\newcommand\compareStringsDo[5]{%
  \lowercase{\ifcase\pdf@strcmp{#1}{#2}}%
    #4\or
    #5\else
    #3\fi
}
\newtheoremstyle{thmstyle}
  {\medskipamount}
  {\smallskipamount}
  {\slshape}
  {0pt}
  {\bfseries}
  {.}
  { }
  {\thmname{#1}\thmnumber{ #2}{\normalfont\thmnote{ (#3)}}}
\newtheoremstyle{plainstyle}
  {\medskipamount}
  {\smallskipamount}
  {\rmfamily}
  {0pt}
  {\bfseries}
  {.}
  { }
  {\thmname{#1}\thmnumber{ #2}{\normalfont\thmnote{ (#3)}}}
\theoremstyle{thmstyle}
\newtheorem{theorem}{Theorem}[section]
\newtheorem{lemma}[theorem]{Lemma}
\newtheorem{corollary}[theorem]{Corollary}
\newtheorem{proposition}[theorem]{Proposition}
\theoremstyle{plainstyle}
\newtheorem{definition}[theorem]{Definition}
\newtheorem{convention}[theorem]{Convention}
\newtheorem{remark}[theorem]{Remark}
\newtheorem{discussion}[theorem]{Discussion}
\newtheorem{example}[theorem]{Example}
\newenvironment{proofof}[1]{\begin{proof}[Proof of #1.]}{\end{proof}}
\setlist[enumerate]{label={\roman*.}, ref={(\roman*)}}
\newcommand{\rn}{\bm}
\newcommand{\df}{\stackrel{\text{def}}{=}}
\newcommand{\place}{\mathord{-}}
\def\symdiff{\mathbin{\triangle}}
\newcommand{\comp}{\mathbin{\circ}}
\newcommand{\rest}{\mathord{\vert}}
\newcommand{\cat}{\textsc}
\newcommand{\disjcup}{\mathbin{\stackrel\cdot\cup}}
\newcommand{\tot}{\leftrightarrow}
\DeclareMathOperator{\Hom}{Hom}
\DeclareMathOperator{\Aut}{Aut}
\DeclareMathOperator{\im}{im}
\DeclareMathOperator{\id}{id}
\DeclareMathOperator{\VC}{VC}
\DeclareMathOperator{\Th}{Th}
\DeclareMathOperator{\QR}{QR}
\newcommand{\HomT}[1]{\Hom^+(\cA[#1],\RR)}
\def\Tind{T_{\operatorname{ind}}}
\def\tind{t_{\operatorname{ind}}}
\def\Dopen{D_{\operatorname{open}}}
\DeclareMathOperator{\Forb}{Forb}
\newcommand{\NN}{\mathbb{N}}
\newcommand{\RR}{\mathbb{R}}
\newcommand{\One}{\mathbbm{1}}
\newcommand{\cA}{\mathcal{A}}
\newcommand{\cB}{\mathcal{B}}
\newcommand{\cC}{\mathcal{C}}
\newcommand{\cD}{\mathcal{D}}
\newcommand{\cE}{\mathcal{E}}
\newcommand{\cF}{\mathcal{F}}
\newcommand{\cH}{\mathcal{H}}
\newcommand{\cK}{\mathcal{K}}
\newcommand{\cL}{\mathcal{L}}
\newcommand{\cM}{\mathcal{M}}
\newcommand{\cN}{\mathcal{N}}
\newcommand{\cP}{\mathcal{P}}
\newcommand{\cS}{\mathcal{S}}
\def\prop{\texttt}
\def\UCouple{\prop{UCouple}}
\def\UInduce{\prop{UInduce}}
\def\AEHP{\prop{AEHP}}
\def\WR{\prop{WR}}
\def\EHP{\prop{EHP}}
\newcommand{\TGraph}{T_{\operatorname{Graph}}}
\newcommand{\TkHypergraph}[1][k]{T_{#1\operatorname{-Hypergraph}}}
\newcommand{\TLinOrder}{T_{\operatorname{LinOrder}}}
\newcommand{\TPerm}{T_{\operatorname{Perm}}}
\newcommand{\TPermGraph}{T_{\operatorname{PermGraph}}}
\newcommand{\TBipartite}{T_{\operatorname{Bipartite}}}
\newcommand{\TPerfect}{T_{\operatorname{Perfect}}}
\newcommand{\TTri}{T_{\operatorname{Tri}}}
\def\Erdos{Erd\H{o}s}
\def\Caratheodory{Carath\'{e}odory}
\title{Weak randomness in graphons and theons}
\author{Leonardo N.~Coregliano\thanks{Institute for Advanced Study, {\tt lenacore@ias.edu}. This
    material is based upon work supported by a grant from the Institute for Advanced Study School of
    Mathematics.} \and%
  Maryanthe Malliaris\thanks{University of Chicago, {\tt mem@math.uchicago.edu}. Research partially
    supported by NSF-BSF 2051825.}%
}
\date{\today}
\begin{document}
\maketitle

\begin{abstract}
  Call a hereditary family $\cF$ of graphs strongly persistent if there exists a graphon $W$ such
  that in all subgraphons $W'$ of $W$, $\cF$ is precisely the class of finite graphs that have
  positive density in $W'$. Our first result is a complete characterization of the hereditary
  families of graphs that are strongly persistent as precisely those that are closed under
  substitutions.

  We call graphons with the self-similarity property above weakly random. A hereditary family $\cF$
  is said to have the weakly random \Erdos--Hajnal property ($\WR$) if every graphon that is a limit
  of graphs in $\cF$ has a weakly random subgraphon. Among families of graphs that are closed under
  substitutions, we completely characterize the families that belong to $\WR$ as those with ``few''
  prime graphs.

  We also extend some of the results above to structures in finite relational languages by using the
  theory of theons.

  \textbf{Keywords:} Graph limit, theon, quasirandomness.
\end{abstract}

\section{Introduction}
\label{sec:intro}

The theory of graph quasirandomness implies that quasirandom graphons are the only graphons $W$ with
the self-similarity property that densities of finite graphs are invariant across subgraphons of $W$
(see~\cite{Tho85,CGW89} for graph quasirandomness and~\cite{Lov12} for graphons). An interesting weakening
of this property, which we will motivate further below, is to require only that the family $\cF$ of
finite graphs that have positive density is invariant across subgraphons of $W$. We call graphons
with this property \emph{weakly random}. It is natural to ask which families $\cF$ can be realized
in this way in some weakly random $W$. Since all constant graphons are quasirandom, thus also weakly
random, three such families are the cliques, the anti-cliques and the family of all finite
graphs. However, there are other families that can be realized in this way such as the family
$\cF_{C_4}$ of all cographs, that is, graphs such that every induced subgraph of size at least $2$
can be partitioned into two non-trivial parts that are either complete to each other, or empty to each
other. Alternatively, $\cF_{C_4}$ is precisely the family of finite graphs that are induced
subgraphs of some recursive blow-up of the $4$-cycle. Strong persistence of $\cF_{C_4}$ is seen
since the limit $W_{C_4}$ of the balanced recursive blow-ups of the $4$-cycle is weakly random and
realizes the family $\cF_{C_4}$.

The work of this paper is to show that this notion of weak randomness supports a rich structure
theory and provides an illuminating way of studying hereditary classes of graphs based on properties
of their limit objects. Before stating our main results, let us further motivate why the study of
weak randomness is both natural and tractable, which begins by asking what is special about large
cliques and anti-cliques.

Recall that the \Erdos--Hajnal Conjecture~\cite{EH89} says that for any proper hereditary class of
graphs, there exists a constant $c > 0$ such that any graph of size $n$ in this class either has a
clique or an anti-clique of size $n^c$; we will refer to this property of a hereditary class as the
\Erdos--Hajnal property and abbreviate it as $\EHP$ (see also~\cite{Chu14} for a
survey). In~\cite{CM22}, we studied a natural variant of this question in the presence of
convergence, called the approximate \Erdos--Hajnal property ($\AEHP$), in which we allow for a
negligible amount of non-edges in the almost clique or a negligible amount of edges in the almost
anti-clique, but require it to be linear-sized. The framework of $\AEHP$ naturally lends itself to
analysis via limit theory, i.e., graphons~\cite{LS06} in the graph case, or more generally, flag
algebras~\cite{Raz07} and theons~\cite{CR20a} in the case of universal theories in finite relational
languages.

The aforementioned family $\cF_{C_4}$ of cographs plays a key role in some of the classical results
on the usual \Erdos--Hajnal Conjecture: namely, a consequence of~\cite[Theorem~1.1]{APS01} is that
any hereditary class that does not contain $\cF_{C_4}$ has $\EHP$. However, this is not a
characterization of $\EHP$ as several classes that contain $\cF_{C_4}$ still have $\EHP$; easy
examples include perfect graphs and $P_3$-free graphs (i.e., disjoint unions of cliques) and hard
examples include bull-free graphs~\cite{Chu08} and $C_5$-free graphs~\cite{CSSS23}.

On the other hand, surprisingly, hereditary classes of graphs with $\AEHP$ can be characterized as
precisely those that avoid containing $\cF_{C_4}$, see~\cite[Theorem~8.10]{CM22}. In what follows,
it will be more convenient to think about hereditary classes of graphs as the models of a particular
universal first-order theory $T$ of graphs, so a graphon of $T$ is simply a limit of finite models
of $T$. This shift in language supports the model theoretic perspective of studying the theory $T$
(i.e., a hereditary class of graphs) by studying the variation in the class of its infinite models
(i.e., its graphons). In this language, a universal theory $T$ of graphs has $\AEHP$ if every
graphon of $T$ has a (large) trivial subgraphon, i.e., an almost clique or an almost anti-clique,
see~\cite[\S7]{CM22} and Definition~\ref{def:AEHPgraphs}.

In the proof of the negative side of the characterization of $\AEHP$ for graphs, if all cographs are
models of $T$, then the limit $W_{C_4}$ is a graphon of $T$. Looking through the lens of weak
randomness, it is clear that $W_{C_4}$ does not contain trivial subgraphons since both the edge and
the non-edge must persistently have positive density in all subgraphons of $W_{C_4}$. Part of the
characterization of $\AEHP$ involved showing that persistence of the edge and non-edge implies
persistence of every graph in $\cF_{C_4}$. Thus, we are led to ask which families arise as
\emph{persistent classes} of graphons, i.e., families $\cF$ of graphs that are precisely those that
have positive density in all subgraphons of a given graphon $W$. A related notion is that of a
\emph{strongly persistent class}, in which the graphon is further required to be weakly random. A
priori these notions are different since a non-weakly random graphon can have finite graphs with
positive density in only some of its subgraphons.

The first theorem of the present paper is to show the equivalence of strong persistence and
persistence and to characterize such families as precisely those that are closed under substructures
and substitutions (see Definition~\ref{def:subst}). This requires both understanding properties of
substitutions and the construction of appropriate weakly random limits. We prove this result first
for graphs (Theorem~\ref{thm:graphpersistence}) and then a suitable generalization of it for
structures in arbitrary finite relational languages (Theorem~\ref{thm:persistence}) after developing
suitable extensions of the relevant concepts. The appearance of substitution in this
characterization, and of the related notion of primality in what follows, is not completely
unexpected as both the \Erdos--Hajnal property and its approximate version behave very well under
substitution (see~\cite{APS01,Chu14} for $\EHP$ and~\cite{CM22} for $\AEHP$).

Since cliques and anti-cliques are weakly random, we can extend the picture of $\AEHP$ by defining
the class $\WR$ as follows: a universal theory of graphs is in $\AEHP$ if all its graphons have
trivial subgraphons and a universal theory of graphs is in $\WR$ if all its graphons have weakly
random subgraphons. It is immediate that $\AEHP\subseteq\WR$, it is less immediate but shown in the
present paper that this containment is proper and that not every universal theory is in
$\WR$. Because of the nature and simplicity of the characterization of $\AEHP$ for graphs cited
above, it becomes plausible that a characterization of the richer $\WR$ class may exist.

In Theorem~\ref{thm:graphs:WR}, we characterize theories of graphs in $\WR$ under the additional
natural assumption of closure under substitution as those that have ``few'' prime graphs in the
sense that there are no infinite antichains of prime graphs in the induced subgraph partial order, a
condition we call \emph{primally almost finite}. In one direction, we build on the analysis of
persistence of Theorem~\ref{thm:graphpersistence} and in the other direction, the technology of
recursive blow-ups plays a key role. Note that without the assumption of closure under
substitutions, it is obvious that $\WR$ is no longer characterized by the primally almost finite
condition as, e.g., the theory of bipartite graphs is in $\WR$ (even in $\AEHP$) but has infinite
antichains of prime graphs.

Many further questions are discussed in the concluding Section~\ref{sec:concl}.

\medskip

Let us point out that although~\cite{CM22} provides a good motivation for the current work, it is
not a pre-requisite for the current paper and we do not rely on any of the results of~\cite{CM22}
for our study of weak randomness and the class $\WR$, except for a straightforward characterization
of subgraphons and sub-objects~\cite[Lemmas~3.3 and~5.8]{CM22} (see also Section~\ref{sec:prelim}
below). To read the current paper, it will be useful to have some familiarity with the theories of
graphons and theons, but we repeat the relevant definitions and results in Section~\ref{sec:prelim}
to set the notation.

Now we describe the structure of the paper. In Section~\ref{sec:prelim}, we review the necessary
preliminaries and set notation. Section~\ref{sec:subst} starts to develop the properties of
substitution, primality and almost finiteness, which we will need for the rest of the paper.
Section~\ref{sec:pers:graph} is devoted to proving the persistence result for graphs,
Theorem~\ref{thm:graphpersistence}. Section~\ref{sec:WR:graph} defines the class $\WR$ for graphs
and proves the characterization under the assumption of closure under substitutions,
Theorem~\ref{thm:graphs:WR}. In Section~\ref{sec:VC:graph}, we study how the notions of weak
randomness interact with VC~dimension, show that weakly random graphons of proper theories of graphs
must be a.e.\ $\{0,1\}$-valued (Theorem~\ref{thm:weaklyrandom01}) and show that primally almost
finite families of graphs must have bounded VC~dimension
(Theorem~\ref{thm:primallyalmostfiniteNIP}). In Section~\ref{sec:pers:univ}, we prove the general
characterization of strongly persistent classes of structures in finite relational languages
(Theorem~\ref{thm:persistence}). In the brief Section~\ref{sec:WR:univ}, we point out which results
concerning $\WR$ generalize easily to finite relational languages. In the final
Section~\ref{sec:concl}, we summarize and discuss some open problems.

\subsection*{Acknowledgments}

We thank two anonymous reviewers for helpful comments.

\section{Preliminaries}
\label{sec:prelim}

In this section, we establish the notation and background results that will be used throughout the
paper. The core results of the paper are in probabilistic combinatorics, and most of the results and
proofs are stated in that language. Still, there are quite a few points where we believe the
introduction of (simple) model theoretic language is more natural both to explain our approach and
to organize the results, as we shall explain.

\medskip

We denote the set of non-negative integers by $\NN$ and the set of positive integers by
$\NN_+\df\NN\setminus\{0\}$ and given $n,k\in\NN$, we let $[n]\df\{1,\ldots,n\}$ and let $(n)_k\df
n(n-1)\cdots(n-k+1)$ denote the falling factorial. Given a set $V$ and $k\in\NN$, we let $(V)_k$ be
the set of \emph{injective} functions $[k]\to V$, we let $\binom{V}{k}\df\{A\subseteq V \mid\lvert
A\rvert = k\}$ be the set of subsets of $V$ of size $k$, let $\binom{V}{\leq
  k}\df\bigcup_{\ell=0}^k\binom{V}{\ell}$ and we let $r(V)\df\bigcup_{k\in\NN_+}\binom{V}{k}$ be the
set of non-empty finite subsets of $V$. We will often abuse notation and write $n$ in place of $[n]$
when $V=[n]$ in some of the notation.

\subsection{Terminology from model theory: structures and theories}

In this paper a main object of study is hereditary classes of graphs. These can be seen as a
special case of what are called in model theoretic language ``classes of structures in finite
relational languages'' or even ``universal theories'', and often the greater level of generality is
useful. We now explain all these (quite natural) definitions.

Recall that a family of graphs (up to isomorphism) is called \emph{hereditary} if it is closed under
induced subgraphs. As an example, consider the triangle-free graphs, and observe the
following. Using $R$ as the binary edge symbol, we can write a set of axioms $\TTri$ of first order
logic which capture this class of graphs. First, the theory of graphs $\TGraph$ will say: the edge
relation is symmetric [~$\forall x \forall y (R(x,y) \iff R(y,x))$~] and irreflexive [~$\forall x
  \forall y (R(x,y) \implies x \neq y)$~ ]. To obtain $\TTri$, we add the axiom that there are no
triangles [~$\forall x \forall y \forall z ~ \neg (x \neq y \land y \neq z \land z \neq x \land
  R(x,y) \land R(y,z) \land R(x,z))$~]. This $\TTri$ is called a universal theory because it uses
only universal quantifiers, and to a model theorist, this explains the fact that the axioms still
hold on any induced substructure, or in other words, the graphs satisfying the axioms $\TTri$ form a
hereditary class. Model theorists consider a set of axioms and the class of structures satisfying
those axioms to be two sides of the same coin, so in logical parlance we could say we are studying
the hereditary family of triangle-free graphs, or equivalently, we are studying the universal theory
$\TTri$.

To motivate the phrase ``finite relational languages'', observe that there are other natural
hereditary classes we might want to study, such as: linear orders, tournaments, $3$-uniform
hypergraphs; or perhaps the class of hypergraphs on which we have both a graph edge $E$ and a
$3$-uniform hyperedge $R$, and $E$ has no triangles and $R$ has no tetrahedra (i.e., there are no
four vertices such that every three form an $R$-hyperedge).\footnote{A series of formal examples
  will be worked out later in this section.} The following three definitions give us the right level
of generality. First, we choose our alphabet.

\begin{definition}\label{def:frl}
  A \emph{finite relational language} $\cL$ is a set of finitely many symbols $P_1,\ldots,P_n$, each
  given with an \emph{arity} $k(P_i)\in\NN_+$.
\end{definition}

Second, when we define a graph $G$, we present it as a set $V$ of vertices along with a set
$R\subseteq V\times V$ of edges, and $\cL$-structures just extend this in the obvious way:

\begin{definition}\label{def:structure}
  Given a finite relational language $\cL$, an $\cL$-\emph{structure} $M$ is given by:
  \begin{enumerate}[label={(\alph*)}]
  \item the data of a set $V(M)$, called the vertices of $M$ or the domain of $M$, and
  \item for each $P_i\in\cL$, a subset of $V(M)^{k(P_i)}$, that is, the set of $k(P_i)$-tuples on
    which $P_i$ holds. This set is denoted $P_i^M$ and called the interpretation of $P_i$ in $M$.
  \end{enumerate}
\end{definition}

Finally, we make the bridge to theories:\footnote{Observe that each of the hereditary classes listed
  before Definition~\ref{def:frl} can be expressed as a class of models for some appropriate
  universal axioms using an appropriate $\cL$: for instance $\{{<}\}$ with $k({<}) = 2$, $\{R\}$
  with $k(R) = 2$, $\{R\}$ with $k(R) = 3$, and $\{E,R\}$ with $k(E) = 2$ and $k(R) = 3$,
  respectively.}

\begin{definition}
  A \emph{universal theory} $T$ in the language $\cL$ is a set of axioms (i.e., a set of well formed
  formulas of first order logic, using basic logical symbols along with the symbols from $\cL$) in
  which the only quantifiers are universal. An $\cL$-structure $M$ is said to be a \emph{model} for
  $T$, in symbols $M\models T$, if all the axioms $T$ hold in $M$.\footnote{Formally defining
    ``$M\models T$'' requires an induction on formula complexity, as in~\cite[Chapter~2]{CK90}.}
\end{definition}

Throughout this text, unless explicitly mentioned otherwise, all languages are assumed to be finite
relational languages. We allow\footnote{This may seem curious to model theorists but simplifies our
  calculations.} structures to have empty vertex sets and the unique structure with empty vertex
set, called \emph{trivial structure}, is denoted $K_0$. Given an $\cL$-structure $M$, $V\subseteq
V(M)$ and $v\in V(M)$, we denote the \emph{substructure} of $M$ induced by $V$ by $M\rest_V$ (i.e.,
we have $V(M\rest_V)\df V$ and $P^{M\rest_V}\df P^M\cap V^{k(P)}$ for every $P\in\cL$) and we let $M
- v\df M\rest_{V(M)\setminus\{v\}}$.

We put the following in a convention environment to emphasize its importance:
\begin{convention}\label{conv:23}
  Our substructures and subgraphs will always be induced, but keeping with the tradition of the
  fields, we will use the short term ``substructure'' for the former but the full term ``induced
  subgraph'' for the latter.
\end{convention}

In this paper, and elsewhere, the word ``structure'' and ``model'' are used interchangeably; often
the first emphasizes the abstract aspect, and the second emphasizes the relation to a theory. Basic
model theory verifies that these work as intended: when $T$ is a universal theory, the class of
models of $T$ is closed under substructures, i.e., is a hereditary class.

\begin{convention}
  In this paper the phrase ``universal theory'' will denote both a fixed universal set of axioms $T$
  in some fixed finite relational language $\cL$, and the \emph{class of $\cL$-structures} which are
  models for $T$, which, as noted, is a hereditary class.

  The reader is free to substitute the phrase ``hereditary class'' for ``universal theory''
  throughout, keeping in mind the language being used and the appropriate notion of substructure,
  and Convention~\ref{conv:23}.
\end{convention}

\begin{discussion}
  If the reader is essentially free to read ``universal theory'' as ``hereditary class,'' why do we
  introduce this terminology? This indicates a certain change in perspective which appears to be
  useful for theorems and proofs. Part of this choice reflects a history of work in the area, as in
  the theons of~\cite{CR20b}. Centrally for the present work, a characteristically model-theoretic
  move of ``studying all models of a theory'' can be seen in the definition of the class $\WR$ and
  in various aspects of the proofs.
\end{discussion}

\subsection{Counting embeddings of graphs and structures}

In order to develop the theory of graph limits (see~\ref{subsec:theons} below), one starts by
defining \emph{labeled (induced) density of a finite graph $H$ in some other finite graph $G$}. That
is, let $\Tind(H,G)$ be the set of injective maps $f\colon V(H)\to V(G)$ which preserve edges and
non-edges, and let
\begin{align*}
  \tind(H,G) & \df
  \begin{dcases*}
    \frac{\lvert\Tind(H,G)\rvert}{(\lvert G\rvert)_{\lvert H\rvert}},
    & if $\lvert H\rvert\leq\lvert G\rvert$,
    \\
    0, & otherwise.
  \end{dcases*}
\end{align*}
One also defines the \emph{induced density of $H$ in $G$}:
\begin{align*}
  p(H,G)
  & \df
    \frac{
      \lvert\{U\subseteq V(H) \mid G\rest_U\cong H\}\rvert
    }{
      \binom{\lvert H\rvert}{\lvert G\rvert}
    }
  =
  \frac{\lvert H\rvert!}{\lvert\Aut(H)\rvert}\cdot\tind(H,G),
\end{align*}
when $\lvert H\rvert\leq\lvert G\rvert$ (and defined as $0$ otherwise), which gives the normalized
number of (unlabeled) copies. The discussion in the previous subsection suggests an obvious
extension of this definition to the setting of finite relational languages. That is, given finite
structures $M$ and $N$ in a language $\cL$, we let $\Tind(M,N)$ be the set of embeddings of $M$ in
$N$ (i.e., the set of injective maps $f\colon V(M)\to V(N)$ that preserve all relations and their
negations) and let
\begin{align*}
  \tind(M,N) & \df
  \begin{dcases*}
    \frac{\lvert\Tind(M,N)\rvert}{(\lvert N\rvert)_{\lvert M\rvert}},
    & if $\lvert M\rvert\leq\lvert N\rvert$,
    \\
    0, & otherwise
  \end{dcases*}
\end{align*}
be the \emph{labeled (induced) density} of $M$ in $N$. We also define the \emph{(induced) density}
of $M$ in $N$ as the normalized number of substructures of $N$ that are isomorphic to $M$ given
by
\begin{align*}
  p(M,N)
  & \df
  \frac{
    \lvert\{U\subseteq V(N) \mid N\rest_U\cong M\}\rvert
  }{
    \binom{\lvert N\rvert}{\lvert M\rvert}
  }
  =
  \frac{\lvert M\rvert!}{\lvert\Aut(M)\rvert}\cdot\tind(M,N),
\end{align*}
when $\lvert M\rvert\leq\lvert N\rvert$ (and defined as $0$ otherwise), where $\Aut(M)$ is the group
of automorphisms of $M$.

Given a universal theory $T$ in $\cL$ and a set $V$, we let $\cK_V[T]$ be the set of all models $M$
of $T$ whose vertex set $V(M)$ is $V$. Given $n\in\NN$, we let $\cM_n[T]$ be the set of models of
$T$ of size $n$ up to isomorphism; we typically think of $\cM_n[T]$ as a subset of $\cK_{[n]}[T]$ by
putting one representative of each isomorphism class in $\cM_n[T]$. We also let
$\cM[T]\df\bigcup_{n\in\NN}\cM_n[T]$.

\subsection{Open interpretations}

When comparing hereditary classes it is useful to know when one contains the information of the
other in perhaps a different presentation. As a trivial example, consider the hereditary class of
triangle-free graphs in which the edge relation is called $E$ ($k(E) = 2$) and the hereditary class
of triangle-free graphs in which the edge relation is called $R$ ($k(R) = 2$). As a slightly less
trivial example, compare these to the hereditary class of graphs in which each vertex is either
colored red or green, and the green vertices form a triangle-free graph.

This subsection introduces language for addressing such situations by identifying obviously
equivalent pieces of hereditary classes. Model theorists will recognize ``open'' as meaning
``quantifier-free''.

Recall that for universal theories $T_1$ and $T_2$ in finite relational languages $\cL_1$ and
$\cL_2$, respectively, an \emph{open interpretation} (or \emph{definition}) from $T_1$ to $T_2$ is a
function $I$ (denoted $I\colon T_1\leadsto T_2$) that maps each predicate symbol $P\in\cL_1$ to an
open (i.e., quantifier-free) formula $I(P)(x_1,\ldots,x_{k(P)})$ in $\cL_2$ and such that for each
axiom $\forall\vec{x}, F(\vec{x})$ of $T_1$, we have $T_2\vdash\forall\vec{x}, I(F)(\vec{x})$ when
we declare $I$ to commute with logical connectives. Open interpretations of the form $I\colon
T_1\leadsto T_2$ contra-variantly define maps $\cK_V[T_2]\to\cK_V[T_1]$ for each set $V$ given by
$(I(M)\vDash P(\vec{x})) \iff (M\vDash I(P)(\vec{x}))$ for each $P\in\cL_1$. In turn, for an open
interpretation $I\colon T_1\leadsto T_2$, we let $I(T_2)$ be the universal theory in the language of
$T_1$ whose finite models are precisely those of the form $I(M)$ for some $M\in\cM[T_2]$, that is,
the axioms of $I(T_2)$ are
\begin{align*}
  \forall x_1,\ldots,x_n, \bigvee_{M\in\cK_n[T_2]}\Dopen(I(M))(x_1,\ldots,x_n) \qquad (n\in\NN),
\end{align*}
where $\Dopen(N)$ is the \emph{open diagram} of $N$, that is, the open formula
\begin{align*}
  \bigwedge_{1\leq i < j\leq n} x_i\neq x_j\land
  \bigwedge_{P\in\cL_2}\left(\bigwedge_{\alpha\in P^N} P(x_{\alpha_1},\ldots,x_{\alpha_{k(P)}})\land
  \bigwedge_{\alpha\in V(N)^{k(P)}\setminus P^N} \neg P(x_{\alpha_1},\ldots,x_{\alpha_{k(P)}})\right)
\end{align*}
that completely encodes the quantifier-free type (over $\varnothing$) of the tuple $(1,\ldots,n)$ in
$N$. To make sense out of $\Dopen(K_0)$ (which must be a quantifier-free formula on zero variables),
we allow our formulas to use the \emph{tautological truth symbol} $\top$ so that $\Dopen(K_0)$ is
defined as $\top$.

An \emph{$\cat{Int}$-isomorphism} (or \emph{interdefinition}) is an open interpretation $I\colon
T_1\leadsto T_2$ such that there exists an open interpretation $J\colon T_2\leadsto T_1$ such that
for every set $V$, the compositions $J\comp I\colon\cK_V[T_2]\to\cK_V[T_2]$ and $I\comp
J\colon\cK_V[T_1]\to\cK_V[T_1]$ are the identity maps. Since $p(M,N) = p(I(M),I(N))$ whenever
$I\colon T_1\leadsto T_2$ is an $\cat{Int}$-isomorphism and $M$ and $N$ are finite models of $T_2$,
we typically do not distinguish between $\cat{Int}$-isomorphic universal theories.

\subsection{Canonical theories: avoiding the diagonal}

When defining graphs we require the edge relation to be irreflexive. In general, a universal theory
$T$ in $\cL$ is \emph{canonical} if $T$ entails
\begin{align}\label{eq:canonical}
  \forall x_1,\ldots,x_{k(P)}, \left(\bigvee_{i\neq j} x_i = x_j \to \neg P(\vec{x})\right)
\end{align}
for every predicate symbol $P\in\cL$. By~\cite[Theorem~2.3]{CR20a} (see also~\cite[\S2.2]{AC14}),
every universal theory is $\cat{Int}$-isomorphic to some canonical theory and as such, from this
point forward, all theories are assumed to be canonical theories, unless explicitly mentioned
otherwise.

We say that a canonical theory $T$ is \emph{non-degenerate} if it contains some infinite model
(equivalently, if $\cM_n[T]$ is non-empty for every $n\in\NN$).

\subsection{Examples of theories used in the text}

To make more concrete our use of ``universal theories'' rather than simply hereditary classes of
graphs, in this subsection we lay out some of the main examples used in the text, along with a
useful construction of a canonical theory. We have already seen the \emph{theory of graphs}
$\TGraph$, and we will say that $T$ ``is a universal theory of graphs'' when $T$ is a universal
theory that is obtained from $\TGraph$ by adding axioms, that is, its finite models are some
hereditary class of graphs (an obvious example is $\TGraph$ itself, a less obvious one is the theory
$\TTri$ of triangle-free graphs).

A second kind of example is the \emph{theory of $k$-hypergraphs} $\TkHypergraph$, that is, the
canonical theory with a single \emph{symmetric} \emph{irreflexive}\footnote{In the sense that the
  predicate is \emph{not} true in any non-injective tuple.}  $k$-ary predicate $E$. Obviously the
theory of graphs is simply $\TGraph=\TkHypergraph[2]$. In these theories, we denote by
$K_n^{(k)}\in\cM_n[\TkHypergraph]$ the \emph{complete $k$-hypergraph on $n$ vertices} (i.e., we have
$V(K_n^{(k)})\df[n]$ and $E^{K_n^{(k)}}\df ([n])_k$) and we let $K_n\df K_n^{(2)}$ be the
\emph{complete graph on $n$ vertices}. Given a $k$-hypergraph $G$, we let $\overline{G}$ denote the
\emph{complement} hypergraph of $G$ (given by $V(\overline{G})\df V(G)$ and $E^{\overline{G}}\df
([n])_k\setminus E^G$). In particular, $\overline{K}_n^{(k)}$ is the \emph{empty $k$-hypergraph on
  $n$ vertices}.

Another example of a canonical theory is the \emph{theory of (strict) linear orders} $\TLinOrder$,
i.e., the theory with a binary predicate symbol $\prec$ and axioms
\begin{gather*}
  \forall x, \neg(x\prec x),\\
  \forall\vec{x}, (x_1\neq x_2 \to (x_1\prec x_2\lor x_2\prec x_1)),\\
  \forall\vec{x}, (x_1\prec x_2\land x_2\prec x_3 \to x_1\prec x_3).
\end{gather*}

Another useful theory is the \emph{theory of permutations} $\TPerm\df\TLinOrder\cup\TLinOrder$,
which is the theory of two (strict) linear orders on the same ground set. Its finite models (up to
isomorphism) are in one-to-one correspondence to usual permutations via $S_n\ni\sigma\mapsto
M_\sigma\in\cM_n[\TPerm]$, where the first order ${\prec_1^{M_\sigma}}$ of $M_\sigma$ is simply the
natural order on $[n]$ and the second order is given by
\begin{align*}
  (M_\sigma\vDash i\prec_2 j) & \iff \sigma^{-1}(i) < \sigma^{-1}(j).
\end{align*}

Some other examples of theories that can be obtained from $\TGraph$ by adding axioms that will be
used are the \emph{theory of graphs of agreements of permutations} $\TPermGraph\df I(\TPerm)$, where
$I\colon\TGraph\leadsto\TPerm$ is given by
\begin{align*}
  I(E)(x,y) & \df (x\neq y\land (x\prec_1 y\tot x\prec_2 y)),
\end{align*}
the \emph{theory of bipartite graphs} $\TBipartite$, which is obtained from $\TGraph$ by adding the
axioms $\forall\vec{x},\neg\Dopen(C_{2n+1})(\vec{x})$ for every $n\in\NN_+$, where $C_\ell$ is the
$\ell$-cycle graph and the \emph{theory of perfect graphs}, which by the Strong Perfect Graph
Theorem~\cite{CRST06}, is obtained from $\TGraph$ by adding the axioms
$\forall\vec{x},\neg(\Dopen(C_{2n+1})(\vec{x})\lor\Dopen(\overline{C}_{2n+1})(\vec{x}))$ for every
$n\geq 2$.

For every finite relational language $\cL$, we let $T_\cL$ be the \emph{pure canonical theory} in
$\cL$, that is, the theory whose axioms are precisely~\eqref{eq:canonical} for each
$P\in\cL$. Unless explicitly mentioned otherwise, all $\cL$-structures are assumed to be
\emph{canonical structures}, that is, models of $T_\cL$.

Given a family $\cF$ of models of a canonical theory $T$, we let $\Forb_T(\cF)$ be the theory of
models of $T$ that do not have any copies of models in $\cF$, that is, $\Forb_T(\cF)$ is obtained
from $T$ by adding the axioms $\forall\vec{x},\neg\Dopen(F)(\vec{x})$ for every $F\in\cF$ (note that
if $F = K_0$, then this formula takes the form $\neg\top$, which is tautologically false, so
$\Forb_T(\cF)$ has no models when $K_0\in\cF$).

Another simple but useful construction is the following: if $\cF$ is a family of finite
$\cL$-structures that is closed under substructures, then we let
$\Th(\cF)\df\Forb_{T_\cL}(\cM[T_\cL]\setminus\cF)$ be the unique universal theory (up to
reaxiomatization) such that $\cM[\Th(\cF)]=\cF$.

\subsection{Basics of graphons and theons}
\label{subsec:theons}

We start here with the language of models before specializing to the case of graphons, which will be
central for an initial segment of the text.

A sequence $(N_n)_{n\in\NN}$ of finite models of a canonical theory $T$ is called \emph{convergent}
if it is \emph{increasing} in the sense that $\lvert N_n\rvert < \lvert N_{n+1}\rvert$ for every
$n\in\NN$ and if for every $M\in\cM[T]$, the limit $\lim_{n\to\infty} p(M,N_n)$ exists. Another way
of seeing this convergence is that each finite model $N$ of $T$ corresponds to the point
$p(\place,N)\in [0,1]^{\cM[T]}$ and convergence of an increasing sequence $(N_n)_{n\in\NN}$ amounts
to convergence in the (compact metrizable) product topology of $[0,1]^{\cM[T]}$ of the corresponding
sequence $(p(\place,N_n))_{n\in\NN}$.

There are essentially two ways to encode limits of convergent sequences. The first is
algebraically/syntactically: we say that a function $\phi\colon\cM[T]\to[0,1]$ is the limit of a
convergent sequence $(N_n)_{n\in\NN}$ if $\phi(M)=\lim_{n\to\infty} p(M,N_n)$ for every
$M\in\cM[T]$. The theory of flag algebras~\cite{Raz07} describes the set $\HomT{T}$ of functions
that are limits of convergent sequences as precisely as the ones that induce positive homomorphisms
from a particular commutative $\RR$-algebra $\cA[T]$ to $\RR$, but for this work, the unfamiliarized
reader can safely think of $\HomT{T}$ as simply a fancy notation for the subset of $[0,1]^{\cM[T]}$
of all $\phi$ that are limits of some convergent sequence. Note that compactness of $[0,1]^{\cM[T]}$
implies that $\HomT{T}$ is non-empty if and only if $T$ is non-degenerate.

For $\phi\in\HomT{T}$, the \emph{theory of positive models} of $\phi$ is the universal theory
$\Th(\phi)$ whose finite models are precisely those models $M$ of $T$ such that $\phi(M) > 0$, that
is, the axioms of $\Th(\phi)$ are
\begin{align*}
  \forall\vec{x}, \bigvee_{\substack{M\in\cK_n[T]\\\phi(M) > 0}}\Dopen(M)(x_1,\ldots,x_n) \qquad (n\in\NN).
\end{align*}

\medskip

The second way of encoding limits is geometrically/semantically. In the case of graphs, we can
encode limits using a \emph{graphon} $W$ over an atomless standard probability space
$\Omega=(X,\cA,\mu)$, that is, $W$ is a symmetric function $X\times X\to[0,1]$ measurable in the
completion of the product $\sigma$-algebra (typically, the space $\Omega$ is taken to be $[0,1]$
equipped with the Lebesgue measure $\lambda$, in which case, a graphon is simply a symmetric
Lebesgue measurable function $[0,1]^2\to[0,1]$). Given one such graphon $W$ over
$\Omega=(X,\cA,\mu)$ and a finite graph $G$, the \emph{labeled (induced) density} and the
\emph{(induced) density} of $G$ in $W$ are defined respectively as
\begin{align*}
  \tind(G,W)
  & \df
  \int_{X^{V(G)}} \prod_{\{v,w\}\in E(G)} W(x_v,x_w) \prod_{\{v,w\}\in E(\overline{G})} (1 -
  W(x_v,x_w))
  \ d\mu(x),
  \\
  \phi_W(G) \df p(G,W)
  & \df
  \frac{\lvert G\rvert!}{\lvert\Aut(G)\rvert}\cdot\tind(G,W),
\end{align*}
where $E(G)\df\{\{v,w\} \mid G\vDash E(v,w)\}$ is the edge set of $G$ and $\overline{G}$ is the
complement of $G$. We say that a convergent sequence $(H_n)_{n\in\NN}$ of graphs converges to $W$ if
$\lim_{n\to\infty} p(G,H_n) = \phi_W(G)$ for every $G\in\cM[\TGraph]$. Another way of interpreting
$\tind(G,W)$ above is to define the set $\Tind(G,W)$ of \emph{labeled (induced) copies} of $G$ in
$W$ as
\begin{multline}
  \Tind(G,W)
  \df
  \biggl\{(x,y)\in X^{V(G)}\times [0,1)^{\binom{V(G)}{2}} \;\bigg\vert\;
  \forall\{v,w\}\in\binom{V(G)}{2},
  \\
  (\{v,w\}\in E(G)\tot y_{\{v,w\}} < W(x_v,x_w))
  \biggr\}
\end{multline}
and note that $\tind(G,W) = (\mu\otimes\lambda)(\Tind(G,W))$. We also use the shorthand
$\Th(W)\df\Th(\phi_W)$ for the theory of positive graphs of $W$. Note that when $W$ is
$\{0,1\}$-valued, we can interpret it as the adjacency matrix of a graph with vertex set $X$ and for
$(x,y)\in X^{V(G)}\times [0,1)^{\binom{V(G)}{2}}$ such that all coordinates of $x$ are distinct, we
have $(x,y)\in\Tind(G,W)$ if and only if $x$ is an embedding of $G$ in the graph encoded by the
$\{0,1\}$-valued $W$.

The main theorem of the theory of graphons~\cite{LS06} says that graphons precisely encode limits of
convergent graph sequences. Along with the flag algebra description, this can be easily summarized
as $\HomT{\TGraph} = \{\phi_W \mid W\text{ is a graphon}\}$. However, let us note that different
graphons can represent the same limit; for example, for any graphon $W$ over $[0,1]$, the graphon
$W'$ defined by $W'(x,y)\df W(2x\bmod 1, 2y\bmod 1)$ represents the same limit as $W$ (i.e., we have
$\phi_W = \phi_{W'}$).

Another very useful theorem is the Graphon Removal Lemma~\cite[Theorem~1]{Pet13}, which says that
for any graphon $W$ over $\Omega$, there exists a graphon $W'$ that differs from $W$ only by a zero
measure set (hence $\phi_W=\phi_{W'}$) and such that for every $G\in\cM[\TGraph]$, if
$\tind(G,W')=0$, then $\Tind(G,W')\subseteq\cD_{V(G)}$, where
\begin{align}\label{eq:graphondiagonal}
  \cD_V
  & \df
  \{(x,y)\in X^V\times [0,1)^{\binom{V}{2}} \mid \exists v,w\in V, (v\neq w\land x_v = x_w)\}
\end{align}
is the \emph{diagonal set}, that is, the Graphon Removal Lemma says that we only need to change $W$
in a zero measure set to remove all off-diagonal copies of finite graphs that have zero density in
$W$.

\medskip

For the general case, we will use the theory of theons~\cite{CR20a} (see also~\cite{Aus08}
and~\cite{AC14} for alternative semantic limits). Given an atomless standard probability space
$\Omega=(X,\cA,\mu)$ and a set $V$, we let $\cE_V(\Omega)\df X^{r(V)}$ (recall that
$r(V)\df\bigcup_{k\in\NN_+}\binom{V}{k}$), equipping it with the completion of the product measure,
which is denoted $\mu$ as well, by abuse.

For a predicate symbol $P$ in a finite relational language $\cL$, a \emph{$P$-on} over $\Omega$ is a
measurable subset of $\cE_{k(P)}(\Omega)$. An \emph{Euclidean structure} in $\cL$ over $\Omega$ is a
function $\cN$ that maps each predicate symbol $P\in\cL$ to a $P$-on
$\cN_P\subseteq\cE_{k(P)}(\Omega)$. If we are further given a finite (canonical) $\cL$-structure
$K$, we define the set of \emph{labeled (induced) copies} of $K$ in $\cN$ as
\begin{align*}
  \Tind(K,\cN)
  & \df
  \bigcap_{P\in\cL}\left(
  \bigcap_{\alpha\in P^K} (\alpha^*)^{-1}(\cN_P)\cap
  \bigcap_{\alpha\in (V(K))_{k(P)}\setminus P^K} (\alpha^*)^{-1}(\cE_{k(P)}(\Omega)\setminus\cN_P)
  \right),
\end{align*}
where for each injection $\alpha\colon [k]\to V$, $\alpha^*\colon\cE_V(\Omega)\to\cE_k(\Omega)$ is
the contra-variantly defined ``projection'' given by
\begin{align}\label{eq:alpha*}
  \alpha^*(x)_A & \df x_{\alpha(A)} \qquad (x\in\cE_V(\Omega), A\in r(k)).
\end{align}
Similarly to the graphon case, we let
\begin{align*}
  \tind(K,\cN) & \df \mu(\Tind(K,\cN)), &
  \phi_\cN(K) & \df \frac{\lvert K\rvert!}{\lvert\Aut(K)\rvert}\cdot\tind(K,\cN),
\end{align*}
and we say that a convergent sequence of finite structures $(N_n)_{n\in\NN}$ converges to $\cN$ if
$\lim_{n\to\infty} p(K,N_n) = \phi_\cN(K)$ for every finite structure $K$. Similarly, we use the
shorthand $\Th(\cN)\df\Th(\phi_\cN)$ for the theory of positive models of $\cN$.

For a canonical theory $T$ in $\cL$, a \emph{(weak) $T$-on} over $\Omega$ is an Euclidean structure
$\cN$ in $\cL$ over $\Omega$ such that $\phi_\cN(K)=0$ whenever $K$ is a finite $\cL$-structure that
is \emph{not} a model of $T$. A \emph{strong $T$-on} over $\Omega$ is a $T$-on $\cN$ such that for
every finite $\cL$-structure $K$ that is \emph{not} a model of $T$, we have
$\Tind(K,\cN)\subseteq\cD_{V(K)}(\Omega)$, where
\begin{align}\label{eq:diagonal}
  \cD_V(\Omega)
  & \df
  \{x\in\cE_V(\Omega) \mid \exists v,w\in V, (v\neq w\land x_{\{v\}} = x_{\{w\}})\}
\end{align}
denotes the \emph{diagonal set}.

The main theorem of the theory of theons says that theons precisely encode limits of convergent
sequences of models, that is, we have $\HomT{T} = \{\phi_\cN \mid \cN\text{ is a $T$-on}\}$. In
fact, the easy inclusion of this equality is worth spelling out: given a $T$-on $\cN$ over $\Omega$,
for each $n\in\NN$, we sample $\rn{\theta}$ in $\cE_n(\Omega)$ according to $\mu$ and let $\rn{N_n}$
be the random element of $\cK_n[T_\cL]$ given by
\begin{gather*}
  V(\rn{N_n}) \df [n],\\
  (\rn{N_n}\vDash P(\alpha)) \iff \alpha^*(\rn{\theta})\in\cN_P
  \qquad (P\in\cL, \alpha\in ([n])_{k(P)}),
\end{gather*}
where $\alpha^*\colon [k(P)]\to[n]$ is given by~\eqref{eq:alpha*}. It is a straightforward exercise
on distribution concentration to check that with probability $1$, the sequence
$(\rn{N_n})_{n\in\NN}$ converges to $\phi_\cN$. In particular, this means that any limit
$\phi\in\HomT{T}$ is also a limit of a sequence of models $(N_n)_{n\in\NN}$ that does not omit sizes
in the sense that $\lvert N_n\rvert = n$ for every $n\in\NN$. However, similarly to graphons,
different theons can represent the same limit.

Similarly to graphons, another very useful theorem of the theory of theons is the Induced Euclidean
Removal Lemma~\cite[Theorem~3.3]{CR20a}, which says that any weak $T$-on can be turned into a strong
$T$-on by changing its peons only on a zero measure set (which in particular means the two theons
represent the same limit). A fortiori, by viewing a $T$-on $\cN$ as a $\Th(\cN)$-on, the Induced
Euclidean Removal Lemma implies that there exists a $T$-on $\cN'$ whose peons differ from those of
$\cN$ only by a zero measure set and such that $\Tind(K,\cN')\subseteq\cD_{V(K)}(\Omega)$ whenever
$\tind(K,\cN') = 0$.

\medskip

Given an open formula $F(x_1,\ldots,x_n)$ in $\cL$ and an Euclidean structure $\cN$ in $\cL$ over
$\Omega$, the \emph{truth set} $T(F,\cN)\subseteq\cE_n(\Omega)$ of $F$ in $\cN$ is defined
inductively as follows.
\begin{enumerate}
\item $T(x_i = x_i,\cN)\df\cE_n(\Omega)$.
\item $T(x_i = x_j,\cN)\df\varnothing$, if $i\neq j$.%
  \label{it:truthnoninjequal}
\item $T(P(x_{\alpha_1},\ldots,x_{\alpha_{k(P)}}),\cN)\df\varnothing$, if $\alpha\colon[k(P)]\to[n]$
  is not injective.
  \label{it:truthnotinj}
\item $T(P(x_{\alpha_1},\ldots,x_{\alpha_{k(P)}}),\cN)\df(\alpha^*)^{-1}(\cN_P)$ if
  $\alpha\colon[k(P)]\to[n]$ is injective, where $\alpha^*$ is as in~\eqref{eq:alpha*} for $V=[n]$.
\item $T(\place,\cN)$ commutes with logical connectives (so e.g., $T(\neg
  F,\cN)\df\cE_n(\Omega)\setminus T(F,\cN)$ and $T(F_1\lor F_2,\cN)\df T(F_1,\cN)\cup T(F_2,\cN)$).
\end{enumerate}
One might argue that items~\ref{it:truthnoninjequal} and~\ref{it:truthnotinj} above should be
defined as particular subsets of the diagonal $\cD_n(\Omega)$ (see~\eqref{eq:diagonal}), but since
all information is lost in $\cD_n(\Omega)$ (both in weak and strong theons), the definition above is
just as good but simpler. It is straightforward to check that for $K\in\cK_n[T_\cL]$, we have
$\Tind(K,\cN) = T(\Dopen(K),\cN)$.

Open interpretations behave very well with the notion of convergence. Furthermore, there are natural
operations in the theories of flag algebras~\cite[Definition~4 and Theorem~2.6]{Raz07} and
theons~\cite[Remark~6]{CR20a} that capture this action in the limit. Namely, if $I\colon T_1\leadsto
T_2$ is an open interpretation and $(N_n)_{n\in\NN}$ is a convergent sequence of finite models of
$T_2$ converging to $\phi\in\HomT{T_2}$ and to the $T_2$-on $\cN$, then $(I(N_n))_{n\in\NN}$ is a
convergent sequence of models of $T_1$ converging to $\phi^I\in\HomT{T_1}$ and to the $T_1$-on
$I(\cN)$ given by
\begin{align}
  \phi^I(M) & \df \sum_{\substack{M'\in\cM[T_2]\\I(M')\cong M}} \phi(M') \qquad (M\in\cM[T_1]),
  \label{eq:phiI}
  \\
  I(\cN)_P & \df T(I(P),\cN) \qquad (P\in\cL).
  \notag
\end{align}

\medskip

Given $\phi\in\HomT{T}$, a \emph{sub-object} of $\phi$ of measure $c > 0$ is a $\psi\in\HomT{T}$
such that there exist a sequence $(N_n)_{n\in\NN}$ of models converging to $\phi$ and sets
$A_n\subseteq V(N_n)$ such that $\lim_{n\to\infty}\lvert A_n\rvert/\lvert N_n\rvert = c$ and
$(N_n\rest_{A_n})_{n\in\NN}$ converges to $\psi$. By a small abuse, we may use theons $\cN$ and
$\cH$ in place of $\phi$ and/or $\psi$, respectively when $\phi_\cN=\phi$ and $\phi_\cH=\psi$. When
the underlying theory is $\TGraph$, we will use the more natural name \emph{subgraphon} for the
concept of sub-object and with a similar abuse, we will use graphons $W$ and $W'$ in place of $\phi$
and/or $\psi$, respectively when $\phi_W=\phi$ and $\phi_{W'}=\psi$.

For simplicity and for later quotation we spell this out:

\begin{definition}[Subgraphons]
  Given a graphon $W$ over an atomless standard probability space $\Omega$, a (positive measure)
  subgraphon $W'$ of $W$ is a graphon over a space $\Omega'$ such that there exist a sequence
  $(H_n)_{n\in\NN}$ of graphs converging to $W$ and sets $U_n\subseteq V(H_n)$ such that
  $\lim_{n\to\infty}\lvert U_n\rvert/\lvert H_n\rvert > 0$ and $(H_n\rest_{U_n})_{n\in\NN}$
  converges to $W'$.
\end{definition}

Some useful equivalences are the following.

By~\cite[Lemma~3.3]{CM22}, $\psi\in\HomT{\TGraph}$ is a subgraphon of $W$ of measure $c > 0$ if and
only if there exists a measurable function $f\colon X\to [0,1]$ with $\int_X f\ d\mu = c$ such that
$\psi = \phi_{W\rest_f}$, where $W\rest_f$ is the graphon over the space $\Omega_f\df(X,\cA,\mu_f)$
defined by
\begin{align}
  \mu_f(B) & \df \frac{1}{c}\int_B f(x)\ d\mu(x),\label{eq:muf}
  \\
  W\rest_f(x,y) & \df W(x,y).\notag
\end{align}

More generally, by~\cite[Lemma~5.8]{CM22}, $\psi\in\HomT{T}$ is a sub-object of a theon $\cN$ over
$\Omega=(X,\cA,\mu)$ of measure $c > 0$ if and only if there exist a measurable function $f\colon
X\to [0,1]$ with $\int_X f\ d\mu = c$ and a measure-isomorphism $F$ modulo $0$ from the space
$\Omega_f=(X,\cA,\mu_f)$ given by~\eqref{eq:muf} to $\Omega$ such that $\psi=\phi_{\cN\rest_f^F}$
for the theon $\cN\rest_f^F$ over $\Omega_f$ defined by
\begin{align*}
  (\cN\rest_f^F)_P & \df \{x\in\cE_{k(P)}(\Omega_f) \mid x^F\in\cN_P\}\qquad (P\in\cL),
\end{align*}
where $x^F\in\cE_{k(P)}(\Omega)$ is given by
\begin{align*}
  x^F_B & \df
  \begin{dcases*}
    x_B, & if $\lvert B\rvert=1$,\\
    F(x_B), & if $\lvert B\rvert\geq 2$.
  \end{dcases*}
\end{align*}

When the function $f$ in the above is the indicator function $\One_A$ of some positive measure set
$A\subseteq X$, we use the shorthands $\mu_A\df\mu_{\One_A}$, $\Omega_A\df\Omega_{\One_A}$, $W\rest_A\df
W\rest_{\One_A}$ and $\cN\rest_A^F\df\cN\rest_{\One_A}^F$ for the concepts above. However, we point out
that not every sub-object of $\cN$ is necessarily of the form $\cN\rest_A^F$ for some positive
measure set $A$ (see~\cite[Example~45]{CR20b}).

\subsection{The approximate \Erdos--Hajnal property}

We conclude this section by recalling the definition of the approximate \Erdos--Hajnal property ($\AEHP$)
from~\cite[Definition~7.1]{CM22}.

\begin{definition}\label{def:AEHP}
  A universal theory $T$ in a finite relational language $\cL$ has the approximate \Erdos--Hajnal
  property ($\AEHP$) if every limit $\phi$ of $T$ has a \emph{trivial} sub-object, i.e., a
  sub-object $\psi$ of the form $\psi=\phi_\cN$ for some $T$-on $\cN$ whose peons all have measure
  in $\{0,1\}$.
\end{definition}

In particular, the definition above specializes to universal theories of graphs as follows.

\begin{definition}\label{def:AEHPgraphs}
  If $T$ is a universal theory of graphs (in other words, if $T$ is a hereditary class of graphs),
  then $T\in\AEHP$ if every graphon that is a limit of $T$ has a \emph{trivial} subgraphon, i.e., a
  subgraphon that is either a.e.\ equal to $0$ or a.e.\ equal to $1$.
\end{definition}

An equivalent formulation of $\AEHP$ (see~\cite[Theorem~7.11]{CM22}) is that for every convergent
sequence $(N_n)_{n\in\NN}$ of models of $T$, there exist sets $U_n\subseteq V(N_n)$ with
$\lim_{n\to\infty}\lvert U_n\rvert/\lvert N_n\rvert > 0$ and $(N_n\rest_{U_n})_{n\in\NN}$ converges
to a trivial limit. In other words, $\AEHP$ for graphs requires linear-sized almost cliques or
almost anti-cliques in the presence of convergence.

The property $\AEHP$ was introduced and its graph version was characterized both combinatorially and
model-theoretically in~\cite{CM22}.

\section{Substitution and primality}
\label{sec:subst}

A simple operation for graphs that is useful in studying the \Erdos--Hajnal property and its
approximate version ($\AEHP$, see Definitions~\ref{def:AEHP} and~\ref{def:AEHPgraphs}) is
substitution. While for graphs this operation has received considerable
attention~\cite{Gia97,APS01,Zve03,CKOS16}, we will need a slight generalization of it for structures
in finite relational languages along with some associated notions (e.g., primality).

In the graph case, some of the results of this section have appeared in some shape or another in the
literature. As such, in Section~\ref{subsec:substgraphs}, we state the definitions and the results
for graphs that we will use without proofs but with pointers to their corresponding generalizations
for relational structures that appear in Section~\ref{subsec:subststruc} with proofs. The reader
that feels sufficiently confident in their knowledge of these and is only interested in the graph
case may read Section~\ref{subsec:substgraphs} then freely skip the remainder of the section.

\subsection{Substitution and primality for graphs}
\label{subsec:substgraphs}

\begin{definition}[Graph version of Definition~\ref{def:subst}]
  Given two graphs $F_1$ and $F_2$ and $v\in V(F_1)$, the \emph{substitution} of $v$ in $F_1$ by
  $F_2$ is the graph $F_1^{v\to F_2}$ obtained from the disjoint union of $F_1 - v$ with $F_2$ by
  adding all edges of the form $\{u,w\}$, where $u\in V(F_1)$ is a vertex that is adjacent to $v$
  in $F_1$ and $w$ is a vertex of $F_2$ (see Figure~\ref{fig:graphsubst} for an example).

  \begin{figure}[htbp]
    \begingroup
\def\scale{1}
\def\firstbaseangle{0}
\def\secondbaseangle{45}
\def\baseradius{1cm}
\def\labelradius{1.5cm}

\let\bigbaseangle\firstbaseangle
\def\bigradius{1.75cm}
\def\biglabelradius{2.25cm}
\let\smallbaseangle\secondbaseangle
\def\smallradius{0.5cm}
\def\smalllabelradius{1.2cm}

\def\ptsize{3pt}
\def\smallptsize{2pt}
\def\captionheight{-3cm}
\def\secondcaptionheight{-3cm}

\def\drawgraph#1#2#3#4#5{
  \begin{tikzpicture}[scale=\scale]
    \pgfmathtruncatemacro{\nmo}{#1-1}
    \foreach \i in {0,...,\nmo}{
      \pgfmathsetmacro{\angle}{#5 + \i * 360 / #1}
      \coordinate (P\i) at (\angle:\baseradius);
      \coordinate (L\i) at (\angle:\labelradius);

      \filldraw (P\i) circle (\ptsize);
      \node at (L\i) {#3};
    }

    \foreach \i/\j in #2{
      \draw (P\i) -- (P\j);
    }

    \node at (0,\captionheight) {#4};
  \end{tikzpicture}
}

\def\drawsubstitutedgraph#1#2#3#4#5#6#7#8{
  \begin{tikzpicture}[scale=\scale]
    \pgfmathtruncatemacro{\nmo}{#1-1}
    \foreach \i in {0,...,\nmo}{
      \pgfmathsetmacro{\angle}{\bigbaseangle + \i * 360 / #1}
      \coordinate (GP\i) at (\angle:\bigradius);
      \coordinate (GL\i) at (\angle:\biglabelradius);
    }

    \foreach \i in {1,...,\nmo}{
      \filldraw (GP\i) circle (\ptsize);
      \node at (GL\i) {#3};
    }

    \pgfmathtruncatemacro{\mmo}{#5-1}
    \foreach \i in {0,...,\mmo}{
      \pgfmathsetmacro{\angle}{\smallbaseangle + \i * 360 / #5}
      \coordinate (FP\i) at ($(GP#4) + (\angle:\smallradius)$);
      \coordinate (FL\i) at ($(GP#4) + (\angle:\smalllabelradius)$);

      \filldraw (FP\i) circle (\smallptsize);
      \node at (FL\i) {#7};
    }

    \foreach \i/\j in #6{
      \draw (FP\i) -- (FP\j);
    }

    \foreach \i/\j in #2{
      \ifnum\i=#4
        \foreach \k in {0,...,\mmo}{
          \draw (FP\k) -- (GP\j);
        }
      \else
        \ifnum\j=#4
          \foreach \k in {0,...,\mmo}{
            \draw (GP\i) -- (FP\k);
          }
        \else
          \draw (GP\i) -- (GP\j);
        \fi
      \fi
    }

    \node at (0,\secondcaptionheight) {#8};
  \end{tikzpicture}
}

\def\Gn{5}
\def\Gedges{0/2,0/3,1/2,1/3,2/4}
\def\Glabel{$v_{\i}$}

\def\Fn{4}
\def\Fedges{0/2,1/2,1/3,2/3}
\def\Flabel{$w_{\i}$}

\def\substvert{0}

\begin{center}
\drawgraph{\Gn}{\Gedges}{{\small\Glabel}}{$F_1$}{\firstbaseangle}
\qquad
\drawgraph{\Fn}{\Fedges}{{\small\Flabel}}{$F_2$}{\secondbaseangle}
\qquad
\drawsubstitutedgraph{\Gn}{\Gedges}{{\small\Glabel}}{\substvert}{\Fn}{\Fedges}%
                     {{\footnotesize\Flabel}}{$F_1^{v_{\substvert}\to F_2}$}
\end{center}
\endgroup

    \caption{Example of a graph substitution.}
    \label{fig:graphsubst}
  \end{figure}

  We say a family $\cF$ of graphs (up to isomorphism) is \emph{closed under substitution} if for
  every $F_1,F_2\in\cF$ and every $v\in V(F_1)$, we have $F_1^{v\to F_2}\in\cF$. The \emph{closure
    under substitutions} of $\cF$ is the smallest family $S(\cF)$ containing $\cF$ that is closed
  under substitution.

  A graph $F$ is called \emph{prime} if it is \emph{not} a substitution of $v$ in $F_1$ by $F_2$ for
  any graphs $F_1,F_2$ and $v\in V(F_1)$ with $\lvert F_1\rvert,\lvert F_2\rvert < \lvert F\rvert$.
\end{definition}

\begin{lemma}[Graph version of Lemma~\ref{lem:substitutionK0}]\label{lem:graph:substitutionK0}
  Let $\cF$ be a non-empty family of graphs (up to isomorphism) that is closed under
  substitutions. Then $\cF$ is closed under induced subgraphs if and only if $\cF$ contains the
  trivial graph $K_0$ of size $0$.
\end{lemma}

\begin{lemma}[Graph version of Lemma~\ref{lem:substitutionsubstructure}]\label{lem:graph:substitutionsubstructure}
  Let $F_1,F_2$ be finite graphs and let $v\in V(F_1)$.

  If $F$ is an induced subgraph of $F_1^{v\to F_2}$, then there exist induced subgraphs $G_1$ and
  $G_2$ of $F_1$ and $F_2$, respectively, with $v\in V(G_1)$ such that $F\cong G_1^{v\to G_2}$.

  Conversely, if $G_1$ and $G_2$ are induced subgraphs of $F_1$ and $F_2$, respectively, with $v\in
  V(G_1)$, then $G_1^{v\to G_2}$ is an induced subgraph of $F_1^{v\to F_2}$.
\end{lemma}

\begin{lemma}[Graph version of Lemma~\ref{lem:primesubstructure}]\label{lem:graph:primesubstructure}
  If $\cF$ is a (not necessarily finite) family of graphs (up to isomorphism), $F\in S(\cF)$ and $P$
  is a prime induced subgraph of $F$, then $P$ is an induced subgraph of some $F'\in\cF$.
\end{lemma}

\begin{lemma}[Graph version of Lemma~\ref{lem:strongclosuresubst}]\label{lem:graph:strongclosuresubst}
  Let $\cF$ be a family of finite graphs (up to isomorphism) that is closed under substitutions and
  closed under induced subgraphs and let $\cP$ be the set of graphs in $\cF$ that are prime. Then
  $\cF = S(\cP)$.

  Conversely, if $\cP'$ is a family of prime finite graphs that is closed under prime induced
  subgraphs and $\cF = S(\cP')$, then $\cP'=\cP$.
\end{lemma}

\begin{definition}[Graph version of Definition~\ref{def:almostfinite}]
  We say that a family of graphs $\cF$ (up to isomorphism) is \emph{almost finite} if $\cF$ does not
  contain any infinite antichain in the induced subgraph partial order. Equivalently, $\cF$ is
  almost finite if for every infinite $\cF'\subseteq\cF$, there exist $F_1,F_2\in\cF'$ such that
  $F_1$ is a proper induced subgraph of $F_2$.

  By letting further $\cP$ be the set of all graphs of $\cF$ that are prime, we say that $\cF$ is
  \emph{primally finite} if $\cP$ is finite and we say that $\cF$ is \emph{primally almost finite}
  if $\cP$ is almost finite.
\end{definition}

\begin{lemma}[Graph version of Lemma~\ref{lem:almostfinite}]\label{lem:graph:almostfinite}
  The following are equivalent for a family $\cF$ of finite graphs (up to isomorphism).
  \begin{enumerate}
  \item The family $\cF$ is almost finite.
  \item For every sequence $(F_n)_{n\in\NN}$ in $\cF$, there exist $n,m\in\NN$ such that $n < m$ and
    $F_n$ is an induced subgraph of $F_m$.
  \end{enumerate}
\end{lemma}

\subsection{Substitution and primality for relational structures}
\label{subsec:subststruc}

\begin{definition}\label{def:subst}
  Given two structures $F_1$ and $F_2$ in a finite relational language $\cL$ and $v\in V(F_1)$, a
  \emph{substitution} of $v$ in $F_1$ by $F_2$ is an $\cL$-structure $F$ such that there exist
  functions $f_1\colon V(F_1-v)\to V(F)$ and $f_2\colon V(F_2)\to V(F)$ such that
  \begin{enumerate}
  \item $V(F)=\im(f_1)\cup\im(f_2)$,
  \item $f_2$ is an embedding of $F_2$ in $F$,
  \item For every $u\in V(F_2)$, the extension of $f_1$ to a function $V(F_1)\to V(F)$ that maps $v$
    to $f_2(u)$ is an embedding of $F_1$ in $F$.
  \end{enumerate}
  We call the substitution $F$ \emph{standard} if $V(F_1)\cap V(F_2)=\varnothing$ and the functions
  $f_1$ and $f_2$ act identically on their domains (thus $V(F)= (V(F_1)\setminus\{v\})\cup
  V(F_2)$). (See Figure~\ref{fig:graphsubst} for a graph example.)

  The unique substitution $F$ (up to isomorphism) of $v$ in $F_1$ by $F_2$ that has the smallest
  possible relation sets $P^F$ ($P\in\cL$) is called the \emph{conservative substitution of $v$ in
    $F_1$ by $F_2$} and is denoted $F_1^{v\to F_2}$. If $V(F_1)\cap V(F_2)=\varnothing$, then we can
  formally define $F_1^{v\to F_2}$ by
  \begin{align*}
    V(F_1^{v\to F_2}) & \df (V(F_1)\setminus\{v\})\cup V(F_2),\\
    P^{F_1^{v\to F_2}} & \df P^{F_2}\cup\{f_u\comp\alpha \mid \alpha\in P^{F_1}\land u\in V(F_2)\},
  \end{align*}
  for every $P\in\cL$, where $f_u\colon V(F_1)\to V(F_2)$ is the function that acts identically on
  $V(F_1)\setminus\{v\}$ and has $f_u(v)=u$.

  We say that a family $\cF$ of $\cL$-structures (up to isomorphism) is \emph{strongly closed under
    substitutions} if for every $F_1,F_2\in\cF$, every $v\in V(F_1)$ and every substitution $F$ of
  $v$ in $F_1$ by $F_2$, we have $F\in\cF$. We say that $\cF$ is \emph{weakly closed under
    substitutions} if for every $F_1,F_2\in\cF$, every $v\in V(F_1)$, there exists some substitution
  $F$ of $v$ in $F_1$ by $F_2$ such that $F\in\cF$. The \emph{strong closure under substitutions} of
  $\cF$ is the smallest family $S(\cF)$ containing $\cF$ that is strongly closed under
  substitutions.

  We say that a finite $\cL$-structure $F$ is \emph{prime}\footnote{This should not to be confused
    with the notion of prime model/structure of model theory.} if it is not a substitution of $v$ in
  $F_1$ by $F_2$ for any $F_1,F_2$ and $v\in V(F_1)$ with $\lvert F_1\rvert,\lvert F_2\rvert <
  \lvert F\rvert$. We say that an $\cL$-structure $F$ is \emph{monochromatic} if for every
  \emph{unary} predicate symbol $P\in\cL$, we have $F\vDash\forall x\forall y, P(x)\tot P(y)$, that
  is, each unary predicate is either true everywhere or true nowhere in $F$.
\end{definition}

\begin{example}
  To illustrate Definition~\ref{def:subst}, which includes subtleties that do not appear the case of
  graphs, suppose $\cH$ is a family of $3$-uniform hypergraphs, with hyperedge $R$. Suppose our
  family contains $F_1$, the empty hypergraph on vertices $\{v_1,v_2,v_3,v_4\}$, and $F_2$ is the
  hypergraph on vertices $\{w_1,w_2,w_3\}$ having a single hyperedge $(w_1,w_2,w_3)$. Suppose we
  substitute $F_2$ into $F_1$ by replacing the vertex $v_1$ by a copy of $F_2$. In the hypergraph
  resulting from the substitution, the vertex set is $\{w_1,w_2,w_3,v_2,v_3,v_4\}$ and the
  requirements of the substitution are that there is a hyperedge $(w_1,w_2,w_3)$, and that there is
  no hyperedge involving exactly one of the $w_i$'s and exactly two of the $v_j$'s. However, this
  does not completely determine a hypergraph, since we haven't expressed an opinion about hyperedges
  involving two $w_i$'s and one $v_j$. Roughly speaking the \emph{conservative substitution} is the
  result of saying no to all additional hyperedges, whereas saying that $\cH$ is \emph{strongly
    closed under substitution} says any choice on these non-determined hyperedges is represented in
  the class.
\end{example}

\begin{remark}\label{rmk:substitutionunary}
  Note that if $F$ is a substitution of $v$ in $F_1$ by $F_2$, then for every unary predicate symbol
  $P\in\cL$, we must have
  \begin{align*}
    (F_1\vDash P(v)) \implies (F_2\vDash\forall x, P(x)),\\
    (F_1\vDash \neg P(v)) \implies (F_2\vDash\forall x, \neg P(x)).
  \end{align*}

  In particular, this means that any $\cF$ that is weakly closed under substitution can have at most
  one structure $M_1$ of size $1$ (up to isomorphism), all structures $F$ of $\cF$ are monochromatic
  and of the same ``color'' in the sense that for every unary predicate symbol $P\in\cL$ and every
  $F\in\cF$ with $\lvert F\rvert\geq 1$, we have $M_1\vDash\forall x, P(x)$ if and only if
  $F\vDash\forall x, P(x)$.
\end{remark}

\begin{remark}\label{rmk:increasingsubstitution}
  Note that if $F$ is a substitution of $v$ in $F_1$ by $F_2$, then $\lvert F\rvert\leq\max\{\lvert
  F_1\rvert,\lvert F_2\rvert\}$ if and only if $\min\{\lvert F_1\rvert,\lvert F_2\rvert\}\leq
  1$. When $F_1$ has a single vertex, then $F\cong F_2$; when $F_2$ has a single vertex, then
  $F\cong F_1$; and when $F_2$ has no vertices (i.e., $F_2=K_0$), then $F\cong F_1-v$.

  In particular, this means that every structure of size at most $2$ is prime.
\end{remark}

\begin{remark}\label{rmk:substarity2}
  If all predicates in $\cL$ have arity at most $2$ (which in particular covers the case of the
  theory of graphs), then all substitutions are conservative and the notions of weakly closed under
  substitutions and strongly closed under substitutions coincide. As such, in
  Sections~\ref{sec:pers:graph}, \ref{sec:WR:graph} and~\ref{sec:VC:graph} concerning $\TGraph$, we
  will drop the superfluous qualifiers ``weakly'' and ``strongly'' from the terminology.
\end{remark}

\begin{remark}\label{rmk:prime3}
  If all predicates have arity at least $3$, then the notion of prime structure completely
  degenerates: the only prime structures are the unique structures $K_0$, $M_1$ and $M_2$ of sizes
  $0$, $1$ and $2$, respectively. The reason why every structure $K$ of size at least $3$ is not
  prime is that for any $u\in V(K)$ and $v\in V(M_2)$, $K$ is a substitution of $v$ in $M_2$ by
  $K-u$ since all relations involving $u$ must involve at least two other vertices.
\end{remark}

\begin{remark}\label{rmk:nondegenerate}
  If $T$ is a universal theory such that $\cM[T]$ is weakly closed under substitution, then $T$ is
  non-degenerate if and only if $\cM_2[T]\neq\varnothing$.
\end{remark}

Let us now prove some basic facts about substitutions and primality.

\begin{lemma}\label{lem:substitutionK0}
  Let $\cF$ be a non-empty family of $\cL$-structures (up to isomorphism) that is weakly closed
  under substitutions. Then $\cF$ is closed under substructures if and only if $\cF$ contains the
  trivial structure $K_0$ of size $0$.
\end{lemma}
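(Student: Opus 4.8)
The plan is to prove the two directions separately. The easy direction is that closure under substructures implies $K_0\in\cF$: since $\cF$ is non-empty, pick any $F\in\cF$ and take the substructure induced by the empty vertex set, which is $K_0$; as $\cF$ is closed under substructures, $K_0\in\cF$. (If one wants to be careful about whether ``closed under substructures'' is meant to include the empty one, note that one can instead take any $F\in\cF$ and repeatedly delete vertices, so $K_0$ arises as $F\rest_\varnothing$.)

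For the converse, assume $K_0\in\cF$ and $\cF$ is weakly closed under substitutions; I want to show $\cF$ is closed under substructures. By induction it suffices to show that $\cF$ is closed under deleting a single vertex: given $F\in\cF$ with $\lvert F\rvert\geq 1$ and $v\in V(F)$, I must produce $F-v\in\cF$. The idea is to use Remark~\ref{rmk:increasingsubstitution}, which says that when $F_2=K_0$, any substitution $F'$ of $v$ in $F_1$ by $F_2$ satisfies $F'\cong F_1-v$. Applying weak closure under substitutions to $F_1\df F$, the vertex $v\in V(F)$, and $F_2\df K_0\in\cF$, there exists \emph{some} substitution $F'$ of $v$ in $F$ by $K_0$ with $F'\in\cF$; but every such substitution is isomorphic to $F-v$, so $F-v\in\cF$. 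Iterating, $\cF$ is closed under all induced substructures.

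The one point that needs a moment of care is the case $\lvert F\rvert=1$: deleting the unique vertex of $M_1$ should give $K_0$, and indeed the substitution of that vertex by $K_0$ is $M_1-v=K_0$, which lies in $\cF$ by hypothesis. So the induction bottoms out correctly. The main (very mild) obstacle is simply making sure the degenerate substitution $F_1^{v\to K_0}$ behaves as claimed in Remark~\ref{rmk:increasingsubstitution}, i.e.\ that \emph{every} substitution — not just the conservative one — of $v$ by the empty structure is forced to be $F_1-v$: this is because condition (i) of Definition~\ref{def:subst} forces $V(F')=\im(f_1)$ (as $\im(f_2)=\varnothing$), and condition (iii), vacuously quantified over $u\in V(F_2)=\varnothing$, together with $f_1$ being the restriction of an embedding, forces $f_1$ to be an embedding of $F_1-v$ onto $F'$. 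Hence there is genuinely a unique such substitution up to isomorphism, and the argument goes through. No serious difficulty is expected.
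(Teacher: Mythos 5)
Your proposal is correct and follows essentially the same route as the paper: the paper's one-line proof is exactly the observation that a substitution of $v$ in $F$ by $K_0$ is isomorphic to $F-v$ (Remark~\ref{rmk:increasingsubstitution}), which combined with weak closure under substitutions and induction on deleted vertices gives closure under substructures, the converse direction being immediate from non-emptiness. One minor caveat: your closing justification of the degenerate case is circular (with $F_2=K_0$ condition (iii) of Definition~\ref{def:subst} is vacuous, so nothing in it supplies the premise that $f_1$ is ``the restriction of an embedding''), but since the fact you need is precisely what Remark~\ref{rmk:increasingsubstitution} asserts and what the paper's own proof cites without further argument, this does not affect the correctness of your proof.
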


\begin{proof}
  Follows since a substitution of $v$ in $F$ by $K_0$ is isomorphic to $F-v$.
\end{proof}

\begin{lemma}\label{lem:substitutionsubstructure}
  Let $F_1,F_2$ be finite $\cL$-structures, let $v\in V(F_1)$.

  If $F$ is a substitution of $v$ in $F_1$ by $F_2$ and $U\subseteq V(F)$, then there exist sets
  $U_1\subseteq V(F_1)$ and $U_2\subseteq V(F_2)$ with $v\in U_1$ such that $F\rest_U$ is a
  substitution of $v$ in $F_1\rest_{U_1}$ by $F_2\rest_{U_2}$.

  Conversely, if $U_1\subseteq V(F_1)$ and $U_2\subseteq V(F_2)$ are such that $v\in U_1$ and $F'$
  is a substitution of $v$ in $F_1\rest_{U_1}$ by $F_2\rest_{U_2}$, then there exist a substitution
  $F$ of $v$ in $F_1$ by $F_2$ and a set $U\subseteq V(F)$ such that $F\rest_U\cong F'$.
\end{lemma}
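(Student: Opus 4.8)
The plan is to prove both implications by exhibiting explicit witnessing functions for the substitutions involved; no limit theory enters, only the combinatorics of Definition~\ref{def:subst}.

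For the forward direction I would fix maps $f_1\colon V(F_1-v)\to V(F)$ and $f_2\colon V(F_2)\to V(F)$ witnessing that $F$ is a substitution of $v$ in $F_1$ by $F_2$, take $U\subseteq V(F)$, and set $U_2\df f_2^{-1}(U)$ and $U_1\df f_1^{-1}(U)\cup\{v\}$. The claim is that the corestrictions $g_1\df f_1\rest_{U_1\setminus\{v\}}$ and $g_2\df f_2\rest_{U_2}$, viewed as maps into $U=V(F\rest_U)$, witness that $F\rest_U$ is a substitution of $v$ in $F_1\rest_{U_1}$ by $F_2\rest_{U_2}$. Condition~(i) of Definition~\ref{def:subst} holds because $\im(g_1)\cup\im(g_2)=(U\cap\im(f_1))\cup(U\cap\im(f_2))=U$; condition~(ii) holds because restricting the embedding $f_2$ to $U_2$ gives an embedding of $F_2\rest_{U_2}$ into the induced substructure $F\rest_{f_2(U_2)}=(F\rest_U)\rest_{\im(g_2)}$; and for condition~(iii), given $u\in U_2$, the extension of $f_1$ sending $v\mapsto f_2(u)$ is by hypothesis an embedding of $F_1$ into $F$, so its restriction to $U_1$ is an embedding of $F_1\rest_{U_1}$ whose image lies in $U$ (as $f_2(u)\in U$), hence an embedding into $F\rest_U$, and this restriction is exactly the extension of $g_1$ required by~(iii). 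The only point needing separate attention is $U_2=\varnothing$, i.e.\ $F_2\rest_{U_2}=K_0$: then $U\subseteq\im(f_1)$ and $F\rest_U\cong F_1\rest_{U_1}-v$, so the conclusion follows from Lemma~\ref{lem:substitutionK0} together with Remark~\ref{rmk:increasingsubstitution}.

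For the converse I would pass to isomorphic copies so that $V(F_1)\cap V(F_2)=\varnothing$. If $U_2=\varnothing$ I take $F\df F_1^{v\to F_2}$ and $U\df U_1\setminus\{v\}$: since the conservative substitution satisfies $F\rest_{V(F_1)\setminus\{v\}}=F_1-v$, this gives $F\rest_U=F_1\rest_{U_1\setminus\{v\}}\cong F_1\rest_{U_1}-v\cong F'$ by Remark~\ref{rmk:increasingsubstitution}. So assume $U_2\neq\varnothing$. Using that the witnessing maps $g_1,g_2$ of $F'$ have disjoint images (the disjointness being forced by condition~(iii) for $F'$, which makes each extension of $g_1$ injective) whose union is $V(F')$, I would identify $V(F')$ with $(U_1\setminus\{v\})\disjcup U_2$ and $g_1,g_2$ with the inclusions; under this identification $F'\rest_{U_2}=F_2\rest_{U_2}$, $F'\rest_{U_1\setminus\{v\}}=F_1\rest_{U_1\setminus\{v\}}$, and for each $u\in U_2$ the bijection $U_1\to(U_1\setminus\{v\})\cup\{u\}$ that fixes $U_1\setminus\{v\}$ and sends $v\mapsto u$ is an isomorphism $F_1\rest_{U_1}\to F'\rest_{(U_1\setminus\{v\})\cup\{u\}}$.

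Then I would build $F$ on the vertex set $V(F)\df(V(F_1)\setminus\{v\})\disjcup V(F_2)$, with $U\df V(F')\subseteq V(F)$, by declaring, for each predicate symbol $P$ and each injective tuple $\beta$ over $V(F)$, that $\beta\in P^F$ according to four cases: if $\im(\beta)\subseteq V(F_2)$, whether $\beta\in P^{F_2}$; if $\im(\beta)\subseteq V(F_1)\setminus\{v\}$, whether $\beta\in P^{F_1}$; if $\beta$ meets $V(F_2)$ in a single vertex $u$, whether the tuple obtained from $\beta$ by replacing $u$ with $v$ lies in $P^{F_1}$; and if $\beta$ meets $V(F_2)$ in at least two vertices, whether $\im(\beta)\subseteq V(F')$ and $\beta\in P^{F'}$. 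The first two cases make the inclusions $V(F_2)\hookrightarrow V(F)$ and $(V(F_1)\setminus\{v\})\hookrightarrow V(F)$ embeddings of $F_2$ and of $F_1-v$ respectively; together with the third they make, for every $u\in V(F_2)$, the map fixing $V(F_1)\setminus\{v\}$ and sending $v\mapsto u$ an embedding of $F_1$ into $F$, so $F$ is a substitution of $v$ in $F_1$ by $F_2$. Restricting the four-case definition to tuples over $V(F')$ and invoking the three identifications above then yields $F\rest_U=F'$, which is stronger than the required $F\rest_U\cong F'$. I expect the bookkeeping in the converse to be the main obstacle: the ``mixed'' relations of $F$ (those involving several vertices of the substituted copy) must be chosen so that $F$ is at once a genuine substitution of $v$ in $F_1$ by $F_2$ and restricts to the prescribed $F'$ on $U$. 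What makes this possible, and is also what is being used when identifying $V(F')$ with a disjoint union, is the structural fact that in any substitution of a vertex by a structure with at least one vertex the images of the two witnessing maps are disjoint, so tuples meeting the substituted copy in two or more vertices are unconstrained by Definition~\ref{def:subst} and may be set freely to copy $F'$.
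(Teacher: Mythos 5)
Your argument is correct and is essentially the paper's own proof with the ``straightforward to check'' steps written out: the paper reduces both directions to standard substitutions and asserts the rest, while your choice of $U_1,U_2$ in the forward direction coincides with the paper's after that reduction, and your identification of $V(F')$ with $(U_1\setminus\{v\})\disjcup U_2$ together with the explicit construction of $F$ is exactly the standard substitution the paper claims to exist. One small repair in the converse: as stated, your four clauses are not mutually exclusive --- an injective tuple of arity at least $2$ lying entirely in $V(F_2)$ satisfies both the first and the fourth clause, and they disagree whenever such a tuple is in $P^{F_2}$ but uses a vertex of $V(F_2)\setminus U_2$ (the first clause puts it in $P^F$, the fourth does not, since its image is not contained in $V(F')$); likewise a unary tuple on a vertex of $V(F_2)$ falls under both the first and the third clause. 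The fourth clause should be restricted to genuinely mixed tuples (at least two vertices in $V(F_2)$ and at least one in $V(F_1)\setminus\{v\}$), with the first clause governing all tuples inside $V(F_2)$; this is what your subsequent verification actually uses, and accordingly your closing claim that tuples meeting the substituted copy in two or more vertices are unconstrained is true only for mixed tuples, since tuples wholly inside the copy are constrained by condition (ii) of Definition~\ref{def:subst}. Finally, for a unary $P$, condition (iii) applied to the single-vertex tuple forces $P$ to hold at every vertex of $F_2$ or at none, according to its value at $v$ in $F_1$, and the hypothesis (which only controls $F_2\rest_{U_2}$) does not supply this; in that degenerate situation no substitution of $v$ in $F_1$ by $F_2$ exists at all, so this is a gap in the statement itself that the paper's proof glosses over in the same way, not a defect specific to your argument.
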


\begin{proof}
  Without loss of generality, by possibly renaming vertices, we can consider only the case when $F$
  is a standard substitution. Then it is straightforward to check that for $U_1\df (U\cap
  V(F_1))\cup\{v\}$ and $U_2\df U\cap V(F_2)$, we have that $F\rest_U$ is a substitution of $v$ in
  $F_1\rest_{U_1}$ by $F_2\rest_{U_2}$.

  \medskip

  For the second assertion, by possibly renaming vertices, we may suppose without loss of generality
  that $F'$ is also a standard substitution. Then it is straightforward to see that setting $U\df
  (U_1\setminus\{v\})\cup U_2$, there exists a standard substitution $F$ of $v$ in $F_1$ by $F_2$
  such that $F\rest_U = F'$.
\end{proof}

\begin{lemma}\label{lem:primesubstructure}
  If $\cF$ is a (not necessarily finite) family of finite $\cL$-structures, $F\in S(\cF)$ and $P$ is
  a prime substructure of $F$, then $P$ is a substructure of some $F'\in\cF$.
\end{lemma}

\begin{proof}
  The proof is by induction in the minimum length $\ell$ of a sequence of substitutions needed to
  obtain $F$ from elements of $\cF$.

  If $\ell = 0$, then $F\cong F'$ for some $F'\in\cF$, so $P$ is a substructure of $F'$.

  If $\ell > 0$, then $F$ is a substitution of $v$ in $M_1$ by $M_2$ for some $M_1,M_2\in S(\cF)$
  and some $v\in V(M_1)$ such that if the minimum lengths of sequences of substitutions needed to
  obtain $M_1$ and $M_2$ from elements in $\cF$ are $\ell_1$ and $\ell_2$, respectively, then
  $\ell_1 + \ell_2 + 1 = \ell$, which in particular implies $\ell_1,\ell_2 < \ell$.

  Without loss of generality, suppose $F$ is a standard substitution of $v$ in $M_1$ by $M_2$. By
  Lemma~\ref{lem:substitutionsubstructure}, we know that there exist $U_1\subseteq V(M_1)$ and
  $U_2\subseteq V(M_2)$ with $v\in U_1$ such that $P$ is isomorphic to some substitution of $v$ in
  $M_1\rest_{U_1}$ by $M_2\rest_{U_2}$. Since $P$ is prime, we must have $\lvert
  P\rvert\leq\max\{\lvert U_1\rvert,\lvert U_2\rvert\}$, so by
  Remark~\ref{rmk:increasingsubstitution}, either $P\cong M_1\rest_{U_1}$, $P\cong M_2\rest_{U_2}$
  or $P\cong M_1\rest_{U_1\setminus\{v\}}$, so $P$ is a substructure of either $M_1$ or $M_2$ and by
  inductive hypothesis, it follows that $P$ is a substructure of some $F'\in\cF$.
\end{proof}

As one might expect, prime structures play a major role in characterizing classes that are strongly
closed under substitutions. This is made precise by the next two lemmas.

\begin{lemma}\label{lem:strongclosuresubst}
  Let $\cF$ be a family of finite $\cL$-structures (up to isomorphism) that is strongly closed under
  substitutions and closed under substructures and let $\cP$ be the set of structures in $\cF$ that
  are prime. Then $\cF = S(\cP)$.

  Conversely, if $\cP'$ is a family of prime finite $\cL$-structures that is closed under prime
  substructures and $\cF = S(\cP')$, then $\cP'=\cP$.
\end{lemma}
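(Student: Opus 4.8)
The plan is to prove the two inclusions $\cF\subseteq S(\cP)$ and $S(\cP)\subseteq\cF$ separately, and then deduce the converse statement.

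For $S(\cP)\subseteq\cF$: since $\cP\subseteq\cF$ and $\cF$ is strongly closed under substitutions, and $S(\cP)$ is by definition the smallest family containing $\cP$ that is strongly closed under substitutions, this inclusion is immediate.

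For $\cF\subseteq S(\cP)$: I would argue by strong induction on $\lvert F\rvert$ that every $F\in\cF$ lies in $S(\cP)$. If $F$ is prime, then $F\in\cP\subseteq S(\cP)$ and we are done; this covers in particular all $F$ with $\lvert F\rvert\le 2$ by Remark~\ref{rmk:increasingsubstitution}. Otherwise $F$ is a substitution of some $v$ in $F_1$ by $F_2$ with $\lvert F_1\rvert,\lvert F_2\rvert<\lvert F\rvert$. The key point is that $F_1$ and $F_2$ are themselves substructures of $F$ (up to isomorphism): indeed, taking $F$ standard, $F_1\cong F\rest_{(V(F_1)\setminus\{v\})\cup\{w\}}$ for any $w\in V(F_2)$ by condition~(iii) of Definition~\ref{def:subst}, and $F_2\cong F\rest_{V(F_2)}$ by condition~(ii). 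Since $\cF$ is closed under substructures, $F_1,F_2\in\cF$, and both are strictly smaller than $F$, so by the inductive hypothesis $F_1,F_2\in S(\cP)$. Since $S(\cP)$ is strongly closed under substitutions, $F\in S(\cP)$. This completes the induction and hence $\cF=S(\cP)$.

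For the converse: suppose $\cP'$ is a family of prime finite $\cL$-structures closed under prime substructures with $\cF=S(\cP')$, and let $\cP$ be the set of primes in $\cF$; we must show $\cP=\cP'$. The inclusion $\cP'\subseteq\cP$ holds because $\cP'\subseteq S(\cP')=\cF$ and every member of $\cP'$ is prime. For $\cP\subseteq\cP'$: let $P\in\cP$, so $P$ is prime and $P\in\cF=S(\cP')$. First observe that $\cP'$ is finite or at least that $S(\cP')$ is generated from finitely many structures up to any fixed size, so Lemma~\ref{lem:primesubstructure} applies (if $\cP'$ is infinite, apply it within the finitely many generators of size $\le\lvert P\rvert$, or argue directly): since $P$ is a prime substructure of itself and $P\in S(\cP')$, Lemma~\ref{lem:primesubstructure} gives that $P$ is a substructure of some $F_i\in\cP'$. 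But $F_i$ is prime, so its only prime substructures... —here I need that $P$, being a \emph{prime} substructure of the prime structure $F_i$, together with closure of $\cP'$ under prime substructures, yields $P\in\cP'$. Indeed $\cP'$ is closed under prime substructures, so any prime substructure of $F_i\in\cP'$ lies in $\cP'$; hence $P\in\cP'$, as desired.

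The main obstacle I anticipate is the careful bookkeeping around Lemma~\ref{lem:primesubstructure}, which as stated applies to finite tuples $F_1,\dots,F_t$: one must either note that $\cP'$ may be taken finite for the purposes of deriving any single $F\in S(\cP')$ (since only finitely many substitutions, hence finitely many generators, are used to build $F$), or restate the lemma for arbitrary families. Everything else — the substructure identifications in Definition~\ref{def:subst} and the induction on size — is routine once the conditions (ii) and (iii) of the definition of substitution are unwound.
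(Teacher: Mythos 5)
Your proof is correct and takes essentially the same route as the paper: for $\cF=S(\cP)$ the paper argues via a minimal counterexample rather than your induction on size (the same idea), and for the converse it uses exactly your combination of Lemma~\ref{lem:primesubstructure} with closure of $\cP'$ under prime substructures, while your direct argument for $\cP'\subseteq\cP$ is in fact slightly shorter than the paper's. One small remark: the parenthetical fallback ``apply it within the finitely many generators of size $\le\lvert P\rvert$'' would not work as stated (substitutions by $K_0$ can shrink structures, so small elements of $S(\cP')$ may require large generators), but your primary fix --- any single element of $S(\cP')$ is produced by finitely many substitutions, hence from finitely many generators, so Lemma~\ref{lem:primesubstructure} applies --- is correct, and is a point the paper's own proof leaves implicit.
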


\begin{proof}
  Let $\cF' = S(\cP)$. It is obvious that $\cF'\subseteq\cF$. Suppose toward a contradiction that
  $\cF\setminus\cF'\neq\varnothing$ and let $F$ be an $\cL$-structure in $\cF\setminus\cF'$ of
  minimum size.

  We claim that $F$ is prime. Indeed, if not, then $F$ is a substitution of some $v$ in some $F_1$
  by some $F_2$ with $\lvert F_1\rvert,\lvert F_2\rvert<\lvert F\rvert$. Since both $F_1$ and $F_2$
  are proper substructures of $F$ and both $\cF$ and $\cF'$ are strongly closed under substitutions
  and closed under substructures, this contradicts the minimality of $F$. Thus $F$ is prime, so
  $F\in\cP$, which contradicts $F\notin\cF'$.

  \medskip

  For the second assertion, if $\cF$ is empty, then both $\cP$ and $\cP'$ must also be empty. If
  $\cF$ is not empty, then each $P'\in\cP'$ is in $\cF=S(\cP)$, so Lemma~\ref{lem:primesubstructure}
  implies that $P'$ must be a substructure of some element in $\cP$, hence must also be in $\cP$ as
  it is closed under prime substructures (as $\cF$ is closed under substructures). Similarly, every
  element of $\cP$ must be an element of $\cP'$ as the latter is also closed under prime
  substructures.
\end{proof}

\begin{lemma}\label{lem:stronglyclosedprimecF}
  Let $T$ be a canonical theory in a finite relational language $\cL$ and let $\cF$ be the set of
  minimal $\cL$-structures that are \emph{not} models of $T$, that is, the set of all
  $M\in\cM[T_\cL]\setminus\cM[T]$ such that every proper substructure of $M$ is a model of $T$.

  Then $\cM[T]$ is strongly closed under substitution if and only if $\cF$ contains only prime
  structures.
\end{lemma}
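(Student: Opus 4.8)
The plan is to reduce both implications to the following elementary observation about Definition~\ref{def:subst}: if $F$ is a substitution of $v$ in $F_1$ by $F_2$, then $F_2$ is always isomorphic to a substructure of $F$ (by condition~(ii)), and so is $F_1$ \emph{provided} $F_2\neq K_0$ (by condition~(iii), choosing any $u\in V(F_2)$); in the remaining case $F_2=K_0$ one has instead $F\cong F_1-v$ by Remark~\ref{rmk:increasingsubstitution}. In particular, if $F$ is a substitution of $v$ in $F_1$ by $F_2$ with $\lvert F_1\rvert,\lvert F_2\rvert<\lvert F\rvert$, then Remark~\ref{rmk:increasingsubstitution} forces $F_2\neq K_0$, and hence both $F_1$ and $F_2$ are isomorphic to \emph{proper} substructures of $F$. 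I will also use freely that $\cM[T]$ is closed under substructures and under isomorphism, since $T$ is universal.

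For the forward implication, assume $\cM[T]$ is strongly closed under substitution, let $M\in\cF$, and suppose for contradiction that $M$ is not prime. Then $M$ is a substitution of some $v$ in some $F_1$ by some $F_2$ with $\lvert F_1\rvert,\lvert F_2\rvert<\lvert M\rvert$, so by the observation $F_1$ and $F_2$ are isomorphic to proper substructures of $M$; since $M\in\cF$, those substructures are models of $T$, hence $F_1,F_2\in\cM[T]$. Strong closure under substitution then gives $M\in\cM[T]$, contradicting $M\in\cF\subseteq\cM[T_\cL]\setminus\cM[T]$. Thus every member of $\cF$ is prime.

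For the converse, assume $\cF$ contains only prime structures and suppose for contradiction that $\cM[T]$ is not strongly closed: there are $F_1,F_2\in\cM[T]$, a vertex $v\in V(F_1)$, and a substitution $F$ of $v$ in $F_1$ by $F_2$ with $F\notin\cM[T]$. As $T$ is universal and $F$ is finite, among the substructures of $F$ failing to be models of $T$ — a nonempty finite family, as it contains $F$ — pick one $M$ of minimum size; minimality makes every proper substructure of $M$ a model of $T$, so $M\in\cF$ and hence $M$ is prime. Writing $M=F\rest_U$, Lemma~\ref{lem:substitutionsubstructure} produces $U_1\subseteq V(F_1)$ and $U_2\subseteq V(F_2)$ with $v\in U_1$ such that $M$ is a substitution of $v$ in $F_1\rest_{U_1}$ by $F_2\rest_{U_2}$. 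If $U_2=\varnothing$ then $M\cong F_1\rest_{U_1}-v$, a substructure of $F_1$; otherwise $F_2\rest_{U_2}\neq K_0$, so $F_1\rest_{U_1}$ and $F_2\rest_{U_2}$ both embed into $M$, primality of $M$ prevents both of $\lvert F_1\rest_{U_1}\rvert,\lvert F_2\rest_{U_2}\rvert$ from being smaller than $\lvert M\rvert$, and the one attaining $\lvert M\rvert$ is then isomorphic to $M$ (its embedding into $M$ is onto). Either way $M$ is isomorphic to a substructure of $F_1$ or of $F_2$, both models of $T$, so $M\in\cM[T]$, contradicting $M\in\cF$. Hence $\cM[T]$ is strongly closed under substitution.

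The one place that needs genuine care is the converse's reduction: locating the minimal non-model substructure $M$ of $F$, verifying $M\in\cF$ (immediate, as substructures of $M$ are substructures of $F$), and — via Lemma~\ref{lem:substitutionsubstructure} — re-expressing $M$ itself as a substitution assembled from substructures of $F_1$ and $F_2$ so that primality of $M$ can be invoked. The small case distinction $U_2=\varnothing$ versus $U_2\neq\varnothing$ is a nuisance but nothing more; beyond that, the whole argument is bookkeeping with the two defining embeddings of a substitution and the definition of primality, so I anticipate no real obstacle.
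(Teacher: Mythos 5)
Your proof is correct and takes essentially the same route as the paper: the forward direction is the paper's argument in contrapositive form, and the backward direction rests on the same decomposition of a non-model substructure of a substitution (your appeal to Lemma~\ref{lem:substitutionsubstructure}, which the paper carries out by hand) combined with primality of a minimal non-model, i.e.\ an element of $\cF$. The only cosmetic difference is that the paper contradicts primality directly by checking that both pieces of the decomposition are strictly smaller, whereas you use primality to force the minimal non-model to be isomorphic to one of the pieces and then contradict its being a non-model; these are interchangeable pieces of bookkeeping.
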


\begin{proof}
  For the forward implication, note that if $M\in\cM[T_\cL]\setminus\cM[T]$ is \emph{not} prime, then it is
  a substitution of $v$ in $M_1$ by $M_2$ for some $M_1,M_2\in\cM[T_\cL]$ with $\lvert
  M_1\rvert,\lvert M_2\rvert < \lvert M\rvert$ and $v\in V(M_1)$. Since $\cM[T]$ is strongly closed
  under substitutions, we must have either $M_1\notin\cM[T]$ or $M_2\notin\cM[T]$, hence
  $M\notin\cF$.

  \medskip

  For the backward implication, first note that $T$ is a reaxiomatization of
  $\Forb_{T_\cL}(\cF)$. Let us show that if $M$ is a substitution of $v$ in $M_1\in\cM[T]$ by
  $M_2\in\cM[T]$, then $M\in\cM[T]$. Without loss of generality, let us assume the substitution to
  be standard so there is the natural identification of $V(M)$ with $(V(M_1)\setminus\{v\})\cup
  V(M_2)$.

  Suppose toward a contradiction that $M\notin\cM[T]$, that is, there exists $F\in\cF$ and
  $U\subseteq V(M)$ such that $M\rest_U\cong F$. Since $M_2\in\cM[T]$, we must have $U\cap
  V(M_1)\neq\varnothing$ (otherwise, $F$ would be a substructure of $M_2$) and since $M_1\in\cM[T]$,
  we must have $\lvert U\cap V(M_2)\rvert\geq 2$ (otherwise, $F$ would be a substructure of
  $M_1$). But then $F$ is a substitution of $v$ in $M_1\rest_{(U\cap V(M_1))\cup\{v\}}$ by
  $M_2\rest_{U\cap V(M_2)}$ and since
  \begin{align*}
    \lvert (U\cap V(M_1))\cup\{v\}\rvert
    & \leq
    \lvert U\rvert - \lvert U\cap V(M_2)\rvert + 1
    <
    \lvert U\rvert
    =
    \lvert F\rvert,
    \\
    \lvert U\cap V(M_2)\rvert
    & \leq
    \lvert U\rvert - \lvert U\cap V(M_1)\rvert
    <
    \lvert U\rvert
    =
    \lvert F\rvert,
  \end{align*}
  this contradicts the fact that $F$ is prime (as it is an element of $\cF$). Thus $\cM[T]$ is
  strongly closed under substitutions.
\end{proof}

Next we cover the notion of an almost finite family of structures and some associated notions.

\begin{definition}\label{def:almostfinite}
  We say that a family $\cF$ of $\cL$-structures (up to isomorphism) is \emph{almost finite} if
  $\cF$ does not contain any infinite antichain in the substructure partial order. Equivalently,
  $\cF$ is almost finite if for every infinite $\cF'\subseteq\cF$, there exist $F_1,F_2\in\cF'$ such
  that $F_1$ is a proper substructure of $F_2$.

  Given a family $\cF$ of $\cL$-structures (up to isomorphism), let $\cP\subseteq\cF$ be the set of
  prime structures in $\cF$ and $\cP'\subseteq\cP$ be the set of monochromatic prime structures in
  $\cF$. We say that $\cF$ is
  \begin{enumerate}
  \item \emph{primally finite}, if $\cP$ is finite,
  \item \emph{primally almost finite}, if $\cP$ is almost finite,
  \item \emph{monochromatically primally finite}, if $\cP'$ is finite,
  \item \emph{monochromatically primally almost finite}, if $\cP'$ is almost finite.
  \end{enumerate}
\end{definition}

\begin{example}\label{ex:substitutiongeneration}
  An example of a family of graphs that is primally almost finite, strongly (equivalently,
  weakly) closed under substitutions and closed under substructures but is not primally finite is
  $S(\{P_n \mid n\in\NN\})$, where $P_n$ is the path on $n$ vertices.

  An example of a proper family of graphs that is not primally almost finite, but is strongly closed
  under substitutions and closed under substructures is $S(\{K_0\}\cup\{C_n \mid n\geq 5\})$, where
  $C_n$ is the cycle on $n$ vertices (it is straightforward to check that when $n\geq 5$, $C_n$ is
  prime).

  Another very important such example is the family $S(\{G_n \mid n\geq 6\}\cup\{K_0\})$, where
  $G_n$ is the graph obtained from $P_n$ by adding four vertices $a,b,c,d$ and adding the edges
  $\{a,b\},\{c,d\},\{a,2\},\{b,3\},\{c,n-2\},\{d,n-1\}$, assuming that $V(P_n)=[n]$ in the natural
  order of the path (see Figure~\ref{fig:pathwithfourcycles}). It is straightforward to check that
  each $G_n$ is prime and that they are pairwise incomparable in the induced subgraph partial
  order. Note also that the paths $P_n$ are elements of $S(\{G_n \mid n\geq 6\}\cup\{K_0\})$ as
  $P_n$ is a substructure of $G_n$.
\end{example}

\begin{figure}[htb]
  \begingroup
\def\basedist{1}
\def\ptsize{2pt}
\def\subfigwidth{\linewidth}
\newcommand{\draw}[1]{%
  \begin{subfigure}{\subfigwidth}
    \begin{center}
      \begin{tikzpicture}
        \foreach \i in {1,...,#1}{
          \pgfmathsetmacro{\x}{\i * \basedist}
          \coordinate (P\i) at (\x,0);
          \fill (P\i) circle (\ptsize);
          \node[above] at (P\i) {$\i$};
        }
        
        \foreach \i[%
          remember=\i as \pi (initially 1)%
        ] in {2,...,#1}{
          \draw (P\pi) -- (P\i);
        }

        \coordinate (A) at ($(P2) + (0,-\basedist)$);
        \coordinate (B) at ($(P3) + (0,-\basedist)$);
        \fill (A) circle (\ptsize);
        \fill (B) circle (\ptsize);
        \node[left] at (A) {$a$};
        \node[right] at (B) {$b$};

        \draw (P2) -- (A) -- (B) -- (P3);
        
        \pgfmathtruncatemacro{\nmone}{#1-1}
        \pgfmathtruncatemacro{\nmtwo}{#1-2}
        \coordinate (C) at ($(P\nmtwo) + (0,-\basedist)$);
        \coordinate (D) at ($(P\nmone) + (0,-\basedist)$);
        \fill (C) circle (\ptsize);
        \fill (D) circle (\ptsize);
        \node[left] at (C) {$c$};
        \node[right] at (D) {$d$};

        \draw (P\nmtwo) -- (C) -- (D) -- (P\nmone);
      \end{tikzpicture}
      \caption*{$G_{#1}$}
    \end{center}
  \end{subfigure}
}
\begin{center}
  \draw{6}
  \vskip 0.5cm
  \draw{7}
  \vskip 0.5cm
  \draw{8}
  \caption{Prime graphs $G_n$ of Example~\ref{ex:substitutiongeneration} that form a family that is
    not almost finite.}
  \label{fig:pathwithfourcycles}
\end{center}

\endgroup

\end{figure}

\begin{remark}\label{rmk:primallyalmostfinite}
  A family $\cF$ of the form $\cF = S(\cP')$ for some almost finite set of prime structures $\cP'$
  is not necessarily primally almost finite; this is because the set $\cP$ of structures in $\cF$
  that are prime is equal to the set of prime substructures of elements of $\cP'$, which may be a
  proper superset of $\cP'$ (see Lemma~\ref{lem:primesubstructure}).

  As an example, consider the graphs $G_n$ of Example~\ref{ex:substitutiongeneration} and for each
  $n\geq 6$, define the (prime) graph $H_n$ as the graph obtained from the disjoint union of
  $G_6,\ldots,G_n$ by connecting all first vertices of the $P_k$ inside of $G_k$ in a clique (see
  Figure~\ref{fig:gluedpathswithfourcycles}). Obviously, each $H_n$ is an induced subgraph of
  $H_{n+1}$, so $\cP'\df\{K_0\}\cup\{H_n\mid n\geq 6\}$ is almost finite, but since $\{G_n\mid n\geq
  6\}\subseteq S(\cP')$, it follows that $S(\cP')$ is not primally almost finite.
\end{remark}

\begin{figure}[htb]
  \input{gluedpathswithfourcycles}
\end{figure}

The next lemma uses the fact that the substructure partial order on finite structures is
well-founded to provide a useful equivalent formulation of almost finiteness.

\begin{lemma}\label{lem:almostfinite}
  The following are equivalent for a family $\cF$ of finite $\cL$-structures (up to isomorphism).
  \begin{enumerate}
  \item The family $\cF$ is almost finite.%
    \label{lem:almostfinite:almostfinite}
  \item For every sequence $(F_n)_{n\in\NN}$ in $\cF$, there exist $n,m\in\NN$ such that
    $n < m$ and $F_n$ is a substructure of $F_m$.%
    \label{lem:almostfinite:noincreases}
  \end{enumerate}
\end{lemma}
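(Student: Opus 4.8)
The plan is to recognize this as the standard fact that, for a well-founded partial order, having no infinite antichain is equivalent to being a well-quasi-order. The relevant order here is the substructure order on isomorphism classes of finite $\cL$-structures, $F\leq G$ iff $F$ is isomorphic to an induced substructure of $G$; this is a partial order (antisymmetry holds because $F\leq G\leq F$ forces $\lvert F\rvert=\lvert G\rvert$ and hence $F\cong G$), it is well-founded because a \emph{proper} substructure has strictly smaller size, and a sequence $(F_n)_{n\in\NN}$ has the property in~\ref{lem:almostfinite:noincreases} precisely when it is not a \emph{bad sequence} for this order. So the lemma is just: $\cF$ has no infinite antichain iff $\cF$ contains no bad sequence.

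The implication~\ref{lem:almostfinite:noincreases}$\Rightarrow$\ref{lem:almostfinite:almostfinite} is the easy direction, handled by contraposition: an infinite antichain in $\cF$ can be listed as a sequence $(F_n)_{n\in\NN}$ (its members being pairwise non-isomorphic, hence distinct up to isomorphism), and then for no $n<m$ is $F_n$ a substructure of $F_m$, since $F_n$ and $F_m$ are incomparable; this contradicts~\ref{lem:almostfinite:noincreases}.

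For~\ref{lem:almostfinite:almostfinite}$\Rightarrow$\ref{lem:almostfinite:noincreases} I would again argue by contraposition. Suppose $(F_n)_{n\in\NN}$ is a bad sequence in $\cF$. Apply the infinite Ramsey theorem to the $2$-colouring of $\binom{\NN}{2}$ that colours a pair $\{n,m\}$ with $n<m$ \emph{descending} if $F_m$ is a substructure of $F_n$, and \emph{incomparable} otherwise; the a priori third option ``$F_n$ is a substructure of $F_m$'' (which includes $F_n\cong F_m$) cannot occur for $n<m$ precisely because $(F_n)_{n\in\NN}$ is bad. An infinite homogeneous set $S=\{n_0<n_1<\cdots\}$ now forces one of two contradictions. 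If $S$ is coloured \emph{descending}, then $F_{n_0}\supseteq F_{n_1}\supseteq\cdots$, with each inclusion proper because $F_{n_i}\cong F_{n_j}$ for $i<j$ would again contradict badness; this yields an infinite strictly decreasing sequence of sizes $\lvert F_{n_0}\rvert>\lvert F_{n_1}\rvert>\cdots$ in $\NN$, which is impossible. If $S$ is coloured \emph{incomparable}, then $\{F_{n_i}\mid i\in\NN\}$ is an infinite antichain in $\cF$, contradicting~\ref{lem:almostfinite:almostfinite}.

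The only step needing any care is the bookkeeping around the ``up to isomorphism'' convention — specifically the observation that a bad sequence never repeats an isomorphism type, which is exactly what upgrades the two Ramsey cases into genuine contradictions (an infinite descending chain, resp.\ an infinite antichain). The single external input is the infinite Ramsey theorem; one could instead run a minimal-bad-sequence argument using well-foundedness directly, but the Ramsey route is cleaner here. I do not expect any real obstacle.
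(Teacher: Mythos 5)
Your proof is correct, but your harder direction is genuinely different from the paper's. For the easy implication (an infinite antichain, listed as a sequence, violates the sequential condition) you and the paper do essentially the same thing. For the implication from almost finiteness to the sequential condition, you take the standard well-quasi-order route: given a bad sequence, apply the infinite Ramsey theorem to the two admissible pair-colours (``$F_m$ is a substructure of $F_n$'' versus ``incomparable''), and rule out the homogeneous descending case by well-foundedness (strictly decreasing sizes, with properness secured by your correct observation that a bad sequence cannot repeat an isomorphism type), leaving an infinite antichain that contradicts almost finiteness. The paper instead argues directly and without Ramsey: it considers the set $I$ of indices $n$ such that no $F_m$ is a proper substructure of $F_n$, shows $I$ is finite (otherwise $\{F_n \mid n \in I\}$ would be an infinite antichain), and then, starting from any index $m_0 > \max(I)$, repeatedly passes to a proper substructure appearing in the sequence until the process terminates at some index $m_t \in I$; since $m_t < m_0$ and $F_{m_t}$ is a proper substructure of $F_{m_0}$, this exhibits the required pair explicitly. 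Your approach buys conceptual clarity by placing the lemma in the familiar WQO framework at the cost of invoking Ramsey's theorem; the paper's argument is more elementary and self-contained, producing the witnessing pair by a finite descent rather than by a dichotomy. Both hinge on the same well-foundedness fact that proper substructures have strictly smaller size, and both correctly handle the ``up to isomorphism'' bookkeeping.
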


\begin{proof}
  We start with the
  implication~\ref{lem:almostfinite:almostfinite}$\implies$\ref{lem:almostfinite:noincreases}.

  If there exist $n,m\in\NN$ with $n < m$ and $F_n\cong F_m$, then $F_n$ is a substructure of
  $F_m$. Suppose then that the $F_n$ are pairwise non-isomorphic. Let $I$ be the set of all
  $n\in\NN$ such that for every $m\in\NN$, $F_m$ is not a proper substructure of $F_n$.

  We claim that $I$ is finite. Indeed, otherwise, $\cF'\df\{F_n\mid n\in I\}$ would be an infinite
  subfamily of $\cF$ such that for all $F,F'\in\cF'$, $F$ is not a proper substructure of $F'$.

  We now construct inductively a sequence $m_0,\ldots,m_t$ as follows. Let $m_0 > \max(I)$. Given
  $m_i$, if $m_i\notin I$, then there exists $m_{i+1}$ such that $F_{m_{i+1}}$ is a proper
  substructure of $F_{m_i}$; otherwise, we set $t\df i$ and stop the construction.

  Since $\lvert F_{i+1}\rvert < \lvert F_i\rvert$ and all structures are finite, the construction
  above must stop and by a simple induction, we have that $F_{m_t}$ is a proper substructure of
  $F_{m_0}$. Finally, since $m_t\in I$ and $m_0 > \max(I)$, we get $m_t < m_0$.

  \medskip

  Let us now show the
  implication~\ref{lem:almostfinite:noincreases}$\implies$\ref{lem:almostfinite:almostfinite}. Let
  $\cF'$ be an infinite subfamily of $\cF$ and enumerate it as $(F_n)_{n\in\NN}$ without
  repetitions. Then there exists $n,m\in\NN$ such that $n < m$ and $F_n$ is a substructure of
  $F_m$. Since $F_n\neq F_m$, it follows that $F_n$ is a proper substructure of $F_m$.
\end{proof}

We end this section with the following proposition that relates the notions of primally finite and
primally almost finite for a family $\cF$ strongly closed under substitutions and closed under
substructures with the number of subclasses of $\cF$ that are strongly closed under substitutions
and closed under substructures.

\begin{proposition}\label{prop:primcount}
  Let $\cF$ be a family of finite $\cL$-structures that is strongly closed under substitutions and
  closed under substructures, let $\cS$ be the set of subfamilies $\cF'$ of $\cF$ that are strongly
  closed under substitutions and closed under substructures. Then the following hold.
  \begin{enumerate}
  \item $\cF$ is primally finite if and only if $\cS$ is finite.%
    \label{prop:primcount:finite}
  \item $\cF$ is primally almost finite if and only if $\cS$ is countable.%
    \label{prop:primcount:almostfinite}
  \end{enumerate}
\end{proposition}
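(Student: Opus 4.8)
The plan is to leverage Lemma~\ref{lem:strongclosuresubst}, which tells us that a family $\cF'\in\cS$ is completely determined by its set of prime structures $\cP'=\cP\cap\cF'$, and that $\cP'$ must be closed under prime substructures (since $\cF'$ is closed under substructures), while conversely every such $\cP'$ yields an element $S(\cP')\in\cS$. Thus $\cS$ is in bijection with the set $\cQ$ of subsets of $\cP$ that are downward closed in the \emph{prime-substructure} partial order on $\cP$ (the restriction of the substructure order to $\cP$). So part~\ref{prop:primcount:finite} becomes: $\cP$ finite $\iff$ the lattice of down-sets of $\cP$ is finite; and part~\ref{prop:primcount:almostfinite} becomes: $\cP$ is almost finite (no infinite antichain) $\iff$ the lattice of down-sets of $\cP$ is countable. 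Both are then purely order-theoretic facts about the poset $\cP$, which is well-founded (as a sub-poset of the well-founded substructure order on finite structures).

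For~\ref{prop:primcount:finite}: if $\cP$ is finite there are at most $2^{|\cP|}$ down-sets, so $\cS$ is finite. Conversely, if $\cP$ is infinite, then for each $P\in\cP$ the set $\{Q\in\cP\mid Q$ is a substructure of $P\}$ is a distinct down-set (distinct because $P$ belongs to its own down-set but the down-sets of two non-isomorphic primes differ — one contains a maximal-size element the other omits, using finiteness of each structure), giving infinitely many elements of $\cS$; more cleanly, the principal down-sets $\downarrow\! P$ for $P\in\cP$ are pairwise distinct since $P\in\,\downarrow\! P'$ with $|P|=|P'|$ forces $P\cong P'$.

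For~\ref{prop:primcount:almostfinite}: If $\cP$ is almost finite, I would show the down-sets of $\cP$ are at most countable. Each down-set $D$ is determined by its antichain of maximal elements (here ``maximal'' is w.r.t.\ the prime-substructure order restricted to $D$): since $\cP$ is well-founded, $D=\bigcup_{P\in\max(D)}\downarrow\! P$, and by almost finiteness $\max(D)$ is a finite antichain. Hence down-sets inject into the countable set of finite antichains of $\cP$ (finite subsets of the countable set $\cP$), so $\cS$ is countable. Conversely, if $\cP$ is \emph{not} almost finite, pick an infinite antichain $\{P_n\mid n\in\NN\}\subseteq\cP$; then for every subset $A\subseteq\NN$ the down-set generated by $\{P_n\mid n\in A\}$ gives distinct elements of $\cS$ (distinct because $P_n$ lies in the down-set iff $n\in A$, using that the $P_n$ form an antichain so $P_n\le P_m$ only if $n=m$), producing $2^{\aleph_0}$ elements of $\cS$, hence uncountably many. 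One subtlety to handle carefully: $\cS$ always contains $S(\cP)=\cF$ itself and is nonempty, and when $\cF$ is empty the statement is vacuous since then $\cP=\varnothing$ is finite and $\cS=\{\varnothing\}$ is finite/countable — I would note this degenerate case up front.

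The main obstacle I expect is the bookkeeping around the bijection $\cS\leftrightarrow\cQ$: I must verify that $S(\cdot)$ and $\cF'\mapsto(\cP\cap\cF')$ are mutually inverse when restricted to down-closed sets of primes, which is exactly the content of Lemma~\ref{lem:strongclosuresubst} (the first assertion gives $\cF'=S(\cP\cap\cF')$, the second that distinct down-closed prime families give distinct closures), so this should go through cleanly. The only genuinely nontrivial order-theoretic input is the ``almost finite $\Rightarrow$ every down-set has finitely many maximal elements'' step, but that is immediate: the maximal elements of any subset of $\cP$ form an antichain, and almost finiteness forbids infinite antichains. Everything else is routine counting.
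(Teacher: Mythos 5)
Your reduction of $\cS$ to the down-sets of the poset $\cP$ of primes (via Lemma~\ref{lem:strongclosuresubst}), your treatment of item~\ref{prop:primcount:finite}, and your ``not almost finite $\Rightarrow$ uncountable'' direction all match the paper's argument. But the direction ``$\cP$ almost finite $\Rightarrow$ countably many down-sets'' has a genuine gap: you claim that, by well-foundedness, every down-set $D$ satisfies $D=\bigcup_{P\in\max(D)}\,\downarrow\! P$ with $\max(D)$ a finite antichain. Well-foundedness of the substructure order (no infinite strictly \emph{descending} chains) gives minimal elements of nonempty subsets, not maximal ones; the order has infinite strictly \emph{ascending} chains, so a down-set need not have enough maximal elements. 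Concretely, take $\cP=\{P_n\mid n\in\NN\}$ the paths, a chain and hence almost finite: the down-set $D=\cP$ has $\max(D)=\varnothing$, so the claimed decomposition fails and your map $D\mapsto\max(D)$ is not even injective ($\varnothing$ and $\cP$ both go to $\varnothing$). This is exactly the step you flagged as ``the only genuinely nontrivial order-theoretic input,'' and the justification you give (finiteness of $\max(D)$) does not address the decomposition, which is where the argument breaks.

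The fix is to encode a down-set by the minimal elements of its \emph{complement}: the complement is an up-set, almost finiteness plus well-foundedness (i.e.\ $\cP$ is a well-quasi-order) shows every up-set is the union of the principal up-sets over its finitely many minimal elements, so down-sets inject into the finite subsets of the countable set $\cP$, giving countability. The paper avoids this entirely by proving the contrapositive: assuming $\cS'$ (equivalently $\cS$) is uncountable, it runs an inductive pigeonhole construction producing primes $P_0,P_1,\ldots$ such that $P_n$ is never a substructure of $P_m$ for $n<m$, and then invokes Lemma~\ref{lem:almostfinite} to conclude $\cP$ is not almost finite. Either route is fine; as written, though, your argument for this implication does not go through.
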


\begin{proof}
  Let $\cP$ be the set of prime structures of $\cF$ and let $\cS'$ be the set of
  subfamilies $\cP'\subseteq\cP$ that are closed under prime substructures. Then
  Lemma~\ref{lem:strongclosuresubst} gives a natural bijection between $\cS$ and $\cS'$.

  Since $\cP$ is finite if and only if $\cS'$ is finite, item~\ref{prop:primcount:finite}
  follows.

  The same bijection between $\cS$ and $\cS'$ implies that for
  item~\ref{prop:primcount:almostfinite}, it is sufficient to show that $\cP$ almost finite is
  equivalent to $\cS'$ countable.

  Suppose first that $\cP$ is not almost finite, that is, $\cP$ contains an infinite (countable)
  antichain $\cA\subseteq\cP$. Then for each $\cA'\subseteq\cA$, let $\cC_{\cA'}\subseteq\cP$ be the
  set of elements of $\cP$ that are substructures of some element of $\cA'$. Since $\cA$ is an
  antichain, it follows that for $\cA',\cA''\subseteq\cA$ distinct, we have
  $\cC_{\cA'}\neq\cC_{\cA''}$, hence $\cS'$ has cardinality of the continuum.

  \medskip

  Suppose now that $\cS'$ is uncountable. Let us define by induction in $n\in\NN$ two sequences
  $(P_n)_{n\in\NN}$ and $(\cS'_n)_{n\in\NN}$ with the following properties.
  \begin{enumerate}[label={\arabic*.}, ref={(\arabic*)}]
  \item $\cS'_n\subseteq\cS'$ is uncountable.\label{prop:primcount:cS'n}
  \item There exists $\cC\in\cS'_n$ such that $P_n\in\cC$.\label{prop:primcount:Pnin}
  \item For every $\cC'\in\cS'_{n+1}$, we have $P_n\notin\cC'$.\label{prop:primcount:Pnout}
  \end{enumerate}

  We start by setting $\cS'_0\df\cS'$. Given $\cS'_n$, let $\cP_n\df\bigcup_{\cC\in\cS'_n}\cC$ and
  for each $P\in\cP_n$, we let $\cS'_n(P)\df\{\cC\in\cS'_n \mid P\notin\cC\}$. Since $\cS'_n$ is
  uncountable, $\cP_n$ is countable and $\cS'_n\subseteq\{\cP_n\}\cup\bigcup_{P\in\cP_n}\cS'_n(P)$,
  by the pigeonhole principle, there exists $P_n\in\cP_n$ such that $\cS'_n(P_n)$ is
  uncountable. Set $\cS'_{n+1}\df\cS'_n(P_n)$ so that by definition, we have $P_n\in\cC$ for some
  $\cC\in\cS'_n$ but $P_n\notin\cC'$ for every $\cC'\in\cS'_{n+1}$. This concludes the construction.

  Since each $\cC\in\cS'$ is closed under prime substructures, items~\ref{prop:primcount:cS'n},
  \ref{prop:primcount:Pnin} and~\ref{prop:primcount:Pnout} together imply that for every $n < m$,
  $P_n$ is not a substructure of $P_m$, so by Lemma~\ref{lem:almostfinite}, we get that $\cP$ is not
  almost finite.
\end{proof}

\section{Persistence in graphons}
\label{sec:pers:graph}

In this section we study the notion of (strongly) persistent families of graphs
(Definition~\ref{def:persistence} below). The main objective of this section is to characterize
persistence for theories of graphs in terms of closure under substitution and under induced
subgraphs. We also remind the reader that in this section we drop the qualifiers ``weakly'' and
``strongly'' from ``closed under substitutions'' as they are superfluous for graphs (see
Remark~\ref{rmk:substarity2}).

\begin{definition}\label{def:persistence}
  Let $W$ be a graphon. The set of \emph{positive graphs in $W$} is the set $Q(W)\df\cM[\Th(W)]$ of
  all finite graphs $G$ (up to isomorphism) such that $\phi_W(G) > 0$. The set of \emph{persistently
    positive graphs in $W$} is the set $P(W)\df\bigcap_{W'} Q(W')$, where the intersection is over
  all subgraphons $W'$ of $W$. Clearly, $P(W)$ and $Q(W)$ depend only on the limit
  $\phi_W\in\HomT{\TGraph}$. Thus, for $\phi\in\HomT{\TGraph}$, we let $P(\phi)\df P(\phi_W)$ and
  $Q(\phi)\df Q(\phi_W)$ for any graphon $W$ such that $\phi = \phi_W$.

  A graphon $W$ (or the limit $\phi_W$ it represents) is called \emph{weakly random} if
  $P(W)=Q(W)$. Equivalently, $W$ is weakly random if $Q(W)=Q(W')$ for every subgraphon $W'$ of $W$.

  A family of graphs (up to isomorphism) $\cF$ is called \emph{persistent} if there exists a graphon
  $W$ such that $P(W)=\cF$. The family $\cF$ is called \emph{strongly persistent} if there exists a
  weakly random $W$ such that $P(W) = \cF$ (which must also equal $Q(W)$); in this case, we also say
  that $W$ (or rather $\phi_W$) is a \emph{universal weakly random limit of $\cF$}. If $T$ is a
  universal theory of graphs, then we say that $W$ is a universal weakly random limit of $T$ if it
  is a universal weakly random limit of $\cM[T]$.
\end{definition}

Obviously, both $Q(W)$ and $P(W)$ are closed under induced subgraphs, $Q(W')\subseteq Q(W)$ whenever
$W'$ is a subgraphon of $W$, $P(W)\subseteq Q(W)$ and every strongly persistent family is
persistent.

\begin{example}
  The simplest weakly random graphons are of course the two trivial graphons, that is, the clique
  $W\equiv 1$ and the empty graphon $W\equiv 0$.

  The next in line are the non-trivial quasirandom graphons $W_p\equiv p$ for some $p\in(0,1)$: this
  is because just as the trivial graphons, the quasirandom graphons $W_p$ also have the property
  that the only subgraphon of $W_p$ is $W_p$, up to zero-measure change. In fact, it is an immediate
  consequence of the classic theory of graph quasirandomness~\cite{CGW89} that this property
  characterizes the quasirandom graphons (see also~\cite{SS97,SS03} for related graph
  quasirandomness properties); for general theories in finite relational languages, this property is
  called $\UInduce[1]$ in~\cite{CR20b} and is a strengthening of the more well-known
  discrepancy and clique-discrepancy properties from hypergraph quasirandomness
  (see~\cite{CG90,Chu90,KRS02,LM15b,Tow17,ACHP18}).

  Other examples of weakly random graphons are the recursive blow-up of $C_4$ (see
  Proposition~\ref{prop:recC4}) and the graphon of agreements of the quasirandom permuton (see
  Proposition~\ref{prop:agreementsofQRpermuton}).
\end{example}

The following lemma is a simple but very powerful observation about persistent families.

\begin{lemma}\label{lem:PWsubgraphon}
  Let $W$ be a graphon over a space $\Omega=(X,\cA,\mu)$. Then $P(W) = \bigcap_A Q(W\rest_A)$, where
  the intersection is over all positive measure sets $A\subseteq X$.
\end{lemma}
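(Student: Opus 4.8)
The plan is to prove the two inclusions separately, the forward one being essentially immediate from the definitions and the reverse one requiring a short truncation argument. For $P(W)\subseteq\bigcap_A Q(W\rest_A)$: given a positive measure set $A\subseteq X$, taking $f=\One_A$ in the characterization of subgraphons from~\cite[Lemma~3.3]{CM22} shows that $\phi_{W_A}$ is a subgraphon of $W$ of measure $\mu(A)>0$; hence $Q(W\rest_A)=Q(W_A)$ occurs among the $Q(W')$ whose intersection defines $P(W)$, and the inclusion follows. It remains to show $\bigcap_A Q(W\rest_A)\subseteq P(W)$.

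So let $G$ be a finite graph with $\tind(G,W_A)>0$ (equivalently $\phi_{W_A}(G)>0$, since $\lvert G\rvert!/\lvert\Aut(G)\rvert>0$) for every positive measure $A\subseteq X$, and let $W'$ be an arbitrary subgraphon of $W$. By~\cite[Lemma~3.3]{CM22} we may write $W'=\phi_{W_f}$ where $f\colon X\to[0,1]$ is measurable with $\int_X f\ d\mu=c>0$, $W_f=W$ on $X$, and $\mu_f=\frac1c f\,\mu$. Since $\int_X f\ d\mu>0$, the set $\{f>0\}$ has positive measure, so writing it as $\bigcup_n\{f\geq 1/n\}$ we find $n_0$ with $\mu(A)>0$ for $A\df\{x\mid f(x)\geq 1/n_0\}$; then $f\geq\frac{1}{n_0}\One_A$ pointwise on $X$.

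Now unravel the density integrals against the product measures on $X^{V(G)}$. Writing $k=\lvert V(G)\rvert$ and $\Pi(x)\df\prod_{\{v,w\}\in E(G)} W(x_v,x_w)\prod_{\{v,w\}\in E(\overline{G})}(1-W(x_v,x_w))\geq 0$, we have $\tind(G,W_f)=c^{-k}\int_{X^{V(G)}}\bigl(\prod_v f(x_v)\bigr)\Pi(x)\ d\mu^{\otimes V(G)}(x)$ and likewise $\tind(G,W_A)=\mu(A)^{-k}\int_{X^{V(G)}}\bigl(\prod_v\One_A(x_v)\bigr)\Pi(x)\ d\mu^{\otimes V(G)}(x)$. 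Since $\prod_v f(x_v)\geq n_0^{-k}\prod_v\One_A(x_v)$ and $\Pi\geq 0$, monotonicity of the integral yields
\[
  \tind(G,W_f)\ \geq\ \frac{\mu(A)^k}{n_0^k c^k}\,\tind(G,W_A)\ >\ 0,
\]
so $\phi_{W'}(G)=\phi_{W_f}(G)>0$, i.e.\ $G\in Q(W')$. As $W'$ was an arbitrary subgraphon of $W$, we conclude $G\in P(W)$, completing the reverse inclusion. The only mildly delicate step is the passage from a general density-weighting function $f$ to an honest indicator $\One_A$; once one observes that $f$ dominates a constant multiple of $\One_{\{f\geq 1/n_0\}}$ for a suitable $n_0$, nonnegativity of the homomorphism-density integrand $\Pi$ does the rest, so no substantial obstacle arises.
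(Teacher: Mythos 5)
Your proof is correct and follows essentially the same route as the paper: the easy inclusion comes from each $W\rest_A$ being a subgraphon, and the reverse inclusion reduces a general subgraphon $W\rest_f$ to an indicator one via the support of $f$ (the paper works with $A=\{f>0\}$ and the observation $Q(W\rest_f)=Q(W\rest_A)$, stated there as ``easy to see''). Your truncation to $A=\{f\geq 1/n_0\}$ and the explicit bound $\tind(G,W_f)\geq \mu(A)^k n_0^{-k}c^{-k}\,\tind(G,W_A)$ simply makes that step quantitative, which is a fine way to justify it.
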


\begin{proof}
  Since each $W\rest_A$ is a subgraphon of $W$, it is sufficient to show that if $H\in\bigcap_A
  Q(W\rest_A)$, then $H\in P(W)$. We prove this by the contra-positive: if there exists a subgraphon
  $W'$ of $W$ such that $H\notin Q(W')$, then $\phi_{W'} = \phi_{W\rest_f}$ for some measurable
  function $f\colon X\to [0,1]$. Let $A\df\{x\in X \mid f(x) > 0\}$. It is easy to see that
  $Q(W\rest_f)=Q(W\rest_A)$, thus $H\notin\bigcap_A Q(W\rest_A)$.
\end{proof}

The objective of this section is to prove the following theorem that characterizes (strongly)
persistent families in terms of substitutions.

\begin{theorem}\label{thm:graphpersistence}
  The following are equivalent for a family $\cF$ of finite graphs (up to isomorphism) containing at
  least one graph of size at least $2$.
  \begin{enumerate}
  \item The family $\cF$ is strongly persistent.%
    \label{thm:graphpersistence:stronglypersistent}
  \item The family $\cF$ is persistent.%
    \label{thm:graphpersistence:persistent}
  \item The family $\cF$ is closed under substitutions and induced subgraphs.%
    \label{thm:graphpersistence:closed}
  \end{enumerate}
\end{theorem}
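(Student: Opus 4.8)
The plan is to prove the cycle of implications $\ref{thm:graphpersistence:stronglypersistent}\implies\ref{thm:graphpersistence:persistent}\implies\ref{thm:graphpersistence:closed}\implies\ref{thm:graphpersistence:stronglypersistent}$. The first implication is immediate from the definitions since every weakly random $W$ witnessing strong persistence of $\cF$ also witnesses that $\cF$ is persistent, so the real content is in the other two. The implication $\ref{thm:graphpersistence:persistent}\implies\ref{thm:graphpersistence:closed}$ should be the ``soft'' direction: closure under induced subgraphs is built into the definition of $P(W)$, so the work is to show that if $\cF = P(W)$ then $\cF$ is closed under substitutions. Given $F_1, F_2\in P(W)$ and $v\in V(F_1)$, I want to show some substitution $F$ of $v$ in $F_1$ by $F_2$ lies in $P(W)$; by Lemma~\ref{lem:PWsubgraphon} it suffices to show $F\in Q(W\rest_A)$ for every positive-measure $A$. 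Fix such an $A$. Since $F_1\in P(W)\subseteq Q(W\rest_A)$, there is a positive-density off-diagonal copy of $F_1$ in $W_A$; the idea is to localize: pick a point $x_0$ playing the role of $v$ in many such copies, restrict to a small ball $B$ around $x_0$ (intersected with $A$), note that $F_2\in P(W)\subseteq Q(W\rest_B)$ gives a copy of $F_2$ inside $B$, and observe that ``zooming in'' near $x_0$ means the $F_2$-copy plugs into the $F_1$-copy at $v$, producing a copy of some substitution $F$ with positive density in $W_A$. Making the localization precise — choosing $x_0$ correctly and controlling which substitution one obtains — is the technical heart of this direction.

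The implication $\ref{thm:graphpersistence:closed}\implies\ref{thm:graphpersistence:stronglypersistent}$ is where I expect the main obstacle, and it is a \emph{construction} problem: given $\cF$ closed under substitutions and induced subgraphs (and containing a graph of size $\geq 2$), build a weakly random graphon $W$ with $P(W) = Q(W) = \cF$. The natural candidate is a ``universal recursive-blow-up''-type object. One approach: enumerate the graphs of $\cF$, and build $W$ as an iterated random blow-up construction on $[0,1]$ that, at finer and finer scales, reproduces copies of every member of $\cF$ while — crucially — never creating any graph outside $\cF$ off the diagonal. Closure under substitutions is exactly what guarantees the ``never creates anything new'' property: when you blow up a vertex of an $\cF$-graph by another $\cF$-graph, closure under substitutions keeps you inside $\cF$, and closure under induced subgraphs handles the sub-patterns that appear. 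The weak randomness, i.e.\ $Q(W') = \cF$ for every subgraphon $W'$, should follow because the self-similar/recursive nature of the construction means every positive-measure piece again ``sees'' the whole construction at smaller scale, hence realizes all of $\cF$ with positive density. I would need to handle the base case (there is a graph of size $\geq 2$ in $\cF$, which by Remark~\ref{rmk:substarity2} and closure gives enough to seed the recursion — e.g.\ both $K_2$ and $\overline{K_2}$ or at least one of them, and iterating substitutions from a single such graph) and verify measurability and convergence of the iterated construction.

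The hardest part is almost certainly the explicit construction in $\ref{thm:graphpersistence:closed}\implies\ref{thm:graphpersistence:stronglypersistent}$: one must simultaneously ensure (a) \emph{everything in $\cF$ appears} with positive density in every subgraphon (a density lower bound, requiring the construction to be ``rich'' and self-similar at all scales), and (b) \emph{nothing outside $\cF$ appears} off-diagonal (a rigidity/upper-bound statement, requiring that the only operations used are substitutions and restrictions, both of which preserve membership in $\cF$). Balancing richness against rigidity is the crux. For (b) I would lean on a lemma, provable by induction on the recursion depth, that any finite induced subgraph of the partially-built object is obtained from graphs in $\cF$ by a sequence of substitutions and taking induced subgraphs, hence lies in $\cF$; here closure under substitutions is used in an essential, repeated way, matching the intuition flagged in the introduction that substitution is the governing operation. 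For (a) I would argue that any positive-measure set $A$ meets some ``cell'' of the recursion at some depth in positive relative measure, and within that cell the construction restarts, so $A$ carries a scaled copy of the whole object and therefore positive density of every $G\in\cF$. Assembling these into a genuine limit graphon (as opposed to a sequence of finite graphs) is then a routine compactness/measure-theoretic wrap-up.
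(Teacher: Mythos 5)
Your overall architecture is the same as the paper's: (i)$\implies$(ii) is trivial, (ii)$\implies$(iii) is the statement that $P(W)$ is closed under substitutions (the paper's Lemma~\ref{lem:PW}), and (iii)$\implies$(i) is realized by a recursive blow-up (infinite lexicographic product) along an enumeration of $\cF$, exactly as in Definition~\ref{def:graphrecursiveblowup} and Lemmas~\ref{lem:QphiG} and~\ref{lem:PphiGQphiG}. However, two of your key mechanisms have genuine gaps. For (ii)$\implies$(iii), the localization ``restrict to a small ball $B$ around a point $x_0$ playing the role of $v$'' does not work: a graphon is merely measurable, so points metrically close to $x_0$ need carry no relation to the adjacency pattern of $x_0$ (think of the $\{0,1\}$-valued recursive blow-ups themselves). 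The correct localization, and the one the paper uses, is measure-theoretic rather than metric: fix, via Fubini, a partial copy $(x,y)$ of $F_1$ with the $v$-coordinate deleted for which the \emph{fiber} $U_{x,y}$ of points completing it to a copy of $F_1$ has positive measure, and then plug a copy of $F_2$ coming from $W'\rest_{U_{x,y}}$ into that fiber. Moreover, producing one off-diagonal copy does not by itself give positive density; the paper runs the argument in the contrapositive after an application of the Graphon Removal Lemma (so that zero density forces all copies into the diagonal), and your sketch needs this step or an equivalent device.

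For (iii)$\implies$(i), your claim that ``any positive-measure set $A$ meets some cell in positive relative measure, and within that cell the construction restarts, so $A$ carries a scaled copy of the whole object'' hides the two points where the real work lies. First, the construction must be arranged so that every generator recurs at arbitrarily deep levels (the paper's repeating sequence $G^*$ via~\eqref{eq:mell}); with a plain one-pass enumeration, a deep cell only sees the tail of the list and early graphs of $\cF$ need not persist there. Second, positive relative measure of $A$ in a cell is not sufficient to find a copy of a given $G\in\cF$ with all vertices in $A$: within that cell $A$ can again avoid an entire child sub-cell at every level, which is precisely the phenomenon exploited in Proposition~\ref{prop:noweaklyrandomsubgraphon} to destroy weak randomness. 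What is needed is a Lebesgue-density-type statement for the cylinder cells (Lemma~\ref{lem:KsigmaV}): for every $\epsilon>0$ and all sufficiently deep levels, $A$ fills some cell to relative measure at least $1-\epsilon$; taking $\epsilon<1/\lvert G\rvert$ at a depth where $G$ is the next graph blown up, $A$ meets every child sub-cell and hence contains a copy of $G$. With these two ingredients supplied (and the prime/substitution induction you already describe for the upper bound on $Q$), your plan coincides with the paper's proof.
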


\begin{example}
  As we will see, both the theory $\TPerfect$ of perfect graphs
  (Proposition~\ref{prop:perfectgraphtheory}) and the theory $\TPermGraph$ of graphs of agreements
  of permutations (Proposition~\ref{prop:agreementsofpermutationtheory}) have their corresponding
  $\cM[T]$ closed under substitutions, hence both these theories have a universal weakly random
  graphon (in fact, Proposition~\ref{prop:agreementsofQRpermuton} gives an example for the latter
  theory that is very different from the one produced in the proof of
  Theorem~\ref{thm:graphpersistence}).

  On the negative side, the theory of triangle-free graphs does not have $\cM[T]$ closed under
  substitutions (as $K_2\in\cM[T]$ but $K_3\cong K_2^{v\to K_2}\notin\cM[T]$), so it does not have a
  universal weakly random graphon.
\end{example}

We will prove Theorem~\ref{thm:graphpersistence} through a series of lemmas. As we noted before, the
implication~\ref{thm:graphpersistence:stronglypersistent}$\implies$\ref{thm:graphpersistence:persistent}
is trivial. The
implication~\ref{thm:graphpersistence:persistent}$\implies$\ref{thm:graphpersistence:closed} is a
corollary of the following lemma.

\begin{lemma}\label{lem:PW}
  If $W$ is a graphon, then $P(W)$ is closed under substitutions and induced subgraphs.
\end{lemma}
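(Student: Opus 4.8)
The plan is to prove the two closure properties separately, with closure under induced subgraphs being essentially immediate and closure under substitutions being the substantive content. For induced subgraphs: if $G\in P(W)$ and $H$ is an induced subgraph of $G$, then for every subgraphon $W'$ of $W$ we have $\phi_{W'}(G)>0$, and since $\phi_{W'}(H)\geq$ (a positive constant depending only on $|G|,|H|$) $\cdot\,\phi_{W'}(G)$ — because every copy of $G$ contains copies of $H$ — we get $\phi_{W'}(H)>0$, so $H\in P(W)$. (Alternatively, invoke that $P(W)$ is already noted to be closed under induced subgraphs right after Definition~\ref{def:persistence}.)

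The main work is closure under substitutions. Fix $G_1,G_2\in P(W)$, a vertex $v\in V(G_1)$, and let $G=G_1^{v\to G_2}$ be the (conservative, hence unique) substitution; I must show $G\in P(W)$. By Lemma~\ref{lem:PWsubgraphon} it suffices to show $\phi_{W_A}(G)>0$ for every positive-measure $A\subseteq X$. So fix such an $A$. Since $G_1\in P(W)\subseteq Q(W_A)$, there is a positive-measure set of tuples $(x_u)_{u\in V(G_1)}\in X^{V(G_1)}$ (with respect to $\mu_A^{\otimes V(G_1)}\otimes\lambda$) that form labeled induced copies of $G_1$ in $W_A$. The idea is to ``zoom in'' near the $v$-coordinate of such a copy: for a copy of $G_1$ with $v$ mapped to a point $x_v$, the remaining vertices of $G_1$ see $x_v$ in a prescribed adjacency pattern; I want to replace $x_v$ by a small copy of $G_2$ all of whose vertices have that same adjacency pattern to the $V(G_1-v)$ part.

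Concretely, here is the step I'd carry out. For $y\in X$ let $B(y)\subseteq X$ be the set of points $z$ such that the function $x\mapsto W_A(x,z)$ agrees with $x\mapsto W_A(x,y)$ on a large set — more precisely, I want to find, for a positive-measure set of ``centers'' $y$, a positive-measure set $C_y\ni y$ of points that are ``$W$-indistinguishable from $y$'' in the sense needed to extend a $G_1$-copy. The clean way: consider the map $x\mapsto W_A(x,\place)\in L^2(\Omega_A)$ (or rather its relevant finite-dimensional projection); by a Lebesgue-density / measurable-selection argument, almost every point $y$ is a density point of the set of $z$ with $\|W_A(\place,z)-W_A(\place,y)\|$ small in the relevant sense. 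Then, inside a $G_1$-copy with $v\mapsto y$, restrict the $G_2$-coordinates to the set $C_y$: since $G_2\in P(W)\subseteq Q(W_{C_y})$ (as $C_y$ has positive measure), $W_{C_y}$ contains labeled copies of $G_2$ with positive density, and — crucially — because every point of $C_y$ relates to the fixed $V(G_1-v)$-coordinates just as $y$ does, gluing a $G_2$-copy on $C_y$ to the $G_1-v$-part produces a labeled induced copy of $G=G_1^{v\to G_2}$ in $W_A$. Fubini over the choice of center $y$ and the $V(G_1-v)$-coordinates then gives $\phi_{W_A}(G)>0$.

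The hard part will be making the ``indistinguishability'' argument rigorous: the naive hope that a positive-measure set of points $z$ have $W_A(\place,z)$ literally equal (a.e.) to $W_A(\place,y)$ is false in general, so one must instead work with an approximation and absorb the resulting small error into the positive density of the $G_2$-copies. The right tool is a weak regularity / density argument: partition (a density point neighborhood of) the center into finitely many pieces on which $W_A(\place,\cdot)$ is nearly constant in $L^1$, pick a piece $C_y$ of positive measure on which the adjacencies to the $V(G_1-v)$-coordinates of the ambient $G_1$-copy are determined up to small error, and then observe that the set of ``bad'' $G_2$-copies on $C_y$ (those where the small error flips an edge) has measure bounded by that error times the number of pairs, which can be made smaller than $\phi_{W_{C_y}}(G_2)>0$. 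One subtlety to watch: the choice of $C_y$ may depend on the adjacency pattern that the ambient $G_1$-copy imposes at $v$, but there are only finitely many such patterns ($2^{|G_1|-1}$ of them), so one can handle them one at a time and still end with a positive-measure set of valid configurations. I expect this error-absorption step, together with keeping the Fubini bookkeeping honest, to be the only real obstacle; everything else is routine once the indistinguishability set is in hand.
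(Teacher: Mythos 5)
Your overall strategy (reduce to $W\rest_A$ via Lemma~\ref{lem:PWsubgraphon}, find a positive-measure set of candidate images for the substituted vertex, run $G_2\in P(W)$ inside that set, and glue) is the right one and is the same skeleton as the paper's proof, but the execution of the gluing step has a genuine gap. Your ``indistinguishability'' set $C_y$ is defined by closeness of the columns $W_A(\place,z)$ to $W_A(\place,y)$ in $L^1$ or $L^2$; this says nothing about the values $W_A(x_u,z)$ at the finitely many \emph{specific} coordinates $x_u$ of the ambient $G_1$-copy you fixed (a measure-zero set of first coordinates), so the claim that ``every point of $C_y$ relates to the fixed $V(G_1-v)$-coordinates just as $y$ does'' fails even approximately, and the union bound you plan has nothing to bite on. Moreover, the proposed error absorption is circular in its quantifiers: you want ``error $<\phi_{W_{C_y}}(G_2)$'', but $C_y$ (hence $\phi_{W_{C_y}}(G_2)$, for which there is no uniform lower bound) is chosen as a function of the error parameter, so you cannot shrink the error afterwards. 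Finally, ``the small error flips an edge'' is not meaningful for a fractional-valued $W$ without passing to the threshold description of labeled copies.

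The paper avoids all of this approximation machinery with an exact fiber argument in the threshold formulation: after the Graphon Removal Lemma one may assume all copies of $F_1^{v_0\to F_2}$ in the given subgraphon $W'$ lie on the diagonal, and Fubini applied to $\Tind(F_1,W')$ yields a fixed choice $(x,y)$ of the off-$v_0$ coordinates and auxiliary threshold coordinates whose fiber $U_{x,y}=\{z \mid ((x,z),y)\in\Tind(F_1,W')\}$ has positive measure. Every $z\in U_{x,y}$ satisfies the \emph{same} threshold inequalities against the \emph{same} points $x_u$ and thresholds $y_{\{u,v_0\}}$, so a copy of $F_2$ inside $W'\rest_{U_{x,y}}$ (which exists since $F_2\in P(W)$) glues exactly, by reusing $y_{\{u,v_0\}}$ for the cross pairs, into a single off-diagonal copy of $F_1^{v_0\to F_2}$ --- a contradiction. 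If you prefer to keep a direct positive-density route, the fix is to define the candidate set for the $v$-image from the ambient data itself rather than by column closeness to a center: for instance the set of $z$ on which the product of cross factors to the fixed $x_u$'s exceeds a fixed $\epsilon>0$ extracted from $\tind(G_1,W\rest_A)>0$, chosen \emph{before} invoking $G_2\in P(W)$ on that set; then no approximation or error absorption is needed.
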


\begin{proof}
  It is obvious that $K_0\in P(W)$, so by Lemma~\ref{lem:graph:substitutionK0}, it is sufficient to
  show that $P(W)$ is closed under substitutions.

  Let $F_1,F_2\in P(W)$ and $v_0\in V(F_1)$ and let us show that if $W'$ is a subgraphon of $W$,
  then $\tind(F_1^{v_0\to F_2},W') > 0$. Without loss of generality, we suppose $V(F_1)\cap V(F_2) =
  \varnothing$. Suppose toward a contradiction that $\tind(F_1^{v_0\to F_2},W') = 0$. By possibly
  applying the Graphon Removal Lemma~\cite[Theorem~1]{Pet13} to $W'$, we may suppose that the set
  $\Tind(F_1^{v_0\to F_2},W')$ of copies of $F_1^{v_0\to F_2}$ in $W'$ is contained in the diagonal
  set $\cD_{V(F_1^{v_0\to F_2})}$ (see~\eqref{eq:graphondiagonal}).

  Since $F_1\in P(W)$, we must have $\tind(F_1,W') > 0$, that is, the set $\Tind(F_1,W')$ has
  positive measure. For every $(x,y)\in X^{V(F_1)\setminus\{v_0\}}\times [0,1)^{\binom{V(F_1)}{2}}$,
  let
  \begin{align*}
    U_{x,y} & \df \{z\in X^{\{v_0\}} \mid ((x,z),y)\in\Tind(F_1,W')\}.
  \end{align*}

  By Fubini's Theorem, there exists $(x,y)\in X^{V(F_1)\setminus\{v_0\}}\times
  [0,1)^{\binom{V(F_1)}{2}}$ with all $x$ coordinates distinct such that $U_{x,y}$ has positive
  measure. Since $W'\rest_{U_{x,y}}$ is a subgraphon of $W'$, hence of $W$, we must have
  $\tind(F_2,W'\rest_{U_{x,y}}) > 0$, which implies that there exists $(z,w)\in
  X^{V(F_2)}\times[0,1)^{\binom{V(F_2)}{2}}$ with all $z$ coordinates in $U_{x,y}$, distinct and
  distinct from those in $x$ such that $z\in\Tind(F_2,W')$. Thus, the point
  $(\widehat{x},\widehat{y})\in X^{V(F_1^{v_0\to F_2})}\times [0,1)^{\binom{V(F_1^{v_0\to F_2})}{2}}$ defined by
  \begin{align*}
    \widehat{x}_v & \df
    \begin{dcases*}
      x_v, & if $v\in V(F_1)\setminus\{v_0\}$,\\
      z_v, & if $v\in V(F_2)$,
    \end{dcases*}
    &
    \widehat{y}_A & \df
    \begin{dcases*}
      y_A, & if $A\subseteq V(F_1)\setminus\{v_0\}$,\\
      w_A, & if $A\subseteq V(F_2)$,\\
      y_{(A\cap V(F_1))\cup\{v_0\}}, & otherwise.
    \end{dcases*}
  \end{align*}
  is a point in $\Tind(F_1^{v_0\to F_2},W')\setminus\cD_{V(F_1^{v_0\to F_2})}$, a contradiction.
\end{proof}

To show the final
implication~\ref{thm:graphpersistence:closed}$\implies$\ref{thm:graphpersistence:stronglypersistent}
of Theorem~\ref{thm:graphpersistence}, we will use the repeating recursive blow-up relative to an
infinite sequence of graphs defined below.

\begin{definition}\label{def:graphrecursiveblowup}
  Given a sequence $V = (V_\ell)_{\ell\in\NN}$ of finite sets with $\lvert V_\ell\rvert\geq 2$ for
  every $\ell\in\NN$, the \emph{Cantor probability space} corresponding to $V$ is the space
  $\Omega^V = (\prod_{\ell\in\NN} V_\ell, \cA, \nu^V)$, where $\cA$ is the Borel $\sigma$-algebra of
  the product topology on $\prod_{\ell\in\NN} V_\ell$ and $\nu^V$ is the unique Borel measure such
  that $\nu^V(K_{\sigma,V}) = \prod_{\ell=0}^{t-1} \lvert V_\ell\rvert^{-1}$ for every $t\in\NN$ and
  every $\sigma\in\prod_{\ell=0}^{t-1} V_\ell$, where $K_{\sigma,V}$ is the basic clopen set given
  by
  \begin{align}\label{eq:KsigmaV}
    K_{\sigma,V}
    & \df
    \left\{\tau\in\prod_{\ell\in\NN} V_\ell \;\middle\vert\;
    \forall\ell\in\{0,\ldots,t-1\},\tau_\ell = \sigma_\ell
    \right\}.
  \end{align}

  Let $G = (G_m)_{m\in\NN}$ be a sequence of finite graphs with $\lvert G_m\rvert\geq 2$ for every
  $m\in\NN$, we let the \emph{recursive blow-up relative to $G$} be the limit
  $\phi_G\in\HomT{\TGraph}$ defined as follows. We let $V=(V_\ell)_{\ell\in\NN}$ be defined by
  $V_\ell\df V(G_\ell)$ and define the graphon $W^G$ over the space $\Omega^V$ by
  \begin{align}\label{eq:WG}
    W^G(x,y) & \df
    \begin{dcases*}
      1, & if $x\neq y$ and $\{x_\ell,y_\ell\}\in E(G_{\ell})$,\\
      0, & otherwise,
    \end{dcases*}
  \end{align}
  where $\ell$ is the first position in which $x$ and $y$ differ. Finally, we define
  $\phi_G\df\phi_{W^G}\in\HomT{\TGraph}$ (see Example~\ref{ex:recC4} and Figures~\ref{fig:recC4}
  and~\ref{fig:recCsquares} for examples).

  We let the \emph{repeating recursive blow-up relative to $G$} be the limit
  $\phi_G^*\in\HomT{\TGraph}$ defined as follows. For each $\ell\in\NN$, we let
  \begin{align}\label{eq:mell}
    m_\ell
    & \df
    \max\{m\in\NN \mid 2^m \text{ divides } \ell+1\},
  \end{align}
  we let $G^*\df(G_{m_\ell})_{\ell\in\NN}$ and we define $\phi_G^*\df\phi_{G^*}$.
\end{definition}

\begin{remark}\label{rmk:lexicographicproduct}
  Since $W^G$ is $\{0,1\}$-valued, we can interpret it as a (measurable) graph $H$ with vertex set
  $\Omega^V$ and the reader familiarized with lexicographic products of graphs should note that
  $H$ is simply the infinite lexicographic product of $(G_m)_{m\in\NN}$.
\end{remark}

\begin{remark}\label{rmk:mell}
  The definition of the numbers $m_\ell$ in~\eqref{eq:mell} guarantees a simple but very useful
  property: for every $m\in\NN$ there exist infinitely many $\ell\in\NN$ with $m_\ell=m$. In fact,
  for every $m\in\NN$ and every $\ell\in\NN$, there exists $\ell'\in\NN$ with $\ell < \ell'\leq \ell
  + 2^m$ such that $m_{\ell'} = m$, that is, for every $m\in\NN$, we only need to wait at most $2^m$
  steps to see $m$ in the sequence $(m_\ell)_{\ell\in\NN}$ regardless of where we start.
\end{remark}

\begin{example}\label{ex:recC4}
  If the sequence $G$ consists of only one graph $G'$ repeated infinitely many times, then $\phi_G$
  is the limit of the usual notion of recursive blow-ups of a single graph $G'$ on progressively
  more and more levels.

  \begin{figure}[htb]
    \begingroup
\def\order{3}
\def\side{5}
\def\axis{5.5}

\begin{center}
  \begin{tikzpicture}
    \draw (0,0) -- (\side,0) -- (\side,\side) -- (0,\side) -- cycle;
    \draw[->] (0,0) -- (\axis,0);
    \draw[->] (0,0) -- (0,\axis);

    \node[below right] at (\axis,0) {$x$};
    \node[above left] at (0,\axis) {$y$};

    \foreach \i in {1,...,\order}{
      \pgfmathsetmacro{\step}{4^(-\i+1) * \side}
      \pgfmathsetmacro{\baseside}{4^(-\i) * \side}
      \pgfmathtruncatemacro{\rep}{4^(\i-1)}
      \foreach \j in {1,...,\rep}{
        \pgfmathsetmacro{\p}{(\j-1) * \step}
        \pgfmathsetmacro{\ppone}{\p + \baseside}
        \pgfmathsetmacro{\pptwo}{\ppone + \baseside}
        \pgfmathsetmacro{\ppthree}{\pptwo + \baseside}
        \foreach \x/\y in {\p/\ppone,\ppone/\pptwo,\pptwo/\ppthree,\ppthree/\p}{
          \fill (\x,\y) -- ++(\baseside,0) -- ++(0,\baseside) -- ++(-\baseside,0) -- cycle;
          \fill (\y,\x) -- ++(\baseside,0) -- ++(0,\baseside) -- ++(-\baseside,0) -- cycle;
        }
      }
    }
  \end{tikzpicture}

  \caption{Approximation of a graphon $W^{C_4}$ over $[0,1]$ representing the limit $\phi_{C_4}$ of
    recursive blow-ups of $C_4$ of Example~\ref{ex:recC4}. The graphon $W^{C_4}$ has a fractal
    structure, whose first $\order$ steps are represented in the picture.}
  \label{fig:recC4}
\end{center}
\endgroup

  \end{figure}
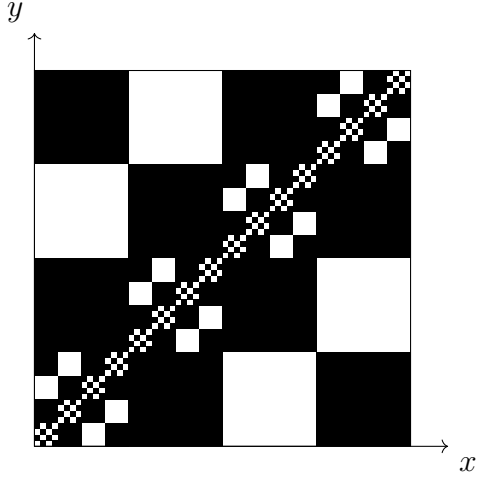

  For example, the limit $\phi_{C_4}$ of recursive blow-ups of $C_4$ used
  in~\cite[Definition~8.5]{CM22} (see Figure~\ref{fig:recC4}) is obtained as $\phi_G$ (or
  $\phi_G^*$) when $G$ is the sequence that is constant equal to $C_4$. Alternatively, $\phi_{C_4}$
  is also obtained as $\phi_{G'}$ for the sequence
  $G'\df(K_2,\overline{K}_2,K_2,\overline{K}_2,\ldots)$ that infinitely alternates between the edge
  graph $K_2$ and the non-edge graph $\overline{K}_2$. Also alternatively, $\phi_{C_4}$ is obtained
  as $\phi_{G''}^*$ for the sequence $G''=(K_2,\overline{K}_2,\overline{K}_2,\ldots)$ whose first
  element is $K_2$ and all other elements are $\overline{K}_2$ (as $(G'')^* = G'$).
\end{example}

Let us show a simple structural fact about the Cantor probability space $\Omega^V$.

\begin{lemma}\label{lem:KsigmaV}
  Let $V = (V_\ell)_{\ell\in\NN}$ be a sequence of finite sets with $\lvert V_\ell\rvert\geq 2$ for
  every $\ell\in\NN$ and let $A\subseteq\Omega^V$ be a set with positive measure. Then for every
  $\epsilon > 0$, there exists $t_0\in\NN$ such that for every $t\geq t_0$, there exists
  $\sigma\in\prod_{\ell=0}^{t-1} V_\ell$ such that $\nu^V(A\cap K_{\sigma,V})\geq
  (1-\epsilon)\cdot\nu^V(K_{\sigma,V})$, where
  \begin{align*}
    K_{\sigma,V}
    & \df
    \left\{\tau\in\prod_{\ell\in\NN} V_\ell \;\middle\vert\;
    \forall\ell\in\{0,\ldots,t-1\}, \tau_\ell = \sigma_\ell
    \right\}
  \end{align*}
  is the basic clopen set defined in~\eqref{eq:KsigmaV}.
\end{lemma}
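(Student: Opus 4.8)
The plan is to use a Lebesgue-density / martingale-type argument adapted to the tree structure of $\Omega^V$. The key observation is that the basic clopen sets $K_{\sigma,V}$ for $\sigma\in\prod_{\ell=0}^{t-1}V_\ell$ form, as $t$ ranges over $\NN$, a refining sequence of finite partitions of $\prod_{\ell\in\NN}V_\ell$ that generates the Borel $\sigma$-algebra $\cA$. First I would consider the conditional expectations $g_t\df\EE[\One_A\mid\cA_t]$, where $\cA_t$ is the (finite) $\sigma$-algebra generated by the level-$t$ cylinders; explicitly, $g_t$ is constant on each $K_{\sigma,V}$ with value $\nu^V(A\cap K_{\sigma,V})/\nu^V(K_{\sigma,V})$. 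The sequence $(g_t)_{t\in\NN}$ is a bounded martingale, so by the martingale convergence theorem (or directly by the Lebesgue density theorem for the metric space $\Omega^V$ with the ultrametric induced by first disagreement), $g_t\to\One_A$ almost everywhere as $t\to\infty$.

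Next I would exploit $\nu^V(A) > 0$: since $g_t\to\One_A$ a.e.\ and $\One_A = 1$ on the positive-measure set $A$, for any $\epsilon>0$ the set $\{x : g_t(x) > 1-\epsilon\}$ has measure tending to $\nu^V(A) > 0$, so in particular it is nonempty for all $t$ large enough, say $t\ge t_0$. But $\{x : g_t(x) > 1-\epsilon\}$ being nonempty means precisely that there is some level-$t$ cylinder $K_{\sigma,V}$ on which $g_t$ takes a value exceeding $1-\epsilon$, i.e.\ $\nu^V(A\cap K_{\sigma,V}) > (1-\epsilon)\,\nu^V(K_{\sigma,V})$, which is the desired conclusion (with strict inequality, hence certainly $\ge$). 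Picking $t_0$ uniformly works because once $\{g_t > 1-\epsilon\}$ has positive measure it stays nonempty — though to be safe one can simply take $t_0$ large enough that $\nu^V(\{g_t > 1-\epsilon\}) > 0$ for all $t \ge t_0$, which follows from almost-everywhere convergence via Egorov or simply Fatou applied to $\One_A - \One\{g_t > 1-\epsilon\}$.

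Alternatively, and perhaps more self-containedly to avoid invoking martingale machinery, I would argue by contradiction: if for some $\epsilon>0$ there were arbitrarily large $t$ with $\nu^V(A\cap K_{\sigma,V})<(1-\epsilon)\nu^V(K_{\sigma,V})$ for \emph{every} $\sigma$ of length $t$, then summing over the partition at level $t$ gives $\nu^V(A)=\sum_\sigma\nu^V(A\cap K_{\sigma,V})<(1-\epsilon)\sum_\sigma\nu^V(K_{\sigma,V})=(1-\epsilon)$, which is not yet a contradiction. So one genuinely needs a density statement rather than a crude averaging bound; the point is that the \emph{maximum} over $\sigma$ of the relative density must approach $1$, not that the average does. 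This is exactly what the density/martingale theorem provides, so I would keep that as the backbone of the argument.

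The main obstacle is simply making sure the density theorem applies cleanly to this non-atomic but somewhat exotic probability space $\Omega^V$: the spaces $V_\ell$ vary with $\ell$ and the measure is an infinite product of non-uniform-sized uniform measures, so one should not quote the Lebesgue density theorem for $\RR^n$ verbatim. The cleanest route is the martingale formulation, which only uses that $\bigvee_t\cA_t=\cA$ (true since the cylinders generate the Borel $\sigma$-algebra of the product topology) and boundedness of $\One_A$; these hypotheses are immediate here, so the obstacle is really just bookkeeping rather than a substantive difficulty. I would also note at the end that, since $A$ has positive measure, the conclusion holds for \emph{all} sufficiently large $t$ and not merely infinitely many, which is what the statement asks for.
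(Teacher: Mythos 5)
Your argument is correct, but it proceeds by a different route than the paper. You condition on the finite $\sigma$-algebras $\cA_t$ generated by the level-$t$ cylinders and invoke L\'evy's upward martingale convergence theorem (or, equivalently, a density theorem for the ultrametric on $\Omega^V$) to get $g_t\to\One_A$ a.e.; since $\nu^V(A)>0$, dominated convergence (or your Fatou/Egorov remark, which is easily made precise since $\One_{\{g_t>1-\epsilon\}}\to\One_A$ a.e.) shows $\{g_t>1-\epsilon\}$ has positive measure for all large $t$, and nonemptiness of that set is exactly a level-$t$ cylinder of relative density above $1-\epsilon$ --- note the canonical version of $g_t$ is constant on each cylinder, and cylinders have positive measure, so this last step is clean. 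The paper instead stays elementary: by uniqueness in \Caratheodory's Extension Theorem it approximates $A$ by a set $B$ in the algebra generated by the cylinders with $\nu^V(A\symdiff B)\leq\delta$ for $\delta=\nu^V(A)\epsilon/(1+\epsilon)$, observes that for all $t\geq t_B$ the set $B$ is a disjoint union of level-$t$ cylinders, and then derives a contradiction from assuming every such cylinder misses $A$ in measure more than $\epsilon$ times its own measure. Your version buys conceptual transparency (it is literally a density-point statement) at the cost of quoting martingale machinery and needing the small bookkeeping that $A$ is (a.e. equal to) a Borel set so that $\EE[\One_A\mid\sigma(\bigcup_t\cA_t)]=\One_A$; the paper's version is self-contained and uses nothing beyond the extension theorem. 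You also correctly diagnose that a crude averaging over the level-$t$ partition is insufficient, and both arguments deliver the conclusion for all $t\geq t_0$, as required.
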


\begin{proof}
  Let $\cB$ be the Boolean algebra generated by $\cC\df\{K_{\sigma,V} \mid
  t\in\NN\land\sigma\in\prod_{\ell=0}^{t-1} V_\ell\}$ and note that every set in $\cB$ is a finite
  union of elements of $\cC$. In fact, since for every $t\in\NN$ and every
  $\sigma\in\prod_{\ell=0}^{t-1} V_\ell$, the collection $\{K_{(\sigma,v),V} \mid v\in V_t\}$ forms
  a partition of $K_{\sigma,V}$, it follows that for every $B\in\cB$, there exists $t_B\in\NN$ such
  that for every $t\geq t_B$, the set $B$ can be written as the \emph{disjoint} union $B =
  \bigcup_{\sigma\in\Sigma_{B,t}} K_{\sigma,V}$ where
  \begin{align*}
    \Sigma_{B,t} & \df \left\{\sigma\in\prod_{\ell=0}^{t-1} V_\ell \;\middle\vert\;
    K_{\sigma,V}\subseteq B \right\}.
  \end{align*}
  Namely, we can take any representation of $B$ as a finite union of elements of $\cC$ and let $t_B$
  be the maximum length of a $\sigma$ used in this representation.

  Since $\cB$ generates the $\sigma$-algebra $\cA$ of $\Omega^V$, uniqueness of \Caratheodory's
  Extension Theorem implies that for every $\delta > 0$, there exists $B\in\cB$ such that
  $\nu^V(A\symdiff B)\leq\delta$.

  We claim that by taking $\delta\df\nu^V(A)\cdot\epsilon/(1+\epsilon)$ and $t_0\df t_B$, we get
  that for every $t\geq t_0$, there must exist $\sigma\in\Sigma_{B,t}$ such that
  $\nu^V(K_{\sigma,V}\setminus A)\leq\epsilon\cdot\nu^V(K_{\sigma,V})$. Suppose not. Then we have
  \begin{align*}
    \delta
    & \geq
    \nu^V(A\symdiff B)
    \geq
    \sum_{\sigma\in\Sigma_{B,t}} \nu^V(K_{\sigma,V}\setminus A)
    >
    \sum_{\sigma\in\Sigma_{B,t}} \epsilon\cdot\nu^V(K_{\sigma,V})
    =
    \epsilon\cdot\nu^V(B)
    \geq
    \epsilon(\nu^V(A) - \delta),
  \end{align*}
  from which we conclude that
  \begin{align*}
    \delta\cdot\frac{1+\epsilon}{\epsilon} > \nu^V(A),
  \end{align*}
  contradicting the definition of $\delta$. Finally, from $\nu^V(K_{\sigma,V}\setminus
  A)\leq\epsilon\cdot\nu^V(K_{\sigma,V})$, we conclude that $\nu^V(A\cap K_{\sigma,V})\geq
  (1-\epsilon)\cdot\nu^V(K_{\sigma,V})$ as desired.
\end{proof}

Our next objective is to show that $P(\phi_G^*)$ is precisely $S(\{K_0\}\cup\{G_m \mid
m\in\NN\})$. We start by showing the simpler fact $\{G_m \mid m\in\NN\}\subseteq Q(\phi_G)\subseteq
S(\{K_0\}\cup\{G_m\mid m\in\NN\})$ in Lemma~\ref{lem:QphiG} below. Clearly this implies the same
statement for $\phi_G^*$.

\begin{lemma}\label{lem:QphiG}
  Let $G = (G_m)_{m\in\NN}$ be a sequence of finite graphs with $\lvert G_m\rvert\geq 2$ for every
  $m\in\NN$. Then $\{G_m\mid m\in\NN\}\subseteq Q(\phi_G)\subseteq S(\{K_0\}\cup\{G_m\mid
  m\in\NN\})$.
\end{lemma}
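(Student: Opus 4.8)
The plan is to prove the two inclusions separately, both by analyzing the explicit graphon $W^G$ over the Cantor space $\Omega^V$.

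\medskip

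\textbf{The inclusion $\{G_m\mid m\in\NN\}\subseteq Q(\phi_G)$.} To show $\phi_G(G_m)>0$ for a fixed $m$, I would exhibit a positive-measure set of labeled copies of $G_m$ in $W^G$. Pick a prefix length $t=m$ (or more generally any position where we may place the ``splitting level''). Concretely, for each vertex $v\in V(G_m)=V_m$, choose points $x^{(v)}\in\prod_{\ell\in\NN}V_\ell$ that agree on coordinates $0,\ldots,m-1$ with some fixed common prefix $\sigma\in\prod_{\ell=0}^{m-1}V_\ell$, have $x^{(v)}_m=v$ (so they all differ in the $m$-th coordinate, and this is the first place any two of them differ), and are otherwise free. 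By the definition of $W^G$ in~\eqref{eq:WG}, the adjacency between $x^{(v)}$ and $x^{(w)}$ is determined by whether $\{v,w\}\in E(G_m)$, which is exactly the edge relation of $G_m$. The set of such tuples has positive measure (it is essentially a product of the clopen set $K_{\sigma,V}$ restricted to the $m$-th coordinate being a prescribed value, times free choices), so $\tind(G_m,W^G)>0$, giving $G_m\in Q(\phi_G)$. A small point to be careful about: ensuring the chosen points are genuinely distinct off-diagonal, which is automatic since they already differ in coordinate $m$.

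\medskip

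\textbf{The inclusion $Q(\phi_G)\subseteq S(\{K_0\}\cup\{G_m\mid m\in\NN\})$.} This is the structural heart of the lemma, and I expect it to be the main obstacle. The idea is that any finite graph $H$ with $\phi_G(H)>0$ must ``look like'' something built from the $G_m$ by iterated substitution, because $W^G$ itself has such a recursive structure. I would argue as follows. Suppose $\tind(H,W^G)>0$; then there is a positive-measure set of points $(x^{(v)})_{v\in V(H)}$ in $\Omega^V$ (with all coordinates distinct off-diagonal) forming a labeled copy. Group the vertices of $H$ according to the value of the first coordinate at which they start to diverge: let $\ell^*$ be the minimal coordinate such that not all $x^{(v)}_{\ell^*}$ are equal — more carefully, we should pass to a positive-measure sub-copy on which the ``merge pattern'' (the partition of $V(H)$ by the first level of divergence, and recursively) is constant; this is a finite refinement of measurable sets so at least one pattern survives with positive measure. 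On such a copy, the vertices of $H$ are partitioned into blocks according to their value in coordinate $\ell^*$; by~\eqref{eq:WG}, vertices in distinct blocks are adjacent iff the corresponding values are adjacent in $G_{\ell^*}$, and vertices within a block have their adjacency determined by deeper coordinates. Thus $H$ is obtained from (an induced subgraph of) $G_{\ell^*}$ by substituting into each used vertex the graph induced on the corresponding block — and by induction on $|H|$ each such block-graph, having positive density in the ``deeper'' graphon $W^{(G_{\ell^*+1},G_{\ell^*+2},\ldots)}$ which is of the same form, lies in $S(\{K_0\}\cup\{G_m\mid m\in\NN\})$. Since induced subgraphs of $G_{\ell^*}$ are substitutions of $G_{\ell^*}$ with $K_0$ and singletons (Remark~\ref{rmk:increasingsubstitution}), and $S(\cdot)$ is closed under substitution and substructure (Lemma~\ref{lem:substitutionK0}), we conclude $H\in S(\{K_0\}\cup\{G_m\mid m\in\NN\})$.

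\medskip

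The delicate steps are: (i) justifying that one can restrict to a positive-measure set of copies with a fixed, finitely-describable ``divergence/merge pattern'' — this uses that there are only finitely many patterns on $|V(H)|$ vertices, so one has positive measure by pigeonhole; (ii) making precise that the induced subgraph on a block depends only on coordinates $\ge \ell^*+1$ and that the relevant deeper graphon is again of the form $W^{G'}$ for a suffix $G'$ of $G$ (a shift), so the induction applies cleanly; and (iii) handling the base cases $|H|\le 1$ and the degenerate case where all $x^{(v)}$ share arbitrarily long prefixes (which cannot happen for distinct points, so the first divergence level $\ell^*$ is well-defined on the off-diagonal part). I would phrase the induction on $|H|$, with the shift-invariance of the construction $G\mapsto W^G$ doing the bookkeeping.
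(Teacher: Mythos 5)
Your proof of the first inclusion is exactly the paper's argument: fix a common prefix $\sigma$ of length $m$, put vertex $v$ in the cell $K_{(\sigma,v),V}$, and read off adjacency from $G_m$ at the first differing coordinate. For the second inclusion your route is correct in substance but organized differently from the paper's. The paper proves a slightly stronger statement -- a \emph{single} off-diagonal copy of $H$ in $W^G$ already forces $H\in S(\{K_0\}\cup\{G_m\mid m\in\NN\})$ -- by induction on $\lvert H\rvert$, and it invokes the first-divergence-level block decomposition only when $H$ is prime: there primality forces every block to be a singleton, so $H$ embeds into $G_t$; composite $H$ is dispatched purely combinatorially by writing $H=F_1^{v\to F_2}$ with $F_1,F_2$ proper induced subgraphs of $H$ (which inherit off-diagonal copies) and using closure of $S$ under substitutions. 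You instead perform the block decomposition for arbitrary $H$ and recurse into the blocks via the shifted graphon $W^{(G_{\ell^*+1},G_{\ell^*+2},\ldots)}$. That works, but it is exactly what creates your extra bookkeeping: the induction must be stated uniformly over all suffix sequences; the data you condition on ranges over \emph{countably} many possibilities (the divergence level $\ell^*$ and the common prefix $\sigma$ are unbounded), not finitely many as you wrote, so you need countable additivity rather than a finite pigeonhole; and to see that each block graph has positive density in the shifted graphon you need a Fubini/product-measure step -- once $\ell^*$, $\sigma$ and the labelled partition are fixed, the copies of $H$ with that pattern sit inside the product over blocks of the block-copy sets, whose measure factors, so each factor has positive measure. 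These points are fixable and you flagged most of them; the trade-off is that the paper's organization needs only one copy and no measure theory beyond the removal lemma elsewhere, while yours avoids any appeal to primality in the structural step.
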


\begin{proof}
  To see that every $G_m$ has positive density in $\phi_G$, simply note that if we take an arbitrary
  $\sigma\in\prod_{\ell=0}^{m-1} V(G_\ell)$ and set $x_v\df(\sigma,v)$ for every $v\in V(G_m)$, then
  $x$ is an embedding of $G_m$ in $W^G$ and thus
  \begin{align*}
    \tind(G_m,W^G) & \geq \prod_{\ell=0}^m \frac{1}{\lvert G_\ell\rvert} > 0,
  \end{align*}
  hence $\{G_m\mid m\in\NN\}\subseteq Q(\phi_G)$.

  \medskip

  To show that $Q(\phi_G)\subseteq S(\{K_0\}\cup\{G_m\mid m\in\NN\})$, we will prove a slightly
  stronger result: let us show that if $H$ is a finite graph such that
  $\Tind(H,W^G)\not\subseteq\cD_{V(H)}$, then $H\in S(\{K_0\}\cup\{G_m\mid m\in\NN\})$. The proof is
  by induction on $\lvert H\rvert$.

  The first two base cases are when $\lvert H\rvert\leq 1$ (i.e., $H\in\{K_0,K_1\}$), in which case
  trivially $H\in S(\{K_0\}\cup\{G_m\mid m\in\NN\})$.

  The next base cases are when $\lvert H\rvert\geq 2$ and $H$ is prime. In this case, we show that
  $H$ must be an induced subgraph of $G_m$ for some $m\in\NN$.

  Recall that since $W^G$ is $\{0,1\}$-valued, the set $\Tind(H,W^G)\setminus\cD_{V(H)}$ is
  alternatively described as the set of pairs $(x,y)\in(\Omega^V)^{V(H)}\times
  [0,1)^{\binom{V(H)}{2}}$ such that $x$ is an embedding of $H$ in $W^G$.

  Fix then one such point $(x,y)$ and let $t\in\NN$ be the length of the longest string
  $\sigma\in\prod_{\ell=0}^{t-1} V(G_\ell)$ that is common to all coordinates of $x$, that is, for
  every $v\in V(H)$, we have $x_v\rest_{\{0,\ldots,t-1\}} = \sigma$ and there exist $v,w\in V(H)$
  such that $(x_v)_t \neq (x_w)_t$.

  For each $i\in V(G_t)$, let $U_i\df\{v\in V(H) \mid (x_v)_t = i\}$ and let $H_i\df
  H\rest_{U_i}$. Let also $I\df\{i\in V(G_t)\mid U_i\neq\varnothing\}$. The structure of $W^G$
  implies that $H$ is obtained from $G_t\rest_I$ by substituting each $i\in I$ by $H_i$. Since
  $\lvert H_i\rvert < \lvert H\rvert$ for every $i\in I$ and $H$ is prime, it follows that $\lvert
  G_t\rest_I\rvert = \lvert H\rvert$ and $\lvert U_i\rvert = 1$ for every $i\in I$, that is, the
  unique function $\beta\colon V(H)\rightarrowtail V(G_t)$ such that $v\in U_{\beta(v)}$ is an
  embedding of $H$ in $G_t$. Thus $H$ is an induced subgraph of $G_t$.

  \medskip

  We now consider the inductive step when $H$ is not prime. Then $H$ is of the form $F_1^{v\to F_2}$
  for some graphs $F_1,F_2$ and $v\in V(F_1)$ with $\lvert F_1\rvert,\lvert F_2\rvert < \lvert
  H\rvert$. By inductive hypothesis, we have $F_1,F_2\in S(\{K_0\}\cup\{G_m\mid m\in\NN\})$ and
  since this set is closed under substitutions we get $H\in S(\{K_0\}\cup\{G_m\mid m\in\NN\})$.
\end{proof}

\begin{lemma}\label{lem:PphiGQphiG}
  Let $G = (G_m)_{m\in\NN}$ be a sequence of finite graphs with $\lvert G_m\rvert\geq 2$ for every
  $m\in\NN$. Then $P(\phi_G^*) = Q(\phi_G^*) = S(\{K_0\}\cup\{G_m\mid m\in\NN\})$.
\end{lemma}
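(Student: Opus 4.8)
The plan is to combine Lemma~\ref{lem:QphiG} with an analysis of subgraphons of $W^{G^*}$ using the Cantor space structure from Lemma~\ref{lem:KsigmaV}. First, note that since $G^* = (G_{m_\ell})_{\ell\in\NN}$ is a sequence of finite graphs all of size at least $2$ whose entries form the set $\{G_m \mid m\in\NN\}$ (by Remark~\ref{rmk:mell}, every $G_m$ appears in $G^*$), Lemma~\ref{lem:QphiG} applied to $G^*$ already gives $\{G_m\mid m\in\NN\}\subseteq Q(\phi_G^*)\subseteq S(\{K_0\}\cup\{G_m\mid m\in\NN\})$. Since $Q(\phi_G^*)$ is closed under induced subgraphs and $K_0\in Q(\phi_G^*)$ trivially, and since $S(\{K_0\}\cup\{G_m\mid m\in\NN\})$ is by definition the smallest family containing $\{K_0\}\cup\{G_m\mid m\in\NN\}$ closed under substitutions — but $Q(\phi_G^*)$ need not be closed under substitutions a priori — I cannot immediately conclude $Q(\phi_G^*) = S(\{K_0\}\cup\{G_m\mid m\in\NN\})$ from this alone. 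Instead, the equality $Q(\phi_G^*) = S(\{K_0\}\cup\{G_m\mid m\in\NN\})$ will follow once I establish $P(\phi_G^*)\supseteq S(\{K_0\}\cup\{G_m\mid m\in\NN\})$, because then $S(\{K_0\}\cup\{G_m\mid m\in\NN\})\subseteq P(\phi_G^*)\subseteq Q(\phi_G^*)\subseteq S(\{K_0\}\cup\{G_m\mid m\in\NN\})$, forcing all three to coincide.

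So the heart of the proof is to show $S(\{K_0\}\cup\{G_m\mid m\in\NN\})\subseteq P(\phi_G^*)$; equivalently, by Lemma~\ref{lem:PWsubgraphon}, that for every positive-measure set $A$ in the Cantor space $\Omega^{V^*}$ (where $V^*_\ell = V(G_{m_\ell})$) and every $H\in S(\{K_0\}\cup\{G_m\mid m\in\NN\})$, we have $\tind(H, W^{G^*}\rest_A) > 0$. By Lemma~\ref{lem:PW}, $P(\phi_G^*)$ is closed under substitutions and induced subgraphs, so it suffices to prove that each $G_m\in P(\phi_G^*)$ (and $K_0\in P(\phi_G^*)$, which is trivial). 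Fix $m\in\NN$ and a positive-measure set $A\subseteq\Omega^{V^*}$. Apply Lemma~\ref{lem:KsigmaV} with, say, $\epsilon = \tfrac{1}{2}\prod_{\ell=0}^{t} \lvert V^*_\ell\rvert^{-1}$ chosen small relative to the size of one copy — more carefully, I want to find a long basic clopen cylinder $K_{\sigma,V^*}$ in which $A$ is almost full, then use Remark~\ref{rmk:mell} to locate, just past the end of $\sigma$, a level $\ell'$ with $m_{\ell'} = m$; the coordinates of $W^{G^*}$ at level $\ell'$ restricted to the cylinder reproduce $G_m$, and because $A$ occupies more than a $1 - 1/\lvert G_m\rvert$ fraction of the cylinder, a counting/union-bound argument shows that a positive-measure family of embeddings of $G_m$ lands entirely inside $A$.

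The main obstacle — and the step requiring the most care — is quantifying the last point: after passing to a cylinder $K_{\sigma,V^*}$ where $\nu^{V^*}(A\cap K_{\sigma,V^*})\geq (1-\epsilon)\,\nu^{V^*}(K_{\sigma,V^*})$, I need to pick $t$ large enough (and hence $\epsilon$ small enough) that within $K_{\sigma,V^*}$ there is a level $\ell'\geq t$ with $m_{\ell'} = m$ (guaranteed with $\ell' \le t + 2^m$ by Remark~\ref{rmk:mell}), and then the $\lvert V(G_m)\rvert$ sub-cylinders $K_{(\sigma,\ast,\ldots,\ast,i),V^*}$ indexed by $i\in V(G_m)$ — obtained by fixing coordinates $t,\ldots,\ell'-1$ to some common string and coordinate $\ell'$ to $i$ — each must meet $A$ in positive measure. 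Choosing $\epsilon < \lvert V(G_m)\rvert^{-1}\cdot(\text{relative size of the block of sub-cylinders})$ ensures $A$ cannot miss any one of these $\lvert V(G_m)\rvert$ sub-cylinders entirely, and then picking one point of $A$ in each sub-cylinder (with coordinates below $\ell'$ all equal and coordinate $\ell'$ equal to $i$) yields, via the structure of $W^{G^*}$ in~\eqref{eq:WG}, an embedding of $G_m$ into $W^{G^*}\rest_A$; a Fubini argument over the choice of such points upgrades this to positive measure. Once $G_m\in P(\phi_G^*)$ for all $m$ and $K_0\in P(\phi_G^*)$, closure of $P(\phi_G^*)$ under substitutions and induced subgraphs (Lemma~\ref{lem:PW}) gives $S(\{K_0\}\cup\{G_m\mid m\in\NN\})\subseteq P(\phi_G^*)$, completing the chain of inclusions and hence the lemma.
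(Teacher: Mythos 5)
Your overall architecture matches the paper's: reduce via Lemma~\ref{lem:QphiG} to showing $S(\{K_0\}\cup\{G_m\mid m\in\NN\})\subseteq P(\phi_G^*)$, prove the generators $G_m$ are persistently positive by combining Lemma~\ref{lem:PWsubgraphon}, Lemma~\ref{lem:KsigmaV} and Remark~\ref{rmk:mell}, and finish with closure of $P(\phi_G^*)$ under substitutions (Lemma~\ref{lem:PW}). Your route is in fact marginally more direct than the paper's at one point: instead of the paper's induction on $\lvert H\rvert$ (whose prime base case invokes Lemma~\ref{lem:primesubstructure}), you put only the generators into $P(\phi_G^*)$ and conclude by minimality of $S(\cdot)$ among families closed under substitutions; that is correct and bypasses Lemma~\ref{lem:primesubstructure} entirely.

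However, there is a genuine gap in the quantitative step you yourself flag as the crux. You fix a good cylinder $K_{\sigma,V^*}$ of length $t$ from Lemma~\ref{lem:KsigmaV}, locate a level $\ell'$ with $t\leq\ell'\leq t+2^m$ and $m_{\ell'}=m$, and then require $\epsilon$ to be smaller than $\lvert V(G_m)\rvert^{-1}$ times the relative size of the block obtained by fixing coordinates $t,\ldots,\ell'-1$. That relative size is $\prod_{\ell=t}^{\ell'-1}\lvert V(G_{m_\ell})\rvert^{-1}$, which depends on $t$ and on which (possibly huge) graphs occur at the intermediate levels; but $t\geq t_0$ and $t_0$ is produced by Lemma~\ref{lem:KsigmaV} only \emph{after} $\epsilon$ is chosen. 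Since the sequence $(G_m)$ may have unbounded sizes and the window past $t_0$ can contain levels $\ell$ with $m_\ell$ arbitrarily large, this circular choice of $\epsilon$ cannot in general be satisfied as written (your first suggested value $\epsilon=\tfrac12\prod_{\ell=0}^{t}\lvert V^*_\ell\rvert^{-1}$ has the same circularity). The repair is exactly the device the paper uses: Lemma~\ref{lem:KsigmaV} supplies a good cylinder at \emph{every} length $t\geq t_0$, and Remark~\ref{rmk:mell} gives some $t\in(t_0,t_0+2^m]$ with $m_t=m$; choosing the good cylinder of that exact length $t$ makes the relevant sub-cylinders the immediate children $K_{(\sigma^t,v),V}$, each of relative measure $1/\lvert G_m\rvert$, so the fixed choice $\epsilon<1/\lvert G_m\rvert$ suffices with no dependence on $t$. (Alternatively, an averaging/pigeonhole step over the blocks of fixed intermediate coordinates would also rescue your version.) With that adjustment your proof goes through.
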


\begin{proof}
  By Lemma~\ref{lem:QphiG}, we know that $Q(\phi_G^*)\subseteq S(\{K_0\}\cup\{G_m\mid m\in\NN\})$
  and since $P(\phi_G^*)\subseteq Q(\phi_G^*)$, it is sufficient to prove that
  $S(\{K_0\}\cup\{G_m\mid m\in\NN\})\subseteq P(\phi_G^*)$.

  Let $H\in S(\{K_0\}\cup\{G_m\mid m\in\NN\})$ and let us show that $H\in P(\phi_G^*)$ by induction
  on $\lvert H\rvert$.

  The base case is when $H$ is a prime graph. By Lemma~\ref{lem:graph:primesubstructure}, we know
  there exists $\widehat{m}\in\NN$ such that $H$ is an induced subgraph of $G_{\widehat{m}}$, so it
  is sufficient to show that $G_{\widehat{m}}\in P(\phi_G^*)$. In turn, by
  Lemma~\ref{lem:PWsubgraphon}, it is sufficient to show that for every positive measure set
  $A\subseteq\Omega^V$, the graph $G_{\widehat{m}}$ has positive density in the subgraphon
  $W^{G^*}\rest_A$.

  Let $\epsilon$ be any positive number with $\epsilon < 1/\lvert G_{\widehat{m}}\rvert$. By
  Lemma~\ref{lem:KsigmaV}, there exists $t_0\in\NN$ such that for every $t\geq t_0$, there exists
  $\sigma^t\in\prod_{\ell=0}^{t-1} V(G_{m_\ell})$ such that $\nu^V(A\cap
  K_{\sigma^t,V})\geq(1-\epsilon)\cdot\nu^V(K_{\sigma^t,V})$.

  Let $t\in\NN$ be such that $t_0 < t\leq t_0 + 2^{\widehat{m}}$ and $m_t=\widehat{m}$ as provided
  by Remark~\ref{rmk:mell}. Let also
  \begin{align*}
    T
    & \df
    \left\{\tau\in\prod_{\ell=0}^t V(G_{m_\ell}) \;\middle\vert\;
    \tau\rest_{\{0,\ldots,t-1\}} = \sigma^t
    \right\}.
  \end{align*}
  Since $\{K_{\tau,V} \mid \tau\in T\}$ partitions $K_{\sigma^t,V}$ into $\lvert T\rvert = \lvert
  G_{m_t}\rvert = \lvert G_{\widehat{m}}\rvert$ parts of equal measure, it follows that for every
  $\tau\in T$, we have
  \begin{align*}
    \nu^V(A\cap K_{\tau,V})
    & \geq
    \left(
    1 - \epsilon - \frac{\lvert G_{\widehat{m}}\rvert - 1}{\lvert G_{\widehat{m}}\rvert}
    \right)
    \nu^V(K_{\sigma^t,V})
    >
    0.
  \end{align*}
  However, the definition of $W^{G^*}$ implies that if we pick $x_v\in K_{(\sigma^t,v),V}$ for each
  $v\in V(G_{\widehat{m}})$ (and pick any $y\in [0,1)^{\binom{V(G_{\widehat{m}})}{2}}$), then we get
  a copy of $G_{\widehat{m}}$ in $W^{G^*}$ and since for every $v\in G_{\widehat{m}}$, we have
  $\nu^V(A\cap K_{(\sigma^t,v),V}) > 0$ (as $(\sigma^t,v)\in T$), it follows that
  $G_{\widehat{m}}$ has positive density in $W^{G^*}\rest_A$.

  \medskip

  For the inductive step when $H$ is not prime, we must have $H = F_1^{v\to F_2}$ for some graphs
  $F_1,F_2$ and some $v\in V(F_1)$ with $\lvert F_1\rvert,\lvert F_2\rvert < \lvert H\rvert$. Since
  $F_1$ and $F_2$ are in $P(\phi_G^*)$ by inductive hypothesis and $P(\phi_G^*)$ is closed under
  substitutions by Lemma~\ref{lem:PW}, it follows that $H\in P(\phi_G^*)$.
\end{proof}

We can finally show Theorem~\ref{thm:graphpersistence} that says that a family $\cF$ of graphs with
at least one graph of size at least $2$ is strongly persistent if and only if it is persistent if
and only if it is closed under substitutions and under induced subgraphs.

\begin{proofof}{Theorem~\ref{thm:graphpersistence}}
  The
  implication~\ref{thm:graphpersistence:stronglypersistent}$\implies$\ref{thm:graphpersistence:persistent}
  is trivial: every strongly persistent family is obviously persistent.

  \medskip

  For the implication~\ref{thm:graphpersistence:persistent}$\implies$\ref{thm:graphpersistence:closed}, if
  $\cF = P(W)$ for some graphon $W$, then Lemma~\ref{lem:PW} implies that it is closed under
  substitutions and induced subgraphs.

  \medskip

  For the final
  implication~\ref{thm:graphpersistence:closed}$\implies$\ref{thm:graphpersistence:stronglypersistent},
  by Lemma~\ref{lem:graph:strongclosuresubst}, we have $\cF = S(\cP)$, where $\cP$ is the set of
  graphs in $\cF$ that are prime. Since $\cF$ contains at least one graph of size at least $2$,
  $\cP$ must also contain one such graph (since $S(\{K_0,K_1\}) = \{K_0,K_1\}$).

  Let $G = (G_m)_{m\in\NN}$ be an enumeration of all graphs in $\cP$ of size at least $2$
  (potentially with repetitions if $\cP$ is finite). Note that since $\cF = S(\cP)$ is closed under
  induced subgraphs, it follows that $\cF = S(\{K_0\}\cup\{G_m \mid m\in\NN\})$.

  By Lemma~\ref{lem:PphiGQphiG}, the repeating recursive blow-up $\phi_G^*$ relative to $G$ satisfies
  $P(\phi_G^*)=Q(\phi_G^*)=\cF$, hence $\cF$ is strongly persistent.
\end{proofof}

\section{Weak randomness in graphons}
\label{sec:WR:graph}

Recall that Theorem~\ref{thm:graphpersistence} characterizes all universal theories of graphs that
contain a universal weakly random graphon. In this section, we study a related natural question (see
Definition~\ref{def:WR} below): when does every graphon of a universal theory of graphs contain some
weakly random subgraphon? As mentioned in the introduction, this property is a generalization of
$\AEHP$ (see Definition~\ref{def:AEHPgraphs}). We also remind the reader that in this section we
drop the qualifiers ``weakly'' and ``strongly'' from ``closed under substitutions'' as they are
superfluous for graphs (see Remark~\ref{rmk:substarity2}).

\begin{definition}\label{def:WR}
  We say that a universal theory $T$ of graphs has the \emph{weakly random \Erdos--Hajnal property}
  (abbreviated $T\in\WR$) if every limit $W$ of $T$ contains a weakly random subgraphon.
\end{definition}

\begin{remark}
  Since trivial graphons are weakly random, we obviously have $\AEHP\subseteq\WR$. Furthermore, even
  though it is also natural to ask what is the class of theories that have \emph{some} weakly random
  limit, it is clear that this is precisely the set of non-degenerate universal theories. This is
  because Ramsey's Theorem implies that any non-degenerate theory $T$ of graphs must either contain
  arbitrarily large cliques, in which case $W\equiv 1$ is a limit of $T$, or contain arbitrarily
  large anti-cliques, in which case $W\equiv 0$ is a limit of $T$.
\end{remark}

Similarly to Lemma~\ref{lem:PWsubgraphon}, the following lemma is a simple but very powerful
observation about weakly random subgraphons.

\begin{lemma}\label{lem:weaklyrandomsubgraphon}
  A graphon $W$ over a space $\Omega=(X,\cA,\mu)$ contains a weakly random subgraphon $W'$ if and
  only if there exists a positive measure set $A\subseteq X$ such that $W\rest_A$ is weakly random.
\end{lemma}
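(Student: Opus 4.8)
The plan is as follows. The ``if'' direction is immediate: if $A\subseteq X$ has positive measure, then $W\rest_A=W_{\One_A}$ is by construction a subgraphon of $W$, so if $W\rest_A$ is weakly random then $W$ contains a weakly random subgraphon. For the converse, suppose $W'$ is a weakly random subgraphon of $W$. By the characterization of subgraphons from Section~\ref{sec:prelim} (\cite[Lemma~3.3]{CM22}), we have $\phi_{W'}=\phi_{W_f}$ for some measurable $f\colon X\to[0,1]$ with $\int_X f\,d\mu>0$; since $P$, $Q$, and hence weak randomness, depend only on the limit, $W_f$ is weakly random, i.e.\ $P(W_f)=Q(W_f)$. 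I would then set $A\df\{x\in X: f(x)>0\}$ (a positive measure set) and aim to show $W\rest_A$ is weakly random. Since $P(W\rest_A)\subseteq Q(W\rest_A)$ always holds, the whole task reduces to proving $Q(W\rest_A)\subseteq P(W\rest_A)$.

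The key tool, which I would isolate as a small lemma, is that $Q(W_g)$ depends only on the support $\{g>0\}$ of the reweighting $g$, not on its values (this is already used implicitly in the proof of Lemma~\ref{lem:PWsubgraphon}). The proof is routine: $\tind(H,W_g)$ is the integral over $X^{V(H)}$ of a fixed $[0,1]$-valued integrand built from $W$ and $H$ alone, against the product measure $\mu_g^{\otimes V(H)}$, whose Radon--Nikodym density with respect to $\mu^{\otimes V(H)}$ is positive exactly on $(\{g>0\})^{V(H)}$; hence $\tind(H,W_g)>0$ iff a fixed set depending only on $W$ and $H$ meets $(\{g>0\})^{V(H)}$ in positive measure, and the diagonal contributes nothing since $\Omega$ is atomless. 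In particular $Q(W\rest_A)=Q(W_{\One_A})=Q(W_f)$, so combining with the first paragraph we get $Q(W\rest_A)=Q(W_f)=P(W_f)$, and it remains only to prove $P(W_f)\subseteq P(W\rest_A)$.

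For that inclusion I would use one more elementary fact — a consequence of the subgraphon characterization together with the bookkeeping identity $(W_k)_g=W_{kg}$ for renormalized measures — namely that if $0\le k_1\le k_2$ a.e.\ and $\int k_1\,d\mu>0$, then $W_{k_1}$ is a subgraphon of $W_{k_2}$. Now fix $H\in P(W_f)$ and let $\phi_{W_h}$ be an arbitrary subgraphon of $W\rest_A$, so $\{h>0\}\subseteq A$ a.e.\ and $\int h\,d\mu>0$. Put $h'\df\min(h,f)$; since $\{h>0\}\subseteq\{f>0\}$ we have $\{h'>0\}=\{h>0\}$, hence $\int h'\,d\mu>0$ and $Q(W_{h'})=Q(W_h)$ by the support lemma. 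But $h'\le f$, so $W_{h'}$ is a subgraphon of $W_f$, and therefore $H\in P(W_f)\subseteq Q(W_{h'})=Q(W_h)$. Since $W_h$ was an arbitrary subgraphon of $W\rest_A$, this gives $H\in P(W\rest_A)$, finishing the argument.

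The genuinely new point — and the one I expect to be the crux — is this transfer of weak randomness from $W_f$ down to $W\rest_A$: a subgraphon $W_h$ of $W\rest_A$ need not itself be a subgraphon of $W_f$, because its weight $h$ may fail to be dominated by any multiple of $f$. The resolution is that the clipped weight $\min(h,f)$ has the same support as $h$ (hence the same $Q$) while being dominated by $f$, so the corresponding subgraphon of $W_f$ carries exactly the same positive-density information as $W_h$. Everything else is routine bookkeeping with the explicit description of subgraphons as reweightings $W_g$, and I do not anticipate any analytic difficulty beyond keeping track of the various renormalized measures.
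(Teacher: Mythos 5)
Your proof is correct, and it rests on the same core observation as the paper's argument, namely that $Q(W\rest_g)$ depends only on the support $\{g>0\}$ of the reweighting; the difference is in how the final transfer from $W\rest_f$ to $W\rest_A$ is organized. The paper's proof is essentially two lines: it applies Lemma~\ref{lem:PWsubgraphon} to both $W\rest_A$ and $W\rest_f$, so that $P(W\rest_A)$ and $P(W\rest_f)$ are intersections of $Q(W\rest_A\rest_B)$, respectively $Q(W\rest_f\rest_B)$, over the same family of positive-measure sets $B$, and these coincide term by term because both restrictions have support $A\cap B$; weak randomness of $W\rest_A$ then follows immediately from that of $W\rest_f$. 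You instead treat an arbitrary weighted subgraphon $W\rest_h$ of $W\rest_A$ directly, clip to $h'=\min(h,f)$, use the identity $(W\rest_k)\rest_g=W\rest_{kg}$ to see that weight domination $h'\leq f$ makes $W\rest_{h'}$ a subgraphon of $W\rest_f$, and invoke the support fact to get $Q(W\rest_{h'})=Q(W\rest_h)$; all of these steps are sound (in particular the domination criterion is correct, since $h'\leq f$ lets you write $h'=gf$ with $g$ taking values in $[0,1]$). The net effect is a somewhat longer route that re-derives, via the clipping and domination steps, exactly the reduction that Lemma~\ref{lem:PWsubgraphon} already supplies — applying that lemma to $W\rest_A$ as well would have let you dispense with $\min(h,f)$ altogether — but in exchange your write-up makes explicit two small facts (the bookkeeping identity for iterated reweightings and the domination criterion for subgraphons) that the paper leaves implicit.
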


\begin{proof}
  The backward implication follows because $W\rest_A$ is a subgraphon of $W$.

  \medskip

  For the forward implication, we know that $\phi_{W'} = \phi_{W\rest_f}$ for some measurable
  function $f\colon X\to [0,1]$. Let $A\df\{x\in X \mid f(x) > 0\}$. We claim that $W\rest_A$ is
  weakly random. Indeed, this follows from Lemma~\ref{lem:PWsubgraphon} and since for every
  $B\subseteq X$ of positive $\mu_f$-measure, we have $Q(W\rest_A\rest_B) = Q(W\rest_f\rest_B)$.
\end{proof}

Our next main objective is to characterize the class $\WR$ under the assumption that the set of
graphs of the theory is closed under substitutions.

\begin{theorem}\label{thm:graphs:WR}
  Let $T$ be a universal theory of graphs such that $\cM[T]$ is closed under substitutions. Then
  $T\in\WR$ if and only if $\cM[T]$ is primally almost finite.
\end{theorem}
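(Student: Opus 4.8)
The plan is to prove the two directions separately, reusing the machinery from Section~\ref{sec:pers:graph} and the recursive blow-up construction from Definition~\ref{def:graphrecursiveblowup}. First I would dispose of the easy reductions: by Lemma~\ref{lem:weaklyrandomsubgraphon} a graphon $W$ contains a weakly random subgraphon iff some $W\rest_A$ with $\mu(A)>0$ is weakly random, and by Lemma~\ref{lem:PWsubgraphon} weak randomness of $W\rest_A$ means $P(W\rest_A)=Q(W\rest_A)$. So the whole theorem is about whether, inside every limit $W$ of $T$, one can localize to a positive-measure piece on which the set of positively-dense graphs stabilizes under all further localizations. Note also that since $\cM[T]$ is closed under substitutions and under induced subgraphs, Theorem~\ref{thm:graphpersistence} applies to any subfamily that is itself closed under substitutions and induced subgraphs, and by Lemma~\ref{lem:strongclosuresubst} such subfamilies are classified by their sets of prime graphs (subfamilies of the prime graphs $\cP$ of $\cF\df\cM[T]$ that are closed under prime induced subgraphs).

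For the backward direction (primally almost finite $\implies\WR$): let $W$ be a limit of $T$. I want to find $A$ with $P(W\rest_A)=Q(W\rest_A)$. The idea is to descend: starting from $W$, as long as $W\rest_A$ is not weakly random there is some graph $H\in Q(W\rest_A)$ and a further $B\subseteq A$ with $H\notin Q(W\rest_B)$, so $Q(W\rest_B)\subsetneq Q(W\rest_A)$ is a proper sub-family closed under substitutions and induced subgraphs (properness at the level of prime graphs). Since the descending chain of such subfamilies corresponds, via Lemma~\ref{lem:strongclosuresubst}, to a strictly descending chain of prime-graph families, and since $\cP$ is almost finite (no infinite antichains), I would argue this descent must terminate — here Lemma~\ref{lem:almostfinite} and well-foundedness of the induced-subgraph order on each "level set" of prime graphs is the key combinatorial input, preventing an infinite strictly decreasing chain. (One must be slightly careful: a single descending step may not remove a prime graph outright, but it removes all subgraphons realizing some $H$, and $H$ being non-prime decomposes into primes; eventually a prime must be lost, and almost-finiteness bounds how long this can go.) When the descent stops we have the desired weakly random $W\rest_A$.

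For the forward direction (\WR{} $\implies$ primally almost finite): I argue the contrapositive. Suppose $\cM[T]=\cF$ is not primally almost finite, so its prime graphs $\cP$ contain an infinite antichain $G_6,G_7,\dots$ (in the induced-subgraph order) — as in Example~\ref{ex:substitutiongeneration}. The plan is to build a single limit $W$ of $T$ with \emph{no} weakly random subgraphon, by taking a recursive blow-up $\phi_G^*$ (Definition~\ref{def:graphrecursiveblowup}) relative to a cleverly chosen sequence $G$ that "hides" the antichain: by Lemma~\ref{lem:PphiGQphiG}, $P(\phi_G^*)=Q(\phi_G^*)=S(\{K_0\}\cup\{G_m\mid m\in\NN\})$, which is indeed inside $\cF$ since $\cF=S(\cP)$. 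The subtlety is that $\phi_G^*$ itself \emph{is} weakly random, so I cannot use it directly; instead I want a limit all of whose subgraphons fail to be weakly random. The right construction is to assemble, in a single graphon, copies of $W^{G^{(k)}}$ for a family of sequences $G^{(k)}$ realizing progressively more of the antichain, arranged so that every positive-measure set $A$ still "sees" an infinite tail of the antichain not yet realized in $W\rest_A$ — concretely, one should use the non-repeating recursive blow-up $\phi_G$ (rather than $\phi_G^*$) along the antichain $G=(G_{m+6})_{m\in\NN}$, so that each level introduces a new incomparable prime and, crucially, passing to any subgraphon $W^G\rest_A$ one loses only finitely many early levels (by Lemma~\ref{lem:KsigmaV}, every positive-measure $A$ contains almost-full basic clopen sets $K_{\sigma,V}$ of unbounded depth $t$), hence $Q(W^G\rest_A)$ still misses (say) $G_{t'+6}$ for the relevant depth while $Q(W^G\rest_B)$ for a deeper $B$ recovers it — producing a proper inclusion $Q(W^G\rest_B)\subsetneq Q(W^G\rest_A)$ and contradicting weak randomness of $W^G\rest_A$. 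Since this holds for \emph{every} $A$, $W^G$ has no weakly random subgraphon, so $T\notin\WR$.

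\textbf{Main obstacle.} The hard part is the backward direction: turning "no infinite antichain of primes" into a genuine termination argument for the non-weakly-random descent. One must track what a single descent step $Q(W\rest_A)\supsetneq Q(W\rest_B)$ does at the level of prime graphs — it need not immediately delete a prime, only force the eventual loss of one — and combine almost-finiteness with the well-foundedness of $\subseteq$ on finite structures to rule out infinite descent; formalizing this (perhaps via a rank/ordinal argument on the poset of prime-graph families, or by extracting from an infinite descent an infinite antichain of primes) is where the real work lies. The forward direction is comparatively mechanical once the right sequence $G$ along the antichain is chosen and Lemmas~\ref{lem:KsigmaV}, \ref{lem:QphiG} and \ref{lem:PphiGQphiG} are invoked.
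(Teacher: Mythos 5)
Your backward direction is essentially the paper's argument (Lemma~\ref{lem:graphs:primallyalmostfinite->WR}): assuming some limit of $T$ has no weakly random subgraphon, one descends through sub-objects, at each step removing a prime graph from $Q$, and the resulting sequence of primes $(R_n)$, in which no $R_n$ is an induced subgraph of any later $R_m$, contradicts almost finiteness via Lemma~\ref{lem:almostfinite}. The ``main obstacle'' you flag is in fact immediate from Lemma~\ref{lem:PW}: if $\psi$ is a non-weakly-random sub-object and $H\in Q(\psi)\setminus P(\psi)$, then, since $P(\psi)$ is closed under substitutions and induced subgraphs, $H$ would otherwise lie in the substitution closure of its prime induced subgraphs and hence in $P(\psi)$; so already at this single step some prime induced subgraph of $H$ lies in $Q(\psi)\setminus P(\psi)$, and one passes to a further sub-object in which it has density zero. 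No rank or ordinal argument is needed.

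The forward direction, however, has a genuine gap. You take an arbitrary infinite antichain $(G_n)$ of primes of $T$ and use the non-repeating blow-up $\phi_G$ along it, claiming that every positive-measure restriction fails weak randomness. That claim is false in general: the construction needs, besides ``each term is an induced subgraph of only finitely many terms'', the product condition $\prod_n\bigl(1-1/\lvert G_n\rvert\bigr)=0$, which is exactly what forces every positive-measure set $A$ to positively see some antichain member (a set avoiding copies at every level has measure at most that product). Remark~\ref{rmk:recCsquares} and Proposition~\ref{prop:WRproductcond} give a concrete counterexample to your construction: for the antichain of primes $(C_{n^2+5})_{n\in\NN}$, the non-repeating blow-up does contain a weakly random subgraphon (delete one vertex per level; what remains is generated by paths), and indeed its theory is in $\WR$. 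Your intended contradiction is also stated with the inclusion backwards: for $B\subseteq A$ one always has $Q(W\rest_B)\subseteq Q(W\rest_A)$, so a deeper $B$ can never ``recover'' a graph missed by $A$; the correct mechanism is that $A$ must positively see some $G_t$ (this is where the product condition enters), while a sufficiently deep basic clopen set meeting $A$ in positive measure has $G_t$-density zero because $G_t$ is prime and embeds in no later level. The repair is the paper's: repeat each $G_n$ exactly $r_n$ times with $(1-1/\lvert G_n\rvert)^{r_n}\leq 1/2$, which keeps every term occurring in only finitely many positions yet makes the product vanish, and then apply Proposition~\ref{prop:noweaklyrandomsubgraphon} together with Lemma~\ref{lem:QphiG} to get a limit of $T$ with no weakly random sub-object.
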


Before we prove Theorem~\ref{thm:graphs:WR} above, let us observe a simple corollary of it.

\begin{corollary}
  There exists a universal theory of graphs $T$ with $\cM[T]\subsetneq\cM[\TGraph]$ and $T\notin\WR$.
\end{corollary}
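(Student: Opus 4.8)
The statement we must prove is: there exists a universal theory of graphs $T$ with $\cM[T]\subsetneq\cM[\TGraph]$ and $T\notin\WR$. Since $\WR$ is closed under nothing obvious but $\AEHP\subseteq\WR$, the natural route is to exhibit a theory $T$ whose class $\cM[T]$ is closed under substitutions and then invoke Theorem~\ref{thm:graphs:WR}: all we need is a class $\cM[T]$ that is closed under substitutions and under induced subgraphs, is not all graphs, and is \emph{not} primally almost finite. Example~\ref{ex:substitutiongeneration} hands us exactly such a class on a silver platter.

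\textbf{Key steps.} First I would set $\cP'\df\{K_0\}\cup\{G_n\mid n\geq 6\}$ where $G_n$ is the path-with-four-cycles prime graph from Example~\ref{ex:substitutiongeneration}, and let $\cF\df S(\cP')$ be its strong closure under substitutions. By construction $\cF$ is closed under substitutions, and since $K_0\in\cF$ and $\cF$ is weakly (equivalently, strongly) closed under substitutions, Lemma~\ref{lem:substitutionK0} gives that $\cF$ is also closed under substructures. Hence $T\df\Th(\cF)$ is a well-defined universal theory of graphs with $\cM[T]=\cF$ closed under substitutions. Second, I would check $\cM[T]\subsetneq\cM[\TGraph]$: this is immediate because, e.g., the graphs $\overline{C_n}$ for large $n$ (or any prime graph not appearing as an induced subgraph of any $G_n$ and not buildable by substitution) are missing — in fact $\cF$ contains only finitely many prime graphs of each size that are either one of the $G_n$ or a prime induced subgraph thereof, so it is a proper subclass. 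Third, and this is the heart: $\cM[T]$ is not primally almost finite. By Example~\ref{ex:substitutiongeneration} each $G_n$ ($n\geq 6$) is prime, the $G_n$ are pairwise incomparable in the induced subgraph partial order, and they all lie in $\cF$; hence $\{G_n\mid n\geq 6\}$ is an infinite antichain of prime graphs in $\cF$, so the set $\cP$ of prime graphs of $\cF$ is not almost finite (Definition~\ref{def:almostfinite}). By Theorem~\ref{thm:graphs:WR} applied to this $T$ (whose $\cM[T]$ is closed under substitutions), $T\notin\WR$. This completes the proof.

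\textbf{Main obstacle.} The only genuinely non-routine point is verifying the two combinatorial facts about the $G_n$ that Example~\ref{ex:substitutiongeneration} asserts: that each $G_n$ is prime, and that they form an antichain under induced subgraphs. The primality of $G_n$ for $n\geq 6$ can be seen by noting that a non-trivial substitution would force a proper induced subgraph that is a "module" of size at least $2$, but the pendant $C_4$'s attached at positions $\{2,3\}$ and $\{n-2,n-1\}$ together with the internal path structure leave no non-trivial module: any two distinct vertices have distinct neighborhoods outside any candidate module because the path forces asymmetry. The antichain property follows because an induced copy of $G_m$ inside $G_n$ with $m<n$ would have to match the two pendant $C_4$ gadgets at the two ends of an induced path of the right length, but an induced path of length $m$ inside $G_n$ sitting at the correct distance from both gadget-pairs forces $m=n$. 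Since Example~\ref{ex:substitutiongeneration} already states both facts, I would simply cite it rather than re-deriving them; thus the corollary is essentially immediate once Theorem~\ref{thm:graphs:WR} is in hand, and the proof can be written in a few lines.

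\begin{proof}
  Let $G_n$ ($n\geq 6$) be the prime graphs of Example~\ref{ex:substitutiongeneration} and let
  $\cF\df S(\{K_0\}\cup\{G_n\mid n\geq 6\})$. By construction $\cF$ is closed under substitutions,
  and since $K_0\in\cF$, Lemma~\ref{lem:substitutionK0} shows $\cF$ is also closed under
  substructures (recall that for graphs the weak and strong closures coincide by
  Remark~\ref{rmk:substarity2}). Hence $T\df\Th(\cF)$ is a universal theory of graphs with
  $\cM[T]=\cF$ closed under substitutions.

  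We have $\cM[T]\subsetneq\cM[\TGraph]$: by Lemma~\ref{lem:primesubstructure} the prime graphs in
  $\cF$ are exactly the prime induced subgraphs of the graphs $G_n$, so, for instance, $C_5$ is not
  in $\cF$ (it is prime but is not an induced subgraph of any $G_n$, since $G_n$ is easily seen to
  contain no induced $C_5$), whence $\cF$ is a proper subclass of all finite graphs.

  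Finally, $\cM[T]$ is not primally almost finite: by Example~\ref{ex:substitutiongeneration} the
  graphs $G_n$ ($n\geq 6$) are prime, lie in $\cF$, and are pairwise incomparable in the induced
  subgraph partial order, so they form an infinite antichain of prime graphs in $\cF$. By
  Definition~\ref{def:almostfinite}, the set of prime graphs of $\cF$ is not almost finite, so
  $\cM[T]$ is not primally almost finite. Since $\cM[T]$ is closed under substitutions,
  Theorem~\ref{thm:graphs:WR} gives $T\notin\WR$.
\end{proof}
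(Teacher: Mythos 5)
Your proof is correct and follows essentially the same route as the paper: build $\cF=S(\{K_0\}\cup\cA)$ for an infinite antichain $\cA$ of prime graphs from Example~\ref{ex:substitutiongeneration}, verify closure and properness (via Lemma~\ref{lem:primesubstructure} applied to a prime graph outside the generators), and conclude $T=\Th(\cF)\notin\WR$ from Theorem~\ref{thm:graphs:WR}. The only difference is cosmetic: the paper takes $\cA=\{C_n\mid n\geq 5\}$ and excludes $G_6$, whereas you take $\cA=\{G_n\mid n\geq 6\}$ and exclude $C_5$.
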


\begin{proof}
  The family $\{C_n \mid n\geq 5\}$ of cycles of length at least $5$ is a family of prime graphs
  that is not almost finite (see Example~\ref{ex:substitutiongeneration}). Let
  $\cF=S(\{K_0\}\cup\{C_n \mid n\geq 5\})$ and since $\cF$ is closed under substitutions and induced
  subgraphs but is not primally almost finite, the universal theory $T\df\Th(\cF)$ with
  $\cM[\Th(\cF)]=\cF$ satisfies $T\notin\WR$ by Theorem~\ref{thm:graphs:WR}. It is also easy to see
  that $\cM[T]\subsetneq\cM[\TGraph]$, as for example the prime graph $G_6$ of
  Example~\ref{ex:substitutiongeneration} is not in $\cM[T]$ by
  Lemma~\ref{lem:graph:primesubstructure}.
\end{proof}

We start by proving the easier direction of Theorem~\ref{thm:graphs:WR} in the lemma below. In fact,
for this direction, we do not even need $\cM[T]$ to be closed under substitutions.

\begin{lemma}\label{lem:graphs:primallyalmostfinite->WR}
  If $T$ is a universal theory of graphs such that $\cM[T]$ is primally almost finite, then
  $T\in\WR$.
\end{lemma}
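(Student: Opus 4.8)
Let $T$ be a universal theory of graphs with $\cM[T]$ primally almost finite, and let $W$ be an arbitrary limit of $T$; we must produce a weakly random subgraphon. The idea is to iteratively pass to subgraphons in a way that can only shrink the set of positively-dense graphs, and argue that the process must stabilize precisely because there is no infinite antichain of prime graphs. Concretely, starting from $W_0 \df W$, if $W_i$ is not already weakly random, then by Lemma~\ref{lem:weaklyrandomsubgraphon} there is a positive measure set $A_i$ with $Q(W_i\rest_{A_i}) \subsetneq Q(W_i)$ — some graph $H_i$ drops out — and we set $W_{i+1}\df W_i\rest_{A_i}$. This yields a strictly decreasing chain $Q(W_0)\supsetneq Q(W_1)\supsetneq\cdots$ of hereditary (substructure-closed) families, each still containing all of $\cM[T]$'s… wait — not quite; rather, each $Q(W_i)$ is hereditary and the witnesses $H_i\in Q(W_i)\setminus Q(W_{i+1})$.

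The key observation is that, by Lemma~\ref{lem:PW}, $P(W)$ is closed under substitutions and induced subgraphs, and more relevantly, each $Q(W_i)$ lies between $P(W)$ and $Q(W)$. I would first reduce to the prime structure content: a hereditary family $\cG$ is determined, among hereditary families, by its set of prime members is false in general, but the relevant finiteness is. The point is that the witnesses $H_i$ can be chosen to be \emph{prime}: if $H_i = F_1^{v\to F_2}$ is not prime with $F_1,F_2\in Q(W_i)$ of smaller size, then since $Q(W_{i+1})$ is closed under substitutions — no, $Q$ of a general graphon need not be substitution-closed. Here is where closure of $\cM[T]$ under substitutions is \emph{not} used (the lemma explicitly says so), so instead I argue directly: among the finitely-many-at-each-size graphs that leave, pick a minimal-size one $H_i$; minimality plus the fact that $H_i$'s proper induced subgraphs remain in $Q(W_{i+1})$ forces, via Remark~\ref{rmk:increasingsubstitution} and the analysis as in the proof of Lemma~\ref{lem:QphiG}, that $H_i$ is prime. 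Thus we extract an infinite sequence of \emph{prime} graphs $H_0, H_1, \dots$ with the property that $H_m\notin Q(W_{m+1})$ while $H_m\in Q(W_m)$; since $Q(W_{m})\supseteq Q(W_{m+1})\supseteq Q(W_{m'})$ for $m<m'$, we get $H_{m'}\in Q(W_{m'})\subseteq Q(W_{m+1})\not\ni H_m$, so $H_m$ is not an induced subgraph of $H_{m'}$ (as $Q(W_{m+1})$ is hereditary). Hence $\{H_m\}_{m\in\NN}$ is an infinite antichain of prime graphs in $\cM[T]\subseteq Q(W)$ — contradicting primal almost finiteness via Lemma~\ref{lem:almostfinite}.

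Wait — I need $H_m\in\cM[T]$, i.e.\ $H_m$ is a graph of the theory, not merely a positive graph of some subgraphon. But every subgraphon of a limit of $T$ is again a limit of $T$ (subgraphons of $T$-ons are $T$-ons), so $Q(W_i)\subseteq\cM[T]$ for all $i$; thus each prime $H_i\in\cM[T]$ and the antichain lives in the prime graphs of $\cM[T]$, exactly as needed. So the argument runs: assume $W$ has no weakly random subgraphon; build the infinite descending chain of $Q$'s; at each stage extract a minimal-size leaving graph and show it is prime; observe the resulting prime graphs form an infinite antichain in $\cM[T]$; invoke Lemma~\ref{lem:almostfinite} and primal almost finiteness for the contradiction. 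Therefore $W$ has a weakly random subgraphon, and $T\in\WR$.

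\textbf{Main obstacle.} The delicate point is guaranteeing that the graph $H_i$ that ``drops out'' when passing to $W_i\rest_{A_i}$ can be taken prime, and that the sequence of such primes is genuinely an antichain. For primality: I expect the cleanest route is to choose $H_i$ of \emph{minimum size} among $Q(W_i)\setminus Q(W_i\rest_{A_i})$ and then run the non-primality-forces-descent argument — if $H_i$ were a nontrivial substitution $F_1^{v\to F_2}$, then $F_1,F_2$ are proper induced subgraphs hence still in $Q(W_i\rest_{A_i})$, and one reconstructs a positive-density copy of $H_i$ in $W_i\rest_{A_i}$ by the same Fubini/nested-copy bookkeeping used in Lemma~\ref{lem:PW}'s proof, contradicting $H_i\notin Q(W_i\rest_{A_i})$. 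Wait, that reconstruction needs $Q$-membership to propagate under substitution \emph{within the fixed subgraphon $W_i\rest_{A_i}$}, which is exactly the content of the first half of Lemma~\ref{lem:PW} applied with the subgraphon in the role of ``$W$'' and noting $F_1, F_2\in P$ is too strong — rather I need $F_1,F_2\in Q(W_i\rest_{A_i})$ implies $F_1^{v\to F_2}\in Q(W_i\rest_{A_i})$, which is \emph{not} true for arbitrary graphons. So instead I should not insist on reconstructing inside the fixed subgraphon; rather, I carry along \emph{all} positive measure sets at once: redefine the process using $P$-type intersections. The robust fix: by Lemma~\ref{lem:PWsubgraphon}, $W$ is weakly random iff $Q(W)=\bigcap_A Q(W\rest_A)=P(W)$, and $P(W)$ \emph{is} substitution-closed by Lemma~\ref{lem:PW}; so if $W$ is not weakly random, $Q(W)\setminus P(W)$ is nonempty, pick $H$ in it of minimum size — its proper subgraphs lie in $P(W)$, and if $H=F_1^{v\to F_2}$ nontrivially then $F_1,F_2\in P(W)$, so $H\in P(W)$ by Lemma~\ref{lem:PW}, contradiction; hence $H$ is prime. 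Then pass to a subgraphon $W'$ with $H\notin Q(W')$ (exists since $H\notin P(W)$) — but now $H\in\cM[T]$ is a prime graph of the theory, and iterating (on $W'$, which is again a limit of $T$) produces primes $H_0,H_1,\ldots\in\cM[T]$ with $H_m\notin Q(W_{m+1})\supseteq Q(W_{m'})\ni H_{m'}$ for $m<m'$, an infinite antichain of primes, contradicting primal almost finiteness. The one thing to be careful about is that the chain of subgraphons really does give $Q(W_{m+1})\supseteq Q(W_{m+2})\supseteq\cdots$ — true since each $W_{m+2}$ is a subgraphon of $W_{m+1}$ and $Q$ is monotone under subgraphons — and that we never run out (if at some stage $W_m$ is weakly random we are done, giving $T\in\WR$ anyway). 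This is the crux; the remainder is routine.
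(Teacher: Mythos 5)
Your final, self-corrected argument is correct and is essentially the paper's proof: assume a limit of $T$ with no weakly random sub-object, use the substitution-closure of $P$ (Lemma~\ref{lem:PW}) to find a \emph{prime} graph in $Q(W_m)\setminus P(W_m)$ at each stage, pass to a subgraphon in which it loses positive density, and read off an infinite antichain of prime graphs in $\cM[T]$, contradicting primal almost finiteness via Lemma~\ref{lem:almostfinite}. The only cosmetic differences are that the paper argues by contrapositive with general sub-objects $\phi_n$ and extracts the prime witness from the fact that every graph lies in $S$ of its prime induced subgraphs, whereas you restrict to sets $W\rest_A$ and use a minimum-size argument — immaterial variants of the same proof.
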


\begin{proof}
  We prove the lemma by its contra-positive. Suppose $T\notin\WR$ and let us show that the set $\cP$
  of graphs of $T$ that are prime is not almost finite. By Lemma~\ref{lem:graph:almostfinite}, it is
  sufficient to construct a sequence $(R_n)_{n\in\NN}$ in $\cP$ such that for every $n,m\in\NN$, if
  $n < m$, then $R_n$ is not an induced subgraph of $R_m$.

  Since $T\notin\WR$, there must exist a limit $\phi\in\HomT{T}$ of $T$ that does not contain any
  weakly random sub-object.

  We now construct sequences $(\phi_n)_{n\in\NN}$ of sub-objects of $\phi$ and $(R_n)_{n\in\NN}$ of
  prime graphs in $\cM[T]$ satisfying:
  \begin{enumerate}
  \item For every $n\in\NN$, $\phi_{n+1}$ is a sub-object of $\phi_n$.
  \item For every $n\in\NN$, $R_n\in Q(\phi_n)\setminus Q(\phi_{n+1})$.
  \end{enumerate}
  We construct these sequences inductively as follows.
  \begin{enumerate}[label={\arabic*.}]
  \item Set $\phi_0\df\phi$.
  \item Given $\phi_n\in\HomT{T}$, since $\phi_n$ is a sub-object of $\phi$, we know that $\phi_n$
    is not weakly random, so there exists $G_n\in Q(\phi_n)\setminus P(\phi_n)$. Let $\cP_n$ be the
    set of induced subgraphs of $G_n$ that are prime. Since by Lemma~\ref{lem:PW}, $P(\phi_n)$ is
    closed under substitutions and $G_n\in S(\cP_n)$, there exists $R_n\in\cP_n\setminus P(\phi_n)$
    and since $Q(\phi_n)$ is closed under induced subgraphs, we get $R_n\in Q(\phi_n)\setminus
    P(\phi_n)$. From the definition of $P(\phi_n)$, it follows that there exists a sub-object
    $\phi_{n+1}$ of $\phi_n$ (hence $\phi_{n+1}$ is also a sub-object of $\phi$) such that $R_n\in
    Q(\phi_n)\setminus Q(\phi_{n+1})$.
  \end{enumerate}

  Let now $n,m\in\NN$ be such that $n < m$. By induction, we know that $\phi_m$ is a sub-object of
  $\phi_{n+1}$, so $Q(\phi_m)\subseteq Q(\phi_{n+1})$, which in turn implies that $R_n\in
  Q(\phi_n)\setminus Q(\phi_m)$. Since $R_m\in Q(\phi_m)$ and $Q(\phi_m)$ is closed under induced
  subgraphs, it follows that $R_n$ is not an induced subgraph of $R_m$, concluding the proof.
\end{proof}

For the other side of the characterization of $\WR$, the proposition below shows that under
appropriate hypotheses, the recursive blow-up $\phi_R$ of Definition~\ref{def:graphrecursiveblowup}
is a graphon without any weakly random subgraphon.

\begin{proposition}\label{prop:noweaklyrandomsubgraphon}
  Let $R = (R_n)_{n\in\NN}$ be a sequence of prime graphs of size at least $2$ such that for each
  $n\in\NN$, there exist at most finitely many $m\in\NN$ such that $R_n$ is an induced subgraph of
  $R_m$. Suppose also that $\prod_{n\in\NN} (1 - 1/\lvert R_n\rvert) = 0$. Then $\phi_R$ does not
  contain any weakly random sub-object.
\end{proposition}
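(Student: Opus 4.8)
The plan is to show that no subgraphon of $\phi_R$ is weakly random by exhibiting, for any given subgraphon, some prime graph $R_n$ that is persistently positive in it but not positive in some further subgraphon. Concretely, let $W^R$ be the graphon over the Cantor space $\Omega^V$ (with $V_\ell = V(R_\ell)$) defined by~\eqref{eq:WG}, and suppose toward a contradiction that some subgraphon of $\phi_R$ is weakly random. By Lemma~\ref{lem:weaklyrandomsubgraphon} it suffices to handle subgraphons of the form $W^R\rest_A$ for a positive-measure set $A\subseteq\Omega^V$; so assume $W^R\rest_A$ is weakly random. The first step is to observe that, by the same localization argument used in Lemma~\ref{lem:PphiGQphiG}, every $R_n$ lies in $Q(W^R\rest_A)$: applying Lemma~\ref{lem:KsigmaV} with a small $\epsilon$, we find long strings $\sigma$ along which $A$ has density close to $1$ inside $K_{\sigma,V}$; refining one more level using the partition of $K_{\sigma,V}$ by $\{K_{(\sigma,v),V}\mid v\in V_{|\sigma|}\}$, the level-$|\sigma|$ graph is $R_{|\sigma|}$, and if we want a specific $R_n$ we instead keep descending and use the prime-structure argument of Lemma~\ref{lem:QphiG} — but the cleanest route is: since $W^R\rest_A$ is weakly random, $Q(W^R\rest_A)=P(W^R\rest_A)$, and $P$ is closed under substitutions (Lemma~\ref{lem:PW}); since $Q(W^R)\supseteq\{R_n\mid n\in\NN\}$ and these generate everything, it suffices to show $Q(W^R\rest_A)$ already contains $R_n$ for at least the $n$'s we need. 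I would in fact prove directly, via Lemma~\ref{lem:KsigmaV}, that $R_{m_\ell}\in Q(W^R\rest_A)$ for every level $\ell$ beyond the $t_0$ given by the lemma, hence $R_n\in Q(W^R\rest_A)$ for infinitely many $n$, and then for \emph{all} $n$ because $Q$ is closed under induced subgraphs and every $R_n$ with small index is an induced subgraph of $R_m$ for only finitely many $m$ — wait, that hypothesis goes the wrong way; so I must argue more carefully.

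Here is the correct engine. The product hypothesis $\prod_n(1-1/|R_n|)=0$ means the ``failure probabilities'' of hitting a single chosen coordinate at each level are not summable-to-a-positive-product; equivalently, for a random point of $\Omega^V$, with probability $1$ there is no coordinate $\ell$ at which we can ``avoid'' a prescribed single vertex forever — more usefully, it guarantees that after sufficiently many levels the conditional measure of $A$ inside the basic clopen sets cannot stay bounded away from $0$ uniformly, forcing many levels $\ell$ where $A$ is ``spread out'' across all of $V_\ell$. The second and central step is to pick a level $\ell^\star$ and a string $\sigma$ of length $\ell^\star$ with $\nu^V(A\cap K_{\sigma,V})\ge(1-\epsilon)\nu^V(K_{\sigma,V})$ (Lemma~\ref{lem:KsigmaV}) such that, additionally, $R_{\ell^\star}$ is an induced subgraph of $R_m$ for only finitely many $m$; then I claim $R_{\ell^\star}\in Q(W^R\rest_A)$ — indeed inside $K_{\sigma,V}$ the level-$\ell^\star$ structure is exactly $R_{\ell^\star}$ and we can place the $|R_{\ell^\star}|$ representative points in distinct blocks $K_{(\sigma,v),V}$, each of which meets $A$ in positive measure once $\epsilon<1/|R_{\ell^\star}|$ (exactly as in Lemma~\ref{lem:PphiGQphiG}). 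So $R_{\ell^\star}\in Q(W^R\rest_A)=P(W^R\rest_A)$.

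Now I derive the contradiction: since $R_{\ell^\star}\in P(W^R\rest_A)$, it must be positive in \emph{every} further subgraphon of $W^R\rest_A$. But using the hypothesis — $R_{\ell^\star}$ embeds into $R_m$ for only finitely many $m$ — I build a subgraphon where $R_{\ell^\star}$ has zero density: restrict to $A\cap K_{\tau,V}$ for a long string $\tau$ extending $\sigma$ that, beyond level $\ell^\star$, is chosen to pass only through levels $m_\ell$ with index so large that $R_{\ell^\star}\not\hookrightarrow R_{m_\ell}$, and such that $A$ still has positive measure in this clopen set (possible by Lemma~\ref{lem:KsigmaV} applied again, or by a density-point argument, provided $\prod(1-1/|R_n|)=0$ is used to ensure $A$ genuinely spreads — this is where that hypothesis is essential, to guarantee we can keep finding such $\tau$ with $A\cap K_{\tau,V}$ of positive measure rather than $A$ collapsing onto a fixed branch). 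In the subgraphon $W^R\rest_{A\cap K_{\tau,V}}$, by the prime-embedding analysis of Lemma~\ref{lem:QphiG} (a prime graph has positive density only if it embeds into some $R_{m_\ell}$ appearing along the branch, or is smaller), $R_{\ell^\star}$ has density $0$ — contradicting $R_{\ell^\star}\in P(W^R\rest_A)$.

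The main obstacle, which I would treat with care, is precisely the last point: ensuring that the hypothesis $\prod_{n\in\NN}(1-1/|R_n|)=0$ is what lets us find, for arbitrarily large prescribed ``avoidance sets'' of levels, a positive-measure sub-clopen-set of $A$ whose branch avoids those levels. If instead $\prod(1-1/|R_n|)>0$, then $A$ could be (essentially) a single branch of the tree, along which only one graph $R_n$ per level appears and the persistence structure is rigidly that branch's generated class — exactly the situation where a weakly random subgraphon \emph{does} exist. So the quantitative content of that product going to zero must be converted, via a Borel--Cantelli-style computation on the Cantor measure $\nu^V$ together with Lemma~\ref{lem:KsigmaV}, into the statement ``$A$ cannot be concentrated on any finite-branching sub-tree,'' and hence we always have room to steer the branch away from the finitely many bad levels. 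Everything else — the positive-density arguments and the prime-embedding characterization of $Q$ — is bookkeeping on top of Lemmas~\ref{lem:KsigmaV}, \ref{lem:QphiG}, \ref{lem:PW} and~\ref{lem:PWsubgraphon} already established.
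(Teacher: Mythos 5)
There is a genuine gap at the central step, namely your proof that some $R_{\ell^\star}\in Q(W^R\rest_A)$. You fix $\epsilon$, invoke Lemma~\ref{lem:KsigmaV} to get a threshold $t_0$ and a string $\sigma$ of some length $\ell^\star\geq t_0$ with $\nu^V(A\cap K_{\sigma,V})\geq(1-\epsilon)\nu^V(K_{\sigma,V})$, and then need $\epsilon<1/\lvert R_{\ell^\star}\rvert$ so that every child block $K_{(\sigma,v),V}$ meets $A$ in positive measure. But $\epsilon$ must be chosen \emph{before} $t_0$, hence before $\ell^\star$, and the hypotheses allow $\lvert R_n\rvert\to\infty$ (e.g.\ $\lvert R_n\rvert\sim n$ gives $\sum_n 1/\lvert R_n\rvert=\infty$, so $\prod_n(1-1/\lvert R_n\rvert)=0$); then for your fixed $\epsilon$ every level $\ell\geq t_0$ may have $\lvert R_\ell\rvert>1/\epsilon$, and $(1-\epsilon)$-density in $K_{\sigma,V}$ no longer forces all $\lvert R_{\ell^\star}\rvert$ children to meet $A$. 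This one-level refinement trick is exactly the argument of Lemma~\ref{lem:PphiGQphiG}, but there it is rescued by the \emph{repeating} structure (Remark~\ref{rmk:mell} guarantees a level of the prescribed, fixed size shortly after any $t_0$); in the present non-repeating setting it simply does not close. Note also that your step as written never uses the product hypothesis, and without it the conclusion ``some $R_t\in Q(W^R\rest_A)$'' is false (Remark~\ref{rmk:recCsquares}: for $R=(C_{n^2+5})_n$ and $A$ avoiding one vertex per level, no $R_t$ is positive in $W^R\rest_A$), so any correct argument must bring $\prod_n(1-1/\lvert R_n\rvert)=0$ in right here. The paper does this by a different, global counting argument: if no $R_t$ were positive in $W^R\rest_A$, then for every prefix $\tau$ of every length $m\leq n-2$ some child vertex $v_\tau$ is avoided by (the positive-measure part of) $A$, so $A$ is contained, up to null sets, in the set of branches avoiding one vertex per level, giving $\nu^V(A)\leq\prod_{\ell=0}^{n-1}(1-1/\lvert R_\ell\rvert)$, which contradicts positivity of $\nu^V(A)$ once $n$ is large.

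Conversely, you place the weight of the product hypothesis on the second half (finding a positive-measure piece of $A$ inside a deep clopen set beyond the last level containing $R_{\ell^\star}$), where no such input is needed: the sets $K_{\tau,V}$ with $\lvert\tau\rvert=m_{\ell^\star}+1$ partition $\Omega^V$ (or $K_{\sigma,V}$, if you insist $\tau\supseteq\sigma$), so some cell automatically meets $A$ in positive measure; there is no ``steering of the branch'' to be done, since every branch passes through every level and what matters is only the depth of the restriction. Once inside such a cell, your appeal to the prime-embedding analysis (Lemma~\ref{lem:QphiG} plus Lemma~\ref{lem:primesubstructure}) to conclude $R_{\ell^\star}$ has density zero there is correct and coincides with the paper's conclusion that $R_{\ell^\star}\in Q(W^R\rest_A)\setminus P(W^R\rest_A)$. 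So the overall skeleton matches the paper, but the first step needs to be replaced by the avoidance-counting argument (or an equivalent use of $\prod_n(1-1/\lvert R_n\rvert)=0$), and the Borel--Cantelli-style computation you defer to is precisely the part that has to be carried out, in the first half rather than the second.
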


Let us first give some intuition on the proof of
Proposition~\ref{prop:noweaklyrandomsubgraphon}. First, note that since all $R_n$ are prime graphs
and $\phi_R$ is obtained via a limit of recursive blow-ups, which themselves are obtained from the
$R_n$ via substitutions, it follows that copies of $R_n$ in $\phi_R$ need to correspond to copies of
$R_n$ inside some $R_m$. The condition that each $R_n$ is contained in at most finitely many $R_m$
then ensures that the restriction of $W^R$ to basic clopen sets $K_{\sigma,V}$
(see~\eqref{eq:KsigmaV}) with $\lvert\sigma\rvert$ large enough do not have any copies of $R_n$.
Thus, for every positive measure set $A$, there is some $K_{\sigma,V}$ such that $R_n\notin
Q(W^R\rest_{A\cap K_{\sigma,V}})$ and $A\cap K_{\sigma,V}$ has positive measure. However, to use
this fact show that $\phi_R$ does not contain any weakly random sub-object, we need to also ensure
that every positive measure set $A$ contains at least one $R_n$ (with $n$ depending on $A$), so that
we conclude that $Q(W^R\rest_A)\neq P(W^R\rest_A)$ since the above argument gives $R_n\in
Q(W^R\rest_A)\setminus Q(W^R\rest_{A\cap K_{\sigma,V}})$. This is where the condition
$\prod_{n\in\NN} (1 - 1/\lvert R_n\rvert) = 0$ comes in: we will show that any set $A$ avoiding all
$R_n$ has measure at most $\prod_{n\in\NN} (1 - 1/\lvert R_n\rvert)$.

\begin{proof}
  For every $t\in\NN$, let $R^t$ be the shifted sequence $(R_{n+t})_{n\in\NN}$.

  Also, for each $t\in\NN$, let $m_t$ be the maximum $m\in\NN$ such that $R_t$ is an induced
  subgraph of $R_m$. Note that for every $t\in\NN$, by
  Lemmas~\ref{lem:graph:substitutionsubstructure} and~\ref{lem:QphiG}, we have $R_t\in
  Q(\phi_{R^t})\setminus Q(\phi_{R^{m_t+1}})$ since $R_t$ is prime and is not an induced subgraph of
  any $R_{t'}$ with $t' > m_t$\footnote{In fact, since $\phi_{R^{m_t+1}}$ is a sub-object of
    $\phi_{R^t}$, we have $Q(\phi_{R^{m_t+1}})\subsetneq Q(\phi_{R^t})$.}.

  To show that $\phi_R$ does not contain a weakly random sub-object, by
  Lemma~\ref{lem:weaklyrandomsubgraphon}, it is sufficient to show that for every positive measure
  set $A\subseteq\Omega^V$, the subgraphon $W^R\rest_A$ is not weakly random.

  We claim that there exists $t\in\NN$ such that $R_t\in Q(W^R\rest_A)$. Suppose not, let $n\in\NN$
  be large enough so that $\prod_{\ell=0}^{n-1} (1-1/\lvert R_\ell\rvert) < \nu^V(A)$ (recall from
  Definition~\ref{def:graphrecursiveblowup} that $\nu^V$ is the measure in the underlying space of
  $W^R$) and consider the set
  \begin{align*}
    \Sigma
    & \df
    \left\{\sigma\in\prod_{\ell=0}^{n-1} V(R_\ell) \;\middle\vert\;
    \nu^V(A\cap K_{\sigma,V}) > 0
    \right\}.
  \end{align*}
  We claim that for every $m\in\{0,\ldots,n-2\}$ and every $\tau\in\prod_{\ell=0}^{m-1} V(R_\ell)$,
  there exists $v_\tau\in V(R_m)$ such that $(\tau,v_\tau)$ is not a prefix of any element of
  $\Sigma$. Indeed, otherwise, since mapping each $v\in V(R_m)$ to an element of $K_{(\tau,v),V}$
  gives an embedding of $R_m$ in $W^R$, we would get
  \begin{align*}
    \tind(R_m,W^R\rest_A) & \geq \prod_{v\in V(R_m)} \frac{\nu^V(A\cap K_{(\tau,v),V})}{\nu^V(A)} > 0.
  \end{align*}
  Thus, the existence of $v_\tau$ is proved.

  Let then $\Sigma^*$ be the set of $\sigma\in\prod_{\ell=0}^{n-1} V(R_\ell)$ such that for every
  $m\in\{0,\ldots,n-2\}$, we have $v_{\sigma\rest_{\{0,\ldots,m-1\}}}\neq\sigma_m$. Our last claim
  says that $\Sigma\subseteq\Sigma^*$. Now it is easy to see that
  \begin{align*}
    \nu^V(A)
    & =
    \sum_{\sigma\in\Sigma} \nu^V(A\cap K_{\sigma,V})
    \leq
    \sum_{\sigma\in\Sigma^*} \nu^V(K_{\sigma,V})
    =
    \prod_{\ell=0}^{n-1}\left(1 - \frac{1}{\lvert R_\ell\rvert}\right)
    <
    \nu^V(A),
  \end{align*}
  a contradiction. This concludes the proof that there exists $t\in\NN$ such that $R_t\in
  Q(W^R\rest_A)$.

  We will now show that $W^R\rest_A$ is not weakly random by showing that there exists a sub-object
  of $W^R\rest_A$ in which $R_t$ has density zero (so that we conclude $P(W^R\rest_A)\subsetneq
  Q(W^R\rest_A)$ as $R_t$ is in the latter set but not in the former set).

  Since $\{K_{\sigma,V} \mid \sigma\in\prod_{\ell=0}^{m_t} V(R_\ell)\}$ partitions the space
  $\Omega^V$, there must exist $\sigma\in\prod_{\ell=0}^{m_t} V(R_\ell)$ such that $\nu^V(A\cap
  K_{\sigma,V}) > 0$ but note that $\phi_{W^R\rest_{K_{\sigma,V}}} = \phi_{R^{m_t+1}}$ and since
  $Q(W\rest_{A\cap K_{\sigma,V}})\subseteq Q(W\rest_{K_{\sigma,V}})=Q(\phi_{R^{m_t+1}})$ it follows
  that $R_t\notin Q(W\rest_{A\cap K_{\sigma,V}})$ as desired.

  Therefore $\phi_R$ does not contain any weakly random sub-object.
\end{proof}

\begin{remark}\label{rmk:recCsquares}
  The product condition $\prod_{n\in\NN} (1 - 1/\lvert R_n\rvert) = 0$ in
  Proposition~\ref{prop:noweaklyrandomsubgraphon} may seem very unnatural at first. However, it is
  easy to see that it is necessary for $\phi_R$ to not contain any weakly random sub-object: for
  example, consider the limit $\phi_R$ for the sequence $R\df (C_{n^2+5})_{n\in\NN}$ (see
  Figure~\ref{fig:recCsquares}) and fixing $v_n\in V(C_{n^2+5})$ for each $n\in\NN$, let
  \begin{align*}
    A
    & \df
    \prod_{n\in\NN} (V(C_{n^2+5})\setminus\{v_n\}).
  \end{align*}

  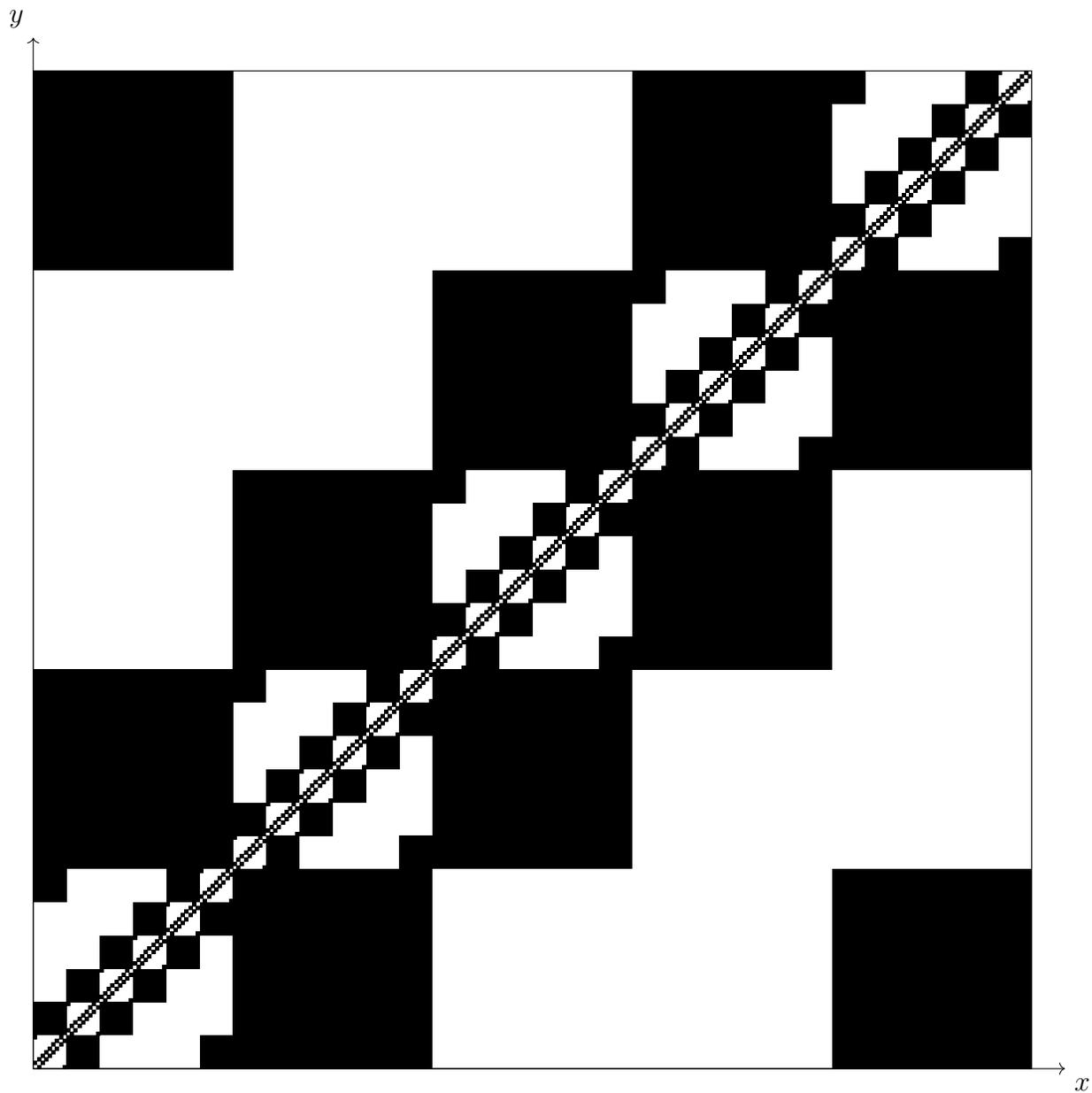
\begin{figure}[htbp]
    \begingroup
\def\order{3}
\def\side{15}
\def\axis{15.5}

\begin{center}
  \begin{tikzpicture}
    \draw (0,0) -- (\side,0) -- (\side,\side) -- (0,\side) -- cycle;
    \draw[->] (0,0) -- (\axis,0);
    \draw[->] (0,0) -- (0,\axis);

    \node[below right] at (\axis,0) {$x$};
    \node[above left] at (0,\axis) {$y$};

    \foreach \i [%
      remember=\rep as \prevrep (initially 1),
      evaluate=\i as \rep using \prevrep * (((\i-1) * (\i-1)) + 5)
      ] in {1,...,\order}{
      \pgfmathsetmacro{\step}{1/\prevrep}
      \pgfmathsetmacro{\baseside}{\side/\rep}
      \pgfmathtruncatemacro{\clength}{(\i-1) * (\i-1) + 5}
      \pgfmathtruncatemacro{\clengthmo}{\clength-1}

      \foreach \j [%
        evaluate=\j as \p using (\j-1) * \step * \side
        ] in {1,...,\prevrep}{
        \foreach \k [%
          remember=\cp as \pp (initially \p),
          evaluate=\k as \cp using \pp + \baseside
          ] in {1,...,\clengthmo}{
          \fill (\pp,\cp) -- ++(\baseside,0) -- ++(0,\baseside) -- ++(-\baseside,0) -- cycle;
          \fill (\cp,\pp) -- ++(\baseside,0) -- ++(0,\baseside) -- ++(-\baseside,0) -- cycle;
        }
        \pgfmathsetmacro{\lastp}{\p + \clengthmo * \baseside}
        \fill (\lastp,\p) -- ++(\baseside,0) -- ++(0,\baseside) -- ++(-\baseside,0) -- cycle;
        \fill (\p,\lastp) -- ++(\baseside,0) -- ++(0,\baseside) -- ++(-\baseside,0) -- cycle;
      }
    }
  \end{tikzpicture}

  \caption{Approximation of a graphon $W$ over $[0,1]$ representing the limit $\phi_R$ of recursive
    blow-ups corresponding to the sequence $R\df(C_{n^2+5})_{n\in\NN}$ of
    Remark~\ref{rmk:recCsquares}. The graphon $W$ has a fractal structure, whose first
    $\order$ steps are represented in the picture.}
  \label{fig:recCsquares}
\end{center}
\endgroup

  \end{figure}

  Note that $\nu^V(A) = \prod_{n\in\NN}(1 - 1/\lvert C_{n^2+5}\rvert) > 0$. On the other hand, since
  $C_{n^2+5}-v_n\cong P_{n^2+4}$ is the path with $n^2+4$ vertices, it is obvious that $W^R\rest_A$
  represents the same limit as $W^{R'}$ for the sequence $R'=(P_{n^2+4})_{n\in\NN}$. In turn, by
  Lemma~\ref{lem:QphiG}, we have
  \begin{align*}
    Q(W^{R'})\subseteq S(\{K_0\}\cup\{P_{n^2+4} \mid n\in\NN\}) = S(\{P_n\mid n\in\NN\})
  \end{align*}
  and since the family above is primally almost finite,
  Lemma~\ref{lem:graphs:primallyalmostfinite->WR} implies that $W^{R'}$ contains a weakly random
  subgraphon, hence so does $W^R$. In fact, with a bit more effort, one can also show that $W^{R'}$
  itself is already weakly random, but we omit this proof. We will also see in
  Proposition~\ref{prop:WRproductcond} that not only does $\phi_R$ contain a weakly random
  sub-object, but we also have $\Th(\phi_R)\in\WR$.
\end{remark}

We can now prove Theorem~\ref{thm:graphs:WR} that says that a universal theory of graphs $T$ with
$\cM[T]$ closed under substitutions and with $\cM_2[T]$ non-empty is in $\WR$ if and only if
$\cM[T]$ is primally almost finite.

\begin{proofof}{Theorem~\ref{thm:graphs:WR}}
  For the backward direction, if $\cM[T]$ is primally almost finite, then by
  Lemma~\ref{lem:graphs:primallyalmostfinite->WR}, we have $T\in\WR$.

  \medskip

  We prove the forward direction by the contra-positive: suppose $\cM[T]$ is not primally almost
  finite, so there exists an infinite antichain $\{G_n \mid n\in\NN\}$ of prime graphs of $T$, and
  without loss of generality, suppose every $G_n$ has size at least $2$.

  For each $n\in\NN$, let $r_n\in\NN_+$ be large enough so that $(1 - 1/\lvert G_n\rvert)^{r_n} \leq
  1/2$ and for each $\ell\in\NN$, let $R_\ell\df G_n$ for the unique $n\in\NN$ such that
  $\sum_{m=0}^{n-1} r_m\leq\ell < \sum_{m=0}^n r_m$. Clearly, for each $\ell\in\NN$, there exist
  exactly $r_\ell$ values of $t\in\NN$ such that $R_\ell$ is an induced subgraph of $R_t$. On the
  other hand, we have
  \begin{align*}
    \prod_{\ell\in\NN} \left(1 - \frac{1}{\lvert R_\ell\rvert}\right)
    & =
    \prod_{n\in\NN} \left(1 - \frac{1}{\lvert G_n\rvert}\right)^{r_n}
    \leq
    \prod_{n\in\NN} \frac{1}{2}
    =
    0.
  \end{align*}
  By Proposition~\ref{prop:noweaklyrandomsubgraphon}, we know that $\phi_R$ does not contain any
  weakly random sub-object and by Lemma~\ref{lem:QphiG}, we know that $Q(\phi_R)\subseteq S(\{R_\ell
  \mid \ell\in\NN\})\subseteq\cM[T]$, so $\phi_R$ is a limit of $T$ without any weakly random
  sub-object.
\end{proofof}

We conclude this section with some natural examples of universal theories in $\WR$ and not in
$\WR$. We start by showing that the universal theory of induced subgraphs of recursive blow-ups of
$C_4$ studied in~\cite[\S 8]{CM22} (see Example~\ref{ex:recC4} and Figure~\ref{fig:recC4}) is the
simplest example in $\WR\setminus\AEHP$.

\begin{proposition}\label{prop:recC4}
  The limit recursive blow-up $\phi_{C_4}$ of $C_4$ is weakly random. In particular, the theory $T$
  of induced subgraphs of the recursive blow-ups of $C_4$ satisfies $T\in\WR\setminus\AEHP$.
\end{proposition}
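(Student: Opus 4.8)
The plan is to prove the first sentence from Lemma~\ref{lem:PphiGQphiG} and then read off the two membership statements. For the first claim, by Example~\ref{ex:recC4} the limit $\phi_{C_4}$ coincides with $\phi_G^*$ for the sequence $G$ that is constantly equal to $C_4$, so Lemma~\ref{lem:PphiGQphiG} gives
$P(\phi_{C_4}) = Q(\phi_{C_4}) = S(\{K_0\}\cup\{C_4\})$;
by Definition~\ref{def:persistence} the equality $P(\phi_{C_4})=Q(\phi_{C_4})$ is precisely the assertion that $\phi_{C_4}$ is weakly random. I would also keep this explicit description of $Q(\phi_{C_4})$ on hand for the rest of the argument.

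The next step is to identify $\cM[T]$. Since $T$ is the theory of induced subgraphs of recursive blow-ups of $C_4$, its models are exactly the finite graphs that embed, as induced subgraphs, into the finite $k$-level recursive blow-ups $B_k$ of $C_4$, and $B_k\to\phi_{C_4}$. On the one hand $B_k\in S(\{K_0,C_4\})$ — it is obtained from $C_4$ by substituting each vertex by $B_{k-1}$ — and $S(\{K_0,C_4\})=Q(\phi_{C_4})$ is closed under induced subgraphs (by the remark following Definition~\ref{def:persistence}), so $\cM[T]\subseteq S(\{K_0,C_4\})$. On the other hand a routine induction on the number of substitutions (using that if $F_1\hookrightarrow B_a$ and $F_2\hookrightarrow B_b$ as induced subgraphs then $F_1^{v\to F_2}\hookrightarrow B_{a+b}$) shows every member of $S(\{K_0,C_4\})$ embeds into some $B_k$. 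Hence $\cM[T]=S(\{K_0,C_4\})=Q(\phi_{C_4})$, which in particular is closed under substitutions. This identification is the only mildly technical point; everything else is a direct application of the lemmas of this section.

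For $T\in\WR$ I would invoke Lemma~\ref{lem:graphs:primallyalmostfinite->WR}: it suffices that $\cM[T]$ be primally almost finite, and in fact it is primally finite. By Lemma~\ref{lem:primesubstructure}, every prime graph in $S(\{K_0,C_4\})$ is a prime induced subgraph of $C_4$; but the only induced subgraphs of $C_4$ of size at least $3$ are $P_3$ and $C_4$ itself, and neither is prime (for instance $P_3\cong K_2^{v\to\overline{K}_2}$ and $C_4\cong P_3^{v\to\overline{K}_2}$ for $v$ the middle vertex of $P_3$). So the primes of $\cM[T]$ are exactly the graphs on at most two vertices, $\cM[T]$ is primally finite, and $T\in\WR$. (Alternatively one could apply Theorem~\ref{thm:graphs:WR} directly, using the closure under substitutions established above.)

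Finally, for $T\notin\AEHP$, observe that $\phi_{C_4}$ is a limit of $T$ since the $B_k$ are models of $T$ converging to it. Suppose toward a contradiction that $\phi_{C_4}$ had a trivial subgraphon $W'$; then $W'$ is a.e.\ $0$ or a.e.\ $1$, so $Q(W')$ is the class of anti-cliques or the class of cliques, hence omits $K_2$ or omits $\overline{K}_2$. But $\phi_{C_4}$ is weakly random, so $Q(W')=Q(\phi_{C_4})=S(\{K_0,C_4\})$, which contains both $K_2$ and $\overline{K}_2$ (these are induced subgraphs of $C_4$, and $Q(\phi_{C_4})$ is closed under induced subgraphs) — a contradiction. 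Thus $\phi_{C_4}$ has no trivial subgraphon, witnessing $T\notin\AEHP$, and together with the previous paragraph $T\in\WR\setminus\AEHP$. The main obstacle, if any, is purely bookkeeping in the second step (the exact correspondence between recursive blow-ups of $C_4$ and $S(\{K_0,C_4\})$); the conceptual content reduces to ``weakly random means $P=Q$'', ``finitely many primes forces $\WR$'', and ``weak randomness forbids trivial subgraphons''.
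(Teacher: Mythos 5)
Your proof is correct and follows essentially the same route as the paper: weak randomness via Lemma~\ref{lem:PphiGQphiG}, membership in $\WR$ via primal finiteness of $\cM[T]$ together with Lemma~\ref{lem:graphs:primallyalmostfinite->WR}/Theorem~\ref{thm:graphs:WR}, and $T\notin\AEHP$ because a trivial subgraphon would have $Q$ missing $K_2$ or $\overline{K}_2$, contradicting weak randomness. The only differences are cosmetic: you apply Lemma~\ref{lem:PphiGQphiG} to the constant sequence $(C_4,C_4,\dots)$ rather than to $(K_2,\overline{K}_2,\overline{K}_2,\dots)$ (both presentations give the same family, since $S(\{K_0,C_4\})=S(\{K_0,K_2,\overline{K}_2\})$), and you spell out the identification $\cM[T]=S(\{K_0,C_4\})$ and the non-primality of $P_3$ and $C_4$, which the paper treats as immediate.
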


\begin{proof}
  Recall from Example~\ref{ex:recC4} that the limit $\phi_{C_4}$ recursive blow-up of $C_4$ can be
  viewed as the repeating recursive blow-up $\phi_{G''}^*$ for the sequence
  $G''=(K_2,\overline{K}_2,\overline{K}_2,\ldots)$ whose first element is $K_2$ and all other
  elements are $\overline{K}_2$.

  There are two ways of seeing that $\phi_{C_4}$ is weakly random. The first is using
  Lemma~\ref{lem:PphiGQphiG} to conclude that $P(\phi_{C_4})=Q(\phi_{C_4}) =
  S(\{K_0,K_2,\overline{K}_2\})$. Alternatively, the result follows directly from the results
  of~\cite{CM22} and Lemma~\ref{lem:PW}: by~\cite[Lemma~8.7]{CM22}, we know that
  $Q(\phi_{C_4})\subseteq S(\{K_0,K_2,\overline{K}_2\})$, so Lemma~\ref{lem:PW} implies that
  $P(\phi_{C_4})$ can only be one of $S(\{K_0,K_2\})$, $S(\{K_0,\overline{K}_2\})$ or
  $S(\{K_0,K_2,\overline{K}_2\})$ and since by~\cite[Lemma~8.8]{CM22} does not contain trivial
  subgraphons, the first two cases are ruled out, so
  $P(\phi_{C_4})=Q(\phi_{C_4})=S(\{K_0,K_2,\overline{K}_2\})$.

  \medskip

  Since the family of induced subgraphs of recursive blow-ups of $C_4$ is precisely the family
  $S(\{K_0,K_2,\overline{K}_2\})$, which is primally finite, the fact that $T\in\WR$ follows from
  Theorem~\ref{thm:graphs:WR}. On the other hand, $\phi_{C_4}$ does not contain trivial subgraphons
  (this follows directly from~\cite[Lemma~8.8]{CM22} or alternatively from the fact that a trivial
  subgraphon $W$ must have $Q(W)$ either equal to $S(\{K_0,K_2\})$ or $S(\{K_0,\overline{K}_2\})$),
  hence $T\notin\AEHP$.
\end{proof}

\begin{proposition}\label{prop:perfectgraphtheory}
  The theory $\TPerfect$ of perfect graphs is not in $\WR$. Furthermore, the set $\cM[\TPerfect]$ is
  closed under substitutions.
\end{proposition}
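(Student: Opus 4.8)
The plan is to prove the second assertion first --- that $\cM[\TPerfect]$ is closed under substitutions --- and then obtain $\TPerfect\notin\WR$ as an immediate consequence of Theorem~\ref{thm:graphs:WR}.

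For the closure under substitutions I would apply Lemma~\ref{lem:stronglyclosedprimecF}, so it is enough to show that every \emph{minimal} graph that is not perfect is prime. By the Strong Perfect Graph Theorem~\cite{CRST06}, a finite graph is perfect if and only if it contains no induced odd hole $C_{2n+1}$ and no induced odd antihole $\overline{C}_{2n+1}$ with $n\geq 2$. Moreover, every proper induced subgraph of $C_{2n+1}$ is a disjoint union of paths, hence perfect, and every proper induced subgraph of $\overline{C}_{2n+1}$ is the complement of a disjoint union of paths, hence also perfect (perfection being preserved under complementation). Consequently the family $\cF$ of minimal non-perfect graphs is exactly $\{C_{2n+1}:n\geq 2\}\cup\{\overline{C}_{2n+1}:n\geq 2\}$. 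Each $C_\ell$ with $\ell\geq 5$ is prime by Example~\ref{ex:substitutiongeneration}, and each $\overline{C}_\ell$ with $\ell\geq 5$ is prime as well, since complementation commutes with conservative substitution (so $F$ is prime if and only if $\overline{F}$ is prime). Thus $\cF$ consists entirely of prime graphs, and Lemma~\ref{lem:stronglyclosedprimecF} yields that $\cM[\TPerfect]$ is closed under substitutions. (This closure is also the classical substitution lemma for perfect graphs.)

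To deduce $\TPerfect\notin\WR$, I would invoke Theorem~\ref{thm:graphs:WR}: having just verified that $\cM[\TPerfect]$ is closed under substitutions, it suffices to show that $\cM[\TPerfect]$ is \emph{not} primally almost finite, i.e., that the prime perfect graphs contain an infinite antichain in the induced subgraph order. For this I would use the even cycles $\{C_{2k}:k\geq 3\}$: each $C_{2k}$ is bipartite, hence perfect; each is prime by Example~\ref{ex:substitutiongeneration} (as $2k\geq 5$); and they are pairwise incomparable, since every proper induced subgraph of a cycle is a disjoint union of paths and the $C_{2k}$ all have distinct sizes. Hence $\cM[\TPerfect]$ contains an infinite antichain of prime graphs and is therefore not primally almost finite, so Theorem~\ref{thm:graphs:WR} gives $\TPerfect\notin\WR$.

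I do not anticipate a real obstacle here: the statement reduces, via Theorem~\ref{thm:graphs:WR}, to two facts about perfect graphs. The only place needing a little care is the first paragraph --- identifying the minimal non-perfect graphs and checking they are all prime --- which is precisely what makes $\cM[\TPerfect]$ closed under substitutions; the infinite antichain witnessing the failure of primal almost finiteness is then immediate.
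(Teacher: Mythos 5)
Your proof is correct, and the overall skeleton (closure under substitutions, plus exhibiting an infinite antichain of prime perfect graphs, plus Theorem~\ref{thm:graphs:WR}) matches the paper's; but the two individual steps are carried out differently. For closure under substitutions, the paper argues directly: it shows that an induced odd cycle of length at least $5$ in $F_1^{v\to F_2}$ would force a vertex of $F_1$ adjacent to an entire segment of the cycle inside $F_2$, a contradiction, and then handles antiholes via $\overline{F_1^{v\to F_2}}\cong(\overline{F_1})^{v\to\overline{F_2}}$. You instead invoke Lemma~\ref{lem:stronglyclosedprimecF}, identifying the minimal non-models of $\TPerfect$ as exactly the odd holes and antiholes (via the Strong Perfect Graph Theorem and the fact that their proper induced subgraphs are disjoint unions of paths, respectively complements thereof) and checking these are prime, using the primality of $C_\ell$ ($\ell\geq 5$) from Example~\ref{ex:substitutiongeneration} and the same complementation identity to get primality of the antiholes; this is a clean reduction that reuses the paper's general machinery rather than redoing a cycle analysis, at the cost of needing the precise list of minimal obstructions. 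For the failure of primal almost finiteness, the paper reuses the bipartite prime graphs $G_n$ of Example~\ref{ex:substitutiongeneration}, while you use the even cycles $\{C_{2k}\mid k\geq 3\}$, which are bipartite, prime, and pairwise incomparable since proper induced subgraphs of cycles are unions of paths; your antichain is simpler and equally valid. Both routes then conclude via Theorem~\ref{thm:graphs:WR} exactly as the paper does.
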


\begin{proof}
  We first show that $\cM[\TPerfect]$ is closed under substitutions. By the Strong Perfect Graph
  Theorem~\cite{CRST06}, we know that a graph $G$ is perfect if and only if both $G$ and its
  complement $\overline{G}$ do not contain any induced odd-cycle of length at least $5$.

  Let us show that if $F_1,F_2$ are perfect graphs and $v\in V(F_1)$, then $F_1^{v\to F_2}$ is also
  a perfect graph. Since $\overline{F_1^{v\to F_2}}\cong(\overline{F_1})^{v\to\overline{F_2}}$, it
  is sufficient to show that $F_1^{v\to F_2}$ does not contain any induced odd-cycles of length at
  least $5$.

  Without loss of generality, let us suppose $V(F_1)\cap V(F_2)=\varnothing$. Suppose toward a
  contradiction that $v_1,\ldots,v_{2\ell+1}$ forms an induced odd-cycle of $F_1^{v\to F_2}$ with
  $\ell\geq 2$. Since both $F_1$ and $F_2$ are perfect, this odd-cycle must contain both vertices of
  $F_1$ (that are not $v$) and $F_2$. Without loss of generality, suppose $v_i\in V(F_2)$ for every
  $i\in[k]$ for some $k\in [2\ell+1]$ and $v_{k+1}\in V(F_1)$. Since $v_{k+1}\in V(F_1)$ is adjacent
  to $v_k\in V(F_2)$, it follows from the structure of $F_1^{v\to F_2}$ that $v_{k+1}$ is adjacent
  to all of $v_1,\ldots,v_k$, but since the cycle is induced, this can only happen if $k=2$ and
  $2\ell+1=3$, a contradiction. Therefore, $\cM[\TPerfect]$ is closed under substitutions.

  \medskip

  By Theorem~\ref{thm:graphs:WR}, to show that $\TPerfect\notin\WR$, it is
  sufficient to show that $\cM[\TPerfect]$ is not primally almost finite. But recall that the family
  of graphs $\{G_n \mid n\geq 6\}$ of Example~\ref{ex:substitutiongeneration} is a family of prime
  graphs that is not almost finite and since these graphs are bipartite, they are also perfect.
\end{proof}

Finally, we consider the theory $\TPermGraph\df I(\TPerm)$ of graphs of agreements of permutations,
where $I\colon\TGraph\leadsto\TPerm$ is given by
\begin{align*}
  I(E)(x,y) & \df (x\neq y\land (x\prec_1 y\tot x\prec_2 y)),
\end{align*}

The next proposition provides a natural universal weakly random limit of $\TPermGraph$ as the
graphon of agreements of the quasirandom permuton (see Figure~\ref{fig:agreementsgraphon}). However,
we defer its proof to Section~\ref{sec:pers:univ} as it will follow as an easy consequence of
naturality of weak randomness (Proposition~\ref{prop:naturality}\ref{prop:naturality:WR}) and the
fact that the quasirandom permuton is a universal weakly random limit of $\TPerm$
(Proposition~\ref{prop:QRpermuton}).

\begin{proposition}\label{prop:agreementsofQRpermuton}
  The graphon $W$ over $[0,1]^2$ of agreements of the quasirandom permuton given by
  \begin{align*}
    W(x,y) & \df \One[\pi_1(x) < \pi_1(y) \tot \pi_2(x) < \pi_2(y)],
  \end{align*}
  where $\pi_i\colon[0,1]^2\to[0,1]$ is the projection onto the $i$th coordinate, is a universal
  weakly random limit of $\TPermGraph$.
\end{proposition}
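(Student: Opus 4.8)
The plan is to deduce this directly from two facts the paper establishes in Section~\ref{sec:pers:univ}: that the quasirandom permuton $\cN_{\QR}$ is a universal weakly random limit of $\TPerm$ (Proposition~\ref{prop:QRpermuton}), and that being a universal weakly random limit is preserved under open interpretations (the naturality statement, Proposition~\ref{prop:naturality}\ref{prop:naturality:WR}). Since $\TPermGraph = I(\TPerm)$ for the open interpretation $I\colon\TGraph\leadsto\TPerm$ with $I(E)(x,y) = (x\neq y\land(x\prec_1 y\tot x\prec_2 y))$, it is enough to check that the graphon $W$ in the statement represents the limit $\phi_{\cN_{\QR}}^I$ (equivalently, $W$ is a theon representing $I(\cN_{\QR})$) and then invoke naturality.

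For the identification, I would realize the quasirandom permuton as the $\TPerm$-on $\cN_{\QR}$ over $\Omega = ([0,1]^2,\text{Borel},\lambda^2)$ with $(\cN_{\QR})_{\prec_i} = \{z\in\cE_2(\Omega) \mid \pi_i(z_{\{1\}}) < \pi_i(z_{\{2\}})\}$ for $i\in\{1,2\}$, where $\pi_i$ is the projection onto the $i$th coordinate; sampling $n$ vertices from $\cN_{\QR}$ yields a uniformly random element of $\cM_n[\TPerm]$, so $\phi_{\cN_{\QR}}$ is indeed the quasirandom permuton limit. Unwinding the definition of truth sets, $I(\cN_{\QR})_E = T(I(E),\cN_{\QR})$ is the set of $z\in\cE_2(\Omega)$ with $(\pi_1(z_{\{1\}}) < \pi_1(z_{\{2\}}))\tot(\pi_2(z_{\{1\}}) < \pi_2(z_{\{2\}}))$, since the conjunct $x\neq y$ has full truth set $\cE_2(\Omega)$. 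This is exactly the $\{0,1\}$-valued Euclidean structure determined by the graphon $W$ of the statement, so $\phi_W = \phi_{I(\cN_{\QR})} = \phi_{\cN_{\QR}}^I$.

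Finally, I would invoke Proposition~\ref{prop:naturality}\ref{prop:naturality:WR}: since $\cN_{\QR}$ is a universal weakly random limit of $\TPerm$, its image $\phi_{\cN_{\QR}}^I$ is a universal weakly random limit of $I(\TPerm) = \TPermGraph$, and since $W$ represents this limit it is a universal weakly random limit of $\TPermGraph$. The engine behind the naturality step is that the sub-object operation commutes with open interpretations: every subgraphon of $I(\cN_{\QR})$ is $I$ applied to a sub-permuton of $\cN_{\QR}$ cut out by the same weight function, so both weak randomness ($P = Q$) and the precise realized family ($\cM[\TPerm]$ passing to $\cM[I(\TPerm)] = \cM[\TPermGraph]$) transfer through $I$.

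I do not expect the proposition itself to present an obstacle; the real work lies upstream in Proposition~\ref{prop:QRpermuton}, where one must show that every sub-permuton of the quasirandom permuton is again quasirandom (the permutation analogue of graph quasirandomness, i.e., the $\UInduce[1]$ property), which is what forces $P(\cN_{\QR}) = Q(\cN_{\QR}) = \cM[\TPerm]$. Granting that and the naturality lemma, the argument above is a routine check.
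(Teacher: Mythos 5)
Your proposal is correct and takes essentially the same route as the paper: identify $W$ with $\psi_{\QR}^I$ for the open interpretation $I\colon\TGraph\leadsto\TPerm$, then combine Proposition~\ref{prop:QRpermuton} with Proposition~\ref{prop:naturality} (both the preservation of weak randomness and the identity $Q(\phi^I)=I(Q(\phi))$, which gives $Q(\phi_W)=\cM[\TPermGraph]$). Your closing remark about the upstream work is harmless but slightly off-target: the paper establishes Proposition~\ref{prop:QRpermuton} via the independent-coupling result (Proposition~\ref{prop:indepcoup}) applied to the weakly random linear-order limit, not by proving that every sub-permuton of the quasirandom permuton is again quasirandom in the $\UInduce[1]$ sense.
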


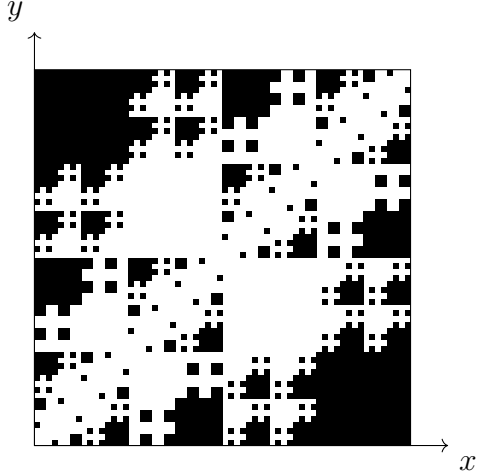
\begin{figure}[htb]
  \begingroup
\def\order{3}
\def\side{5}
\def\axis{5.5}

\begin{center}
  \begin{tikzpicture}
    \draw (0,0) -- (\side,0) -- (\side,\side) -- (0,\side) -- cycle;
    \draw[->] (0,0) -- (\axis,0);
    \draw[->] (0,0) -- (0,\axis);

    \node[below right] at (\axis,0) {$x$};
    \node[above left] at (0,\axis) {$y$};
    
    \pgfmathsetmacro{\smallside}{2^(-2*\order) * \side}
    \def\strings{v//0/0,}
    \foreach \i in {1,...,\order}{
      \let\oldstrings\strings
      \def\strings{}
      \foreach \v/\s/\done/\dtwo in \oldstrings {
        \ifx\v\empty\relax
        \else
        \pgfmathsetmacro{\newdone}{\done + 2^(-2*\i+1) * \side}
        \pgfmathsetmacro{\newdtwo}{\dtwo + 2^(-2*\i) * \side}
        \edef\newstrings{\strings v/\s0/\done/\dtwo,v/\s1/\newdone/\newdtwo,}
        \global\let\strings\newstrings
        \fi
      }
    }

    \foreach \v/\s/\done/\dtwo in \strings {
      \ifx\v\empty\relax
      \else
      \foreach \V/\S/\Done/\Dtwo in \strings {
        \ifx\V\empty\relax
        \else
        \compareStringsDo{\s}{\S}{
          \foreach \w/\t/\eone/\etwo in \strings {
            \ifx\w\empty\relax
            \else
            \foreach \W/\T/\Eone/\Etwo in \strings {
              \ifx\W\empty\relax
              \else
              \compareStringsDo{\t}{\T}{
                \fill ($(\done,\Done) + (\etwo,\Etwo)$) -- ++ (\smallside,0) -- ++ (0,\smallside)
                -- ++ (-\smallside,0) -- cycle;
              }{
              }{
              }
              \fi
            }
            \fi
          }
        }{
        }{
          \foreach \w/\t/\eone/\etwo in \strings {
            \ifx\w\empty\relax
            \else
            \foreach \W/\T/\Eone/\Etwo in \strings {
              \ifx\W\empty\relax
              \else
              \compareStringsDo{\t}{\T}{
              }{
              }{
                \fill ($(\done,\Done) + (\etwo,\Etwo)$) -- ++ (\smallside,0) -- ++ (0,\smallside)
                -- ++ (-\smallside,0) -- cycle;
              }
              \fi
            }
            \fi
          }
        }
        \fi
      }
      \fi
    }
  \end{tikzpicture}

  \caption{Approximation of a graphon $W'$ over $[0,1]$ representing the same limit as the graphon
    $W$ of agreements of the quasirandom permuton of
    Proposition~\ref{prop:agreementsofQRpermuton}. The graphon $W'$ is indirectly defined by
    $W'(F(x),F(y)) = W(x,y)$ for the standard measure-isomorphism $F$ from $[0,1]$ to $[0,1]^2$ that
    maps the point $(x,y)\in[0,1]^2$ to $\sum_{i\in\NN_+} 4^{-i}\cdot(2\cdot x_i + y_i)$, where the
    binary expansions of $x$ and $y$ are $0.x_1x_2\cdots$ and $0.y_1y_2\cdots$, respectively. The
    graphon $W'$ has a fractal structure, whose first $\order$ steps are represented in the
    picture.}
  \label{fig:agreementsgraphon}
\end{center}
\endgroup

\end{figure}

\begin{proposition}\label{prop:agreementsofpermutationtheory}
  The theory $\TPermGraph$ of graphs of agreements of permutations is not in $\WR$. Furthermore,
  $\cM[\TPermGraph]$ is closed under substitutions.
\end{proposition}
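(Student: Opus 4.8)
The plan is to prove the two assertions and then combine the closure statement with Theorem~\ref{thm:graphs:WR} to conclude $\TPermGraph\notin\WR$.

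First I would show that $\cM[\TPermGraph]$ is closed under substitutions. Recall that the finite models of $\TPermGraph=I(\TPerm)$ are exactly the graphs $I(M_\sigma)$ for $\sigma$ a finite permutation, i.e.\ the permutation graphs: thinking of $\sigma$ via its plot (the permutation matrix with one point for each value), two vertices of $I(M_\sigma)$ are adjacent precisely when their points are concordant (one lies to the north-east of the other). The key point is that finite permutations are closed under \emph{inflation}: given finite permutations $\sigma,\tau$ and a vertex $v$ of $I(M_\sigma)$, replacing the point of $\sigma$'s plot corresponding to $v$ by a small scaled copy of $\tau$'s plot produces a permutation $\sigma[v\to\tau]$. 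Unwinding the definition of adjacency, $I(M_{\sigma[v\to\tau]})$ is a substitution of $v$ in $I(M_\sigma)$ by $I(M_\tau)$ in the sense of Definition~\ref{def:subst}: the inserted cluster induces a copy of $I(M_\tau)$; any point outside the cluster is concordant with all of the cluster, or with none of it, according to whether it was adjacent to $v$ in $I(M_\sigma)$; and the points outside the cluster induce $I(M_\sigma)-v$. Since all predicates involved are binary, substitutions are unique (Remark~\ref{rmk:substarity2}), so $I(M_\sigma)^{v\to I(M_\tau)}=I(M_{\sigma[v\to\tau]})\in\cM[\TPermGraph]$. (Alternatively, one may use that permutation graphs are precisely the graphs $G$ such that both $G$ and $\overline{G}$ are comparability graphs, that comparability graphs are closed under substitution, and that substitution commutes with complementation.)

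Next I would prove $\TPermGraph\notin\WR$. By the previous paragraph $\cM[\TPermGraph]$ is closed under substitutions, so Theorem~\ref{thm:graphs:WR} reduces this to showing that $\cM[\TPermGraph]$ is \emph{not} primally almost finite, i.e.\ that the permutation graphs contain an infinite antichain of prime graphs in the induced-subgraph order. I would use the graphs $G_n$ (for $n\geq 6$) of Example~\ref{ex:substitutiongeneration}: it is recorded there that every $G_n$ is prime and that distinct $G_n$ are incomparable, so it only remains to verify that each $G_n$ is a permutation graph. As $G_n$ is bipartite (its only cycles are the two induced $4$-cycles), this reduces to exhibiting orderings of its two colour classes making the bipartite adjacency matrix a ``staircase'' — the ones in each row and each column forming an interval, with the left and right endpoints of these intervals both monotone along the ordering — equivalently, to exhibiting a permutation with agreement graph $G_n$. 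Ordering each colour class essentially along the spine $1,2,\dots,n$ of $G_n$ and inserting the pendant vertices and the four gadget vertices $a,b,c,d$ next to their neighbours turns the adjacency matrix into a band matrix whose band has monotone endpoints, which is of the required form; this is straightforward to check for small $n$ and the description is uniform in $n$. Hence all $G_n$ lie in $\cM[\TPermGraph]$, so $\cM[\TPermGraph]$ is not primally almost finite, and therefore $\TPermGraph\notin\WR$.

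The only step that is not purely formal is this last verification that $G_n$ is a permutation graph: concretely, writing down, uniformly in $n$, the two colour-class orderings (or the realizing permutation) and checking the interval/monotonicity condition on the adjacency matrix. The closure under substitutions is immediate from the inflation picture, and the rest is a direct application of Theorem~\ref{thm:graphs:WR} together with the facts already established in Example~\ref{ex:substitutiongeneration}.
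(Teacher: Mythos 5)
Your proposal is correct and follows the paper's proof in all essential respects: closure of $\cM[\TPermGraph]$ under substitutions via inflation of permutations (the paper just writes the inflated permutation out as an explicit formula $\pi\in S_{n+m-1}$, which also shows $\cM[\TPerm]$ is weakly closed under substitutions), then Theorem~\ref{thm:graphs:WR} reduces everything to exhibiting an infinite antichain of prime permutation graphs, and you use the very same family $G_n$ of Example~\ref{ex:substitutiongeneration}. The only point where you diverge is the verification that each $G_n$ is a permutation graph: the paper simply writes down a realizing permutation $\pi_n\in S_{n+4}$ (for even $n$) and checks it, whereas you route through the standard characterization of bipartite permutation graphs by ``staircase''/strong orderings of the two colour classes. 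That characterization is an external fact not proved or cited in the paper, and your ordering is left implicit, but it does work: taking the colour classes in the orders $1,a,3,5,\ldots,n-3,c,n-1$ and $2,b,4,\ldots,n-2,d,n$ (for even $n$) makes every row and column neighbourhood an interval with both endpoints monotone, so the step you flagged as the only non-formal one goes through uniformly in $n$. The trade-off is self-containedness (the paper's explicit $\pi_n$ needs no outside input) versus a check that is arguably easier to verify by hand once the staircase criterion is granted.
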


\begin{proof}
  First let us prove that $\cM[\TPermGraph]$ is closed under substitutions. Let
  $F,G\in\cM[\TPermGraph]$, let $v\in V(F)$ and without loss of generality, suppose $V(F) = [n]$ and
  $V(G)=[m]$ for some $n,m\in\NN$ and $\sigma\in S_n$ and $\tau\in S_m$ are permutations
  representing $F$ and $G$ with $\{i,j\}\in E(F)$ if and only if $i < j\tot\sigma(i)\leq\sigma(j)$
  and analogously for $G$ and $\tau$.

  It is now easy to check that $F^{v\to G}$ is the graph of agreements of the permutation $\pi\in
  S_{n+m-1}$ defined by
  \begin{align*}
    \pi(i) & \df
    \begin{dcases*}
      \sigma(i), & if $i < v$ and $\sigma(i) < \sigma(v)$,\\
      \sigma(i) + m - 1, & if $i < v$ and $\sigma(v) < \sigma(i)$,\\
      \sigma(v) + \tau(i-v+1) - 1, & if $v\leq i < v + m$,\\
      \sigma(i-m+1), & if $v+m\leq i$ and $\sigma(i-m+1) < \sigma(v)$,\\
      \sigma(i-m+1) + m - 1, & if $v+m\leq i$ and $\sigma(v) < \sigma(i-m+1)$.
    \end{dcases*}
  \end{align*}
  In fact, the above shows that $\cM[\TPerm]$ is weakly closed under substitutions, so
  $\cM[\TPermGraph]$ inherits this property.

  Now, by Theorem~\ref{thm:graphs:WR}, it is sufficient to show that $\cM[\TPermGraph]$ is not
  primally almost finite.

  Recall that the family $\{G_n \mid n\geq 6\}$ of Example~\ref{ex:substitutiongeneration} is a
  family of prime graphs that is not almost finite. We claim that for every even\footnote{It is also
    true for odd $n$, but we only need an infinite subfamily, so even $n$ suffices.} $n\geq 6$, the
  graph $G_n$ is a graph of agreements of some permutation. Indeed, $G_n$ is the graph of agreements
  of the permutation $\pi_n\in S_{n+4}$ (see Figure~\ref{fig:primegraphofagreements}) given by
  \begin{align*}
    \pi_n(i)
    & \df
    \begin{dcases*}
      n+3, & if $i=1$,\\
      n+1, & if $i=2$,\\
      n-1, & if $i=3$,\\
      n+4, & if $i=4$,\\
      n-i+3, & if $6\leq i\leq n$ and $i$ is even,\\
      n-i+7, & if $5\leq i\leq n-1$ and $i$ is odd,\\
      1, & if $i=n+1$,\\
      6, & if $i=n+2$,\\
      4, & if $i=n+3$,\\
      2, & if $i=n+4$.
    \end{dcases*}
  \end{align*}
  For example, the values of $\pi_{14}$ (in sequence) are
  \begin{align*}
    17, 15, 13, 18, 16, 11, 14, 9, 12, 7, 10, 5, 8, 3, 1, 6, 4, 2.
  \end{align*}

  Thus, $\cM[\TPermGraph]$ is not primally almost finite, hence $\TPermGraph\notin\WR$ by
  Theorem~\ref{thm:graphs:WR}.
\end{proof}

\begin{figure}[htbp]
  \begingroup
\def\n{14}
\def\basedist{0.75}
\def\ptsize{2pt}
\def\ticsize{0.2}

\begin{center}
  \begin{tikzpicture}
    \pgfmathtruncatemacro{\nmfour}{\n-4}
    \foreach \v in {4,6,...,\nmfour}{
      \pgfmathtruncatemacro{\i}{\v+3}
      \pgfmathtruncatemacro{\j}{\n-\i+7}

      \pgfmathsetmacro{\x}{\i * \basedist}
      \pgfmathsetmacro{\y}{\j * \basedist}
      \coordinate (P\v) at (\x,\y);

      \node[above right] at (P\v) {$\v$};
      \fill (P\v) circle (\ptsize);
    }

    \pgfmathtruncatemacro{\nmthree}{\n-3}
    \foreach \v in {5,7,...,\nmthree}{
      \pgfmathtruncatemacro{\i}{\v+1}
      \pgfmathtruncatemacro{\j}{\n-\i+3}

      \pgfmathsetmacro{\x}{\i * \basedist}
      \pgfmathsetmacro{\y}{\j * \basedist}
      \coordinate (P\v) at (\x,\y);

      \node[below left] at (P\v) {$\v$};
      \fill (P\v) circle (\ptsize);
    }

    \pgfmathtruncatemacro{\nmtwo}{\n-2}
    \pgfmathtruncatemacro{\nmone}{\n-1}
    \pgfmathtruncatemacro{\npone}{\n+1}
    \pgfmathtruncatemacro{\nptwo}{\n+2}
    \pgfmathtruncatemacro{\npthree}{\n+3}
    \pgfmathtruncatemacro{\npfour}{\n+4}

    \foreach \v/\i/\j/\pos in {%
      1/1/\npthree/below left,
      2/4/\npfour/above right,
      3/3/\nmone/below left,
      \nmtwo/\nptwo/6/above right,
      \nmone/\npone/1/below left,
      \n/\npfour/2/above right,
      a/2/\npone/below left,
      b/5/\nptwo/above right,
      c/\n/3/below left,
      d/\npthree/4/above right%
    }{%
      \pgfmathsetmacro{\x}{\i * \basedist}
      \pgfmathsetmacro{\y}{\j * \basedist}
      \coordinate (P\v) at (\x,\y);

      \node[\pos] at (P\v) {$\v$};
      \fill (P\v) circle (\ptsize);
    }

    \foreach \i [remember=\i as \pi (initially 1)] in {2,...,\n}{%
      \draw (P\pi) -- (P\i);
    }

    \draw (P2) -- (Pa) -- (Pb) -- (P3);
    \draw (P\nmtwo) -- (Pc) -- (Pd) -- (P\nmone);

    \pgfmathsetmacro{\x}{(\n+4.5) * \basedist}
    \draw[->] (0,0) -- (\x,0);
    \node[below right] at (\x,0) {$i$};
    \draw[->] (0,0) -- (0,\x);
    \node[above left] at (0,\x) {$\pi_{\n}(i)$};

    \foreach \i in {1,...,\npfour}{
      \pgfmathsetmacro{\x}{\i * \basedist}
      \draw (\x,0) -- (\x,\ticsize);
      \draw (0,\x) -- (\ticsize,\x);
    }
  \end{tikzpicture}
  
  \caption{Graph of permutation $\pi_{\n}$ of proof of
    Proposition~\ref{prop:agreementsofpermutationtheory} represented as points (i.e., the set
    $\{(i,\pi_{\n}(i)) \mid i\in[\n]\}$). The edges of the corresponding graph of agreements
    $G_{\n}$ are represented as lines and the labels indicate the vertices of $G_{\n}$.}
  \label{fig:primegraphofagreements}
\end{center}
\endgroup
  
\end{figure}
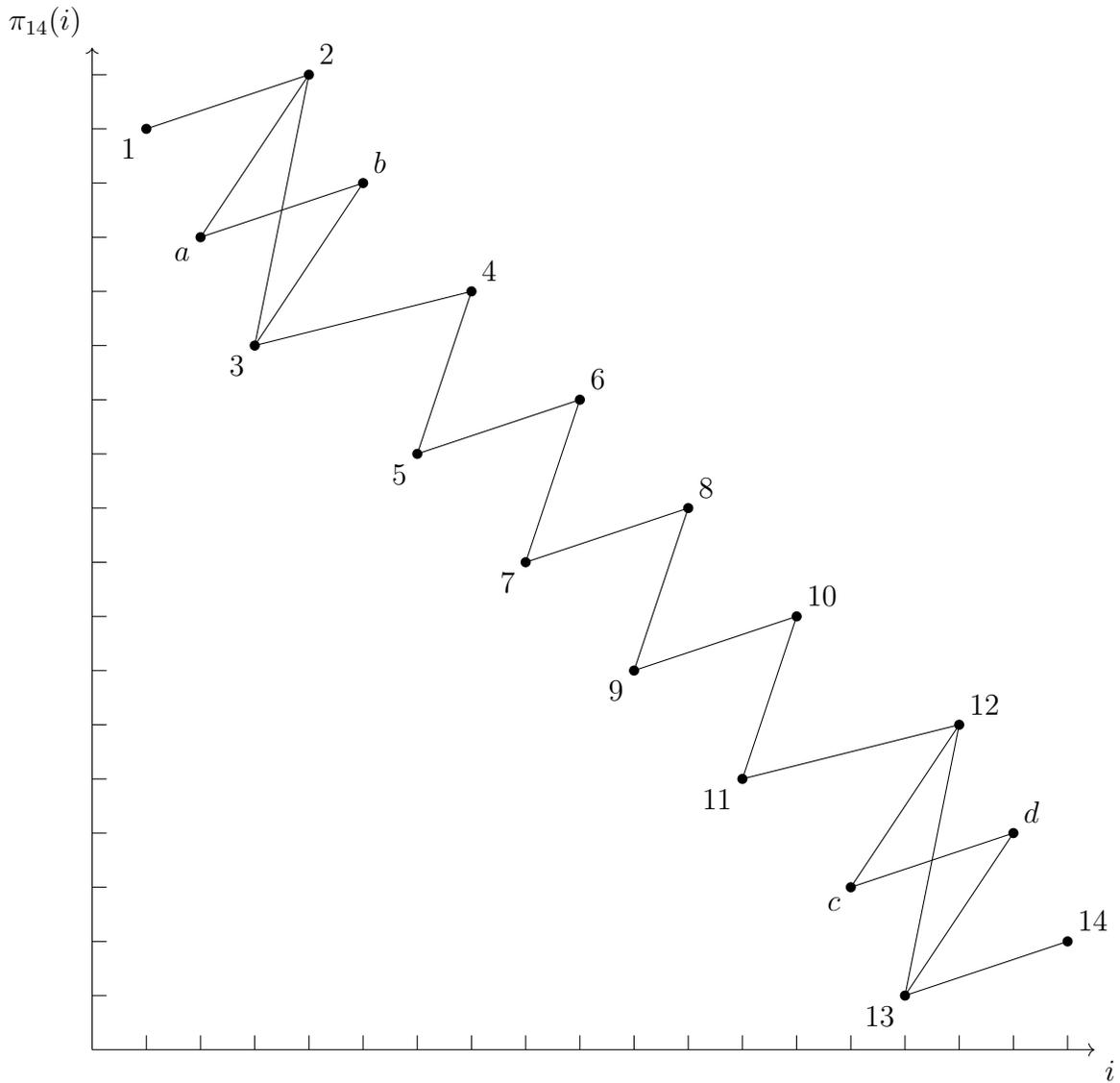

We conclude this section with an example of a universal theory $T$ of graphs that is in $\WR$
essentially because of failure of the product condition of Proposition~\ref{prop:noweaklyrandomsubgraphon}.

\begin{proposition}\label{prop:WRproductcond}
  Consider the sequence of graphs $G = (C_{n^2+5})_{n\in\NN}$ and let $T\df\Th(\phi_G)$ be the
  theory of positive models of the recursive blow-up $\phi_G$ relative to $G$ (see
  Definition~\ref{def:graphrecursiveblowup}). Then $T\in\WR$.
\end{proposition}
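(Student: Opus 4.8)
The plan is to show that every limit of $T$ has a \emph{primally almost finite} sub-object and then invoke Lemma~\ref{lem:graphs:primallyalmostfinite->WR}. Write $R=(C_{n^2+5})_{n\in\NN}$, so that $\cM[T]=Q(\phi_R)$, and recall from Lemma~\ref{lem:QphiG} that $\cM[T]\subseteq S(\{K_0\}\cup\{C_{n^2+5}\mid n\in\NN\})$. First I would record two structural facts, both flowing from the primality argument already used in the proof of Lemma~\ref{lem:QphiG}. (a) Every prime graph in $\cM[T]$ is a prime induced subgraph of some $C_{n^2+5}$ (by Lemma~\ref{lem:primesubstructure}, since any $H\in\cM[T]$ lies in $S$ of a finite subset of $\{K_0\}\cup\{C_{n^2+5}\}$); as induced subgraphs of a cycle are disjoint unions of paths (or the whole cycle), and the only prime such graphs are $K_0,P_1,P_2,\overline{K}_2$, the cycle itself, and the paths $P_k$ with $k\geq 4$, the prime graphs of $\cM[T]$ all lie in $\cP_0\cup\{C_{n^2+5}\mid n\in\NN\}$ where $\cP_0:=\{K_0,P_1,P_2,\overline{K}_2\}\cup\{P_k\mid k\geq 4\}$, and $\cP_0$ is almost finite (the $P_k$ form a chain under the induced-subgraph order, with only finitely many non-path members). (b) Since each $C_L$ with $L\geq 5$ is prime, re-running the ``longest common prefix'' analysis from the proof of Lemma~\ref{lem:QphiG} shows that an induced copy of $C_L$ in $W^R$ must consist of exactly one vertex below each of the $L$ children of some node at depth $n$ with $L=n^2+5$ (a ``full level occupancy''); in particular $W^R$ has no induced $C_L$ for $L\notin\{n^2+5\mid n\in\NN\}$.

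The key reduction is: every $\psi\in\HomT{T}$ admits a sub-object $\chi$ with $Q(\chi)$ primally almost finite. Granting this, $\Th(\chi)$ has $\cM[\Th(\chi)]=Q(\chi)$ primally almost finite, so Lemma~\ref{lem:graphs:primallyalmostfinite->WR} gives $\Th(\chi)\in\WR$; hence $\chi$, being a limit of $\Th(\chi)$, has a weakly random subgraphon, which by transitivity of sub-objects (as used in the proof of Lemma~\ref{lem:graphs:primallyalmostfinite->WR}) is a weakly random sub-object of $\psi$. As $\psi$ is arbitrary, $T\in\WR$.

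For the construction, fix $\ell_0\in\NN$ with $\sum_{\ell\geq\ell_0}\frac{1}{\ell^2+5}<1$ (possible since the series converges). Let $(H_m)_{m\in\NN}$ be a convergent sequence of models of $T$ with limit $\psi$; as each $H_m\in Q(\phi_R)$, fix an embedding $e_m$ of $H_m$ into $W^R$, i.e.\ an injection $V(H_m)\to\prod_\ell V(C_{\ell^2+5})$ whose image is off-diagonal and realizes $H_m$. For each $\ell\geq\ell_0$ choose $v^{(m)}_\ell\in V(C_{\ell^2+5})$ minimizing $|\{w\in V(H_m)\mid e_m(w)_\ell=v^{(m)}_\ell\}|$, so this set has size at most $|H_m|/(\ell^2+5)$. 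Put $A_m:=\{w\in V(H_m)\mid e_m(w)_\ell\neq v^{(m)}_\ell\text{ for all }\ell\geq\ell_0\}$; then $|V(H_m)\setminus A_m|\leq\sum_{\ell\geq\ell_0}|H_m|/(\ell^2+5)<|H_m|$, so $|A_m|/|H_m|$ is bounded below by a positive constant. Passing to a subsequence, $|A_m|/|H_m|\to c>0$ and $(H_m\rest_{A_m})_m$ converges to some $\chi$, a sub-object of $\psi$ of measure $c$ (in particular $\chi\in\HomT{T}$, since $\cM[T]$ is closed under induced subgraphs). By construction, for each $\ell\geq\ell_0$ the entire subtree below the child labelled $v^{(m)}_\ell$ of any depth-$\ell$ node of the embedding has been deleted, so $H_m\rest_{A_m}$ has no full level occupancy at any depth $\geq\ell_0$; by fact (b) it therefore has no induced $C_L$ with $L=\ell^2+5$, $\ell\geq\ell_0$, and no induced $C_L$ with $L\notin\{n^2+5\}$ either. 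Hence $p(C_L,\chi)=\lim_m p(C_L,H_m\rest_{A_m})=0$ for all such $L$, so every cycle in $Q(\chi)$ has length in the finite set $\{n^2+5\mid n<\ell_0\}$. Since $Q(\chi)\subseteq\cM[T]$, fact (a) then shows the prime graphs of $Q(\chi)$ lie in $\cP_0\cup\{C_{n^2+5}\mid n<\ell_0\}$, a finite enlargement of the almost-finite family $\cP_0$, hence itself almost finite; so $Q(\chi)$ is primally almost finite, proving the reduction.

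The part I expect to require the most care is fact (b) and its use above: one must make the ``trie'' of subtrees on $V(H_m)$ coming from an arbitrary embedding into $W^R$ precise enough to justify that deleting one child-subtree at each sufficiently deep node destroys every induced long cycle of the restriction, and one must re-derive (or carefully cite from the proof of Lemma~\ref{lem:QphiG}) that primality pins each induced $C_L$ to a single level of $W^R$. The remaining ingredients — the classification of prime induced subgraphs of cycles in (a), the subsequence/compactness passages, the equality $\cM[\Th(\chi)]=Q(\chi)$, and transitivity of sub-objects — are routine.
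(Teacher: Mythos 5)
Your proposal is correct and is essentially the paper's own argument: both embed models of $T$ into $W^R$, delete a least-occupied child label at every sufficiently deep level to keep a positive fraction of the vertices while destroying all long induced cycles, pass to a convergent subsequence to obtain a positive-measure sub-object whose theory of positive graphs is primally almost finite, and then conclude via Lemma~\ref{lem:graphs:primallyalmostfinite->WR} and transitivity of sub-objects (the paper deletes at all levels and bounds by the product $\prod_k(1-1/(k^2+5))$, identifying the restriction directly as an element of $S(\{P_n\mid n\in\NN\})$, whereas you delete only at levels $\ell\geq\ell_0$ with a union bound and argue through the primality-pinning fact (b) — a cosmetic difference). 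One small caveat: your parenthetical ``$W^R$ has no induced $C_L$ for $L\notin\{n^2+5\mid n\in\NN\}$'' must be read as restricted to prime cycles $L\geq 5$ (e.g.\ $C_3$ and $C_4$ do have positive density in $\phi_R$), but since only prime cycles enter your final count of prime graphs of $Q(\chi)$, this does not affect the argument.
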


\begin{proof}
  Let $V=(V_n)_{n\in\NN}$ be given by $V_n\df V(C_{n^2+5})$ and let $W^G$ be the graphon
  representation of $\phi_G$ given by~\eqref{eq:WG}. Since $W^G$ is $\{0,1\}$-valued, we can view it
  as a continuum-sized graph $H$ with vertex set $\Omega^V$, consider the family $\cF$ of all finite
  graphs that are induced subgraphs of $H$.

  We claim that $\cF = Q(\phi_G)$. Indeed, we obviously have $Q(\phi_G)\subseteq\cF$ and any induced
  subgraph $M\in\cF$ of $H$ must be an induced subgraph of the (finite) recursive blow-up
  $H'_{m_0}\df R^{(C_5,C_6,C_9,\ldots,C_{m_0^2+5})}$ for some $m_0\in\NN$ (see
  Definition~\ref{def:compatibleblowup}) hence
  \begin{align*}
    \phi_G(M)
    & \geq
    p(M,H'_{m_0})\cdot\phi_G(H'_{m_0})
    \geq
    p(M,H'_{m_0})\cdot\frac{\lvert H'_{m_0}\rvert!}{\lvert\Aut(H'_{m_0})\rvert}\cdot
    \left(\prod_{n=0}^{m_0}\frac{1}{n^2+5}\right)^{\lvert H'_{m_0}\rvert}
    >
    0,
  \end{align*}
  so $M\in Q(\phi_G)$.

  Let us now show that $T\in\WR$. Let $\phi\in\HomT{T}$ be an arbitrary limit of $T$. Since
  $\cM[T]=\cF$, there exists a sequence $U = (U_n)_{n\in\NN}$ of finite subsets of $\Omega^V$ such
  that the sequence of finite graphs $(H\rest_{U_n})_{n\in\NN}$ converges to $\phi$.

  For each $k\in\NN$ and each $v\in V(C_{k^2+5})$, let
  \begin{align*}
    K_{k,v} & \df \{\sigma\in\Omega^V \mid \sigma_k = v\}.
  \end{align*}

  For each $n\in\NN$, let us construct a sequence $(U'_{n,k})_{k\in\NN}$ of subsets of $U_n$
  inductively as follows. We set $U'_{n,0}\df U_n$ and given $U'_{n,k}$, let $v_{n,k}$ be a vertex
  $v\in V(C_{k^2+5})$ that minimizes $\lvert U'_{n,k}\cap K_{k,v}\rvert$ (which can be zero) and let
  $U'_{n,k+1}\df U'_{n,k}\setminus K_{n,v_{n,k}}$; note that
  \begin{align*}
    \lvert U'_{n,k+1}\rvert & \geq \left(1 - \frac{1}{k^2+5}\right)\cdot\lvert U'_{n,k}\rvert.
  \end{align*}

  We also let $U'_n\df\bigcap_{k\in\NN} U'_{n,k}$ and note that a simple induction gives
  \begin{align}\label{eq:Uratio}
    \frac{\lvert U'_n\rvert}{\lvert U_n\rvert}
    & \geq
    \prod_{k\in\NN}\left(1 - \frac{1}{k^2+5}\right)
    >
    0.
  \end{align}
  Note also that the definition of $U'_n$ implies that for every $k\in\NN$, there exists $v\in
  V(C_{k^2+5})$ such that $U'_n\cap K_{k,v} = \varnothing$, which along with the definition of $H$
  implies that $H\rest_{U'_n}\in S(\{P_n\mid n\in\NN\})$, where $P_n$ is the path on $n$ vertices.

  Let then $(H\rest_{U'_{n_\ell}})_{\ell\in\NN}$ be a convergent subsequence of
  $(H\rest_{U'_n})_{n\in\NN}$ such that $(\lvert U'_{n_\ell}\rvert/\lvert
  U_{n_\ell}\rvert)_{\ell\in\NN}$ is also convergent and let $\psi\in\HomT{T}$ be the limit of
  $(H\rest_{U'_{n_\ell}})_{\ell\in\NN}$. Then~\eqref{eq:Uratio} implies that $\psi$ is a sub-object
  of $\phi$ of measure at least $\prod_{k\in\NN}(1-1/(k^2+5)) > 0$. But since
  $H\rest_{U'_{n_\ell}}\in S(\{P_n\mid n\in\NN\})$, it follows that $\Th(\psi)$ is primally almost
  finite, which by Lemma~\ref{lem:graphs:primallyalmostfinite->WR} gives $\Th(\psi)\in\WR$, so
  $\psi$ has a weakly random sub-object, hence so does $\phi$.
\end{proof}

\section{VC~dimension and weak randomness}
\label{sec:VC:graph}

In this section we study how weak randomness and the class $\WR$ interact with the notion of
VC~dimension. We remind the reader that in this section we drop the qualifiers ``weakly'' and
``strongly'' from ``closed under substitutions'' as they are superfluous for graphs (see
Remark~\ref{rmk:substarity2}).

Recall that for a non-trivial graph $G$, the \emph{Vapnik--Chervonenkis dimension}~\cite{VC71}
(VC~dimension) of (neighborhoods of) $G$ is the largest size $\VC(G)$ of a set $U\subseteq V(G)$
that is \emph{shattered} by neighborhoods of vertices of $G$ in the sense that for every $A\subseteq
U$, there exists $v\in V(G)$ such that $N_G(v)\cap U = A$, where $N_G(v)\df\{w\in V(G)\mid G\vDash
E(v,w)\}$ is the neighborhood of $v$ in $G$. By convention, we also let $\VC(K_0)\df 0$.

Recall also that (the edge relation in) a class of graphs $\cF$ (or the corresponding universal
theory $\Th(\cF)$) is said to have \emph{bounded VC~dimension} if $\sup\{\VC(G) \mid G\in\cF\} <
\infty$. In model theoretic language, the class has NIP (standing for \emph{not the independence
  property}).

Finally, recall that by~\cite{LS10}, a universal theory $T$ of graphs has bounded VC~dimension if
and only if all graphons of $T$ are a.e.\ $\{0,1\}$-valued\footnote{Let us warn the unfamiliarized
  reader that even if $W$ is a.e.\ $\{0,1\}$-valued, its theory of positive graphs $\Th(W)$ is not
  necessarily of bounded VC~dimension. For example, the construction in the proof of
  Theorem~\ref{thm:graphpersistence} always yields a $\{0,1\}$-valued graphon with $\Th(W)=\cF$,
  even if $\cF=\cM[\TGraph]$, which clearly has unbounded VC~dimension.}. Thus, studying
VC~dimension is directly related to studying whether the theory has fractional-valued graphons.

We start with a simple application of the theory of graph persistence developed so far.

\begin{proposition}\label{prop:01subgraphon}
  If $W$ is a graphon such that there exists a finite graph $G$ with $\phi_W(G)=0$, then $W$ has an
  a.e.\ $\{0,1\}$-valued subgraphon $W'$. Furthermore, $W'$ can be taken of the form $W' = W\rest_A$
  for some positive measure set $A$.
\end{proposition}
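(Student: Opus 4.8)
The plan is to prove the contrapositive in the following sharp form: \emph{if no positive measure set $A\subseteq X$ has $W\rest_A$ a.e.\ $\{0,1\}$-valued, then $\phi_W(G)>0$ for every finite graph $G$}. This immediately gives both assertions of the proposition, since under the hypothesis $\phi_W(G)=0$ it forces some such $A$ to exist, and $W'\df W\rest_A$ is then a subgraphon of $W$ of exactly the required special form.

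First I would pass to the \emph{fractional part} of $W$: set $Z\df\{(x,y)\in X\times X\mid 0<W(x,y)<1\}$, which is symmetric and measurable, and let $U\df\One_Z$, a $\{0,1\}$-valued graphon over $\Omega$. For a positive measure set $A$, tracing through the definition of $\mu_A$ one sees that $W\rest_A$ is a.e.\ $\{0,1\}$-valued exactly when $(\mu\otimes\mu)(Z\cap(A\times A))=0$, i.e.\ when $\tind(K_2,U\rest_A)=0$, i.e.\ when $K_2\notin Q(U\rest_A)$. Hence the standing assumption says precisely that $K_2\in Q(U\rest_A)$ for \emph{every} positive measure $A$, which by Lemma~\ref{lem:PWsubgraphon} is the statement $K_2\in P(U)$.

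Now persistence does the real work. By Lemma~\ref{lem:PW}, $P(U)$ is closed under substitutions, so $S(\{K_2\})\subseteq P(U)$; iterating $K_g\cong K_{g-1}^{v\to K_2}$ shows $K_g\in S(\{K_2\})$ for every $g\geq 2$, whence $K_g\in P(U)\subseteq Q(U)$. To finish, fix a finite graph $G$ with $g\df\lvert V(G)\rvert\geq 2$ (the cases $g\leq 1$ being trivial). Since $\phi_U(K_g)>0$ and $U=\One_Z$, the set of tuples $(x_1,\ldots,x_g)\in X^g$ with $(x_i,x_j)\in Z$ for all $i<j$ has positive $\mu^{\otimes g}$-measure, and remains so after discarding the null set of tuples with a repeated coordinate. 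For every such tuple each $W(x_i,x_j)$ lies strictly between $0$ and $1$, so the integrand $\prod_{\{i,j\}\in E(G)}W(x_i,x_j)\cdot\prod_{\{i,j\}\in E(\overline{G})}(1-W(x_i,x_j))$ defining $\tind(G,W)$ is a product of strictly positive numbers there; integrating over this positive measure set yields $\tind(G,W)>0$, hence $\phi_W(G)>0$.

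The only genuinely new ingredient is the single line of the third paragraph: persistent positivity of $K_2$ in $U$ propagates to all cliques via closure of $P(U)$ under substitution (this is exactly where the theory of persistence enters). I expect the one point needing care is the bookkeeping of the first equivalence---matching ``$W\rest_A$ a.e.\ $\{0,1\}$-valued'' with ``$K_2\notin Q(U\rest_A)$'' by chasing the reweighted measures $\mu_A$; everything after that is routine, and in particular the Graphon Removal Lemma is not needed.
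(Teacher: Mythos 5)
Your proof is correct and follows essentially the same route as the paper's: pass to the $\{0,1\}$-valued ``fractional part'' graphon (your $U$ is the paper's $\widetilde{W}$), observe that existence of an a.e.\ $\{0,1\}$-valued subgraphon is equivalent to $K_2\notin P(U)$ via Lemma~\ref{lem:PWsubgraphon}, use Lemma~\ref{lem:PW} to propagate $K_2\in P(U)$ to all cliques, and conclude that every finite graph has positive density in $W$. The only (harmless) difference is that by phrasing the contrapositive directly in terms of sets $A$ you get the ``furthermore'' clause for free, whereas the paper handles the reduction from $W\rest_f$ to $W\rest_{\{f>0\}}$ in a separate final sentence.
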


\begin{proof}
  Define a $\{0,1\}$-valued graphon $\widetilde{W}$ on the same space as $W$ by
  \begin{align*}
    \widetilde{W}(x,y)
    & \df
    \begin{dcases*}
      1, & if $0 < W(x,y) < 1$,\\
      0, & if $W(x,y)\in\{0,1\}$.
    \end{dcases*}
  \end{align*}

  Note that a subgraphon $W'$ of $W$ represented as $W\rest_f$ is a.e.\ $\{0,1\}$-valued if and only
  if $K_2\notin Q(\widetilde{W}\rest_f)$. Hence, $W$ has an a.e.\ $\{0,1\}$-valued subgraphon if and
  only if $K_2\notin P(\widetilde{W})$. Thus, to prove the proposition, it is sufficient to show
  that $K_2\in P(\widetilde{W})$ implies that every finite graph $G$ has positive density in $W$.

  Since $P(\widetilde{W})$ is closed under substitutions and induced subgraphs (Lemma~\ref{lem:PW}),
  it follows that $K_n\in P(\widetilde{W})$ for every $n\in\NN$ (as $K_n\cong K_2^{v\to
    K_{n-1}}$). But note that each copy of $K_n$ in $\widetilde{W}$ corresponds to points of $W$
  whose pairs all have values in $(0,1)$, hence have strictly fractional (conditional) probability
  of yielding edges, thus the fact that $K_{\lvert G\rvert}$ has positive density in $\widetilde{W}$
  implies that $G$ has positive density in $W$.

  \medskip

  For the final part, if $W'=W\rest_f$ for some function $f\colon X\to [0,1]$, then letting
  $A\df\{x\in X \mid f(x) > 0\}$ yields that $W\rest_A$ is an a.e.\ $\{0,1\}$-valued subgraphon of
  $W$.
\end{proof}

The reader might have noticed that with the notable exception of quasirandom graphons, all examples
of weakly random graphons of Section~\ref{sec:WR:graph} are $\{0,1\}$-valued. The next proposition
says this is not a coincidence: every universal weakly random limit of a \emph{proper} (strongly)
persistent class of graphs $\cF$ (i.e., $\cF\subsetneq\cM[\TGraph]$) must necessarily be
$\{0,1\}$-valued.

\begin{theorem}\label{thm:weaklyrandom01}
  If $W$ is a weakly random graphon such that there exists a finite graph $G$ with $\phi_W(G) = 0$,
  then $W$ is a.e.\ $\{0,1\}$-valued.
\end{theorem}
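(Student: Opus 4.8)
The plan is to argue by contradiction: assume the hypotheses of the theorem and, in addition, that $W$ is \emph{not} a.e.\ $\{0,1\}$-valued; I will derive that $Q(W)=\cM[\TGraph]$, contradicting the existence of $G$ with $\phi_W(G)=0$. First I would record what weak randomness buys us: $Q(W)=P(W)$, and since every subgraphon of a subgraphon of $W$ is again a subgraphon of $W$, we get $Q(W\rest_A)=Q(W)$ for \emph{every} positive measure set $A$ (from $P(W)\subseteq Q(W\rest_A)\subseteq Q(W)$). Since $\Omega$ is atomless standard, I may and will assume $\Omega=([0,1],\lambda)$, a measure isomorphism $\bmod\,0$ preserving weak randomness, zero-density graphs, and being a.e.\ $\{0,1\}$-valued. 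For graphs $G_1,G_2$ and $\beta\subseteq V(G_1)\times V(G_2)$, write $G_1\sqcup_\beta G_2$ for the graph on $V(G_1)\disjcup V(G_2)$ with edge set $E(G_1)\cup E(G_2)\cup\{\{u,w\} : (u,w)\in\beta\}$.

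The crux is to show that $Q(W)$ is closed under these \emph{bipartite joins}: if $G_1,G_2\in Q(W)$ then $G_1\sqcup_\beta G_2\in Q(W)$ for every $\beta$. Granting this, the theorem follows by induction on $\lvert H\rvert$: $K_0,K_1\in Q(W)$ trivially, and any $H$ on $n\ge 2$ vertices can be written as $(H-v)\sqcup_\beta K_1$ for the appropriate $\beta$, with $H-v\in Q(W)$ by induction, so $H\in Q(W)$; hence $Q(W)=\cM[\TGraph]$, the desired contradiction. To prove the closure property, fix $G_1,G_2$ and let $F\df\{(x,y)\in[0,1]^2 : 0<W(x,y)<1\}$; since $W$ is not a.e.\ $\{0,1\}$-valued and the diagonal is null, $F$ has a Lebesgue density point $(x_0,y_0)$ with $x_0\ne y_0$. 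Choose disjoint intervals $A_1\ni x_0$ and $A_2\ni y_0$ small enough that $\lambda\otimes\lambda\bigl(F\cap(A_1\times A_2)\bigr)\ge(1-\epsilon)\lambda(A_1)\lambda(A_2)$ with $\epsilon<\lvert V(G_1)\rvert^{-2}$. A Markov/Fubini argument gives a positive measure set $A_1^\ast\subseteq A_1$ such that $\lambda(A_2\setminus F_x)\le\sqrt{\epsilon}\,\lambda(A_2)$ for all $x\in A_1^\ast$ (where $F_x=\{y : (x,y)\in F\}$); hence for every tuple $x\in(A_1^\ast)^{V(G_1)}$ the set $A_2^{x}\df A_2\cap\bigcap_{u}F_{x_u}$ has positive measure. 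Now I apply weak randomness \emph{twice}: as $A_1^\ast$ has positive measure, $G_1\in Q(W\rest_{A_1^\ast})$, so a positive measure family of copies of $G_1$ has all vertices in $A_1^\ast$; and for each such copy $x$, as $A_2^{x}$ has positive measure, $G_2\in Q(W\rest_{A_2^{x}})$, so a positive measure family of copies of $G_2$ sits inside $A_2^{x}$. For a combined copy $(x,y)$ of this kind every cross pair $(x_u,y_w)$ lies in $F$, so $W(x_u,y_w)\in(0,1)$ and each threshold condition realizing $\beta$ across the cross pairs has positive conditional probability; integrating, $\tind(G_1\sqcup_\beta G_2,\,W\rest_{A_1\cup A_2})>0$, so $G_1\sqcup_\beta G_2\in Q(W\rest_{A_1\cup A_2})=Q(W)$.

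I expect the bipartite-join closure to be the main obstacle, and the key device making it work is the double use of weak randomness: it lets one assert that $W$ still realizes $G_1$ on the small ``good base'' set $A_1^\ast$ and still realizes $G_2$ on the cross-neighbourhood $A_2^{x}$, so one never needs a uniform lower bound on induced densities as the localizing intervals shrink — the only quantitative input is a Lebesgue density estimate, which can be made as sharp as required for each fixed pair $G_1,G_2$. The remaining steps (the Markov/Fubini slicing, the routine verification that the combined configuration is a copy of $G_1\sqcup_\beta G_2$ on a positive-measure set of threshold coordinates, and the final induction) are straightforward.
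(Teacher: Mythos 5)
Your proposal is correct, and at its core it runs on the same engine as the paper's proof: weak randomness forces copies of already-positive graphs to appear inside the ``fractional neighbourhood'' where $0<W<1$, and the fractional values then let you realize any desired adjacency pattern to newly added vertices with positive probability. The packaging differs, though. The paper proves the contrapositive by a direct induction on $\lvert G\rvert$: it fixes a single point $x_{v_0}$ whose fractional slice $A=\{y: 0<W(x_{v_0},y)<1\}$ has positive measure, uses weak randomness once to find a copy of $G-v_0$ inside $W\rest_A$, extends it by the one vertex $x_{v_0}$ via explicit threshold coordinates, and invokes the Graphon Removal Lemma so that a single off-diagonal copy suffices. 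You instead prove a stronger intermediate statement --- $Q(W)$ is closed under arbitrary bipartite joins of two positive graphs --- via a Lebesgue density point of $\{0<W<1\}$, a Markov/Fubini slicing to get the good sets $A_1^\ast$ and $A_2^x$, and two applications of weak randomness; you then only use the special case where the second factor is $K_1$. Your route buys a genuinely stronger closure property and avoids the Removal Lemma (you get positive density directly because the integrand is strictly positive on a positive-measure set of configurations), at the cost of extra quantitative bookkeeping ($\epsilon<\lvert V(G_1)\rvert^{-2}$, the density-point estimate, the two-level Fubini argument) that the paper's single-vertex extension sidesteps. Both arguments are sound; if you wanted to streamline yours, note that only the $G_2=K_1$ case of your join lemma is needed, and in that case fixing one point and its fractional slice (as the paper does) replaces the density-point machinery.
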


\begin{proof}
  We prove this by the contra-positive. Suppose $W$ is a weakly random graphon over some space
  $\Omega=(X,\cA,\mu)$ that is not a.e.\ $\{0,1\}$-valued. Let us show by induction in $\lvert
  G\rvert$ that every finite graph $G$ has $\phi_W(G) > 0$. By possibly applying the Graphon Removal
  Lemma~\cite[Theorem~1]{Pet13}, it is enough to show that $\Tind(G,W)\not\subseteq\cD_{V(G)}$.

  Obviously $\phi_W(K_0) = \phi_W(K_1) = 1$. So suppose $\lvert G\rvert\geq 2$, let $v_0\in V(G)$
  and $H\df G-v_0$. Since $W$ is not a.e.\ $\{0,1\}$-valued, there exists $x_{v_0}\in[0,1]$ such
  that the set
  \begin{align*}
    A & \df \{y\in [0,1]\setminus\{x_{v_0}\} \mid 0 < W(x_{v_0},y) < 1\}
  \end{align*}
  has positive measure. Since $W\rest_A$ is a subgraphon of $W$, $W\rest_A$ is also weakly random
  and since by induction hypothesis, $\phi_W(H) > 0$ we get that there exists a point $(z,w)\in
  A^{V(H)}\times [0,1)^{\binom{V(H)}{2}}$ that induces an off-diagonal copy of $H$ in $W\rest_A$
  (i.e., we have $(z,w)\in\Tind(H,W\rest_A)$).

  Let us extend $(z,w)$ to a point in $X^{V(G)}\times [0,1)^{\binom{V(G)}{2}}$ by defining
  $z_{v_0}\df x_{v_0}$ and
  \begin{align*}
    w_{\{v_0,w\}} & \df
    \begin{dcases*}
      0, & if $\{v_0,u\}\in E(G)$,\\
      \frac{1 + W(x_{v_0},z_u)}{2}, & if $\{v_0,u\}\notin E(G)$,
    \end{dcases*}
  \end{align*}
  for every $u\in V(H)$. It is straightforward to check that $(z,w)$ yields an off-diagonal copy of
  $G$ in $W$, concluding the proof.
\end{proof}

Our next objective is to show that for families of graphs $\cF$ that are closed under substitutions
and induced subgraphs, determining whether $\cF$ has bounded VC~dimension is reduced to determining
whether the family of prime graphs of $\cF$ has bounded VC~dimension. To do so, we need a variation
of the definition of VC~dimension.

\begin{definition}
  Given a non-trivial graph $G$, the \emph{$\VC'$~dimension} of $G$ (denoted $\VC'(G)$) is the
  largest size of a set $U\subseteq V(G)$ that is \emph{almost shattered} by the edge relation of
  $G$ in the sense that for every non-empty $A\subsetneq U$, there exists $v\in V(G)$ such that
  $N_G(v)\cap U = A\setminus\{v\}$. By convention, we also let $\VC'(K_0)\df 0$.
\end{definition}

Note that the notion of almost shattering is weaker than the notion of shattering in \emph{two}
points: we only care about sets $A$ that are \emph{non-empty proper subsets} of $U$ and $N_G(v)\cap
U$ only needs to match $A$ up to possibly removing $v$ from $A$. Note that for a non-trivial graph
$G$, we always have $\VC'(G)\geq 1$ as any singleton set is almost shattered by the edge relation of
$G$.

\begin{lemma}\label{lem:VCVC'}
  For a non-trivial graph $G$, we have
  \begin{align*}
    \max\{F(n) \mid n\leq \VC'(G)\} & \leq \VC(G) \leq \VC'(G),
  \end{align*}
  where
  \begin{align*}
    F(n)
    & \df
    \begin{dcases*}
      \max\left\{t\in\NN \;\middle\vert\; \sum_{i=0}^{t-1}\binom{n}{i} < \frac{2^n-2}{n}\right\},
      & if $n\geq 3$,
      \\
      0, & if $n\leq 2$.
    \end{dcases*}
  \end{align*}
\end{lemma}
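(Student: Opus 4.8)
The plan is to establish the two inequalities separately; the right-hand one is immediate and the left-hand one is the substantive part.

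\emph{The bound $\VC(G)\le\VC'(G)$.} I would show that every set $U\subseteq V(G)$ shattered by the neighborhoods of $G$ is also almost shattered by the edge relation of $G$. Fix a non-empty $A\subsetneq U$; by shattering there is $v\in V(G)$ with $N_G(v)\cap U=A$, and since $G$ is irreflexive we have $v\notin N_G(v)$, hence $v\notin A$ and $A\setminus\{v\}=A=N_G(v)\cap U$. Thus $U$ is almost shattered, so taking $\lvert U\rvert=\VC(G)$ gives $\VC'(G)\ge\VC(G)$.

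\emph{The bound $\max\{F(n)\mid n\le\VC'(G)\}\le\VC(G)$.} Fix $n\le\VC'(G)$. If $n\le 2$ then $F(n)=0\le\VC(G)$, so assume $n\ge 3$. First I would note that a subset of an almost-shattered set is again almost shattered (if $U_0$ is almost shattered and $A$ is a non-empty proper subset of $U\subseteq U_0$, then $A$ is a non-empty proper subset of $U_0$ as well, and intersecting the witnessing trace $N_G(v)\cap U_0=A\setminus\{v\}$ with $U$ still gives $A\setminus\{v\}$ since $A\subseteq U$); hence there is $U\subseteq V(G)$ with $\lvert U\rvert=n$ that is almost shattered. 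Let $\cT\df\{N_G(v)\cap U\mid v\in V(G)\}$ be the family of traces on $U$. For each of the $2^n-2$ non-empty proper subsets $A\subsetneq U$, fix a witness $v_A$ with $T_A\df N_G(v_A)\cap U=A\setminus\{v_A\}$; irreflexivity gives $v_A\notin T_A$, so $T_A$ equals $A$ itself (when $v_A\notin A$) or $A$ with one of its own elements deleted (when $v_A\in A$). The key counting step is that the map $A\mapsto T_A$ into $\cT$ has all fibres of size at most $n$: a preimage of $B$ is either $B$ or of the form $B\cup\{x\}$ for some $x\in U\setminus B$, and a short case analysis (excluding the non-options $A=\varnothing$ and $A=U$) shows there are at most $n$ such sets $A$. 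Hence $\lvert\cT\rvert\ge(2^n-2)/n$.

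Now apply the Sauer--Shelah lemma to $\cT$ as a set system on the $n$-element set $U$: writing $d$ for its VC~dimension, $(2^n-2)/n\le\lvert\cT\rvert\le\sum_{i=0}^{d}\binom{n}{i}$. Any $S\subseteq U$ shattered by $\cT$ is also shattered by the neighborhoods of $G$ (if $N_G(v)\cap U=B$ then $N_G(v)\cap S=B\cap S$), so $\VC(G)\ge d$, whence $\sum_{i=0}^{\VC(G)}\binom{n}{i}\ge(2^n-2)/n$. Since $k\mapsto\sum_{i=0}^{k}\binom{n}{i}$ is strictly increasing for $0\le k\le n$, while $F(n)\le n-1$ and, by definition of $F(n)$, $\sum_{i=0}^{F(n)-1}\binom{n}{i}<(2^n-2)/n$, this forces $F(n)-1<\VC(G)$, i.e.\ $F(n)\le\VC(G)$, completing the argument. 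The only step I expect to require genuine care is the fibre-counting estimate $\lvert\cT\rvert\ge(2^n-2)/n$, in particular getting the constant $n$ rather than a weaker $n+1$; everything else is bookkeeping plus a black-box use of Sauer--Shelah.
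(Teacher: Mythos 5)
Your proof is correct and follows essentially the same route as the paper: the trivial inequality via irreflexivity, an almost-shattered set of size $n$, the fibre count showing each trace has at most $n$ witnessing sets $A$ (so at least $(2^n-2)/n$ distinct traces), and Sauer--Shelah to extract a shattered set of size at least $F(n)$. The only cosmetic differences are that you make explicit the downward closure of almost shattering (which the paper uses implicitly) and you invoke Sauer--Shelah in its cardinality-bound form rather than its contrapositive, which is equivalent.
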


\begin{proof}
  Since the notion of shattering implies the notion of almost shattering, it follows trivially that
  $\VC(G)\leq\VC'(G)$.

  \medskip

  Let now $n\leq\VC'(G)$ and let us show that $F(n)\leq\VC(G)$. Note that the result is trivial if
  $n\leq 2$ as $F(0)=F(1)=F(2)=0$, so let us suppose $n\geq 3$. Since $n\leq\VC'(G)$, we know that there
  exists a set $U\subseteq V(G)$ of size $n$ that is almost shattered by the edge relation of
  $G$. For each non-empty $A\subsetneq U$, let $v_A\in V(G)$ be such that $N_G(v_A)\cap U =
  A\setminus\{v_A\}$ and let $\cF\df\{N_G(v_A)\cap U \mid \varnothing\neq A\subsetneq U\}$.

  We claim that for every $B\subseteq U$, there are at most $n$ non-empty sets $A\subsetneq U$ such
  that $N_G(v_A)\cap U = B$. Indeed, since $N_G(v_A)\cap U = A\setminus\{v_A\}$, the set of non-empty
  $A\subsetneq U$ with $N_G(v_A)\cap U = B$ must be contained in
  \begin{align*}
    \{B\}\cup\{B\cup\{u\} \mid u\in U\}.
  \end{align*}
  When $B$ is non-empty, the set above has size at most $\lvert U\rvert = n$ and when $B$ is empty,
  the set above has size $n+1$ but $A$ cannot be equal to $B$.

  Since there are $2^n-2$ non-empty sets $A\subsetneq U$, we get $\lvert\cF\rvert\geq (2^n-2)/n$. On
  the other hand, by the definition of $F(n)$, we have
  \begin{align*}
    \lvert\cF\rvert & \geq \frac{2^n-2}{n} > \sum_{i=0}^{F(n)-1}\binom{n}{i},
  \end{align*}
  so by the Sauer--Shelah Lemma~\cite{Sau72,She72}, the family $\cF$ shatters some $U'\subseteq U$
  with $\lvert U'\rvert\geq F(n)$, thus $\VC(G)\geq F(n)$.
\end{proof}

\begin{remark}\label{rmk:NIPVC'}
  It is easy to see that the function $F$ of Lemma~\ref{lem:VCVC'} is unbounded. Indeed, if there
  was a bound $t_0\in\NN$ such that $F(n)\leq t_0$ for every $n\in\NN$, then we would have
  \begin{align*}
    \frac{2^n-2}{n} \leq \sum_{i=0}^{t_0}\binom{n}{i} \leq (t_0+1)\cdot n^{t_0}
  \end{align*}
  for every $n\in\NN$, which is yields a contradiction when $n$ is sufficiently large.

  As a corollary of Lemma~\ref{lem:VCVC'}, it then follows that a universal theory graphs $T$ has
  bounded VC~dimension if and only if there exists $k\in\NN$ such that $\VC'(G)\leq k$ for every
  graph $G$ of $T$.
\end{remark}

The next lemma shows that $\VC'$~dimension behaves very well with respect to the substitution
operation.

\begin{lemma}\label{lem:VC'subst}
  Let $F_1$ and $F_2$ be non-trivial finite graphs and $v_0\in V(F_1)$. Then we have
  $\VC'(F_1^{v_0\to F_2}) = \max\{\VC'(F_1),\VC'(F_2)\}$.
\end{lemma}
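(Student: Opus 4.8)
The plan is to prove the two inequalities $\VC'(F_1^{v_0\to F_2})\geq\max\{\VC'(F_1),\VC'(F_2)\}$ and $\VC'(F_1^{v_0\to F_2})\leq\max\{\VC'(F_1),\VC'(F_2)\}$ separately, after recording the adjacency structure of $F\df F_1^{v_0\to F_2}$. Assume without loss of generality $V(F_1)\cap V(F_2)=\varnothing$, so $V(F)=(V(F_1)\setminus\{v_0\})\cup V(F_2)$. The facts to keep in mind are: every $u\in V(F_2)$ has $N_F(u)\cap(V(F_1)\setminus\{v_0\})=N_{F_1}(v_0)$ and $N_F(u)\cap V(F_2)=N_{F_2}(u)$; every $w\in V(F_1)\setminus\{v_0\}$ has $N_F(w)\cap(V(F_1)\setminus\{v_0\})=N_{F_1}(w)\setminus\{v_0\}$ and $N_F(w)\cap V(F_2)$ equal to $V(F_2)$ if $w\in N_{F_1}(v_0)$ and to $\varnothing$ otherwise. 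So from outside $V(F_2)$ the block $V(F_2)$ behaves as a single ``blown-up'' copy of $v_0$.

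For the lower bound, $\VC'(F)\geq\VC'(F_2)$ is immediate: a set $U_2\subseteq V(F_2)$ almost shattered in $F_2$ is still almost shattered in $F$, since the extra neighbours acquired by vertices of $V(F_2)$ all lie in $V(F_1)\setminus\{v_0\}$ and miss $U_2$. For $\VC'(F)\geq\VC'(F_1)$, take $U_1\subseteq V(F_1)$ almost shattered in $F_1$ and fix some $u_0\in V(F_2)$; let $U_1'\df U_1$ if $v_0\notin U_1$ and $U_1'\df(U_1\setminus\{v_0\})\cup\{u_0\}$ otherwise, so $\lvert U_1'\rvert=\lvert U_1\rvert$. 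Given a non-empty $A'\subsetneq U_1'$, translate it to $A\subseteq U_1$ by swapping the roles of $u_0$ and $v_0$, and pick an $F_1$-witness $v$ for $A$, i.e.\ $N_{F_1}(v)\cap U_1=A\setminus\{v\}$. If $v\neq v_0$ then $v\in V(F)$ is the desired $F$-witness for $A'$; if $v=v_0$ then $u_0$ works, because $N_F(u_0)$ reproduces $N_{F_1}(v_0)$ externally. A short case check confirms $U_1'$ is almost shattered in $F$, giving $\VC'(F)\geq\lvert U_1\rvert$.

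For the upper bound, let $U\subseteq V(F)$ be almost shattered of maximum size, and split it as $U=U_1\sqcup U_2$ with $U_1\df U\cap(V(F_1)\setminus\{v_0\})$ and $U_2\df U\cap V(F_2)$. If $U_2=\varnothing$, then $U$ is almost shattered in $F_1$: any witness lying in $V(F_2)$ can be replaced by $v_0$, so $\lvert U\rvert\leq\VC'(F_1)$. If $U_1=\varnothing$, then $U$ is almost shattered in $F_2$: a witness in $V(F_1)\setminus\{v_0\}$ sees $\varnothing$ or all of $U\subseteq V(F_2)$, so it cannot realize a non-empty proper subset, hence every witness already lies in $V(F_2)$; thus $\lvert U\rvert\leq\VC'(F_2)$. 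The interesting case is $U_1,U_2\neq\varnothing$. First I argue $\lvert U_2\rvert=1$: if $\lvert U_2\rvert\geq 2$, probing with $A=U_1\cup\{u_1\}$ and then with $A=\{u_1\}$ for $u_1\in U_2$ forces the witness into $V(F_2)$ (by the all-or-nothing behaviour) and yields both $N_{F_1}(v_0)\cap U_1=U_1$ and $N_{F_1}(v_0)\cap U_1=\varnothing$, contradicting $U_1\neq\varnothing$. Writing $U_2=\{u^*\}$, I then claim $U_1\cup\{v_0\}$ (which has size $\lvert U\rvert$) is almost shattered in $F_1$: given a non-empty $A\subsetneq U_1\cup\{v_0\}$, form $A'\subseteq U$ by replacing $v_0$ with $u^*$, take an $F$-witness $v$ for $A'$, and observe that if $v\in V(F_1)\setminus\{v_0\}$ then $v$ is an $F_1$-witness for $A$, whereas if $v\in V(F_2)$ then $N_F(v)\cap U_1=N_{F_1}(v_0)\cap U_1=A\cap U_1=A\setminus\{v_0\}$, so $v_0$ is an $F_1$-witness for $A$. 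Hence $\lvert U\rvert\leq\VC'(F_1)$ here too, and combining the two inequalities finishes the proof.

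The main obstacle is the mixed case $U_1,U_2\neq\varnothing$ of the upper bound: one must discover that such a straddling almost-shattered set can meet $V(F_2)$ in only a single vertex, and that, after ``re-collapsing'' that vertex to $v_0$, the resulting set is almost shattered by $F_1$. Everything else is bookkeeping with the neighbourhood formulas above, organized as a handful of sub-cases according to whether the relevant witness vertex lies in $V(F_1)\setminus\{v_0\}$ or in $V(F_2)$, and whether it coincides with $v_0$, $u_0$, or $u^*$.
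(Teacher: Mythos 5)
Your proof is correct and follows essentially the same route as the paper's: the lower bound comes from the embedded copies of $F_1$ and $F_2$, and the upper bound hinges on the same two key steps, namely that an almost-shattered set straddling both sides meets $V(F_2)$ in exactly one vertex, and that re-collapsing that vertex to $v_0$ yields a set almost shattered in $F_1$. The only cosmetic difference is the choice of probe sets in the $\lvert U\cap V(F_2)\rvert = 1$ claim (you probe with $U_1\cup\{u_1\}$ and $\{u_1\}$ to force $N_{F_1}(v_0)\cap U_1$ to be both $U_1$ and $\varnothing$, while the paper cases on whether $v_0$ is adjacent to a chosen vertex of $U\cap V(F_1)$), an equally valid variant.
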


\begin{proof}
  Without loss of generality, suppose $V(F_1)\cap V(F_2)=\varnothing$ and let $G\df F_1^{v_0\to
    F_2}$. Since both $F_1$ and $F_2$ are induced subgraphs of $G$ (as both $F_1$ and $F_2$ are
  non-trivial), it follows that $\VC'(G)\geq\max\{\VC'(F_1),\VC'(F_2)\}$.

  To prove the other inequality, let $U\subseteq V(G)$ be a set that is almost shattered by the edge
  relation of $G$ with $\lvert U\rvert = \VC'(G)$.

  Suppose first that $U\subseteq V(F_i)$ for some $i\in[2]$. Then we claim that the edge relation of
  $F_i$ also almost shatters $U$. Indeed, if $A\subsetneq U$ is a non-empty set, then we know that
  there exists $u\in V(G)$ such that $N_G(u)\cap U = A\setminus\{u\}$. Since $A\neq\varnothing$ and
  $A\neq V(F_i)$ (as $A\subsetneq U\subseteq V(F_i)$), we must have $u\in V(F_i)$ (as every $u\in
  V(F_{3-i})$ is either adjacent to all of $V(F_i)$ or not adjacent to all of $V(F_i)$) and thus
  $N_{F_i}(u)\cap U = A\setminus\{u\}$. Therefore, in this case, we get
  $\VC'(G)\leq\VC'(F_i)\leq\max\{\VC'(F_1),\VC'(F_2)\}$.

  Suppose then that $U\not\subseteq V(F_1)$ and $U\not\subseteq V(F_2)$. Then we claim that $\lvert
  U\cap V(F_2)\rvert = 1$. Suppose not and let $v_1\in U\cap V(F_1)$ and $v_2,w_2\in U\cap V(F_2)$
  with $v_2\neq w_2$. We consider first the case when $\{v_0,v_1\}\in E(F_1)$ (recall that $v_0$ is
  the vertex of $F_1$ that is being substituted: $G=F_1^{v_0\to F_2}$), let $A\df\{v_2\}\subsetneq
  U$ and let $u\in V(G)$ be such that $N_G(u)\cap U = A\setminus\{u\}$. Since $\{v_0,v_1\}\in
  E(F_1)$ and $v_1\notin A$, we must have $u\notin V(F_2)$ (as every vertex of $V(F_2)$ is adjacent
  to $v_1$ in $G$) and since $w_2\notin A$, we must have $u\notin V(F_1)$ (as every vertex of
  $V(F_1)$ is adjacent to $w_2$ in $G$), a contradiction. Consider then the case when
  $\{v_0,v_1\}\notin E(F_1)$, let $A\df\{v_1,v_2\}\subsetneq U$ and let $u\in V(G)$ be such that
  $N_G(u)\cap U = A\setminus\{u\}$. Since $\{v_0,v_1\}\notin E(F_1)$ and $v_1\in A$, we must have
  $u\notin V(F_2)$ and since $v_2\in A$, we must have $u\notin V(F_1)$, a contradiction. This
  concludes the proof of the claim, that is, we have $\lvert U\cap V(F_2)\rvert = 1$.

  Let then $w_0$ be the unique element of $U\cap V(F_2)$. We now claim that the set $U'\df
  (U\setminus\{w_0\})\cup\{v_0\}$ is almost shattered by the edge relation of $F_1$. Let
  $A'\subsetneq U'$ be a non-empty set and let
  \begin{align*}
    A
    & \df
    \begin{dcases*}
      A', & if $v_0\notin A'$,\\
      (A'\setminus\{v_0\})\cup\{w_0\}, & if $v_0\in A'$.
    \end{dcases*}
  \end{align*}
  Then there exists $u\in V(G)$ such that $N_G(u)\cap U = A\setminus\{u\}$. Consider first the case
  when $u\in V(F_1)$. Then we have
  \begin{align*}
    N_{F_1}(u)\cap U' & =
    \begin{dcases*}
      N_G(u)\cap U, & if $w_0\notin N_G(u)\cap U$,\\
      ((N_G(u)\cap U)\setminus\{w_0\})\cup\{v_0\}, & if $w_0\in N_G(u)\cap U$,
    \end{dcases*}
  \end{align*}
  hence $N_{F_1}(u)\cap U' = A'\setminus\{u\}$ (as $N_G(u)\cap U = A\setminus\{u\}$). Consider now the
  case when $u\notin V(F_1)$ and note that
  \begin{align*}
    N_{F_1}(v_0)\cap U' = (N_G(u)\cap U)\setminus\{w_0\} = A\setminus\{u,w_0\} = A'\setminus\{v_0\}
  \end{align*}
  since $u\notin A'$ (as $A'\subseteq V(F_1)$). Thus $U'$ is almost shattered by the edge relation of
  $F_1$, hence $\VC'(G)\leq\VC'(F_1)\leq\max\{\VC'(F_1),\VC'(F_2)\}$.
\end{proof}

The following simple consequence of Lemmas~\ref{lem:VCVC'} and~\ref{lem:VC'subst} (and
Remark~\ref{rmk:NIPVC'}) says that when $\cF = S(\cF')$, then $\cF$ has bounded VC~dimension if and
only if $\cF'$ has bounded VC~dimension.

\begin{theorem}\label{thm:VC}
  Let $\cF$ and $\cF'$ be families of finite graphs up to isomorphism and suppose
  $\cF=S(\cF')$. Then $\cF$ has bounded VC~dimension if and only if $\cF'$ has bounded VC~dimension.

  In particular, if $\cF$ is a family of finite graphs that is closed under substitutions and under
  induced subgraphs and $\cP$ is the family of all prime graphs of $\cF$, then $\cF$ has bounded
  VC~dimension if and only if $\cP$ has bounded VC~dimension.
\end{theorem}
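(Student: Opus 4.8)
The plan is to route everything through $\VC'$~dimension, which behaves well under substitution by Lemma~\ref{lem:VC'subst}, and then transfer back to $\VC$~dimension using Lemma~\ref{lem:VCVC'}.

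First I would extract from Lemma~\ref{lem:VCVC'} the following statement about an arbitrary family $\cG$ of finite graphs (this is just the argument of Remark~\ref{rmk:NIPVC'} with ``theory'' replaced by ``family''): $\sup_{G\in\cG}\VC(G)<\infty$ if and only if $\sup_{G\in\cG}\VC'(G)<\infty$. One direction is the trivial inequality $\VC(G)\le\VC'(G)$; for the other, if $\VC'$ were unbounded on $\cG$ while $\VC$ were bounded, the inequality $\max\{F(n)\mid n\le\VC'(G)\}\le\VC(G)$ together with the unboundedness of the function $F$ of Lemma~\ref{lem:VCVC'} would give a contradiction. Thus ``$\cG$ has NIP'' is equivalent to ``$\VC'$ is bounded on $\cG$'', and it suffices to prove the equivalence with ``$\VC'$ bounded'' in place of NIP throughout.

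Since $\cF'\subseteq S(\cF')=\cF$, boundedness of $\VC'$ on $\cF$ trivially implies it on $\cF'$. For the converse, set $k\df\sup\{\VC'(F)\mid F\in\cF'\}$, assume $k<\infty$, and prove $\VC'(G)\le\max\{k,1\}$ for every $G\in S(\cF')$ by induction on the number of substitution steps needed to build $G$ from $\cF'$ (as in the proof of Lemma~\ref{lem:primesubstructure}). The base case $G\in\cF'$ is immediate. In the inductive step, $G=F_1^{v_0\to F_2}$ for some $F_1,F_2\in S(\cF')$ built in fewer steps and some $v_0\in V(F_1)$; note $F_1\neq K_0$ since $v_0\in V(F_1)$. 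When $F_1$ and $F_2$ are both non-trivial, Lemma~\ref{lem:VC'subst} gives $\VC'(G)=\max\{\VC'(F_1),\VC'(F_2)\}$. The degenerate cases are handled via Remark~\ref{rmk:increasingsubstitution}: if $F_1\cong K_1$ then $G\cong F_2$; if $F_2\cong K_1$ then $G\cong F_1$; and if $F_2\cong K_0$ then $G\cong F_1-v_0$ with $\VC'(F_1-v_0)\le\VC'(F_1)$ by monotonicity of $\VC'$ under induced subgraphs. In every case the inductive hypothesis bounds the right-hand side by $\max\{k,1\}$, which completes the induction and hence the first statement.

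The ``in particular'' clause then follows: when $\cF$ is closed under substitutions and induced subgraphs, Lemma~\ref{lem:strongclosuresubst} (for the language of graphs, using Remark~\ref{rmk:substarity2}) gives $\cF=S(\cP)$ for $\cP$ the family of prime graphs of $\cF$, and we apply the first statement with $\cF'=\cP$. I do not expect a serious obstacle here: the substance is already packaged in Lemmas~\ref{lem:VCVC'} and~\ref{lem:VC'subst}, and the only point requiring a little care is the bookkeeping of the degenerate substitutions (those involving $K_0$ or $K_1$), which lie outside the hypotheses of Lemma~\ref{lem:VC'subst}.
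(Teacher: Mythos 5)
Your proposal is correct and follows essentially the same route as the paper's proof: reduce NIP to boundedness of $\VC'$ via Lemma~\ref{lem:VCVC'} and Remark~\ref{rmk:NIPVC'}, propagate the bound through $S(\cF')$ by induction using Lemma~\ref{lem:VC'subst}, and deduce the ``in particular'' clause from Lemma~\ref{lem:strongclosuresubst}. The only difference is that you spell out the ``simple induction'' the paper leaves implicit, including the degenerate substitutions involving $K_0$ and $K_1$, which is a legitimate (and correct) filling-in rather than a different argument.
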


\begin{proof}
  By Lemmas~\ref{lem:VCVC'} and~\ref{lem:VC'subst} and Remark~\ref{rmk:NIPVC'} with a simple
  induction, we have
  \begin{align*}
    \sup\{\VC(F) \mid F\in\cF\} < \infty
    & \iff
    \sup\{\VC'(F) \mid F\in\cF\} < \infty
    \\
    & \iff
    \sup\{\VC'(F') \mid F'\in\cF'\} < \infty
    \\
    & \iff
    \sup\{\VC(F') \mid F'\in\cF'\} < \infty,
  \end{align*}
  so the first statement follows.

  The second statement follows from the first one along with
  Lemma~\ref{lem:graph:strongclosuresubst}.
\end{proof}

As a direct corollary of Theorem~\ref{thm:VC}, it follows that any primally finite family $\cF$ that
is closed under substitutions and under induced subgraphs has bounded VC~dimension. Our next
objective is to show that the same is true in the primally almost finite case. Before we do so, we
need yet another example of a family of prime graphs that is not almost finite.

\begin{example}\label{ex:cliquedpath}
  For each $n\geq 9$ odd, let $G_n'$ be the graph obtained from the path on $n$ vertices $P_n$ by
  adding two vertices $a$ and $b$ adjacent precisely to the fourth and fourth from last vertices of
  $P_n$, respectively and connecting all even vertices into a clique (see
  Figure~\ref{fig:cliquedpath}). Formally, we have
  \begin{align*}
    V(G_n') & \df [n]\cup\{a,b\},
    \\
    E(G_n')
    & \df
    \begin{multlined}[t]
      \{\{i,i+1\} \mid i\in[n-1]\}\cup\{a,4\}\cup\{b,n-3\}\\
      \cup\{\{2i,2j\}\mid i,j\in[(n-1)/2]\land i\neq j\}.
    \end{multlined}
  \end{align*}

  It is straightforward to check that $\{G_n' \mid n\geq 9\text{ odd}\}$ is a family of prime
  graphs that is not almost finite.
\end{example}

\begin{figure}[htbp]
  \begingroup
\def\basehorzdist{1}
\def\vertdist{1.2}
\def\controlbase{0.5}
\def\ptsize{2pt}
\def\subfigwidth{\linewidth}
\newcommand{\draw}[1]{%
  \begin{subfigure}{\subfigwidth}
    \begin{center}
      \begin{tikzpicture}
        \foreach \i in {1,...,#1}{
          \pgfmathsetmacro{\x}{\i * \basehorzdist}
          \coordinate (P\i) at (\x,0);
          \coordinate (Q\i) at (\x,-\vertdist);
        }

        \foreach \i in {2,4,...,#1}{
          \pgfmathtruncatemacro{\pi}{\i-1}
          \fill (Q\i) circle (\ptsize);
          \fill (P\pi) circle (\ptsize);

          \node[left] at (Q\i) {$\i$};
          \node[above] at (P\pi) {$\pi$};

          \pgfmathtruncatemacro{\ipone}{\i+1}
          \draw (P\pi) -- (Q\i) -- (P\ipone);
        }

        \fill (P#1) circle (\ptsize);
        \fill (P4) circle (\ptsize);
        \pgfmathtruncatemacro{\nmthree}{#1-3}
        \fill (P\nmthree) circle (\ptsize);

        \node[above] at (P#1) {$#1$};
        \node[above] at (P4) {$a$};
        \node[above] at (P\nmthree) {$b$};

        \pgfmathtruncatemacro{\nmone}{#1-1}
        \draw (P#1) -- (Q\nmone);
        \draw (P4) -- (Q4);
        \draw (P\nmthree) -- (Q\nmthree);

        \pgfmathtruncatemacro{\nmoneovertwo}{(#1-1)/2}
        \foreach \i in {2,...,\nmoneovertwo}{
          \pgfmathtruncatemacro{\twoi}{2*\i}
          \foreach \j in {1,...,\i}{
            \pgfmathtruncatemacro{\twoj}{2*\j}
            \pgfmathsetmacro{\c}{\controlbase * (\i - \j)}
            \draw (Q\twoi) .. controls ($(Q\twoi) + (0,-\c)$) and ($(Q\twoj) + (0,-\c)$) .. (Q\twoj);
          }
        }
      \end{tikzpicture}
      \caption*{$G_{#1}'$.}
    \end{center}
  \end{subfigure}
}
\begin{center}
  \draw{9}
  \vskip 0.5cm
  \draw{11}
  \vskip 0.5cm
  \draw{13}
  \caption{Prime graphs $G_n'$ of Example~\ref{ex:cliquedpath} that form a family that is
    not almost finite.}
  \label{fig:cliquedpath}
\end{center}

\endgroup

\end{figure}

Before we prove the theorem, we need a small consequence of Ramsey's Theorem.

\begin{lemma}\label{lem:Ramseydensity}
  For every $n\in\NN$ and every graphon $W$, we have
  \begin{align*}
    \phi_W(K_n) + \phi_W(\overline{K}_n) & \geq \binom{R(n,n)}{n}^{-1},
  \end{align*}
  where $R(n,n)$ is the $(n,n)$-Ramsey number.
\end{lemma}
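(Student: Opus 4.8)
The plan is to sample a finite induced subgraph of size $N\df R(n,n)$ from the graphon and apply the defining property of the Ramsey number pointwise to that sample. First I would introduce the finite sample: draw $x\in X^{[N]}$ according to $\mu$ and $y\in [0,1)^{\binom{[N]}{2}}$ according to $\lambda$, and let $\rn{H}$ be the random graph on vertex set $[N]$ in which $\{i,j\}\in E(\rn{H})$ exactly when $y_{\{i,j\}} < W(x_i,x_j)$. The one ingredient that needs (a short) proof is the sampling identity $\EE[p(G,\rn{H})]=\phi_W(G)$ for every graph $G$ with $\lvert G\rvert\leq N$. Since the joint law of $(x,y)$ is exchangeable in the index set $[N]$ and $W$ is symmetric, $\EE[p(G,\rn{H})]=\PP[\rn{H}\rest_{[k]}\cong G]$ with $k\df\lvert G\rvert$; and unravelling the definitions of Section~\ref{sec:prelim}, for any fixed graph $G'$ on vertex set $[k]$ we have $\PP[\rn{H}\rest_{[k]} = G'] = (\mu\otimes\lambda)(\Tind(G',W)) = \tind(G',W)$ by Fubini (the remaining coordinates of $x$ and $y$ being irrelevant), so summing over the $k!/\lvert\Aut(G)\rvert$ graphs on $[k]$ isomorphic to $G$ gives $\PP[\rn{H}\rest_{[k]}\cong G] = p(G,W) = \phi_W(G)$.

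Next I would record the elementary combinatorial fact that \emph{every} graph $H$ on $N=R(n,n)$ vertices satisfies $p(K_n,H)+p(\overline{K}_n,H)\geq\binom{R(n,n)}{n}^{-1}$: by the definition of the Ramsey number at least one of the $\binom{N}{n}$ sets $U\in\binom{[N]}{n}$ has $H\rest_U\cong K_n$ or $H\rest_U\cong\overline{K}_n$, so the two counts defining $p(K_n,H)$ and $p(\overline{K}_n,H)$ sum to at least $1$, and dividing by $\binom{N}{n}$ gives the claim.

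Putting these together finishes the proof: applying the sampling identity to $G=K_n$ and to $G=\overline{K}_n$ and then the combinatorial bound to the (nonnegative, pointwise-bounded-below) integrand yields
\[
  \phi_W(K_n)+\phi_W(\overline{K}_n)
  = \EE\bigl[p(K_n,\rn{H})+p(\overline{K}_n,\rn{H})\bigr]
  \geq \binom{R(n,n)}{n}^{-1}.
\]
The cases $n\leq 1$ are trivial since then $\phi_W(K_n)=1$ while the right-hand side is $1$. I do not anticipate any serious obstacle; the only slightly technical point is the sampling identity $\EE[p(G,\rn{H})]=\phi_W(G)$, which is just the finite-$N$ form of the standard graphon/density correspondence and comes down to Fubini's theorem.
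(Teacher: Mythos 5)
Your proof is correct and is essentially the paper's argument in probabilistic clothing: your expectation $\EE[p(K_n,\rn{H})+p(\overline{K}_n,\rn{H})]$ is exactly the paper's sum $\sum_{M\in\cM_{R(n,n)}[\TGraph]}(p(K_n,M)+p(\overline{K}_n,M))\cdot\phi_W(M)$, and both proofs then apply the same pointwise Ramsey bound that some $n$-subset of any graph on $R(n,n)$ vertices is homogeneous. The only difference is that you re-derive the sampling identity $\EE[p(G,\rn{H})]=\phi_W(G)$ via Fubini, whereas the paper invokes it implicitly as the standard averaging identity for limits.
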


\begin{proof}
  We have
  \begin{align*}
    \phi_W(K_n) + \phi_W(\overline{K}_n)
    & =
    \sum_{M\in\cM_{R(n,n)}[\TGraph]} (p(K_n,M) + p(\overline{K}_n,M))\cdot\phi_W(M)
    \\
    & \geq
    \binom{R(n,n)}{n}^{-1}\sum_{M\in\cM_{R(n,n)}[\TGraph]}\phi_W(M)
    =
    \binom{R(n,n)}{n}^{-1},
  \end{align*}
  where the inequality follows since at least one $n$-sized subset of each
  $M\in\cM_{R(n,n)}[\TGraph]$ must induce either $K_n$ or $\overline{K}_n$ in $M$.
\end{proof}

\begin{theorem}\label{thm:primallyalmostfiniteNIP}
  Let $T$ be a universal theory of graphs. If $\cM[T]$ is primally almost finite, then the edge
  relation in $T$ has NIP (i.e., bounded VC~dimension).

  In particular, the edge relation in every universal theory of graphs $T'\in\WR$ such that
  $\cM[T']$ is closed under substitutions has NIP.
\end{theorem}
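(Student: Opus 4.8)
The ``in particular'' clause is immediate once the first statement is proved: if $T'\in\WR$ and $\cM[T']$ is closed under substitutions then $\cM[T']$ is primally almost finite by Theorem~\ref{thm:graphs:WR}. So I focus on showing that $\cM[T]$ primally almost finite implies $T$ has NIP, and I would argue the contrapositive. First, a reduction to prime graphs: since every graph is built from its prime induced subgraphs by iterated substitution (each non-prime graph being, by definition, a substitution of two strictly smaller induced subgraphs), we have $\cM[T]\subseteq S(\cP)$, where $\cP$ is the set of prime graphs of $\cM[T]$; as $\cP\subseteq\cM[T]$ and NIP passes to subfamilies, Theorem~\ref{thm:VC} applied to $S(\cP)$ gives that $T$ has NIP if and only if $\cP$ has NIP. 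So it suffices to prove: if $\cP$ contains prime graphs of arbitrarily large VC~dimension (equivalently, by Lemma~\ref{lem:VCVC'} and Remark~\ref{rmk:NIPVC'}, of arbitrarily large $\VC'$~dimension), then $\cP$ contains an infinite antichain in the induced-subgraph order. Note that $\cP$, being the set of prime graphs of the hereditary class $\cM[T]$, is closed under prime induced subgraphs, so it is enough to produce prime graphs of arbitrarily large size in $\cM[T]$ forming an antichain.

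The heart of the argument is a Ramsey-theoretic canonicalization of a large shattered set. Fix $P\in\cP$ with a shattered set $U$ of huge size and, for each $A\subseteq U$, a witness $v_A$ with $N_P(v_A)\cap U=A$. Iterating Ramsey's theorem (and its density form Lemma~\ref{lem:Ramseydensity}, convenient if one prefers to pass to a limit graphon of such configurations) together with the Sauer--Shelah lemma, I would pass to a subset $U'\subseteq U$ of size $t\to\infty$ (as the size of the original shattered set grows) and to a set $W'$ of witnesses of size $2^{t}$ still realizing every subset of $U'$, chosen so that $U'$ is homogeneous (a clique or an independent set) and $W'$ is homogeneous. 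The induced subgraph on $U'\cup W'$ is then one of four ``universal'' prime graphs, determined by the two homogeneity types: a split graph with an independent base and a clique realizing all neighbourhood-traces, its complement, a bipartite graph realizing all traces, or the complement of that. Hence, for arbitrarily large $t$, $\cM[T]$ (being hereditary) contains one of these four canonical graphs.

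It then remains to embed an explicit infinite antichain of prime graphs into each of the four canonical families. The split graphs $G_n'$ of Example~\ref{ex:cliquedpath} embed as induced subgraphs of the split-type universal graph of parameter $t$ as soon as $t$ is linear in $n$: route the independent side of $G_n'$ (its odd vertices together with $a,b$) into the base, and realize the small, pairwise distinct neighbourhood-traces of the even (clique) side of $G_n'$ among the $2^{t}$ available traces; symmetrically $\overline{G_n'}$ embeds into the co-split type. Likewise the bipartite graphs $G_n$ of Example~\ref{ex:substitutiongeneration}, whose vertices in either bipartition class have pairwise distinct neighbourhoods, embed into the bipartite universal graph, and $\overline{G_n}$ into the co-bipartite one. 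By pigeonhole one of the four types occurs for arbitrarily large $t$, so one of the four infinite antichains $\{G_n'\}$, $\{\overline{G_n'}\}$, $\{G_n\}$, $\{\overline{G_n}\}$ of prime graphs lies in $\cM[T]$ (using that complementation preserves primality and incomparability and commutes with substitution). Thus $\cM[T]$ is not primally almost finite, completing the contrapositive.

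The step I expect to be the main obstacle is the canonicalization: organizing the iterated Ramsey/Sauer--Shelah bookkeeping so that one lands in exactly one of the four clean universal structures, with the homogeneity of both sides genuinely simultaneous. The subsequent embeddings of $G_n$, $G_n'$ and their complements into these structures are routine but must be checked with care — in particular that the independent and clique sides are matched correctly and that the relevant neighbourhood-traces are distinct so that distinct vertices receive distinct witnesses.
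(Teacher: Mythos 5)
Your reduction of the ``in particular'' clause via Theorem~\ref{thm:graphs:WR}, the reduction to prime graphs via Theorem~\ref{thm:VC}, and your choice of target antichains ($G_n$, $\overline{G}_n$ from Example~\ref{ex:substitutiongeneration} and $G_n'$ from Example~\ref{ex:cliquedpath}, exploiting that primality and incomparability are preserved under complementation) all match what is actually needed, and the final embedding step into a ``homogeneous base plus homogeneous trace-realizing witness set'' would indeed be routine. The genuine gap is exactly the step you flag: the canonicalization. The proposed bookkeeping --- Ramsey on the base, then Ramsey on the witnesses, then Sauer--Shelah to recover a shattered (or fully trace-realizing) sub-base --- fails quantitatively: applying Ramsey to the $2^{m}$ witnesses (or even to the polynomially many witnesses of bounded-size traces) leaves only about $\tfrac{1}{2}\log_2$ of them homogeneous, i.e.\ $O(m)$ vertices, and a family of $O(m)$ traces on an $m$-element base is below the Sauer--Shelah threshold $\sum_{i<t}\binom{m}{i}$ even for $t=2$, so no shattered pair, let alone all $2^{t}$ traces on a $t$-set, is recovered. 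So the statement ``unbounded VC dimension forces one of the four universal configurations'' is not established by the sketch; I do not see a routine finite repair, and the natural way to prove it is essentially the argument you are trying to avoid.

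For comparison, the paper sidesteps the need for any trace-realizing witness set. It argues the contrapositive in the limit: by~\cite{LS10}, unbounded VC dimension yields a graphon $W$ of $T$ that is not a.e.\ $\{0,1\}$-valued; after the Graphon Removal Lemma one takes a Lebesgue density point $(x_0,y_0)$ of the fractional set $W^{-1}((0,1))$ with $x_0\neq y_0$, and uses Lemma~\ref{lem:Ramseydensity} to find, inside the two small boxes around $x_0$ and $y_0$, $n$-tuples each spanning a clique or an anti-clique, while \emph{all} cross-pairs have fractional value. Fractionality of the cross-block means every bipartite pattern between the two sides can be realized in an off-diagonal copy (one simply chooses the auxiliary coordinates below or above $W(x_i,y_j)$), so the explicit prime graphs $G_{2n-4}$, $\overline{G}_{2n-4}$ or $G_{2n-1}'$ are built directly, with no witnesses and no Sauer--Shelah step at all. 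If you want to keep your finite formulation, you would either have to prove the canonicalization claim by a genuinely new (and likely intricate) iterated argument, or derive it by compactness from exactly this graphon argument --- at which point you are reproducing the paper's proof.
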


\begin{proof}
  The second assertion follows from the first along with Theorem~\ref{thm:graphs:WR}.

  \medskip

  We prove the first assertion by the contra-positive. Since the edge relation in $T$ has unbounded
  VC~dimension, by~\cite{LS10}, there exists a graphon $W$ that is a limit of $T$ and is \emph{not}
  a.e.\ $\{0,1\}$-valued. By possibly applying the Graphon Removal Lemma~\cite[Theorem~1]{Pet13}, we
  may suppose that every graph $G$ that has an off-diagonal copy in $W$ has positive density in $W$.

  Our objective is to present a family of prime graphs that is not almost finite and such that all
  graphs in this family have an off-diagonal copy in $W$ (thus $\cM[T]$ is not primally almost
  finite).

  For this purpose, we will show that for
  each $n\in\NN$ with $n\geq 5$, one of the following graphs appears as an off-diagonal copy in $W$:
  \begin{enumerate}
  \item The graph $G_{2n-4}$ of Example~\ref{ex:substitutiongeneration}.
  \item The complement $\overline{G}_{2n-4}$ of the graph of Example~\ref{ex:substitutiongeneration}.
  \item The graph $G_{2n-1}'$ of Example~\ref{ex:cliquedpath}.
  \end{enumerate}
  Since each of these families is a family of prime graphs that is not almost finite (note that
  primality is preserved under complementation) and one of them must occur for infinitely many $n$,
  it will follow that $\cM[T]$ is not primally almost finite as desired.

  Without loss of generality, let us suppose that the underlying space of $W$ is $[0,1]$ and let
  $(x_0,y_0)\in(0,1)^2$ be a Lebesgue density point with respect to $\ell^\infty$-balls of the
  positive measure set $A\df W^{-1}((0,1))$ with $x_0\neq y_0$.

  Let $\epsilon > 0$ be such that $\epsilon < (n\cdot\binom{R(n,n)}{n})^{-2}$ and let $\delta > 0$
  be small enough so that
  \begin{align*}
    \frac{
      \lambda(A\cap B_\delta(x_0,y_0))
    }{
      \lambda(B_\delta(x_0,y_0))
    }
    & \geq
    1 - \epsilon,
  \end{align*}
  where $B_\delta(x_0,y_0)$ is the $\ell^\infty$-ball of radius $\delta$ centered at $(x_0,y_0)$. We
  may also suppose that $\delta > 0$ is small enough so that $(x_0-\delta,x_0+\delta)$ and
  $(y_0-\delta,y_0+\delta)$ are disjoint subsets of $[0,1]$.

  Consider the set
  \begin{align*}
    C & \df \{(x,y)\in B_\delta(x_0)^n\times B_\delta(y_0)^n \mid \forall i,j\in[n], (x_i,y_j)\in A\}.
  \end{align*}
  With a simple union bound, we have
  \begin{align}\label{eq:lambdaC}
    \lambda(C)
    & \geq
    (1 - n^2\epsilon)\cdot
    \lambda(B_\delta(x_0,y_0))^n
    >
    \left(1 - \binom{R(n,n)}{n}^{-2}\right)
    \lambda(B_\delta(x_0,y_0))^n.
  \end{align}

  Define also
  \begin{multline*}
    C'
    \df
    \{(x,y)\in B_\delta(x_0)^n\times B_\delta(y_0)^n \mid
    \exists z\in[0,1)^{\binom{[n]}{2}}, (x,z)\in\Tind(K_n,W)\cup\Tind(\overline{K}_n,W)
    \\
    \land \exists w\in[0,1)^{\binom{[n]}{2}}, (y,w)\in\Tind(K_n,W)\cup\Tind(\overline{K}_n,W)\}.
  \end{multline*}
  By Lemma~\ref{lem:Ramseydensity}, we have
  \begin{align*}
    \lambda(C') & \geq \binom{R(n,n)}{n}^{-2}\cdot\lambda(B_\delta(x_0,y_0))^n.
  \end{align*}
  Putting this together with~\eqref{eq:lambdaC}, we conclude that $\lambda(C\cap C') > 0$.

  Let then $(x,y)\in C\cap C'$ be a point with all coordinates distinct. We now consider four cases.

  \begin{enumerate}[label={Case~\arabic*.}, ref={\arabic*}, wide]
  \item\label{case:emptyempty} There exist points $z,w\in[0,1)^{\binom{[n]}{2}}$ such that
    $(x,z),(y,w)\in\Tind(\overline{K}_n,W)$. In this case, we construct an off-diagonal copy
    $(\widehat{x},\widehat{y})\in [0,1]^{V(G_{2n-4})}\times [0,1)^{\binom{V(G_{2n-4})}{2}}$ of the
    graph $G_{2n-4}$ of Example~\ref{ex:substitutiongeneration} as follows. Recall that
    $V(G_{2n-4}) = [2n-4]\cup\{a,b,c,d\}$ and for convenience of notation, let us make the
    identifications $a\df 2n-3$, $b\df 2n-2$, $c\df 2n-1$ and $d\df 2n$. Then for each $i,j\in
    V(G_{2n-4})$ with $i\neq j$, let
    \begin{align*}
      \widehat{x}_i & \df
      \begin{dcases*}
        x_{i/2}, & if $i\in[2n]$ is even,\\
        y_{(i+1)/2}, & if $i\in[2n]$ is odd,
      \end{dcases*}
      \\
      \widehat{y}_{\{i,j\}}
      & \df
      \begin{dcases*}
        z_{\{i/2,j/2\}}, & if $i,j\in[2n]$ are both even,\\
        w_{\{(i+1)/2,(j+1)/2\}}, & if $i,j\in[2n]$ are both odd,\\
        0,
        & if $i\in[2n]$ is even, $j\in[2n]$ is odd and
        $\{i,j\}\in E(G_{2n-4})$,
        \\
        \frac{1 + W(x_{i/2},y_{(j+1)/2})}{2},
        & if $i\in[2n]$ is even, $j\in[2n]$ is odd
        and $\{i,j\}\notin E(G_{2n-4})$.
      \end{dcases*}
    \end{align*}
    The fact that $(x,y)\in C\cap C'$ and all coordinates of $(x,y)$ are distinct guarantees that
    $(\widehat{x},\widehat{y})$ is an off-diagonal copy of $G_{2n-4}$.
  \item There exist points $z,w\in[0,1)^{\binom{[n]}{2}}$ such that $(x,z),(y,w)\in\Tind(K_n,W)$. In
    this case, a construction analogous to the one in case~\ref{case:emptyempty} yields an
    off-diagonal copy of the complement $\overline{G}_{2n-4}$ of the graph of
    Example~\ref{ex:substitutiongeneration}.
  \item\label{case:cliqueempty} There exist points $z,w\in[0,1)^{\binom{[n]}{2}}$ such that
    $(x,z)\in\Tind(K_n,W)$ and $(y,w)\in\Tind(\overline{K}_n,W)$. We construct an off-diagonal copy
    $(\widehat{x},\widehat{y})\in [0,1]^{V(G_{2n-1}')}\times [0,1)^{\binom{V(G_{2n-1}')}{2}}$ of the
    graph $G_{2n-1}'$ of Example~\ref{ex:cliquedpath} as follows. Recall that $V(G_{2n-1}') =
    [2n-1]\cup\{a,b\}$ and for convenience of notation, let us make the identifications $a\df
    2n+1$ and $b\df 2n+3$ (nothing gets identified with the points $2n$ and $2n+2$). Then for each
    $i,j\in V(G_{2n-1}')$ with $i\neq j$, let
    \begin{align*}
      \widehat{x}_i & \df
      \begin{dcases*}
        x_{i/2}, & if $i\in[2n-1]$ is even,\\
        y_{(i+1)/2}, & if $i\in[2n+3]$ is odd,
      \end{dcases*}
      \\
      \widehat{y}_{\{i,j\}}
      & \df
      \begin{dcases*}
        z_{\{i/2,j/2\}}, & if $i,j\in[2n-1]$ are both even,\\
        w_{\{(i+1)/2,(j+1)/2\}}, & if $i,j\in[2n+3]$ are both odd,\\
        0, &
        \parbox[t]{0.6\textwidth}{if $i\in[2n-1]$ is even, $j\in[2n+3]$ is odd and $\{i,j\}\in
          E(G_{2n-1}')$,}
        \\
        \frac{1 + W(x_{i/2},y_{(j+1)/2})}{2},
        & \parbox[t]{0.6\textwidth}{if $i\in[2n-1]$ is even, $j\in[2n+3]$ is odd and $\{i,j\}\notin
          E(G_{2n-1}')$.}
      \end{dcases*}
    \end{align*}
    The fact that $(x,y)\in C\cap C'$ and all coordinates of $(x,y)$ are distinct guarantees that
    $(\widehat{x},\widehat{y})$ is an off-diagonal copy of $G_{2n-1}'$.
  \item There exist points $z,w\in[0,1)^{\binom{[n]}{2}}$ such that $(x,z)\in\Tind(\overline{K}_n,W)$ and
    $(y,w)\in\Tind(K_n,W)$. This case follows from case~\ref{case:cliqueempty} by swapping the roles
    of $x$ and $y$.
  \end{enumerate}

  Therefore $\cM[T]$ is not primally almost finite.
\end{proof}

\begin{remark}
  The assumption that $\cM[T]$ is closed under substitution is crucial for the second part of
  Theorem~\ref{thm:primallyalmostfiniteNIP}, since, for example, the universal theory $\TBipartite$
  of bipartite graphs clearly is in $\AEHP\subseteq\WR$ (as every limit contains an empty subgraphon
  of measure at least $1/2$) but has unbounded VC~dimension.

  For an example of a theory with unbounded VC~dimension that is in $\WR\setminus\AEHP$, let
  $T_{C_4}$ be the universal theory of graphs that are induced subgraphs of some (finite) recursive
  blow-up of $C_4$ (which has bounded VC~dimension by Theorem~\ref{thm:primallyalmostfiniteNIP} as
  $T_{C_4}$ is primally finite), let $\cF$ be the family of graphs $G$ such that there exists a
  partition $V(G) = A\cup B$ such that both $G\rest_A$ and $G\rest_B$ are models of $T_{C_4}$ and
  let $T\df\Th(\cF)$ be the corresponding universal theory of graphs. Obviously, every bipartite
  graph is a model of $T$, so $T$ has unbounded VC~dimension.

  Since at least one of $A$ or $B$ must have at least half of the vertices, it follows that every
  limit of $T$ has a subgraphon that is a limit of $T_{C_4}$ and since $T_{C_4}\in\WR$ (by
  Proposition~\ref{prop:recC4}), we get $T\in\WR$. On the other hand, since every model of $T_{C_4}$
  is a model of $T$ and $T_{C_4}\notin\AEHP$ (by~\cite[Lemma~8.8]{CM22} or
  Proposition~\ref{prop:recC4} again), it follows that $T\notin\AEHP$.
\end{remark}

\section{Persistence for universal theories}
\label{sec:pers:univ}

In this section, we generalize the results of Section~\ref{sec:pers:graph} on (strongly) persistent
classes to arbitrary universal theories in finite relational languages. Table~\ref{tab:corresp} below
contains the correspondence between the theorems and lemmas of Sections~\ref{sec:pers:graph}
and~\ref{sec:WR:graph} and their generalizations in this and the next section.

\begin{table}[htbp]
  \begin{tabular}{*{2}{c}p{7.3cm}}
    Graph result & Universal theory result & Drawbacks
    \\
    \hline
    Lemma~\ref{lem:PWsubgraphon} & Lemma~\ref{lem:PWRset}\ref{lem:PWRset:P} & None.
    \\
    Theorem~\ref{thm:graphpersistence} & Theorem~\ref{thm:persistence} & Persistence
    (item~\ref{thm:persistence:persistent}) can only be added to the list of equivalences when all
    arities are at most $2$.
    \\
    Lemma~\ref{lem:PW} & Lemma~\ref{lem:Pphi} & Requires either that all arities are at most $2$
    (item~\ref{lem:Pphi:arity2}) or weak randomness (item~\ref{lem:Pphi:WR}).
    \\
    Lemma~\ref{lem:QphiG} &
    \begin{tabular}[t]{c}
      Lemma~\ref{lem:comprecblowup} and\\
      Proposition~\ref{prop:recblowup}\ref{prop:recblowup:conservative},\ref{prop:recblowup:nonconservative}
    \end{tabular}
    &
    When the recursive blow-ups are not conservative (see Definitions~\ref{def:compatibleblowup}
    and~\ref{def:recursiveblowup}), only partial information is known about the limit theon.
    \\
    Lemma~\ref{lem:PphiGQphiG} &
    Proposition~\ref{prop:recblowup}\ref{prop:recblowup:P} &
    None.
    \\
    Lemma~\ref{lem:weaklyrandomsubgraphon} &
    Lemma~\ref{lem:PWRset}\ref{lem:PWRset:WR} &
    None.
    \\
    Theorem~\ref{thm:graphs:WR} &
    Propositions~\ref{prop:monprimalmostfinite->WR} and~\ref{prop:WR->primalmostfinite} &
    Backward direction requires all arities to be at most $2$. Forward direction is trivial if all
    arities are at least $3$.
    \\
    Lemma~\ref{lem:graphs:primallyalmostfinite->WR} &
    Proposition~\ref{prop:monprimalmostfinite->WR} &
    Requires all arities to be at most $2$.
    \\
    Proposition~\ref{prop:noweaklyrandomsubgraphon} &
    Propositions~\ref{prop:recblowup}\ref{prop:recblowup:zeroproduct}
    and~\ref{prop:WR->primalmostfinite} &
    When all arities are at least $3$, there are only finitely many prime structures (see
    Remark~\ref{rmk:prime3}).
  \end{tabular}
  \caption{Correspondence between theorems and lemmas of Sections~\ref{sec:pers:graph}
    and~\ref{sec:WR:graph} and their generalizations in Sections~\ref{sec:pers:univ}
    and~\ref{sec:WR:univ}. Some generalizations have drawbacks (e.g., extra hypotheses or the
    result might be trivial) as pointed out in the third column.}
  \label{tab:corresp}
\end{table}

\begin{definition}
  Let $\phi\in\HomT{T_\cL}$. The set of \emph{positive $\cL$-structures in $\phi$} is the set
  $Q(\phi)\df\cM[\Th(\phi)]$ of all finite $\cL$-structures $M$ (up to isomorphism) such that
  $\phi(M) > 0$. The set of \emph{persistently positive $\cL$-structures in $\phi$} is the set
  $P(\phi)\df\bigcap_\psi Q(\psi)$, where the intersection is over all sub-objects of $\phi$. We
  extend these definitions naturally to Euclidean structures $\cN$ in $\cL$ by $Q(\cN)\df
  Q(\phi_\cN)$ and $P(\cN)\df P(\phi_\cN)$.

  We say that $\phi$ is \emph{weakly random} if $P(\phi) = Q(\phi)$.

  A family $\cF$ of finite $\cL$-structures (up to isomorphism) is called \emph{persistent} if there
  exists $\phi$ such that $P(\phi) = \cF$. The family $\cF$ is called \emph{strongly persistent} if
  there exists a weakly random $\phi$ such that $P(\phi) = \cF$ (which must also equal $Q(\phi)$);
  in this case, we also say that $\phi$ is a \emph{universal weakly random limit of $\cF$}.
\end{definition}

\begin{lemma}\label{lem:PWRset}
  Let $\cN$ be an Euclidean structure in $\cL$ over $\Omega=(X,\cA,\mu)$. Then the following hold.
  \begin{enumerate}
  \item $P(\cN) = \bigcap_A Q(\cN\rest_A^F)$, where the intersection is over all positive measure
    $A\in\cA$ and all measure-isomorphisms $F$ modulo $0$ from $\Omega_A$ to $\Omega$ (equivalently,
    we can also use a single measure-isomorphism $F_A$ modulo $0$ for each positive measure
    $A\in\cA$).%
    \label{lem:PWRset:P}
  \item $\phi_\cN$ has a weakly random sub-object if and only if there exists a positive measure
    $A\in\cA$ and a measure-isomorphism $F$ modulo $0$ from $\Omega_A$ to $\Omega$ such that
    $\phi_{\cN\rest_A^F}$ is weakly random.%
    \label{lem:PWRset:WR}
  \end{enumerate}
\end{lemma}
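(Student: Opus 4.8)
The plan is to transcribe the proofs of Lemmas~\ref{lem:PWsubgraphon} and~\ref{lem:weaklyrandomsubgraphon} to the theon setting. The characterization of sub-objects from~\cite[Lemma~5.8]{CM22} (recalled in Section~\ref{sec:prelim}) says that the sub-objects of $\cN$ of measure $c>0$ are exactly the limits $\phi_{\cN\rest_f^F}$ where $f\colon X\to[0,1]$ is measurable with $\int_X f\,d\mu=c$ and $F$ is a measure-isomorphism modulo $0$ from $\Omega_f$ to $\Omega$; such an $F$ exists because $\Omega_f$ is an atomless standard probability space, as is $\Omega_A$ for any positive-measure $A\in\cA$. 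The one genuinely new ingredient beyond the graph case is the following claim, which replaces the one-line observation ``$Q(W\rest_f)=Q(W\rest_A)$'' used in the proof of Lemma~\ref{lem:PWsubgraphon}.

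\textbf{Claim.} If $f,g\colon X\to[0,1]$ are measurable with $\int_X f\,d\mu,\int_X g\,d\mu>0$ and $\{f>0\}=\{g>0\}$ up to a null set, and $F$ (resp.\ $G$) is a measure-isomorphism modulo $0$ from $\Omega_f$ (resp.\ $\Omega_g$) to $\Omega$, then $Q(\cN\rest_f^F)=Q(\cN\rest_g^G)$. In particular, taking $g=f$, the set $Q(\cN\rest_f^F)$ does not depend on the choice of $F$, which yields the parenthetical in item~\ref{lem:PWRset:P}.

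To prove the Claim I would argue as follows. The measures $\mu_f$ and $\mu_g$ on $(X,\cA)$ are mutually absolutely continuous, since each vanishes on $C\in\cA$ exactly when $\mu(C\cap\{f>0\})=0$. Put $\gamma\df F^{-1}\comp G$, a measure-isomorphism modulo $0$ from $\Omega_g$ to $\Omega_f$, and for each finite set $V$ let $\gamma_V\colon\cE_V(\Omega_g)\to\cE_V(\Omega_f)$ act as the identity on coordinates indexed by singletons and as $\gamma$ on all other coordinates. Unwinding the definitions of $x^F$, $x^G$ and of the restriction operation gives $(\cN\rest_g^G)_P=\gamma_{k(P)}^{-1}\bigl((\cN\rest_f^F)_P\bigr)$ for every $P\in\cL$; and since $\gamma_V$ commutes with the projections $\alpha^*$ of~\eqref{eq:alpha*} (both act coordinatewise by a rule depending only on the size of the coordinate, and $\alpha$ preserves sizes), it follows that $\Tind(K,\cN\rest_g^G)=\gamma_{V(K)}^{-1}\bigl(\Tind(K,\cN\rest_f^F)\bigr)$ up to a null set for every finite $\cL$-structure $K$. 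Finally $\gamma_V$ is a bijection modulo $0$ which pushes the measure on $\cE_V(\Omega_g)$ forward to a measure mutually absolutely continuous with the one on $\cE_V(\Omega_f)$ (on the singleton coordinates it preserves $\mu_g$, which is mutually absolutely continuous with $\mu_f$, and on every other coordinate it sends $\mu_g$ to $\mu_f$), so it maps null sets to null sets in both directions; hence $\tind(K,\cN\rest_g^G)>0\iff\tind(K,\cN\rest_f^F)>0$ and the Claim follows.

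Granting the Claim, item~\ref{lem:PWRset:P} goes exactly as in Lemma~\ref{lem:PWsubgraphon}: the inclusion $\subseteq$ holds because each $\cN\rest_A^F$ is a sub-object of $\cN$, and for $\supseteq$, if $M\notin P(\cN)$ then some sub-object $\psi=\phi_{\cN\rest_f^F}$ has $\psi(M)=0$, and with $A\df\{f>0\}$ (of positive measure) the Claim gives $M\notin Q(\cN\rest_f^F)=Q(\cN\rest_A^{F_A})$ for any measure-isomorphism $F_A$ from $\Omega_A$ to $\Omega$. For item~\ref{lem:PWRset:WR}, the backward implication is immediate since $\cN\rest_A^F$ is a sub-object of $\cN$. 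For the forward implication, take a weakly random sub-object $\psi=\phi_{\cN\rest_f^F}$, set $A\df\{f>0\}$, and fix a measure-isomorphism $F_A$ from $\Omega_A$ to $\Omega$; the Claim gives $Q(\cN\rest_A^{F_A})=Q(\cN\rest_f^F)=Q(\psi)$, while applying item~\ref{lem:PWRset:P} to both $\cN\rest_A^{F_A}$ and $\cN\rest_f^F$, using the identities $(\cN\rest_A^{F_A})\rest_B^G=\cN\rest_{A\cap B}^{F_A\comp G}$ and $(\cN\rest_f^F)\rest_B^G=\cN\rest_{f\One_B}^{F\comp G}$ (routine computations using $(\Omega_A)_B=\Omega_{A\cap B}$, $(\Omega_f)_{\One_B}=\Omega_{f\One_B}$ and $(x^G)^F=x^{F\comp G}$), together with $\{f\One_B>0\}=A\cap B$ and the Claim, gives $P(\cN\rest_A^{F_A})=P(\cN\rest_f^F)=P(\psi)$. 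Since $\psi$ is weakly random, $P(\psi)=Q(\psi)$, so $P(\cN\rest_A^{F_A})=Q(\cN\rest_A^{F_A})$, i.e.\ $\cN\rest_A^{F_A}$ is weakly random, as desired.

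The main obstacle is the Claim. In the graph case $W\rest_f$ and $W\rest_A$ use literally the same function $W$ on the same underlying set, with only the measure changing, so the analogous statement is trivial; here the measure-isomorphisms $F$ and $G$ genuinely reshuffle the peons, and the vertex measures $\mu_f$ and $\mu_A$ need not be equal --- only mutually absolutely continuous. Arranging for the change of variables $\gamma_V$ to simultaneously (i) intertwine the peons with all the projections $\alpha^*$ and (ii) be a two-sided null-set isomorphism between the relevant product measures is the delicate point; once that is in place, the remaining arguments are a direct transcription of Section~\ref{sec:pers:graph}.
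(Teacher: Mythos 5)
Your proposal is correct and follows essentially the same route as the paper: the paper's proof consists of a single sentence asserting exactly your Claim (in the special case $g=\One_A$), namely that $Q(\cN\rest_f^F)=Q(\cN\rest_A^{\widetilde F})$ for $A=\{f>0\}$ and any choice of measure-isomorphisms, with both items then following as in the graph-case Lemmas~\ref{lem:PWsubgraphon} and~\ref{lem:weaklyrandomsubgraphon}. You simply supply the change-of-variables verification of that fact (and the composition identities for iterated restrictions) that the paper leaves implicit.
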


\begin{proof}
  Both items follow from the fact that if $f\colon X\to [0,1]$ is a measurable function with $\int_X
  f\ d\mu > 0$ and $F$ is a measure-isomorphism modulo $0$ from $\Omega_f$ to $\Omega$, then for $A
  = \{x\in X \mid f(x) > 0\}$ and any measure-isomorphism $\widetilde{F}$ modulo $0$ from $\Omega_A$
  to $\Omega$, we have $Q(\cN\rest_f^F) = Q(\cN\rest_A^{\widetilde{F}})$.
\end{proof}

\begin{lemma}\label{lem:Pphi}
  The following hold for $\phi\in\HomT{T_\cL}$.
  \begin{enumerate}
  \item If all predicate symbols of $\cL$ have arity at most $2$, then $P(\phi)$ is strongly closed
    under substitutions and closed under substructures.%
    \label{lem:Pphi:arity2}
  \item If $\phi$ is weakly random, then $P(\phi)$ is weakly closed
    under substitutions and closed under substructures.%
    \label{lem:Pphi:WR}
  \end{enumerate}
\end{lemma}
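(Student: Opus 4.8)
The plan is to transcribe, for both items, the argument of Lemma~\ref{lem:PW}, replacing the Graphon Removal Lemma by the Induced Euclidean Removal Lemma and the Fubini step over ``vertex versus edge'' coordinates by a Fubini step isolating the coordinate indexed by $\{v_0\}$ in $\cE_{V(F_1)}(\Omega)=X^{r(V(F_1))}$, where $v_0$ is the vertex being substituted. Throughout, fix a theon $\cN$ with $\phi_\cN=\phi$. Closure under substructures is immediate and needs no hypothesis: $P(\phi)=\bigcap_\psi Q(\psi)$ is an intersection of families each closed under substructures (alternatively, once closure under substitutions is in hand, invoke Lemma~\ref{lem:substitutionK0} together with the obvious fact $K_0\in P(\phi)$). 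So in both items the content is closure under substitutions, and in both cases the mechanism is: use $F_1\in P(\phi)$ to produce an off-diagonal copy of $F_1$ whose ``fiber'' $A_0\df\{z\in X\mid x^{(z)}\in\Tind(F_1,\cN)\}$ over the $\{v_0\}$-coordinate has positive measure (here $x^{(z)}$ is $x$ with its $\{v_0\}$-coordinate replaced by $z$, and the Fubini computation gives a positive-measure set of ``bases'', of which we pick one generic enough to have distinct singleton coordinates), restrict $\cN$ to $A_0$, use $F_2$ to find a copy of $F_2$ inside that restriction, carry it back to $\cN$, and glue the two copies along the ideas encoded in the substitution.

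For item~\ref{lem:Pphi:arity2}: since all arities are at most $2$, every substitution is conservative (Remark~\ref{rmk:substarity2}), so given $F_1,F_2\in P(\phi)$, $v_0\in V(F_1)$ and a substitution $G$ of $v_0$ in $F_1$ by $F_2$ (necessarily $G\cong F_1^{v_0\to F_2}$, which requires the unary compatibility of Remark~\ref{rmk:substitutionunary}), it suffices by Lemma~\ref{lem:PWRset}\ref{lem:PWRset:P} to show $\tind(G,\cN\rest_A^F)>0$ for every positive-measure $A$ and every measure-isomorphism $F$ modulo $0$ from $\Omega_A$ to $\Omega$. Assume not; applying the Induced Euclidean Removal Lemma to $\cN\rest_A^F$ gives a theon $\cM$ over $\Omega_A$ with $\phi_\cM=\phi_{\cN\rest_A^F}$ (a sub-object of $\phi$) and, since $\tind(G,\cM)=0$, with $\Tind(G,\cM)\subseteq\cD_{V(G)}(\Omega_A)$, while $\tind(F_1,\cM)>0$ because $F_1\in P(\phi)$. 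Writing $V_1'\df V(F_1)\setminus\{v_0\}$ and $V_2\df V(F_2)$ (disjoint), pick a generic $x\in\Tind(F_1,\cM)$ with positive-measure fiber $A_0$; restricting $\cM$ to $A_0$ yields a sub-object of $\phi$, so $F_2\in P(\phi)$ produces an off-diagonal copy $y$ of $F_2$ in $\cM\rest_{A_0}^{F_0}$ with all coordinates in $A_0$, hence a pushforward $y^{F_0}\in\Tind(F_2,\cM)$ whose singleton coordinates lie in $A_0$. Glue into $\widehat{x}\in\cE_{V_1'\cup V_2}(\Omega_A)$ exactly as in Lemma~\ref{lem:PW}: coordinates supported in $V_1'$ from $x$, coordinates supported in $V_2$ from $y^{F_0}$, and a crossing pair $\{u_1,u_2\}$ with $u_1\in V_1'$, $u_2\in V_2$ reuses $x_{\{u_1,v_0\}}$. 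Using the explicit formula for the conservative substitution (including Remark~\ref{rmk:substitutionunary} for the unary predicates) and the defining property of $A_0$ for the crossing pairs (replacing $x_{\{v_0\}}$ by $u_2=y^{F_0}_{\{u_2\}}\in A_0$ preserves $\Tind(F_1,\cM)$-membership), one checks $\widehat{x}\in\Tind(G,\cM)\setminus\cD_{V(G)}(\Omega_A)$, contradicting $\Tind(G,\cM)\subseteq\cD_{V(G)}(\Omega_A)$.

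For item~\ref{lem:Pphi:WR}: take $\cN$ to be a strong theon, so any structure with an off-diagonal copy in $\cN$ has positive density. Since $\phi$ is weakly random, $P(\phi)=Q(\phi)$, and consequently $Q(\psi)=Q(\phi)$ for every sub-object $\psi$ of $\phi$. Given $F_1,F_2\in Q(\phi)$ and $v_0\in V(F_1)$ we must exhibit \emph{some} substitution $F$ of $v_0$ in $F_1$ by $F_2$ with $F\in Q(\phi)$. Pick a generic off-diagonal copy $x$ of $F_1$ in $\cN$ with positive-measure fiber $A_0$, restrict $\cN$ to $A_0$, and use $Q(\cN\rest_{A_0}^{F_0})=Q(\phi)\ni F_2$ to obtain an off-diagonal copy $y$ of $F_2$ with all coordinates in $A_0$, hence a pushforward $y^{F_0}\in\Tind(F_2,\cN)$ with singleton coordinates in $A_0$. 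Glue into $\widehat{x}\in\cE_{V_1'\cup V_2}(\Omega)$ as above, but now --- arities not being bounded by $2$ --- the ``crossing'' coordinates are not forced: set a crossing set $C$ with $C\cap V_2=\{u\}$ a singleton and $C\cap V_1'=D$ to the value $x_{D\cup\{v_0\}}$, set crossing sets with $\lvert C\cap V_2\rvert\geq 2$ arbitrarily, and \emph{define} $F$ to be the $\cL$-structure on $V_1'\cup V_2$ realized by $\widehat{x}$ in $\cN$, i.e.\ $P^F\df\{\beta\in(V_1'\cup V_2)_{k(P)}\mid\beta^*(\widehat{x})\in\cN_P\}$. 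By construction $\widehat{x}\in\Tind(F,\cN)\setminus\cD_{V(F)}(\Omega)$, so $\tind(F,\cN)>0$ and $F\in Q(\phi)$. Finally $F$ is a substitution of $v_0$ in $F_1$ by $F_2$: the second defining condition of Definition~\ref{def:subst} holds because the coordinates of $\widehat{x}$ supported in $V_2$ agree with those of $y^{F_0}\in\Tind(F_2,\cN)$; the third (for each $u\in V_2$, replacing $v_0$ by $u$ embeds $F_1$) holds because, for $z\df y^{F_0}_{\{u\}}\in A_0$, the convention $\widehat{x}_{D\cup\{u\}}=x_{D\cup\{v_0\}}$ makes the relevant pullbacks of $\widehat{x}$ coincide with those of $x^{(z)}\in\Tind(F_1,\cN)$ --- in particular the unary types match automatically, since $A_0$ records the unary type of $v_0$ in $F_1$ and $y$'s coordinates lie in $A_0$.

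The routine part is measure theory and the transcription of Lemma~\ref{lem:PW}; the main obstacle is the coordinate bookkeeping in $\cE_V(\Omega)=X^{r(V)}$. Three points need care: (a) restricting a theon to a subset pushes the coordinates indexed by sets of size $\geq 2$ through the measure-isomorphism $F_0$, so the $F_2$-copy obtained ``downstairs'' must be carried back to $\cN$ via $y\mapsto y^{F_0}$ before gluing, and it must be chosen with \emph{all} its coordinates (not only the singleton ones) inside the fiber $A_0$ --- possible because the measure on $\Omega_{A_0}$ is concentrated on $A_0$; (b) the crossing coordinates of $\widehat{x}$ must be chosen so that the pullbacks along the injections witnessing the substitution reduce to pullbacks of $x$ (or of $x^{(z)}$) and of $y^{F_0}$ --- reusing $x_{D\cup\{v_0\}}$ for crossing sets with a single $V_2$-vertex and arbitrary values otherwise, the latter being invisible to the embeddings of $F_2$ and of $F_1$ occurring in Definition~\ref{def:subst}; and (c) the Fubini argument must yield, for a positive-measure set of bases, a positive-measure fiber, from which one extracts a single base generic enough that all relevant singleton coordinates are pairwise distinct.
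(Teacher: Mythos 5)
Your proposal is correct and follows essentially the same route as the paper's proof: apply the Induced Euclidean Removal Lemma, use Fubini to extract a positive-measure fiber over the $\{v_0\}$-coordinate of a copy of $F_1$, restrict to that fiber to find a copy of $F_2$, and glue the two copies by reusing the $F_1$-copy's coordinates on crossing sets containing exactly one $V(F_2)$-vertex while setting the remaining coordinates arbitrarily, then read off the realized structure. The only differences are organizational (the paper proves the single claim that some standard substitution lies in $Q(\psi)$ for every sub-object $\psi$ and then specializes to the two items, whereas you run the argument separately for each item, using Lemma~\ref{lem:PWRset} in one case and a strong theon for $\phi$ in the other), and these do not change the substance.
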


\begin{proof}
  Since obviously $K_0\in P(\phi)$, by Lemma~\ref{lem:substitutionK0}, it is sufficient to show the
  assertions of closed under substitutions in each item.

  Let $F_1,F_2\in P(\phi)$ and $v\in V(F_1)$ and let $\cF$ be the set of standard substitutions of
  $v$ in $F_1$ by $F_2$.

  Note that in item~\ref{lem:Pphi:arity2}, by Remark~\ref{rmk:substarity2}, $\cF$ has a unique
  element (and the notion of strongly and weakly closed under substitutions coincide). Thus, in both
  items, our objective is to show that $\cF\cap P(\phi)$ is non-empty.

  Let $\psi$ be a sub-object of $\phi$ and let $\cN$ be an Euclidean structure in $\cL$ over some
  space $\Omega=(X,\cA,\mu)$ with $\phi_\cN = \psi$. We claim that $\cF\cap Q(\psi)$ is
  non-empty. Suppose not, that is, suppose $\tind(F,\cN) = 0$ for every $F\in\cF$. By possibly
  applying the Induced Euclidean Removal Lemma~\cite[Theorem~3.3]{CR20a}, we may suppose that
  $\Tind(F,\cN)\subseteq\cD_V$ for every $F\in\cF$.

  Since $F_1\in P(\phi)$, we must have $F_1\in Q(\cN)$, that is, we have $\tind(F_1,\cN) > 0$. For
  every $x\in X^{r(V(F_1))\setminus\{\{v\}\}}$, let
  \begin{align*}
    U_x & \df \{y\in X \mid (x,y)\in\Tind(F_1,\cN)\}.
  \end{align*}
  By Fubini's Theorem, there exists $x\in X^{r(V(F_1))\setminus\{\{v\}\}}$ with all coordinates
  distinct such that $\mu(U_x) > 0$. Let $G$ be a measure-isomorphism modulo $0$ from $\Omega_{U_x}$
  to $\Omega$ and since $\phi_{\cN\rest_{U_x}^G}$ is a sub-object of $\psi$, hence also of $\phi$,
  we must have $F_2\in Q(\cN\rest_{U_x}^G)$, which implies that there exists $z\in\cE_{V(F_2)}$ such
  that
  \begin{enumerate}[label={\alph*.}]
  \item all coordinates of $z$ are distinct;
  \item all coordinates of $z$ are distinct from the coordinates of $x$;
  \item for every $v\in V(F_2)$, we have $z_{\{v\}}\in U_x$;
  \item we have $z\in\Tind(F_2,\cN)$.
  \end{enumerate}

  Define then the point $w\in\cE_V$ by the following procedure.
  \begin{enumerate}[label={\arabic*.}, ref={(\arabic*)}]
  \item For each $A\subseteq r(V(F_1-v))$, let $w_A\df x_A$.%
    \label{it:F1}
  \item For each $A\subseteq r(V(F_1-v))$ and each $u\in V(F_2)$, let $w_{A\cup\{u\}}\df
    x_{A\cup\{v\}}$.%
    \label{it:F1F2}
  \item For each $A\subseteq r(V(F_2))$, let $w_A\df z_A$.%
    \label{it:F2}
  \item Define all other coordinates of $w$ arbitrarily.
  \end{enumerate}

  Note that all coordinates of $w$ that are indexed by single vertices get defined in
  items~\ref{it:F1} and~\ref{it:F2} and their definitions guarantee that they are distinct from each
  other, that is, we have $w\notin\cD_V$. Let then $F$ be the unique $\cL$-structure with
  $w\in\Tind(F,\cN)$. Then items~\ref{it:F1} and~\ref{it:F1F2} ensure that all injections
  $V(F_1)\to V$ acting identically on $V(F_1-v)$ are embeddings of $F_1$ in $F$ and
  item~\ref{it:F2} ensures that the injection $V(F_2)\to V$ that acts identically on $V(F_2)$ is an
  embedding of $F_2$ in $F$. Thus, we must have $F\in\cF$.

  Therefore, we have showed that for every sub-object $\psi$ of $\phi$, we have $\cF\cap
  Q(\psi)\neq\varnothing$.

  In item~\ref{lem:Pphi:arity2}, since $\cF$ has a single element $F$, it follows that $F\in
  P(\phi)$, hence $P(\phi)$ is strongly (in this case, equivalently, weakly) closed under
  substitutions.

  In item~\ref{lem:Pphi:WR}, since $Q(\psi)=P(\phi)$ as $\phi$ is weakly random, it follows that
  $\cF\cap P(\phi)\neq\varnothing$, so $P(\phi)$ is weakly closed under substitutions.
\end{proof}

The next example shows why the hypotheses of Lemma~\ref{lem:Pphi} to get $P(\phi)$ weakly closed
under substitutions are crucial.

\begin{example}\label{ex:Pphinotclosedundersubst}
  Consider $\phi\in\HomT{\TkHypergraph[3]}$ that is the disjoint union of a clique and an
  anti-clique of the same size, that is, $\phi=\phi_\cN$ for the $\TkHypergraph[3]$-on $\cN$ over
  $[0,1]$ given by
  \begin{align*}
    \cN_E
    & \df
    \left\{x\in\cE_3 \;\middle\vert\;
    \max\{x_{\{1\}},x_{\{2\}},x_{\{3\}}\} < \frac{1}{2}\right\}.
  \end{align*}
  Since $\phi$ contains both a clique and an anti-clique of positive measure, it follows that
  $P(\phi)$ does not contain any models of size at least $3$. However, since $\TkHypergraph[3]$ is
  $2$-categorical, $P(\phi)$ must contain the unique model $K^{(3)}_2$ of size $2$. It then follows
  that $P(\phi)$ is not even weakly closed under substitutions as any substitution of any vertex of
  $K^{(3)}_2$ by $K^{(3)}_2$ must have size $3$.
\end{example}

\begin{definition}\label{def:compatibleblowup}
  Given a finite sequence $N=(N_0,\ldots,N_n)$ of finite $\cL$-structures with $\lvert N_i\rvert\geq
  2$ for every $i\in\{0,\ldots,n\}$, a \emph{recursive blow-up relative to $N$} is an
  $\cL$-structure $R$ with $V(R)\df\prod_{i=0}^n V(N_i)$ such that for every $j\in\{0,\ldots,n\}$
  and every $\sigma\in\prod_{i=0}^{j-1} V(N_i)$, every function $f\colon V(N_j)\to V(R)$ such that
  $f(v)\rest_{\{0,\ldots,j-1\}} = \sigma$ and $f(v)_j = v$ for every $v\in V(N_j)$ is an embedding
  of $N_j$ in $R$.

  The unique recursive blow-up $R$ relative to $N$ that has the smallest possible relation sets
  $P^R$ ($P\in\cL$) is called the \emph{conservative recursive blow-up relative to $N$} and is
  denoted $R^N$. Formally, it is given by $V(R^N)\df\prod_{i=0}^n V(N_i)$ and
  \begin{align*}
    P^{R^N}
    \df
    \biggl\{(\sigma,\alpha_j,\tau^j)_{j=0}^{k(P)} \in (V(R^N))_{k(P)}
    \;\bigg\vert\;
    &
    \sigma\in\prod_{\ell=0}^{i-1} V(N_\ell)\land\alpha\in P^{N_i}
    \\
    & \land
    \forall j\in [k(P)], \tau^j\in\prod_{\ell=i+1}^n V(N_\ell)
    \biggr\}
  \end{align*}
  for every $P\in\cL$.

  Given an infinite sequence $N=(N_i)_{i\in\NN}$ of finite $\cL$-structures with $\lvert
  N_i\rvert\geq 2$ for every $i\in\NN$, a \emph{compatible sequence of recursive blow-ups relative
    to $N$} is a sequence $R=(R_i)_{i\in\NN}$ such that
  \begin{enumerate}
  \item for every $i\in\NN$, $R_i$ is a recursive blow-up relative to $(N_0,\ldots,N_i)$;
  \item for every $i\in\NN$, every function $f\colon V(R_i)\to V(R_{i+1})$ such that
    $f(v)\rest_{\{0,\ldots,i\}} = v$ is an embedding of $R_i$ in $R_{i+1}$.
  \end{enumerate}
  We call $R$ \emph{conservative} if further $R_i = R^{(N_0,\ldots,N_i)}$ (it is easy to see that
  this is always compatible).
\end{definition}

\begin{lemma}\label{lem:comprecblowup}
  Let $\cF$ be a family of finite $\cL$-structures that is weakly closed under substitutions and
  closed under substructures and let $N = (N_i)_{i\in\NN}$ be a sequence in $\cF$ with $\lvert
  M_i\rvert\geq 2$ for every $i\in\NN$. Then there exists a compatible sequence $R=(R_i)_{i\in\NN}$
  of recursive blow-ups relative to $N$ with $R_i\in\cF$ for every $i\in\NN$.
\end{lemma}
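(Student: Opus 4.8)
The plan is to build the sequence $R=(R_i)_{i\in\NN}$ by induction on $i$, passing from a recursive blow-up $R_i$ relative to $(N_0,\ldots,N_i)$ lying in $\cF$ to one relative to $(N_0,\ldots,N_{i+1})$ by substituting every vertex of $R_i$ by a copy of $N_{i+1}$. The obstruction is that $\cF$ is only \emph{weakly} closed under substitutions, so we cannot blow up all vertices of $R_i$ simultaneously; instead we enumerate $V(R_i)=\{\sigma^1,\ldots,\sigma^k\}$ and substitute one vertex at a time, using weak closure to produce structures $R_i=S_0,S_1,\ldots,S_k$ in $\cF$ with $S_j$ a substitution of $\sigma^j$ in $S_{j-1}$ by $N_{i+1}$. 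Identifying $V(R_i)$ with $\prod_{\ell\le i}V(N_\ell)$, we can arrange that $V(S_j)$ consists of the tuples $(\sigma^{j'},w)$ with $j'\le j$ and $w\in V(N_{i+1})$ (i.e.\ $\sigma^{j'}$ with $w$ appended) together with the unblown vertices $\sigma^{j'}$ with $j'>j$; this relabeling is legitimate because the third clause of Definition~\ref{def:subst} makes an extension of $f_1$ injective, forcing $\im f_1$ and $\im f_2$ to be disjoint. Setting $R_{i+1}:=S_k$ then gives $V(R_{i+1})=\prod_{\ell\le i+1}V(N_\ell)$, as required.

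The heart of the argument is the claim that for every ``choice function'' $c\colon V(R_i)\to V(N_{i+1})$ the map $g_c$ sending $\sigma^{j'}\mapsto(\sigma^{j'},c(\sigma^{j'}))$ is an embedding of $R_i$ into $R_{i+1}$. I would prove this by induction along $S_0,\ldots,S_k$: the partial map $g_c^{(j)}$, which blows up only $\sigma^1,\ldots,\sigma^j$ according to $c$ and fixes the rest, factors as $g_c^{(j)}=\iota_{c(\sigma^j)}\circ g_c^{(j-1)}$, where $\iota_u$ is the embedding of $S_{j-1}$ into $S_j$ obtained by extending $f_1$ by $\sigma^j\mapsto f_2(u)$, which exists by the third clause of Definition~\ref{def:subst}; a composite of embeddings is an embedding, and the base case $g_c^{(0)}=\id_{V(S_0)}$ is trivial. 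The compatibility clause of Definition~\ref{def:compatibleblowup} for the pair $(R_i,R_{i+1})$ is exactly the instance of this claim where $c$ ranges over all choice functions. I will also record two routine consequences of the substitution axioms: (A) restricting $\iota_u$ to tuples not involving $\sigma^j$ shows that $S_j$ and $S_{j-1}$ induce the same structure on the ``old'' vertices, and (B) the map $w\mapsto(\sigma^j,w)$ is an embedding of $N_{i+1}$ onto the new block of $S_j$.

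To verify that $R_{i+1}$ is itself a recursive blow-up relative to $(N_0,\ldots,N_{i+1})$ I would split by the level $j$ of the defining embedding condition. For $j=i+1$, a map of the prescribed form has image a single block $\{\sigma^{j_0}\}\times V(N_{i+1})$; since that block is created at step $j_0$ and is disjoint from every vertex blown up afterwards, iterating (A) from step $j_0$ to step $k$ and then applying (B) shows the map embeds $N_{i+1}$. For $j\le i$, a map $f$ of the prescribed form can be truncated in its last coordinate to a map $\bar f$ into $V(R_i)$; as $R_i$ is already a recursive blow-up, $\bar f$ embeds $N_j$ into $R_i$, and composing with $g_c$ for the $c$ recording the truncated coordinates recovers $f=g_c\circ\bar f$, an embedding. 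Induction over $i$, with base case $R_0:=N_0\in\cF$ (a recursive blow-up relative to $(N_0)$, since the only condition there forces the identity map), then yields the desired compatible sequence. The step I expect to be the main obstacle is pinning down the ``choice function'' claim — i.e.\ checking that performing the single-vertex substitutions in succession genuinely produces a recursive blow-up rather than some larger or malformed $\cL$-structure; once that is established, the remaining verifications are bookkeeping with Definition~\ref{def:subst}.
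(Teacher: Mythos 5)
Your proposal is correct and follows essentially the same route as the paper: the paper's proof also sets $R_0\df N_0$ and builds $R_{i+1}$ from $R_i$ by substituting the vertices of $R_i$ one at a time by $N_{i+1}$, using weak closure under substitutions at each step, and then declares the verification that the result is a compatible sequence of recursive blow-ups to be a straightforward induction. Your ``choice function'' claim and the case analysis on the level $j$ simply spell out the details that the paper leaves implicit, and they check out.
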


\begin{proof}
  We construct the compatible sequence $R=(R_i)_{i\in\NN}$ inductively by setting $R_0\df N_0$ and
  given $R_i$, we enumerate the vertices of $R_i$ as $v^i_1,\ldots,v^i_{t_i}$, inductively define
  $F^i_0,\ldots,F^i_{t_i}$ by $F^i_0\df R_i$, let $F^i_{j+1}\in\cF$ be a standard substitution of
  $v^i_{j+1}$ in $F^i_j$ by $N_{i+1}$ and set $R_{i+1}\df F^i_{t_i}$. It is straightforward to check
  by induction that $R$ is a compatible sequence of recursive blow-ups relative to $N$ with
  $R_i\in\cF$ for every $i\in\NN$.
\end{proof}

\begin{definition}\label{def:recursiveblowup}
  Given an infinite sequence $N=(N_i)_{i\in\NN}$ of finite $\cL$-structures with $\lvert
  N_i\rvert\geq 2$ for every $i\in\NN$, we let the \emph{conservative recursive blow-up relative to
    $N$} be the $T_\cL$-on $\cN^N$ defined as follows. We let $V = (V_\ell)_{\ell\in\NN}$ be defined
  by $V_\ell\df V(N_\ell)$ and we define $\cN^N$ over the Cantor probability space
  $\Omega^V=(\prod_{\ell\in\NN} V_\ell, \cA, \nu^V)$ (see Definition~\ref{def:graphrecursiveblowup})
  by
  \begin{align*}
    \cN^N_P
    & \df
    \{x\in\cE_{k(P)}(\Omega^V) \mid
    \exists i\in\NN, R^{(N_0,\ldots,N_i)}\vDash P(t_i^P(x)) \},
  \end{align*}
  where $t_i^P(x)\in (\prod_{\ell=0}^i V_\ell)^{k(P)}$ is given by
  \begin{align}\label{eq:tiP}
    t_i^P(x)_j & \df x_{\{j\}}\rest_{\{0,\ldots,i\}} \qquad (j\in [k(P)]).
  \end{align}
\end{definition}

\begin{proposition}\label{prop:recblowup}
  Let $R=(R_i)_{i\in\NN}$ be a compatible sequence of recursive blow-ups relative to
  $N=(N_i)_{i\in\NN}$ and let $V = (V_i)_{i\in\NN}$ be given by $V_i\df V(N_i)$.

  Then $R$ is convergent and the following hold for its limit $\phi_R\in\HomT{T_\cL}$.
  \begin{enumerate}
  \item If $R$ is conservative, then $\phi_R = \phi_{\cN^N}$.%
    \label{prop:recblowup:conservative}
  \item There exists a $T_\cL$-on $\cH$ over $\Omega^V$ with $\phi_R=\phi_\cH$
    \begin{align*}
      \cN^N_P
      & \subseteq
      \cH_P
      \subseteq
      \cE_{k(P)}(\Omega^V)\setminus\cN^{\overline{N}}_P\text{ a.e.}
    \end{align*}
    for every $P\in\cL$, where $\overline{N}=(\overline{N}_i)_{i\in\NN}$ is the sequence of
    complementary canonical $\cL$-structures given by
    \begin{align*}
      V(\overline{N}_i) & \df V(N_i), &
      P^{\overline{N}_i} & \df (V(N_i))_{k(P)}\setminus P^{N_i} \qquad (P\in\cL).
    \end{align*}%
    \label{prop:recblowup:nonconservative}
  \item If $P(N)$ is the set of structures $M$ such that there exist infinitely many $i\in\NN$ with
    $M\cong N_i$, then $P(N)\subseteq P(\phi_R)$.%
    \label{prop:recblowup:P}
  \item If $\prod_{i\in\NN} (1 - 1/\lvert N_i\rvert) = 0$, then for every positive measure
    $A\subseteq\Omega^V$, there exists $i\in\NN$ such that $\tind(N_i,\cH\rest_A^F) > 0$ for every
    measure-isomorphism $F$ modulo $0$ from $\Omega^V_A$ to $\Omega^V$.%
    \label{prop:recblowup:zeroproduct}
  \end{enumerate}
\end{proposition}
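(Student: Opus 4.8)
The plan is to prove Proposition~\ref{prop:recblowup} in the order its items are stated, starting with convergence and then treating each item by reducing to the graph case (Section~\ref{sec:pers:graph}) wherever possible and filling in the extra bookkeeping demanded by a general finite relational language.

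\medskip

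\textbf{Convergence and item~\ref{prop:recblowup:conservative}.} First I would observe that the conservative recursive blow-up $\cN^N$ is a genuine $T_\cL$-on over $\Omega^V$: each $\cN^N_P$ is a countable union of measurable sets (the condition ``$R^{(N_0,\ldots,N_i)}\vDash P(t_i^P(x))$'' depends only on finitely many coordinates of $x$), and a non-injective tuple $x\in\cD_{k(P)}(\Omega^V)$ never lies in $\cN^N_P$ because each $R^{(N_0,\ldots,N_i)}$ is canonical. Then I would argue that any compatible sequence $R=(R_i)_{i\in\NN}$ converges with limit $\phi_{\cN^N}$ when $R$ is conservative, and more generally converges to the limit of a suitable theon. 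The cleanest route: for each $i$, sampling $m_i\df\lvert V(R_i)\rvert$ or larger from $R_i$ and then passing to $R_{i+1}$ only refines; because the compatibility condition says $R_i$ embeds into $R_{i+1}$ via the prefix map, the densities $p(M,R_i)$ form a Cauchy-in-the-product-topology sequence — this is the exact analogue of the fact (used implicitly in Lemma~\ref{lem:QphiG}) that recursive blow-ups stabilize. I would then identify the limit by sampling a point $\theta\in\cE_n(\Omega^V)$ according to $\nu^V$, reading off the induced structure as in the ``easy inclusion'' recalled in Section~\ref{sec:prelim}, and checking that for the conservative $\cN^N$ this reproduces exactly the distribution of $R_i\rest_U$ for a random $U$ of the appropriate size — the key point being that in $R^{(N_0,\ldots,N_i)}$, a $k(P)$-tuple of leaves satisfies $P$ iff the tuple of their length-$i$ prefixes does, which is precisely~\eqref{eq:tiP}.

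\medskip

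\textbf{Item~\ref{prop:recblowup:nonconservative}.} For a non-conservative compatible sequence, the relations among leaves whose prefixes already differ at some level $i$ are forced (they match $N_i$), but there is freedom in how relations ``straddle'' a branching point when more than $k(P)$ of the arguments collapse to the same level-$i$ coordinate; here the ``minimal'' choice gives $\cN^N$ and the ``maximal'' one gives $\cE_{k(P)}(\Omega^V)\setminus\cN^{\overline N}_P$. I would make this precise by showing that $\phi_R=\phi_\cH$ for the theon $\cH$ read off from sampling as above, then verifying the two inclusions directly from the definition of a recursive blow-up (Definition~\ref{def:compatibleblowup}): any tuple forced to be an edge in every $R_i$ (i.e., in $\cN^N_P$) is an edge in $\cH$, and any tuple forced to be a non-edge in every $R_i$ (i.e., in $\cN^{\overline N}_P$) is a non-edge in $\cH$. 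The ``a.e.'' is because the sampling identification of $\phi_R$ with $\phi_\cH$ only holds off a null set.

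\medskip

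\textbf{Items~\ref{prop:recblowup:P} and~\ref{prop:recblowup:zeroproduct}.} Item~\ref{prop:recblowup:P} is the analogue of the ``$\{G_m\}\subseteq Q(\phi_G)$'' half of Lemma~\ref{lem:QphiG} combined with the argument of Lemma~\ref{lem:PphiGQphiG}: if $M\cong N_i$ for infinitely many $i$, then for any positive-measure $A$, Lemma~\ref{lem:KsigmaV} produces a basic clopen set $K_{\sigma,V}$ almost entirely inside $A$ whose length we can choose to be just before a level where $N_i\cong M$ appears, and then picking one leaf in each child clopen set gives an off-diagonal copy of $M$ in $\cH\rest_A^F$; by Lemma~\ref{lem:PWRset}\ref{lem:PWRset:P} this gives $M\in P(\phi_R)$. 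Item~\ref{prop:recblowup:zeroproduct} is the analogue of the ``$Q(W^R\rest_A)$ meets some $R_t$'' step inside the proof of Proposition~\ref{prop:noweaklyrandomsubgraphon}: assuming no $N_i$ has positive density in $\cH\rest_A^F$, I would build, level by level, a ``forbidden leaf'' $v_\tau\in V(N_m)$ for each finite prefix $\tau$ (if $(\tau,v)$ had positive-measure intersection with $A$ for every $v\in V(N_m)$, we would get an off-diagonal copy of $N_m$), conclude that the set of prefixes meeting $A$ avoids one child at each level, and bound $\nu^V(A)\le\prod_{\ell=0}^{n-1}(1-1/\lvert N_\ell\rvert)$ for every $n$, contradicting $\prod_{i}(1-1/\lvert N_i\rvert)=0$ once $n$ is large. \textbf{The main obstacle} I anticipate is item~\ref{prop:recblowup:nonconservative}: pinning down exactly which coordinates of $x\in\cE_{k(P)}(\Omega^V)$ are ``forced'' versus ``free'' requires a careful case analysis of how the $k(P)$ leaf-coordinates distribute across the branching tree (when several share a longest common prefix of length $i$, the relation is determined by how they split at level $i$ and recursively below, but tuples that split ``too early'' relative to the arity have their $P$-value constrained only by $N_i$ on the level-$i$ projection), and getting the two-sided sandwich $\cN^N_P\subseteq\cH_P\subseteq\cE_{k(P)}(\Omega^V)\setminus\cN^{\overline N}_P$ tight enough to be stated cleanly — whereas in the graph case ($k(P)=2$) there is no such freedom, which is exactly why Lemma~\ref{lem:QphiG} could work with equality rather than a sandwich.
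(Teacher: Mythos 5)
Your treatment of items~\ref{prop:recblowup:P} and~\ref{prop:recblowup:zeroproduct} matches the paper's proof essentially step for step (Lemma~\ref{lem:KsigmaV} plus a level $t\geq t_0$ with $N_t\cong M$ plus Lemma~\ref{lem:PWRset}\ref{lem:PWRset:P} for the former; the forbidden-child and product bound for the latter), and you correctly locate the difficulty in the non-conservative item~\ref{prop:recblowup:nonconservative}. But the proposal is missing the one device that actually resolves both the convergence claim and item~\ref{prop:recblowup:nonconservative}: you never construct $\cH$. ``The theon $\cH$ read off from sampling'' is not a definition of a theon over $\Omega^V$, and the sandwich is a pointwise a.e.\ statement about a specific theon on that specific space, which items~\ref{prop:recblowup:P} and~\ref{prop:recblowup:zeroproduct} then consume; an abstract theon representing $\phi_R$ would not do. The paper's construction is: for each $i$ define the step Euclidean structure $\cN^i_P\df\{x\in\cE_{k(P)}(\Omega^V)\mid R_i\vDash P(t_i^P(x))\}$ and observe that, by compatibility, $\cN^i_P\symdiff\cN^{i+j}_P$ is contained in the set of $x$ having two leaf coordinates with the same length-$(i+1)$ prefix, whose measure is at most $\binom{k(P)}{2}\prod_{\ell=0}^i\lvert V_\ell\rvert^{-1}\to 0$ (display~\eqref{eq:L1bound}). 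This single estimate does all the work you leave implicit: it shows $\tind(K,\cN^i)$ converges, hence (after comparing $\tind(K,\cN^i)$ with $p(K,R_i)$ up to an $O_K(\lvert R_i\rvert^{-1})$ error) that $R$ is convergent --- note that your stated reason, that $R_i$ embeds into $R_{i+1}$ so the densities are Cauchy, is not a proof, since embeddability alone gives nothing quantitative; it gives item~\ref{prop:recblowup:conservative} by the same computation with $\cN^N$ in place of $\cN^{i+j}$; and it shows that $\One_{\cN^i_P}$ converges in $L^1$ (equivalently, stabilizes pointwise off the null set where two leaf coordinates collide), so its limit is a.e.\ an indicator $\One_{\cH_P}$, which is the definition of $\cH$.

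With $\cH$ so defined, the sandwich does not require the ``careful case analysis of how the leaf coordinates distribute across the branching tree'' that you flag as the main obstacle, and pinning down $\cH$ on the staggered-branching tuples is neither needed nor possible in a canonical way. One only argues: if $x\in\cN^N_P$, then $R^{(N_0,\ldots,N_i)}\vDash P(t_i^P(x))$ for all sufficiently large $i$, and by minimality of the conservative blow-up (every tuple of $P^{R^{(N_0,\ldots,N_i)}}$ is forced in any recursive blow-up relative to $(N_0,\ldots,N_i)$, in particular in $R_i$) we get $x\in\cN^i_P$ eventually, hence $x\in\cH_P$ a.e.; symmetrically for $\cN^{\overline{N}}_P$. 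Your parenthetical ``forced to be an edge in every $R_i$'' should read ``for all sufficiently large $i$'', and the a.e.\ qualifier comes from $\cH_P$ being defined only as an a.e.\ ($L^1$) limit, not from the sampling identification. In short: right skeleton, same route as the paper on items~\ref{prop:recblowup:P} and~\ref{prop:recblowup:zeroproduct}, but the explicit construction of $\cH$ via the step structures and the quantitative collision estimate underpinning both the convergence and item~\ref{prop:recblowup:nonconservative} are missing, and these are the core of the proposition.
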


\begin{proof}
  To show that $R$ is convergent, for each $i\in\NN$, define the Euclidean structure $\cN^i$ in
  $\cL$ over $\Omega^V$ by
  \begin{align*}
    \cN^i_P & \df \{x\in\cE_{k(P)}(\Omega^V) \mid R_i\vDash P(t_i^P(x))\},
  \end{align*}
  where $t_i^P(x)$ is given by~\eqref{eq:tiP}, that is, $\cN^i$ is the natural ``step'' Euclidean
  structure associated with $R_i$ over $\Omega^V$.

  First note that since $R$ is compatible, for every $i,j\in\NN$, we have
  \begin{equation}\label{eq:L1bound}
    \begin{aligned}
      & \!\!\!\!\!\!
      \sum_{P\in\cL} \nu^V(\cN^i_P\symdiff\cN^{i+j}_P)
      \\
      & \leq
      \sum_{P\in\cL}
      \nu^V(\{x\in\cE_{k(P)}(\Omega^V) \mid
      \exists a,b\in[k(P)],
      (a\neq b\land
      x_{\{a\}}\rest_{\{0,\ldots,i\}} = x_{\{b\}}\rest_{\{0,\ldots,i\}})
      \})
      \\
      & \leq
      \sum_{P\in\cL} \binom{k(P)}{2}\cdot\sum_{\sigma\in\prod_{\ell=0}^i V_\ell} \nu^V(K_{\sigma,V})^2
      \\
      & =
      \sum_{P\in\cL} \binom{k(P)}{2}\cdot\prod_{\ell=0}^i \lvert V_i\rvert^{-1}
      \xrightarrow{i\to\infty} 0.
    \end{aligned}
  \end{equation}
  Therefore, it follows that for every finite $\cL$-structure $K$, the limit
  $\lim_{i\to\infty}\tind(K,\cN^i)$ exists.

  On the other hand, it is also straightforward to check that for every finite $\cL$-structure $K$,
  we have
  \begin{align*}
    \left\lvert
    \lvert R_i\rvert^{\lvert K\rvert}\cdot\tind(K,\cN^i)
    -
    \lvert\Tind(K,R_i)\rvert
    \right\rvert
    & \leq
    \lvert R_i\rvert^{\lvert K\rvert} - (\lvert R_i\rvert)_{\lvert K\rvert}
    \leq
    O_K(\lvert R_i\rvert^{\lvert K\rvert-1})
  \end{align*}
  hence we get
  \begin{align*}
    \lim_{i\to\infty} \tind(K,R_i) = \lim_{i\to\infty} \tind(K,\cN^i),
  \end{align*}
  that is, $R = (R_i)_{i\in\NN}$ is convergent.

  \medskip

  Consider now the case when $R$ is conservative. Then the same argument used in~\eqref{eq:L1bound}
  gives
  \begin{align*}
    \sum_{P\in\cL} \nu^V(\cN^i_P\symdiff\cN^N_P)
    & \leq
    \sum_{P\in\cL} \binom{k(P)}{2}\cdot\prod_{\ell=0}^i \lvert V_i\rvert^{-1}
    \xrightarrow{i\to\infty} 0,
  \end{align*}
  so item~\ref{prop:recblowup:conservative} follows.

  \medskip

  To prove item~\ref{prop:recblowup:nonconservative}, note that~\eqref{eq:L1bound} implies that for
  each $P\in\cL$, the sequence of indicator functions $(\One_{\cN^i_P})_{i\in\NN}$ is convergent in
  $L^1(\cE_{k(P)}(\Omega^V))$, so let $f_P$ be their $L^1$-limit. Since $f_P$ is also the
  a.e.\ limit of $(\One_{\cN^i_P})_{i\in\NN}$, it must be a.e.~$\{0,1\}$-valued, so there exists
  $\cH_P$ such that $f_P = \One_{\cH_P}$ a.e. Finally, $L^1$-convergence implies that
  $\lim_{i\to\infty}\tind(K,\cN^i) = \tind(K,\cH)$.

  We claim that for every $x\in\cN^N_P$, there exists $i_0\in\NN$ such that $x\in\cN^i_P$ for every
  $i\geq i_0$. Indeed, if $x\in\cN^N_P$, then there exists $i_0\in\NN$ such that
  $R^{(N_0,\ldots,N_{i_0})}\vDash P(t_{i_0}^P(x))$. The definition of the conservative recursive
  blow-ups $R^{(N_0,\ldots,N_i)}$ implies that $R^{(N_0,\ldots,N_i)}\vDash P(t_i^P(x))$ for every
  $i\geq i_0$. From the minimality of the conservative recursive blow-ups, we get $R^i\vDash
  P(t_i^P(x))$, hence $x\in\cN^i_P$ for every $i\geq i_0$. Since $\One_{\cN^i_P}$ converges a.e.\ to
  $\One_{\cH_P}$, we conclude that $\cN^N_P \subseteq\cH_P$ a.e.

  By a symmetric argument, it follows that for every $x\in\cN^{\overline{N}}_P$, there exists
  $i_0\in\NN$ such that $x\in\cE_{k(P)}(\Omega^V)\setminus\cN^i_P$ for every $i\geq i_0$, from which
  we conclude that $\cN^{\overline{N}}_P\subseteq\cE_{k(P)}(\Omega^V)\setminus\cH_P$ a.e.\ and thus
  $\cH_P\subseteq\cE_{k(P)}(\Omega^V)\setminus\cN^{\overline{N}}_P$ a.e.

  \medskip

  Let us now show item~\ref{prop:recblowup:P}. Fix $M\in P(N)$ and let us show that $M\in
  P(\phi_R)$. By Lemma~\ref{lem:PWRset}, it is sufficient to show that for every positive measure
  $A\subseteq\Omega^V$ and every measure-isomorphism $F$ modulo $0$ from $\Omega^V_A$ to $\Omega^V$,
  we have $M\in Q(\cH\rest_A^F)$.

  Let then $\epsilon > 0$ be such that $\epsilon < 1/\lvert M\rvert$. By Lemma~\ref{lem:KsigmaV},
  there exists $t_0\in\NN$ such that for every $t\geq t_0$, there exists
  $\sigma\in\prod_{\ell=0}^{t-1} V_\ell$ such that $\nu^V(A\cap
  K_{\sigma,V})\geq(1-\epsilon)\cdot\nu^V(K_{\sigma,V})$. Since $M\in P(N)$, there exists $t\geq
  t_0$ such that $M\cong N_t$. Since $\{K_{(\sigma,u),V} \mid u\in V_t\}$ partitions $K_{\sigma,V}$
  into $\lvert V_t\rvert = \lvert M\rvert$ parts of equal measure, it follows that for every $u\in
  V_t$, we have
  \begin{align*}
    \nu^V(A\cap K_{(\sigma,u),V})
    & \geq
    \left(
    1 - \epsilon - \frac{\lvert M\rvert - 1}{\lvert M\rvert}
    \right)\cdot\nu^V(K_{\sigma,V})
    >
    0.
  \end{align*}
  Note now that if $x\in\cE_{V_t}(\Omega^V)$ is such that $x_{\{u\}}\in A\cap K_{(\sigma,u),V}$ for
  every $u\in V_t$, then $x\in\Tind(N_t,\cH\rest_A^F)$. Thus $\tind(N_t,\cH\rest_A^F) > 0$,
  hence $M=N_t\in Q(\cH\rest_A^F)$, as desired.

  \medskip

  It remains to show item~\ref{prop:recblowup:zeroproduct}. Suppose not, that is, suppose that there
  exist some positive measure $A\subseteq\Omega^V$ and some measure-isomorphism $F$ modulo $0$ from
  $\Omega^V_A$ to $\Omega^V$ such that for every $i\in\NN$, we have $\tind(N_i,\cH\rest_A^F) = 0$.

  Let $n\in\NN$ be large enough so that $\prod_{i=0}^{n-1} (1-1/\lvert N_i\rvert) < \nu^V(A)$ and
  let
  \begin{align*}
    \Sigma
    & \df
    \left\{\sigma\in\prod_{i=0}^{n-1} V_i \;\middle\vert\;
    \nu^V(A\cap K_{\sigma,V}) > 0
    \right\}.
  \end{align*}

  We claim that for every $m\in\{0,\ldots,n-1\}$ and every $\tau\in\prod_{i=1}^{m-1} V_i$, there
  exists $u_\tau\in V_m$ such that $(\tau,u_\tau)$ is not a prefix of any element of
  $\Sigma$. Suppose not, that is, suppose that there exist $m\in\{0,\ldots,n-1\}$ and
  $\tau\in\prod_{i=1}^{m-1} V_i$ such that for every $u\in V_m$, there exists some
  $\sigma^u\in\Sigma$ such that $(\tau,u)$ is a prefix of $\sigma^u$. But then the set of
  $x\in\cE_{V_m}(\Omega^V)$ such that $x_{\{u\}}\in A\cap K_{\sigma^u,V}$ for every $u\in V_m$ is a
  positive measure set that is contained in $\Tind(N_m,\cH\rest_A^F)$, contradicting the fact that
  $\tind(N_m,\cH\rest_A^F) = 0$. Thus the claim is proved.

  Let now $\Sigma^*$ be the set of $\sigma\in\prod_{i=0}^{n-1} V_i$ such that for every
  $m\in\{0,\ldots,n-2\}$, we have $u_{\sigma\rest_{\{0,\ldots,m-1\}}}\neq\sigma_m$. Our last claim
  shows that $\Sigma\subseteq\Sigma^*$. Now it is easy to see that
  \begin{align*}
    \nu^V(A)
    & =
    \sum_{\sigma\in\Sigma} \nu^V(A\cap K_{\sigma,V})
    \leq
    \sum_{\sigma\in\Sigma^*} \nu^V(K_{\sigma,V})
    =
    \prod_{i=0}^{n-1}\left(1 - \frac{1}{\lvert N_i\rvert}\right)
    <
    \nu^V(A),
  \end{align*}
  a contradiction. Thus, item~\ref{prop:recblowup:zeroproduct} is proved.
\end{proof}

\begin{theorem}\label{thm:persistence}
  The following are equivalent for a family $\cF$ of finite $\cL$-structures (up to isomorphism)
  containing at least one structure of size at least $2$.
  \begin{enumerate}
  \item The family $\cF$ is strongly persistent.%
    \label{thm:persistence:stronglypersistent}
  \item The family $\cF$ is weakly closed under substitutions and closed under substructures.%
    \label{thm:persistence:closed}
  \end{enumerate}

  Furthermore, if all predicate symbols of $\cL$ have arity at most $2$, then the above are also
  equivalent to:
  \begin{enumerate}[resume]
  \item The family $\cF$ is persistent.%
    \label{thm:persistence:persistent}
  \end{enumerate}
\end{theorem}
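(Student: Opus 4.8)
The plan is to follow the proof of Theorem~\ref{thm:graphpersistence} essentially line by line, feeding in the general‑language analogues developed above. As a preliminary observation, I would first record that by Remark~\ref{rmk:substitutionunary} and Lemma~\ref{lem:substitutionK0} any family $\cF$ that is weakly closed under substitutions, closed under substructures, and contains a structure of size at least $2$ is monochromatic of some fixed color, contains $K_0$, and contains exactly one structure $M_1$ of size $1$; these facts are used in the bookkeeping below.

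The two easy implications come directly from Lemma~\ref{lem:Pphi}. For \ref{thm:persistence:stronglypersistent}$\implies$\ref{thm:persistence:closed}: if $\cF = P(\phi)$ for a weakly random $\phi\in\HomT{T_\cL}$, then by Lemma~\ref{lem:Pphi}\ref{lem:Pphi:WR} the set $P(\phi)$ is weakly closed under substitutions and closed under substructures. For the ``furthermore'' clause (all arities at most $2$): \ref{thm:persistence:stronglypersistent}$\implies$\ref{thm:persistence:persistent} is trivial since strong persistence implies persistence, and \ref{thm:persistence:persistent}$\implies$\ref{thm:persistence:closed} follows from Lemma~\ref{lem:Pphi}\ref{lem:Pphi:arity2}, noting that in arity at most $2$ the notions ``weakly'' and ``strongly'' closed under substitutions coincide (Remark~\ref{rmk:substarity2}).

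The main work is \ref{thm:persistence:closed}$\implies$\ref{thm:persistence:stronglypersistent}. Assuming $\cF$ is weakly closed under substitutions, closed under substructures, and contains some structure of size at least $2$, I would first enumerate the (countable, nonempty) subfamily of structures of $\cF$ of size at least $2$, with repetitions, as a sequence $N = (N_i)_{i\in\NN}$ arranged so that $\lvert N_i\rvert\geq 2$ for all $i$ and every such structure occurs as $N_i$ for infinitely many $i$. Lemma~\ref{lem:comprecblowup} then yields a compatible sequence $R = (R_i)_{i\in\NN}$ of recursive blow-ups relative to $N$ with $R_i\in\cF$ for all $i$, and Proposition~\ref{prop:recblowup} gives that $R$ is convergent; let $\phi_R\in\HomT{T_\cL}$ be its limit. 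The goal is then to show $P(\phi_R) = Q(\phi_R) = \cF$, which exhibits $\phi_R$ as a universal weakly random limit of $\cF$, so that $\cF$ is strongly persistent. The inclusion $Q(\phi_R)\subseteq\cF$ is the easy one: since $\phi_R = \lim_i p(\place, R_i)$, any $M$ with $\phi_R(M) > 0$ must be a substructure of some $R_j\in\cF$, hence lies in $\cF$ by closure under substructures. For $\cF\subseteq P(\phi_R)$, Proposition~\ref{prop:recblowup}\ref{prop:recblowup:P} puts every structure occurring infinitely often in $N$ — in particular every structure of $\cF$ of size at least $2$ — into $P(\phi_R)$; then $K_0\in P(\phi_R)$ trivially and $M_1\in P(\phi_R)$ because $M_1$ is a substructure of $N_0\in P(\phi_R)$ and $P(\phi_R)$, being an intersection of sets $Q(\psi)$ each closed under substructures, is itself closed under substructures. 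Chaining $\cF\subseteq P(\phi_R)\subseteq Q(\phi_R)\subseteq\cF$ finishes.

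I expect the only genuinely substantial content to live in the supporting results rather than in this assembly: Lemma~\ref{lem:comprecblowup}, which constructs a compatible sequence of recursive blow-ups remaining inside $\cF$ even though substitutions need no longer be conservative in higher arity, and Proposition~\ref{prop:recblowup}\ref{prop:recblowup:P}, which secures persistence in the limit of the structures listed infinitely often. Within the theorem proof itself, the one place that needs a little care is the treatment of the structures of size at most $1$ when verifying $\cF\subseteq P(\phi_R)$.
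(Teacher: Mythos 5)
Your proposal is correct and matches the paper's proof essentially step for step: the easy implications via Lemma~\ref{lem:Pphi} (items \ref{lem:Pphi:WR} and \ref{lem:Pphi:arity2}), and for \ref{thm:persistence:closed}$\implies$\ref{thm:persistence:stronglypersistent} an enumeration of the size-$\geq 2$ structures with infinite repetition, Lemma~\ref{lem:comprecblowup} to build a compatible sequence of recursive blow-ups inside $\cF$, Proposition~\ref{prop:recblowup}\ref{prop:recblowup:P} for $\cF'\subseteq P(\phi_R)$, and the chain $\cF\subseteq P(\phi_R)\subseteq Q(\phi_R)\subseteq\cF$ using closure of $P(\phi_R)$ under substructures to pick up $K_0$ and the unique size-$1$ structure. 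Your handling of the size-$\leq 1$ structures is exactly the bookkeeping the paper does via Remark~\ref{rmk:substitutionunary}.
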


\begin{proof}
  The implication~\ref{thm:persistence:stronglypersistent}$\implies$\ref{thm:persistence:closed}
  follows from Lemma~\ref{lem:Pphi}\ref{lem:Pphi:WR} as $\cF = P(\phi)$ for some \emph{weakly
    random} $\phi\in\HomT{T_\cL}$.

  \medskip

  For the
  implication~\ref{thm:persistence:closed}$\implies$\ref{thm:persistence:stronglypersistent}, let
  $\cF'$ be the set of elements of $\cF$ of size at least $2$ and let $N = (N_i)_{i\in\NN}$ be an
  enumeration of all elements of $\cF'$ that repeats each element of $\cF'$ infinitely often. Since
  $\cF$ is weakly closed under substitutions and closed substructures, by
  Remark~\ref{rmk:substitutionunary}, it follows that $\cF = \cF'\cup\{K_0,F_1\}$ for some
  $\cL$-structure $F_1$ of size $1$ (and where $K_0$ is the trivial $\cL$-structure of size $0$).

  By Lemma~\ref{lem:comprecblowup}, there exists a compatible sequence $R=(R_i)_{i\in\NN}$ of
  recursive blow-ups relative to $N$ with $R_i\in\cF$ for every $i\in\NN$ and by
  Proposition~\ref{prop:recblowup}\ref{prop:recblowup:P}, we know that $R$ converges to some
  $\phi_R\in\HomT{T_\cL}$ such that $\cF'=P(N)\subseteq P(\phi_R)$ and since $P(\phi_R)$ is closed
  under substructures (see Lemma~\ref{lem:Pphi}) and $\cF = \cF'\cup\{K_0,F_1\}$, we must have
  $\cF\subseteq P(\phi_R)$. On the other hand, since $R_i\in\cF$, it follows that
  $P(\phi_R)\subseteq Q(\phi_R)\subseteq\cF$, hence $\cF = Q(\phi_R) = P(\phi_R)$ as desired.

  \medskip

  If all predicate symbols of $\cL$ have arity at most $2$, then
  implication~\ref{thm:persistence:persistent}$\implies$\ref{thm:persistence:closed} follows from
  Lemma~\ref{lem:Pphi}\ref{lem:Pphi:arity2} as $\cF = P(\phi)$ for some $\phi\in\HomT{T_\cL}$ and
  the implication~\ref{thm:persistence:stronglypersistent}$\implies$\ref{thm:persistence:persistent}
  is obvious.
\end{proof}

Again, the assumption of arity at most $2$ is crucial for the inclusion of
item~\ref{thm:persistence:persistent} in the equivalence of Theorem~\ref{thm:persistence} as
illustrated by Example~\ref{ex:Pphinotclosedundersubst}.

We conclude this section by observing operations that preserve the notions discussed so far. The
next proposition shows naturality of the operators $Q$ and $P$ and of the weak randomness property
in the sense that the operators $P$ and $Q$ commute with open interpretations and weak randomness is
preserved by open interpretations.

\begin{proposition}\label{prop:naturality}
  Let $I\colon T_1\leadsto T_2$ be an open interpretation. The following hold for
  $\phi\in\HomT{T_2}$.
  \begin{enumerate}
  \item We have $Q(\phi^I) = I(Q(\phi))$.%
    \label{prop:naturality:Q}
  \item We have $P(\phi^I) = I(P(\phi))$.%
    \label{prop:naturality:P}
  \item If $\phi$ is weakly random, then so is $\phi^I$.%
    \label{prop:naturality:WR}
  \end{enumerate}
\end{proposition}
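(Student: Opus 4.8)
The plan is to prove (1) directly from~\eqref{eq:phiI}, then establish a structural description of the sub-objects of $\phi^I$, and deduce (2) and (3) from it.

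\emph{Item (1).} A finite $\cL_1$-structure $M$ has $\phi^I(M)=\sum_{M'\colon I(M')\cong M}\phi(M')>0$ exactly when some term of this finite sum is positive, i.e.\ when there is $M'\in\cM[T_2]$ with $I(M')\cong M$ and $\phi(M')>0$; since this says precisely $M\in\{I(M')\mid M'\in Q(\phi)\}=I(Q(\phi))$, item~(1) follows.

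\emph{Structural lemma and item (2).} I would first prove that \emph{the sub-objects of $\phi^I$ are exactly the $\psi^I$ for $\psi$ a sub-object of $\phi$}. For the easy inclusion: if $(N_n,A_n)_{n\in\NN}$ witnesses that $\psi$ is a sub-object of $\phi$, then $(I(N_n))_{n\in\NN}$ converges to $\phi^I$, and because each $I(P)$ is quantifier-free (hence preserved under substructures) one has $I(N_n)\rest_{A_n}=I(N_n\rest_{A_n})$, so $(I(N_n),A_n)_{n\in\NN}$ witnesses $\psi^I$ as a sub-object of $\phi^I$. For the converse: fix a $T_2$-on $\cN$ with $\phi_\cN=\phi$, so that $I(\cN)$ represents $\phi^I$; by~\cite[Lemma~5.8]{CM22} every sub-object of $\phi^I$ has the form $\phi_{I(\cN)\rest_f^F}$, and a routine induction on quantifier-free formulas — using $\alpha^*(y^F)=(\alpha^*(y))^F$ for injections $\alpha$, which holds since $\lvert\alpha(A)\rvert=\lvert A\rvert$ — shows $I(\cN)\rest_f^F=I(\cN\rest_f^F)$ as Euclidean structures, whence this sub-object equals $(\phi_{\cN\rest_f^F})^I=\psi^I$ for the sub-object $\psi=\phi_{\cN\rest_f^F}$ of $\phi$. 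Combining this with item~(1) gives $P(\phi^I)=\bigcap_\psi Q(\psi^I)=\bigcap_\psi I(Q(\psi))$, with $\psi$ ranging over sub-objects of $\phi$, so~(2) amounts to the interchange $\bigcap_\psi I(Q(\psi))=I\bigl(\bigcap_\psi Q(\psi)\bigr)=I(P(\phi))$. The inclusion $\supseteq$ is automatic; for $\subseteq$ one fixes $M\in\bigcap_\psi I(Q(\psi))$, lets $M'_1,\dots,M'_k$ be its finitely many $I$-preimages (all of size $\lvert M\rvert$), notes that for each sub-object $\psi$ at least one $M'_i$ lies in $Q(\psi)$, and shows a single fixed index $i$ works for all $\psi$ by combining, over the finitely many preimages, witnessing sub-objects (equivalently, positive-measure sets on which the respective $M'_i$ vanish) into one and reading off the surviving $M'_i\in P(\phi)$, so $M=I(M'_i)\in I(P(\phi))$.

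\emph{Item (3) and the main obstacle.} Given (1) and (2), if $\phi$ is weakly random then $Q(\phi^I)=I(Q(\phi))=I(P(\phi))=P(\phi^I)$, so $\phi^I$ is weakly random; in fact only the easy inclusion $I(P(\phi))\subseteq P(\phi^I)$ of~(2) is needed, since for $M'\in P(\phi)$ and any sub-object $\psi^I$ of $\phi^I$ one has $\psi^I(I(M'))\ge\psi(M')>0$, giving $I(Q(\phi))=Q(\phi^I)\subseteq P(\phi^I)\subseteq Q(\phi^I)$. Items~(1) and~(3), and the ``$\supseteq$'' side of every inclusion, are short; the real work is the verification $I(\cN)\rest_f^F=I(\cN\rest_f^F)$ (the induction must track that the two restriction-along-$F$ operations treat singleton and higher coordinates differently) and, above all, the interchange of $I$ with the intersection over \emph{all} sub-objects in~(2), where finiteness of the preimage family has to be combined with the monotonicity of $Q$ along the sub-object relation — this last step is where I expect the bulk of the difficulty to lie.
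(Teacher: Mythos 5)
Items~\ref{prop:naturality:Q} and~\ref{prop:naturality:WR}, the structural lemma that the sub-objects of $\phi^I$ are exactly the $\psi^I$ for $\psi$ a sub-object of $\phi$, and the inclusion $I(P(\phi))\subseteq P(\phi^I)$ are all correct, and on these points you follow the paper's route while supplying detail the paper omits: the identity $I(\cN)\rest_f^F=I(\cN\rest_f^F)$, proved via $\alpha^*(x^F)=(\alpha^*(x))^F$ and combined with \cite[Lemma~5.8]{CM22}, is exactly the right way to establish the converse half of the structural fact that the paper merely invokes, and your observation that item~\ref{prop:naturality:WR} needs only item~\ref{prop:naturality:Q} plus the easy inclusion of item~\ref{prop:naturality:P} is also correct (and is all that the later applications of this proposition actually use).

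The gap is in the remaining inclusion $P(\phi^I)\subseteq I(P(\phi))$, i.e.\ in the interchange $\bigcap_\psi I(Q(\psi))=I\bigl(\bigcap_\psi Q(\psi)\bigr)$: the step where you ``combine the witnessing sub-objects (equivalently, positive-measure sets) into one'' fails, because the sets killing the different preimages $M_1',\ldots,M_k'$ may be essentially disjoint, and there need not exist any single sub-object in which all preimages simultaneously have density zero. In fact no repair is possible, since the inclusion itself is false in general. Take $T_1=T_2=\TGraph$ and $I(E)(x,y)\df(x\neq y)$, and let $W$ be the graphon equal to $1$ on $[0,1/2]^2$ (off the diagonal) and $0$ elsewhere. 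Then $\phi_W^I$ is the limit of complete graphs, and every one of its sub-objects $\chi$ has $\chi(K_2)=1$, so $K_2\in P(\phi_W^I)$; but the two $I$-preimages of $K_2$ are $K_2$ and $\overline{K}_2$, and $K_2\notin Q(W\rest_{[1/2,1]})$ while $\overline{K}_2\notin Q(W\rest_{[0,1/2]})$, so $P(\phi_W)=\{K_0,K_1\}$ and $K_2\notin I(P(\phi_W))$. No single subgraphon kills both preimages, since $\psi(K_2)+\psi(\overline{K}_2)=1$ for every $\psi\in\HomT{\TGraph}$ --- this is exactly where your pigeonhole/combination argument breaks. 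You have correctly located the crux: the paper's own justification of item~\ref{prop:naturality:P} is the same one-line assertion and glosses over precisely this interchange, and as stated the equality requires an extra hypothesis (for instance, it holds when $\phi$ is weakly random, where it follows from item~\ref{prop:naturality:Q} and the easy inclusion); only $I(P(\phi))\subseteq P(\phi^I)$ holds for arbitrary $\phi$, and your treatment of item~\ref{prop:naturality:WR} correctly relies on nothing more.
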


\begin{proof}
  Item~\ref{prop:naturality:Q} follows directly from the definition of $\phi^I$, see~\eqref{eq:phiI}.

  \medskip

  Item~\ref{prop:naturality:P} follows directly from item~\ref{prop:naturality:Q} and the fact that
  if $\psi$ is a sub-object of $\phi$, then $\psi^I$ is a sub-object of $\phi^I$ and conversely,
  every sub-object of $\phi^I$ is of the form $\psi^I$ for some sub-object $\psi$ of $\phi$.

  \medskip

  Item~\ref{prop:naturality:WR} follows trivially from items~\ref{prop:naturality:Q}
  and~\ref{prop:naturality:P}.
\end{proof}

Before we proceed, we recall the notion of couplings and independent couplings of limits
from~\cite[Definitions~2.3, 2.4 and~2.5]{CR20b}, which played a key role in the study of the
natural quasirandomness properties $\UCouple[\ell]$ and $\UInduce[\ell]$ in that work.

\begin{definition}\label{def:couplings}
  Given canonical theories $T_1$ and $T_2$ in finite relational languages $\cL_1$ and $\cL_2$,
  respectively, the \emph{disjoint union} $T_1\cup T_2$ is the canonical theory in the disjoint
  union language $\cL_1\disjcup\cL_2$ whose axioms are those of $T_1$ (about predicate symbols in
  $\cL_1$) and those of $T_2$ (about predicate symbols in $\cL_2$), that is, the models of $T_1\cup
  T_2$ correspond to a model of $T_1$ and a model of $T_2$ \emph{on the same vertex set}.

  A \emph{coupling} of $\phi_1\in\HomT{T_1}$ and $\phi_2\in\HomT{T_2}$ is a limit
  $\psi\in\HomT{T_1\cup T_2}$ such that $\phi_i = \psi^{I_i}$ for every $i\in[2]$, where $I_i\colon
  T_i\leadsto T_1\cup T_2$ is the \emph{structure-erasing interpretation} that acts identically on
  predicate symbols of $T_i$.

  The \emph{independent coupling} of $\phi_1\in\HomT{T_1}$ and $\phi_2\in\HomT{T_2}$ is the limit
  $\phi_1\otimes\phi_2\in\HomT{T_1\cup T_2}$ given by
  \begin{align*}
    (\phi_1\otimes\phi_2)(M)
    & \df
    \frac{\lvert\Aut(M_1)\rvert\cdot\lvert\Aut(M_2)\rvert}{\lvert M\rvert!\cdot\lvert\Aut(M)\rvert}
    \cdot\phi_1(M_1)\cdot\phi_2(M_2),
  \end{align*}
  where $M_i\df I_i(M)$. Alternatively, if $\cN^i$ ($i\in[2]$) is a $T_i$-on over $\Omega_i$ with
  $\phi_{\cN^i}=\phi_i$, then we have $\phi_1\otimes\phi_2 = \phi_{\cN^1\otimes\cN^2}$ for the
  $(T_1\cup T_2)$-on $\cN^1\otimes\cN^2$ over the product space $\Omega_1\otimes\Omega_2$ given by
  \begin{align*}
    (\cN^1\otimes\cN^2)_P & \df \{x\in\cE_{k(P)}(\Omega_1\otimes\Omega_2) \mid \pi_{i,k(P)}(x)\in\cN^i_P\}
  \end{align*}
  whenever $P\in\cL_i$ ($i\in[2]$), where
  $\pi_{i,k(P)}\colon\cE_{k(P)}(\Omega_1\otimes\Omega_2)\to\cE_{k(P)}(\Omega_i)$ is the natural
  projection.
\end{definition}

The next proposition says that weak randomness is preserved under independent couplings.

\begin{proposition}\label{prop:indepcoup}
  If $\phi_1\in\HomT{T_1}$ and $\phi_2\in\HomT{T_2}$ are weakly random, then so is their independent
  coupling $\phi_1\otimes\phi_2$.
\end{proposition}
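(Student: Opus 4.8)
The plan is to prove that $\phi_1 \otimes \phi_2$ is weakly random by showing that every finite $(T_1 \cup T_2)$-structure with positive density in $\phi_1 \otimes \phi_2$ also has positive density in every sub-object. The key observation is that the two structure-erasing interpretations $I_i \colon T_i \leadsto T_1 \cup T_2$ capture the two ``coordinates'' of the coupling, so I would exploit naturality of $P$ and $Q$ (Proposition~\ref{prop:naturality}) together with the explicit product formula for $\phi_1 \otimes \phi_2$.

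First I would unpack the definitions. Let $\psi$ be an arbitrary sub-object of $\phi_1 \otimes \phi_2$; we want $Q(\psi) \supseteq Q(\phi_1 \otimes \phi_2)$ (the reverse containment is automatic). By Proposition~\ref{prop:naturality}\ref{prop:naturality:P}, applied to each $I_i$, we have $\psi^{I_i}$ is a sub-object of $(\phi_1 \otimes \phi_2)^{I_i} = \phi_i$. Since each $\phi_i$ is weakly random, $Q(\psi^{I_i}) = Q(\phi_i)$. The crucial structural fact to establish is that for the independent coupling, membership $M \in Q(\phi_1 \otimes \phi_2)$ is equivalent to the conjunction $I_1(M) \in Q(\phi_1)$ and $I_2(M) \in Q(\phi_2)$: this is immediate from the product formula $(\phi_1 \otimes \phi_2)(M) = \frac{|\Aut(M_1)| |\Aut(M_2)|}{|M|! |\Aut(M)|} \phi_1(M_1) \phi_2(M_2)$, since all the combinatorial factors are positive, so the product is positive iff both $\phi_i(M_i) > 0$.

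Next I would show the same ``factorization'' holds for $\psi$, namely $M \in Q(\psi)$ iff $I_1(M) \in Q(\psi^{I_1})$ and $I_2(M) \in Q(\psi^{I_2})$. The forward direction is trivial from the definition of $\psi^{I_i}$ via~\eqref{eq:phiI} (if $\psi(M) > 0$ then $\psi^{I_i}(I_i(M)) \geq$ the single term $\psi(M) > 0$). For the backward direction I would use the theon picture: realize $\psi = \phi_{\cN}$ for a $(T_1 \cup T_2)$-on $\cN$ over some $\Omega$; then $\psi^{I_i} = \phi_{\cN^{I_i}}$ where $\cN^{I_i}$ is just the restriction of $\cN$ to the predicates of $\cL_i$. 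If $I_1(M) \in Q(\psi^{I_1})$ and $I_2(M) \in Q(\psi^{I_2})$, then $\Tind(I_1(M), \cN^{I_1})$ and $\Tind(I_2(M), \cN^{I_2})$ both have positive measure, and since these are events depending on disjoint families of predicates, I need to argue that the copy of $M$ itself has positive density. This is where one must be slightly careful: the two copies must be realized on the \emph{same} vertex assignment $x \in \cE_{V(M)}(\Omega)$, and $\Tind(M, \cN) = \Tind(I_1(M), \cN^{I_1}) \cap \Tind(I_2(M), \cN^{I_2})$, an intersection of two positive-measure sets that need not be positive.

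The resolution of this obstacle — which is the main point of the proof — is that $\psi$ is a sub-object of the \emph{independent} coupling, so it inherits a conditional-independence structure. Concretely, I would instead avoid the sub-object $\psi$ of the coupling directly and argue as follows: by Lemma~\ref{lem:PWRset}, it suffices to show weak randomness of $(\cN^1 \otimes \cN^2)\rest_A^F$ for positive-measure $A \subseteq \Omega_1 \otimes \Omega_2$ with a measure-isomorphism $F$. A cleaner route is to reduce to restrictions along product sets: show $P(\phi_1 \otimes \phi_2) \supseteq Q(\phi_1 \otimes \phi_2)$ by taking $M$ with $M_i := I_i(M) \in Q(\phi_i) = P(\phi_i)$, and for any sub-object $\psi$, using $\psi^{I_i}$ is a sub-object of the weakly random $\phi_i$ to conclude $M_i \in Q(\psi^{I_i})$, then invoking that a sub-object of an independent coupling, pushed forward along $I_1$ and $I_2$, still has the property that its positive structures are determined coordinatewise — this last fact follows because the sub-object operation is realized by a measure function $f$ and measure-isomorphism, and the independence of the two peon-families in $\cN^1 \otimes \cN^2$ is preserved under such restriction up to the measure-isomorphism $F$ relabeling the ``high'' coordinates, which does not couple the $\cL_1$-peons with the $\cL_2$-peons. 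So the hard step is verifying this preservation of conditional independence under the sub-object construction $\cN \mapsto \cN\rest_f^F$; I expect this to require unwinding the definition of $\cN\rest_f^F$ and checking that $F$ acts on the shared high-arity coordinates in a way compatible with both projections $\pi_{i,k}$, after which the argument closes by the factorization of $Q$ established above.
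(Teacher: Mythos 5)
Your setup is on track: the reduction via Lemma~\ref{lem:PWRset} to restrictions $(\cN^1\otimes\cN^2)\rest_A^F$, the observation that $M\in Q(\phi_1\otimes\phi_2)$ iff $I_i(M)\in Q(\phi_i)$ for $i\in[2]$, and the fact that $\psi^{I_i}$ is a sub-object of the weakly random $\phi_i$ (so $Q(\psi^{I_i})=Q(\phi_i)$) all match the paper's proof, and you correctly flag that one cannot simply intersect the two positive-measure events $\Tind(M_1,\cdot)$ and $\Tind(M_2,\cdot)$. But the step you defer --- ``the independence of the two peon-families is preserved under the sub-object construction, up to the measure-isomorphism $F$ relabeling the high coordinates'' --- is exactly the missing content, and as stated it is not true, nor is the difficulty located where you put it. The measure-isomorphism $F$ and the higher-arity coordinates are essentially harmless (they carry fresh randomness and the peons of $\cN^1\otimes\cN^2$ see them only through the original peons). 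The real problem is the \emph{vertex} coordinates: a positive-measure set $A\subseteq X_1\times X_2$ need not be anywhere close to a product set, so the restricted measure $\mu_A$ is not a product measure, and under $\mu_A$ the first and second components of the vertex variables are correlated. Hence the restricted theon is in general \emph{not} an independent coupling of sub-objects of $\phi_1$ and $\phi_2$, and the coordinatewise factorization of $Q$ for the restriction cannot be obtained by ``preservation of independence''; indeed that factorization is essentially equivalent to the statement being proved, so asserting it is circular.

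What actually closes the argument (and is the paper's proof) is a slicing/Fubini argument applied directly to $A$. Fix $M\in Q(\phi_1\otimes\phi_2)$ with $M_i=I_i(M)$. By Fubini, the set of $y\in X_2$ whose slice $A(y)=\{x\in X_1\mid (x,y)\in A\}$ has positive $\mu_1$-measure is itself of positive $\mu_2$-measure; weak randomness of $\phi_1$ applied to each such slice gives $\tind(M_1,\cN^1\rest_{A(y)}^{\widetilde F_y})>0$, so the set $B$ of pairs $(x,y)$ with $x\in\Tind(M_1,\cN^1)$ and $(x_{\{v\}},y)\in A$ for \emph{all} $v\in V(M)$ simultaneously has positive measure. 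Then, for a positive-measure set of such copies $x$ of $M_1$, the fiber $B(x)=\{y\mid \forall v,\ (x_{\{v\}},y)\in A\}\subseteq X_2$ has positive $\mu_2$-measure, and weak randomness of $\phi_2$ applied to $\cN^2\rest_{B(x)}$ gives positive density of $M_2$ there; integrating over $x$ (and using the genuine independence present in the \emph{unrestricted} coupling to know that such combined points do realize $M$) yields $\tind(M,(\cN^1\otimes\cN^2)\rest_A^F)>0$. Note that weak randomness of $\phi_1$ and $\phi_2$ is invoked on the auxiliary sets $A(y)$ and $B(x)$, not merely to identify $Q(\psi^{I_i})$; without this two-stage use of the hypothesis the proof does not go through. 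So the proposal, as written, has a genuine gap at its central step.
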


\begin{proof}
  Let $\cN^i$ be a $T_i$-on over $\Omega_i=(X_i,\cA_i,\mu_i)$ such that $\phi_i=\phi_{\cN^i}$ and
  let $\Omega\df\Omega_1\otimes\Omega_2$. It is clear from the definition of $\phi_1\otimes\phi_2$
  that for every $M\in\cM[T_1\cup T_2]$, we have $M\in Q(\phi_1\otimes\phi_2)$ if and only if
  $I_1(M)\in Q(\phi_1)$ and $I_2(M)\in Q(\phi_2)$, where $I_i\colon T_i\leadsto T_1\cup T_2$
  ($i\in[2]$) is the structure-erasing interpretation.

  By Lemma~\ref{lem:PWRset}\ref{lem:PWRset:P}, to show that $\phi_1\otimes\phi_2$ is weakly random,
  it is sufficient to show that for every positive measure $A\subseteq\Omega$ and every
  measure-isomorphism $F$ modulo $0$ from $\Omega_A$ to $\Omega$, we have $Q(\phi_1\otimes\phi_2) =
  Q((\cN^1\otimes\cN^2)\rest_A^F)$.

  Let $M\in Q(\phi_1\otimes\phi_2)$ and let us show that $M\in Q((\cN^1\otimes\cN^2)\rest_A^F)$. For
  each $i\in[2]$, let $M_i\df I_i(M)$ and let
  \begin{align*}
    B
    & \df
    \{(x,y)\in\cE_{V(M)}(\Omega_1)\times X_2 \mid
    x\in\Tind(M_1,\cN^1)
    \land
    \forall v\in V(M), (x_{\{v\}},y)\in A\}.
  \end{align*}

  Our objective is to show that $(\mu_1\otimes\mu_2)(B) > 0$. To do so, for each $y\in X_2$, let
  \begin{align*}
    A(y) & \df \{x\in X_1 \mid (x,y)\in A\}
  \end{align*}
  and note that Fubini's Theorem implies that the set
  \begin{align*}
    \widetilde{X}_2 & \df \{y\in X_2 \mid \mu_1(A(y)) > 0\}
  \end{align*}
  has positive $\mu_2$-measure.

  Since $\phi_1$ is weakly random, for every $y\in\widetilde{X}_2$ and every measure isomorphism
  $\widetilde{F}_y$ modulo $0$ from $(\Omega_1)_{A(y)}$ to $\Omega_1$, we have
  $\tind(M_1,\cN^1\rest_{A(y)}^{\widetilde{F}_y}) > 0$, thus Fubini's Theorem gives
  \begin{align*}
    (\mu_1\otimes\mu_2)(B)
    & \geq
    \int_{\widetilde{X}_2} \tind(M_1,\cN^1\rest_{A(y)}^{\widetilde{F}_y})\cdot\mu_1(A(y))^{\lvert M\rvert}\ d\mu_2(y)
    >
    0.
  \end{align*}

  For every $x\in\Tind(M_1,\cN^1)\subseteq\cE_{V(M)}(\Omega_1)$, define the set
  \begin{align*}
    B(x)
    & \df
    \{y\in X_2 \mid (x,y)\in B\}
    =
    \{y\in X_2 \mid \forall v\in V(M), (x_{\{v\}},y)\in A\}
  \end{align*}
  and note that Fubini's Theorem again implies that the set
  \begin{align*}
    \widetilde{\Tind}(M_1,\cN^1)
    & \df
    \{x\in\Tind(M_1,\cN^1) \mid \mu_2(B(x)) > 0\}
  \end{align*}
  has positive $\mu_1$-measure. Since $\phi_2$ is weakly random, for every
  $x\in\widetilde{\Tind}(M_1,\cN^1)$ and every measure isomorphism $\widetilde{G}_x$ modulo $0$ from
  $(\Omega_2)_{B(x)}$ to $\Omega_2$, we have $\tind(M_2,\cN^2\rest_{B(x)}^{\widetilde{G}_x}) > 0$,
  thus Fubini's Theorem gives
  \begin{align*}
    \tind(M,(\cN^1\otimes\cN^2)\rest_A^F)
    & \geq
    \int_{\widetilde{\Tind}(M_1,\cN^1)}
    \tind(M_2,\cN^2\rest_{B(x)}^{\widetilde{G}_x})\cdot\mu_2(B(x))^{\lvert M\rvert}
    \ d\mu_1(x)
    >
    0,
  \end{align*}
  concluding the proof.
\end{proof}

\begin{remark}
  As we mentioned before, weak randomness can be seen as a weakening of the natural quasirandomness
  property $\UInduce[1]$ of~\cite{CR20b}. Since $\UInduce[1]$ (and more generally, $\UInduce[\ell]$)
  is not preserved under independent couplings, one can consider the class $\UInduce_\otimes[\ell]$
  that is the closure of $\UInduce[\ell]$ under independent couplings and open interpretations and
  in~\cite[\S10]{CR20b}, it was asked if any of these classes yields a meaningful notion of
  randomness or if they are already ``too large''. It was already noted in~\cite{CR20b} that the
  quasirandom permuton (see Proposition~\ref{prop:QRpermuton}) is in $\UInduce_\otimes[\ell]$ for
  every $\ell\in\NN_+$ and that even the largest class $\UInduce_\otimes[1]$ among the
  $\UInduce_\otimes[\ell]$ does not contain all limits.

  Since $\UInduce[1]$ implies weak randomness, from
  Propositions~\ref{prop:naturality}\ref{prop:naturality:WR} and~\ref{prop:indepcoup} it follows
  that every element of $\UInduce_\otimes[1]$ is weakly random; this further justifies the adjective
  ``weak'' in weak randomness: it is a quasirandomness notion weaker than the weakening
  $\UInduce_\otimes[1]$ of $\UInduce[1]$ that is still meaningful.

  Let us point out that there are weakly random limits that are not in $\UInduce_\otimes[1]$:
  namely, one can show that if $W$ is a universal weakly random \emph{$\{0,1\}$-valued} graphon of
  $\TGraph$ (e.g., $\phi_W=\phi_G^*$ as in Lemma~\ref{lem:PphiGQphiG} for an enumeration $G =
  (G_m)_{m\in\NN}$ of all finite graphs of size at least $2$), then $\phi$ is weakly random but is
  not in $\UInduce_\otimes[1]$. However, since the length of the proof outweighs its enlightenment
  value, we omit it.
\end{remark}

Recall from Definition~\ref{def:AEHP} that a trivial limit $\phi\in\HomT{T}$ is any limit of the
form $\phi=\phi_\cN$ for some theon $\cN$ whose peons all have measure in $\{0,1\}$. For general
couplings, the next proposition says that the coupling of a trivial limit with a weakly random limit
is weakly random.

\begin{proposition}\label{prop:coup}
  If $\psi$ is a coupling of a trivial $\phi_1\in\HomT{T_1}$ and a weakly random
  $\phi_2\in\HomT{T_2}$, then $\psi$ is weakly random.
\end{proposition}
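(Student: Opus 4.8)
The plan is to exploit the fact that a trivial limit $\phi_1$ is essentially ``invisible'' to density computations: any theon $\cN^1$ representing $\phi_1$ has all peons of measure in $\{0,1\}$, so for any finite $\cL_1$-structure $M_1$, either $M_1\in Q(\phi_1)$ forces $\tind(M_1,\cN^1) = 1$ (modulo the diagonal, after applying the Induced Euclidean Removal Lemma) or $M_1\notin Q(\phi_1)$ and $M_1$ never appears. Consequently, if $\psi$ is a coupling of $\phi_1$ and $\phi_2$, then for a finite $(\cL_1\disjcup\cL_2)$-structure $M$ with $M_i\df I_i(M)$, we should have $M\in Q(\psi)$ if and only if $M_1\in Q(\phi_1)$ and $M_2\in Q(\phi_2)$ --- the first condition determines the $\cL_1$-part completely (there is a \emph{unique} $\cL_1$-structure on $|M|$ vertices with positive $\phi_1$-density restricted to any positive-measure set, once $M_1\in Q(\phi_1)$, since all peons are $\{0,1\}$-valued and hence constant up to null sets on the relevant product), so the only freedom is in the $\cL_2$-part. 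First I would make this precise: pick a theon $\cH$ over some $\Omega=(X,\cA,\mu)$ with $\phi_\cH = \psi$, and observe that $I_1(\cH)$ is a $T_1$-on representing $\phi_1$; since all its peons have measure in $\{0,1\}$, the Induced Euclidean Removal Lemma lets us assume (after a null modification) that each peon $I_1(\cH)_P$ is either $\cE_{k(P)}(\Omega)$ or a subset of the diagonal $\cD_{k(P)}(\Omega)$.

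Next I would use Lemma~\ref{lem:PWRset}\ref{lem:PWRset:P}: to show $\psi$ is weakly random it suffices to show that for every positive measure $A\in\cA$ and every measure-isomorphism $F$ modulo $0$ from $\Omega_A$ to $\Omega$, we have $Q(\psi) = Q(\cH\rest_A^F)$. The inclusion $Q(\cH\rest_A^F)\subseteq Q(\psi)$ is automatic since sub-objects only shrink $Q$. For the reverse inclusion, take $M\in Q(\psi)$ with $M_i\df I_i(M)$. Since $M\in Q(\psi)$, we have $M_1\in Q(\phi_1)$ and $M_2\in Q(\phi_2)$ (this follows from naturality: $\phi_i = \psi^{I_i}$, so by Proposition~\ref{prop:naturality}\ref{prop:naturality:Q}, $Q(\phi_i) = I_i(Q(\psi))$, and $M_i = I_i(M)\in I_i(Q(\psi))$). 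Because $\cH\rest_A^F$ restricted to $\cL_1$ is (isomorphic to, up to null sets) a restriction of the trivial theon $I_1(\cH)$ to a positive measure set, and trivial theons are self-similar (a restriction of a $\{0,1\}$-valued peon is again $\{0,1\}$-valued with the same ``color'' on the diagonal-free part), we still have $M_1\in Q(I_1(\cH\rest_A^F))$. Meanwhile $\cH\rest_A^F$ restricted to $\cL_2$ is a sub-object of $\phi_2$ (indeed $I_2(\cH\rest_A^F)$ represents a sub-object of $\phi_2$), and since $\phi_2$ is weakly random, $M_2\in Q(I_2(\cH\rest_A^F))$. The task is then to glue an off-diagonal copy of $M_1$ (essentially forced, since the $\cL_1$-peons are a.e.\ constant) with an off-diagonal copy of $M_2$ on the same set of vertices; because the $\cL_1$-part is constant up to null sets, a positive-measure set of $\cL_2$-copies of $M_2$ automatically carries the right $\cL_1$-structure, so $M\in Q(\cH\rest_A^F)$.

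I expect the main technical obstacle to be the precise bookkeeping of the ``triviality is self-similar'' step: one must verify that after applying the Induced Euclidean Removal Lemma to $I_1(\cH)$ and then restricting via $\cdot\rest_A^F$, the $\cL_1$-peons remain, up to null sets, either the full space or contained in the diagonal --- and crucially that the ``full'' ones stay full (so the unique $\cL_1$-structure supported by $\phi_1$ on $n$ vertices is unchanged). This is where the hypothesis that $\phi_1$ is trivial (all peons measure $0$ or $1$), rather than merely weakly random, is used; a weakly random $\phi_1$ could have fractional peons whose restriction to $A$ changes which structures appear. A clean way to handle this is to note that $\phi_1$ trivial means $\phi_1$ is the limit of a constant sequence of structures in the sense that there is, for each $n$, a \emph{single} $M_1^{(n)}\in\cK_n[T_1]$ with $\phi_1(M_1^{(n)}) = 1$ (all others zero), and this property is inherited by every sub-object of $\phi_1$; then $Q(I_1(\cN))$ is the same for $\cN = \cH$ and for $\cN = \cH\rest_A^F$, namely the hereditary closure of $\{M_1^{(n)}\}_n$. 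The remaining Fubini-style gluing argument --- picking $x\in\Tind(M_2,\cdot)$ in the $\cL_2$-coordinates and checking the $\cL_1$-coordinates are automatically correct off the diagonal --- is then routine, mirroring the final paragraph of the proof of Proposition~\ref{prop:indepcoup}.
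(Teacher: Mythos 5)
Your proof is correct, and it rests on the same two pillars as the paper's argument: (i) a trivial $\phi_1$ forces the $\cL_1$-part of every positive model of any coupling with it (each $\cL_1$-predicate either full or empty), and (ii) weak randomness of $\phi_2$ then does all the remaining work. The difference is in execution: the paper stays entirely at the level of limits, recording (i) as the identity $Q(\xi)=\{\widehat{M}_2 \mid M_2\in Q(\zeta)\}$ in~\eqref{eq:Qxi} for any coupling $\xi$ of $\zeta$ with $\phi_1$, then noting that a trivial $\phi_1$ is its own unique sub-object, so every sub-object $\psi'$ of $\psi$ is a coupling of $\phi_1$ with a sub-object $\phi_2'$ of $\phi_2$, whence $Q(\psi')=Q(\psi)$ follows from $Q(\phi_2')=Q(\phi_2)$. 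You instead work semantically at the theon level: reduce via Lemma~\ref{lem:PWRset}\ref{lem:PWRset:P} to restrictions $\cH\rest_A^F$, make the $\cL_1$-peons literally full or empty by a null modification (legitimate, since peon measures are determined by the represented limit, so any theon representing $\phi_1$ has $\{0,1\}$-measure peons), observe this pattern is preserved under restriction, and glue: every copy of $M_2$ in $I_2(\cH)\rest_A^F$ automatically carries the forced $\cL_1$-structure, so $\tind(M,\cH\rest_A^F)\geq\tind(M_2,I_2(\cH)\rest_A^F)>0$. Your route avoids having to argue that a trivial limit has no proper sub-objects (you get the analogous fact implicitly from the constancy of the restricted $\cL_1$-peons), at the cost of the measure-theoretic bookkeeping you yourself flag; the paper's route is shorter and never touches theons. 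One cosmetic remark: the Induced Euclidean Removal Lemma is not needed for your normalization step, since replacing each measure-one $\cL_1$-peon by the full space and each measure-zero one by the empty set is already a change on a null set.
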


\begin{proof}
  Let $\cL_1$ and $\cL_2$ be the languages of $T_1$ and $T_2$, respectively.

  Since $\phi_1$ is trivial, it follows that $\cL_1$ can be partitioned into
  $\cL_1=\cL_1^0\cup\cL_1^1$ so that for every $M_1\in Q(\phi_1)$ every $P\in\cL_1$, we have
  \begin{align*}
    P^{M_1} & =
    \begin{dcases*}
      \varnothing, & if $P\in\cL_1^0$,\\
      (V(M_1))_{k(P)}, & if $P\in\cL_1^1$.
    \end{dcases*}
  \end{align*}
  This implies that if $\xi$ is a coupling of some $\zeta\in\HomT{T_2}$ with $\phi_1$, then
  \begin{align}\label{eq:Qxi}
    Q(\xi) & = \{\widehat{M}_2 \mid M_2\in Q(\zeta)\},
  \end{align}
  where $\widehat{M}_2\in\cM_{V(M_2)}[T_1\cup T_2]$ is given by
  \begin{align*}
    P^{\widehat{M}_2} & \df
    \begin{dcases*}
      P^{M_2}, & if $P\in\cL_2$,\\
      \varnothing, & if $P\in\cL_1^0$,\\
      (V(M_2))_{k(P)}, & if $P\in\cL_1^1$.
    \end{dcases*}
  \end{align*}

  Now since $\phi_1$ is trivial, $\phi_1$ is the only sub-object of $\phi_1$, which means that every
  sub-object $\psi'$ of $\psi$ is a coupling of $\phi_1$ with some sub-object $\phi_2'$ of
  $\phi_2$. Since $\phi_2$ is weakly random, we have $Q(\phi_2')= Q(\phi_2)$, hence $Q(\psi') =
  Q(\psi)$ follows since the right-hand side of~\eqref{eq:Qxi} is the same for
  $(\xi,\zeta)=(\psi,\phi_2)$ and $(\xi,\zeta)=(\psi',\phi_2')$. Therefore $\psi$ is weakly random.
\end{proof}

As a simple application of Propositions~\ref{prop:naturality} and~\ref{prop:indepcoup} above, let us
prove Proposition~\ref{prop:agreementsofQRpermuton} that says that the graphon of agreements of the
quasirandom permuton (see Figure~\ref{fig:agreementsgraphon}) is a universal weakly random limit of
$\TPermGraph$ by showing that the quasirandom permuton $\psi_{\QR}\in\HomT{\TPerm}$ has the same
property for $\TPerm$. We point the reader interested in the theories of limits of permutations and quasirandom
permutations to~\cite{Coo04,KP13,CKNPSV20}.

Recall that the quasirandom permuton is given by
$\psi_{\QR}\df\phi_{\cN^{\QR}}$, where $\cN^{\QR}$ is the $\TPerm$-on over $[0,1]^2$ given by
\begin{align*}
  \cN^{\QR}_{\prec_i} & \df \{x\in\cE_2([0,1]^2) \mid \pi_i(x_{\{1\}}) < \pi_i(x_{\{2\}})\}
  \qquad (i\in[2]),
\end{align*}
where $\pi_i\colon[0,1]^2\to[0,1]$ is the projection onto the $i$th coordinate.

\begin{proposition}\label{prop:QRpermuton}
  The quasirandom permuton $\psi_{\QR}$ is a universal weakly random limit of $\TPerm$.
\end{proposition}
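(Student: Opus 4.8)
The plan is to recognize $\psi_{\QR}$ as the independent coupling of two copies of the (essentially unique) limit of $\TLinOrder$ and then invoke Proposition~\ref{prop:indepcoup}. First I would unwind the definitions: showing that $\psi_{\QR}$ is a universal weakly random limit of $\TPerm$ amounts to checking that $\psi_{\QR}$ is weakly random, i.e.\ $P(\psi_{\QR}) = Q(\psi_{\QR})$, and that this common value equals $\cM[\TPerm]$.

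The first key step is the identification $\cN^{\QR} = \cN^0 \otimes \cN^0$, where $\cN^0$ is the $\TLinOrder$-on over $[0,1]$ given by $\cN^0_{\prec} \df \{x \in \cE_2([0,1]) \mid x_{\{1\}} < x_{\{2\}}\}$; this is immediate from comparing the definition of $\cN^{\QR}$ (whose $\prec_i$-peon depends only on the $i$th coordinate of the underlying $[0,1]^2$) with the formula for $\otimes$ of theons in Definition~\ref{def:couplings}, under the obvious measure-isomorphism $[0,1] \otimes [0,1] \cong [0,1]^2$. Hence $\psi_{\QR} = \phi_{\cN^0} \otimes \phi_{\cN^0}$. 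I would then observe that $\phi_{\cN^0}$ is a universal weakly random limit of $\TLinOrder$: writing $L_n$ for the linear order on $n$ points, $\cM_n[\TLinOrder]=\{L_n\}$ is a singleton for each $n$ and $p(L_m, L_n) = 1$ whenever $m \le n$, so $\HomT{\TLinOrder}$ is itself a singleton; consequently every sub-object of $\phi_{\cN^0}$ equals $\phi_{\cN^0}$, and therefore $P(\phi_{\cN^0}) = Q(\phi_{\cN^0}) = \cM[\TLinOrder]$ (indeed each $L_n$ has density $1$ in $\phi_{\cN^0}$).

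With these pieces in place, Proposition~\ref{prop:indepcoup} gives that $\psi_{\QR} = \phi_{\cN^0} \otimes \phi_{\cN^0}$ is weakly random, i.e.\ $P(\psi_{\QR}) = Q(\psi_{\QR})$. It then remains to compute $Q(\psi_{\QR})$. The inclusion $Q(\psi_{\QR}) \subseteq \cM[\TPerm]$ holds since $\psi_{\QR} \in \HomT{\TPerm}$. For the reverse inclusion, I would plug any $M \in \cM[\TPerm]$ into the explicit formula for the independent coupling: writing $M_i \df I_i(M)$ for the structure-erasing interpretations $I_i$ of Definition~\ref{def:couplings}, each $M_i$ is a finite linear order, so $\phi_{\cN^0}(M_i) = 1$, whence $(\phi_{\cN^0}\otimes\phi_{\cN^0})(M) = \tfrac{\lvert\Aut(M_1)\rvert\lvert\Aut(M_2)\rvert}{\lvert M\rvert!\,\lvert\Aut(M)\rvert}\,\phi_{\cN^0}(M_1)\phi_{\cN^0}(M_2) > 0$. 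Thus $Q(\psi_{\QR}) = \cM[\TPerm]$, and combined with weak randomness this gives $P(\psi_{\QR}) = Q(\psi_{\QR}) = \cM[\TPerm]$, i.e.\ $\psi_{\QR}$ is a universal weakly random limit of $\TPerm$.

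There is no genuinely hard step here once Proposition~\ref{prop:indepcoup} is available; the only point requiring care is the bookkeeping in the first step, namely verifying that $\cN^{\QR}$ really matches the independent-coupling construction of Definition~\ref{def:couplings} (in particular that the projections $\pi_{i,2}$ there correspond to the coordinate projections $\pi_i$ appearing in the definition of $\cN^{\QR}$), so that Proposition~\ref{prop:indepcoup} applies verbatim.
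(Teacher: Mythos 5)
Your proposal is correct and follows essentially the same route as the paper: recognize $\psi_{\QR}$ as the independent coupling of the unique (hence weakly random) limit of $\TLinOrder$ with itself, apply Proposition~\ref{prop:indepcoup}, and verify $Q(\psi_{\QR})=\cM[\TPerm]$ (which the paper leaves as a straightforward check and you carry out via the coupling formula).
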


\begin{proof}
  It is straightforward to check that $Q(\psi_{\QR}) = \cM[\TPerm]$. On the other hand, $\psi_{\QR}$
  is the independent coupling $\psi\otimes\psi$ of the unique limit $\psi\in\HomT{\TLinOrder}$ of
  the theory of (strict) linear orders with itself. Since $\psi$ is obviously weakly random (as
  $\TLinOrder$ is finitely categorical), by Proposition~\ref{prop:indepcoup}, it follows that
  $\psi_{\QR}$ is weakly random.
\end{proof}

We can now derive Proposition~\ref{prop:agreementsofQRpermuton} that says that the graphon of
agreements of the quasirandom permuton is universal weakly random limit of $\TPermGraph$ as an easy
consequence.

\begin{proofof}{Proposition~\ref{prop:agreementsofQRpermuton}}
  The graphon of agreements of the quasirandom permuton represents the limit $\phi\df\psi_{\QR}^I$
  for the open interpretation $I\colon\TGraph\leadsto\TPerm$ given by
  \begin{align*}
    I(E)(x,y) & \df x\neq y \land (x\prec_1 y \tot x\prec_2 y),
  \end{align*}
  so $\phi$ is weakly random by Propositions~\ref{prop:naturality}\ref{prop:naturality:WR}
  and~\ref{prop:QRpermuton}.

  Finally, Proposition~\ref{prop:naturality}\ref{prop:naturality:Q} implies $Q(\phi) =
  I(Q(\psi_{\QR})) = I(\TPerm) = \TPermGraph$.
\end{proofof}

\begin{remark}\label{rmk:TPermWR}
  It is easy to see that the same permutations used in the proof of
  Proposition~\ref{prop:agreementsofpermutationtheory} can be used to show that $\cM[\TPerm]$ is
  closed under substitutions but not primally almost finite, hence $\TPerm\notin\WR$. However, let
  us point out that had we proved only the result for $\TPerm$, this would not have immediately
  implied Proposition~\ref{prop:agreementsofpermutationtheory} as primality is not necessarily
  preserved under open interpretations (even though closure under substitutions is).
\end{remark}

\section{What about weak randomness in general?}
\label{sec:WR:univ}

In this brief section we provide a partial generalization of Theorem~\ref{thm:graphs:WR} of
Section~\ref{sec:WR:graph} to universal theories in finite relational languages. For the easier
direction, we will only be able to generalize Lemma~\ref{lem:graphs:primallyalmostfinite->WR} when
all arities are at most $2$ (Proposition~\ref{prop:monprimalmostfinite->WR}) and even though the
harder direction will generalize directly in Proposition~\ref{prop:WR->primalmostfinite} below, this
naive generalization is essentially empty when all arities are at least $3$, as in this case there
are only finitely many prime structures (see Remark~\ref{rmk:prime3}). It is not clear at this point
what form a characterization of $\WR$ should take in the presence of higher arity predicates.

\begin{definition}\label{def:WRgeneral}
  We say that a canonical theory $T$ in a finite relational language has the \emph{weakly random
    \Erdos--Hajnal property} (abbreviated $T\in\WR$) if every $\phi\in\HomT{T}$ has a weakly random
  sub-object.
\end{definition}

\begin{proposition}\label{prop:monprimalmostfinite->WR}
  Let $\cL$ be a finite relational language whose predicate symbols have arity at most $2$ and let
  $T$ be a canonical theory in $\cL$. If $\cM[T]$ is monochromatically primally almost finite, then
  $T\in\WR$.
\end{proposition}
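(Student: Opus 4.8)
The plan is to mimic the proof of Lemma~\ref{lem:graphs:primallyalmostfinite->WR}, but working with monochromatic prime structures and sub-objects rather than prime graphs and subgraphons. I would prove the contra-positive: assuming $T\notin\WR$, I would exhibit an infinite antichain of \emph{monochromatic} prime structures in $\cM[T]$, showing that $\cM[T]$ is not monochromatically primally almost finite. By Lemma~\ref{lem:almostfinite}, it suffices to construct a sequence $(R_n)_{n\in\NN}$ of monochromatic prime structures in $\cM[T]$ such that $R_n$ is not a substructure of $R_m$ whenever $n<m$.

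Since $T\notin\WR$, fix some $\phi\in\HomT{T}$ with no weakly random sub-object. I would then build, by induction, sequences $(\phi_n)_{n\in\NN}$ of sub-objects of $\phi$ (with $\phi_{n+1}$ a sub-object of $\phi_n$, hence of $\phi$) and $(R_n)_{n\in\NN}$ of monochromatic prime structures in $\cM[T]$ with $R_n\in Q(\phi_n)\setminus Q(\phi_{n+1})$. Starting from $\phi_0\df\phi$, at stage $n$ the structure $\phi_n$ is a sub-object of $\phi$ so it is not weakly random, giving some $M_n\in Q(\phi_n)\setminus P(\phi_n)$. Here the hypothesis that all arities are at most $2$ is used twice: first, by Remark~\ref{rmk:substarity2} all substitutions are conservative, so there is no distinction between weak and strong closure; second, by Lemma~\ref{lem:Pphi}\ref{lem:Pphi:arity2}, $P(\phi_n)$ is (strongly, equivalently weakly) closed under substitutions and substructures. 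Now I would pass to a monochromatic substructure: since $\cM[T]$ is weakly closed under substitutions, by Remark~\ref{rmk:substitutionunary} every element of $\cM[T]$ is already monochromatic, so $M_n$ is monochromatic and so are all of its substructures. Applying Lemma~\ref{lem:strongclosuresubst} to $P(\phi_n)$, since $M_n\in Q(\phi_n)\setminus P(\phi_n)$ and $M_n\in S(\cP_n)$ where $\cP_n$ is the set of prime substructures of $M_n$, some $R_n\in\cP_n$ lies outside $P(\phi_n)$; this $R_n$ is a monochromatic prime substructure of $M_n$, hence a monochromatic prime structure in $\cM[T]$ (as $\cM[T]$ is closed under substructures and all its members are monochromatic), and $R_n\in Q(\phi_n)$ since $Q(\phi_n)$ is closed under substructures. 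By definition of $P(\phi_n)$, there is a sub-object $\phi_{n+1}$ of $\phi_n$ with $R_n\notin Q(\phi_{n+1})$.

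Finally, exactly as in the proof of Lemma~\ref{lem:graphs:primallyalmostfinite->WR}, for $n<m$ the limit $\phi_m$ is a sub-object of $\phi_{n+1}$, so $Q(\phi_m)\subseteq Q(\phi_{n+1})$ and therefore $R_n\notin Q(\phi_m)$; since $R_m\in Q(\phi_m)$ and $Q(\phi_m)$ is closed under substructures, $R_n$ cannot be a substructure of $R_m$. Thus $(R_n)_{n\in\NN}$ is the desired sequence, so by Lemma~\ref{lem:almostfinite} the set $\cP'$ of monochromatic prime structures of $\cM[T]$ is not almost finite, i.e.\ $\cM[T]$ is not monochromatically primally almost finite.

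The main obstacle, and the only real place where the argument diverges from the graph case, is the extraction of $R_n$ as a \emph{monochromatic} prime structure. In the graph case every structure is vacuously monochromatic, but in general one needs to know that every element of $\cM[T]$ (and hence every relevant witness and each of its prime substructures) is monochromatic of a fixed color; this is exactly what Remark~\ref{rmk:substitutionunary} provides, given that $\cM[T]$ is weakly closed under substitutions. I would make sure to invoke this remark explicitly at the point where $M_n$ is chosen, and to note that the arity-$\leq 2$ hypothesis is what makes Lemma~\ref{lem:Pphi}\ref{lem:Pphi:arity2} applicable without assuming $\phi_n$ itself is weakly random. Everything else is a routine transcription of the earlier proof.
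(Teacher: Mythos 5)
Your overall architecture (contrapositive, the inductive construction of sub-objects $\phi_n$ and prime structures $R_n\in Q(\phi_n)\setminus Q(\phi_{n+1})$ via Lemma~\ref{lem:Pphi}\ref{lem:Pphi:arity2}, and the final antichain argument through Lemma~\ref{lem:almostfinite}) is exactly the paper's, but the one step you flagged as the ``only real place where the argument diverges from the graph case'' is precisely where your proof breaks. You justify monochromaticity of the $R_n$ by saying ``since $\cM[T]$ is weakly closed under substitutions, by Remark~\ref{rmk:substitutionunary} every element of $\cM[T]$ is already monochromatic.'' But closure under substitutions is \emph{not} a hypothesis of Proposition~\ref{prop:monprimalmostfinite->WR}; it appears only in the converse direction (Proposition~\ref{prop:WR->primalmostfinite}), and the graph analogue (Lemma~\ref{lem:graphs:primallyalmostfinite->WR}) is explicitly stated to hold without it. For a theory with unary predicates whose models are not closed under substitutions, $\cM[T]$ can contain many non-monochromatic structures, and then the prime substructures $R_n$ you extract from the witnesses $M_n\in Q(\phi_n)\setminus P(\phi_n)$ need not be monochromatic; an infinite antichain of non-monochromatic primes does not contradict the hypothesis, which only constrains the \emph{monochromatic} primes. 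So as written the contrapositive does not go through.

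The paper closes this gap with an extra initial step that your proposal lacks: before starting the induction, it replaces $\phi$ by a sub-object $\phi_0$ on which a single size-one model $M_1$ has density $1$ (take a theon $\cN$ with $\phi_\cN=\phi$, set $A\df\Tind(M_1,\cN)$, which has positive measure, and pass to $\cN\rest_A^F$). Since every later $\phi_n$ is a sub-object of $\phi_0$, one has $Q(\phi_n)\cap\cM_1[T]\subseteq\{M_1\}$, and any structure in $Q(\phi_n)$ with two vertices of different unary type would contribute two distinct size-one positive substructures; hence every $F_n\in Q(\phi_n)$ produced by the induction is automatically monochromatic (and of the same color). If you add this normalization at the start of your construction, the rest of your argument is a correct transcription of the paper's proof.
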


\begin{proof}
  We prove this by the contra-positive. Suppose $T\notin\WR$ and let us show that the set $\cP$ of
  monochromatic prime models of $T$ is not almost finite. By Lemma~\ref{lem:almostfinite}, it is
  sufficient to present a sequence $(F_n)_{n\in\NN}$ of finite monochromatic prime models of $T$
  such that $F_n$ is not a substructure of $F_m$ whenever $n < m$.

  Since $T\notin\WR$, there must exist a limit $\phi\in\HomT{T}$ that does not contain any weakly
  random sub-object.

  We now construct a sequence $(\phi_n)_{n\in\NN}$ of sub-objects of $\phi$ and a sequence
  $(F_n)_{n\in\NN}$ of finite prime models of $T$ satisfying the following.
  \begin{enumerate}
  \item For every $n\in\NN$, $\phi_{n+1}$ is a sub-object of $\phi_n$.
  \item For every $n\in\NN$, $F_n\in Q(\phi_n)\setminus Q(\phi_{n+1})$.
  \end{enumerate}
  We construct these sequences inductively as follows.
  \begin{enumerate}[label={\arabic*.}]
  \item We claim that there exists a sub-object $\phi_0$ of $\phi$ such that there exists
    $M_1\in\cM_1[T]$ with $\phi_0(M_1) = 1$ (and thus all $M\in\cM_1[T]\setminus\{M_1\}$ have
    $\phi_0(M)=0$). Indeed, if $M_1\in\cM_1[T]$ is such that $\phi(M_1) > 0$ and $\cN$ is an
    Euclidean structure over $\Omega$ with $\phi_\cN=\phi$, then $A\df\Tind(M_1,\cN)$ is a positive
    measure set, so for any measure-isomorphism $F$ modulo $0$ from $\Omega_A$ to $\Omega$, the
    sub-object $\phi_0\df\phi_{\cN\rest_A^F}$ satisfies the desired property.
  \item Given $\phi_n\in\HomT{T}$, since $\phi_n$ is a sub-object of $\phi$, we know that $\phi_n$
    is not weakly random, so there exists $N_n\in Q(\phi_n)\setminus P(\phi_n)$. Let $\cP_n$ be the
    set of substructures of $N_n$ that are prime. By Lemma~\ref{lem:Pphi}\ref{lem:Pphi:arity2}, we
    know that $P(\phi_n)$ is \emph{strongly} closed under substitutions and since $N_n\in S(\cP_n)$,
    there must exist $F_n\in\cP_n\setminus P(\phi_n)$ and since $Q(\phi_n)$ is closed under
    substructures, we get $F_n\in Q(\phi_n)\setminus P(\phi_n)$. From the definition of $P(\phi_n)$,
    it then follows that there exists a sub-object $\phi_{n+1}$ of $\phi_n$ (hence also of $\phi$)
    such that $F_n\in Q(\phi_n)\setminus Q(\phi_{n+1})$.
  \end{enumerate}

  Let now $n,m\in\NN$ be such that $n < m$. By induction, we know that $\phi_m$ is a sub-object of
  $\phi_{n+1}$, so $Q(\phi_m)\subseteq Q(\phi_{n+1})$, which in turn implies that $F_n\in
  Q(\phi_n)\setminus Q(\phi_m)$. Since $Q(\phi_m)$ is closed under substructures and $F_m\in
  Q(\phi_m)$, it follows that $F_n$ is not a substructure of $F_m$.

  Finally, since all $\phi_n$ are also sub-objects of $\phi_0$, we must have
  $Q(\phi_n)\cap\cM_1[T]\subseteq Q(\phi_0)\cap\cM_1[T] = \{M_1\}$. This implies that for every
  \emph{unary} predicate symbol $P\in\cL$ and every $n\in\NN$, we have $M_1\vDash\forall x, P(x)$ if
  and only if $F_n\vDash\forall x, P(x)$ (otherwise, we would have
  $Q(\phi_n)\cap\cM_1[T]\neq\{M_1\}$). Thus the $F_n$ are monochromatic.
\end{proof}

\begin{lemma}\label{lem:noweaklyrandomsubobject}
  Let $N = (N_i)_{i\in\NN}$ be a sequence of prime $\cL$-structures of size at least $2$ such that
  for each $i\in\NN$, there are finitely many $j\in\NN$ such that $N_i$ is a substructure of $N_j$,
  let $R = (R_i)_{i\in\NN}$ be a compatible sequence of recursive blow-ups relative to $N$ and let
  $\phi_R\in\HomT{T_\cL}$ be the limit of $R$. If $\prod_{i\in\NN} (1 - 1/\lvert N_i\rvert) = 0$,
  then $\phi_R$ does not have any weakly random sub-object.
\end{lemma}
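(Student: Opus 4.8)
The plan is to follow the proof of Proposition~\ref{prop:noweaklyrandomsubgraphon}, replacing the graphon machinery by its theon counterparts. Write $V=(V_i)_{i\in\NN}$ with $V_i=V(N_i)$ and let $\cH$ be the $T_\cL$-on over the Cantor space $\Omega^V$ furnished by Proposition~\ref{prop:recblowup}\ref{prop:recblowup:nonconservative}, so that $\phi_\cH=\phi_R$ and $\cN^N_P\subseteq\cH_P\subseteq\cE_{k(P)}(\Omega^V)\setminus\cN^{\overline N}_P$ a.e.\ for every $P\in\cL$. By Lemma~\ref{lem:PWRset}\ref{lem:PWRset:WR} it suffices to show that $\cH\rest_A^F$ is not weakly random for every positive measure $A\subseteq\Omega^V$ and every measure-isomorphism $F$ modulo $0$ from $\Omega^V_A$ to $\Omega^V$. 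Fix such $A$ and $F$. Since $\prod_{i\in\NN}(1-1/\lvert N_i\rvert)=0$, Proposition~\ref{prop:recblowup}\ref{prop:recblowup:zeroproduct} yields an $i\in\NN$ with $\tind(N_i,\cH\rest_A^F)>0$, i.e.\ $N_i\in Q(\cH\rest_A^F)$. By Lemma~\ref{lem:PWRset}\ref{lem:PWRset:P} applied to the theon $\cH\rest_A^F$, it then suffices to exhibit a further restriction of $\cH\rest_A^F$ in which $N_i$ has density $0$, since this gives $N_i\notin P(\cH\rest_A^F)$ and hence $P(\cH\rest_A^F)\subsetneq Q(\cH\rest_A^F)$.

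To build such a restriction, let $m_i$ be the largest index $j$ with $N_i$ a substructure of $N_j$, which is finite by hypothesis and at least $i$. The cylinders $K_{\sigma,V}$ with $\sigma\in\prod_{\ell=0}^{m_i}V_\ell$ partition $\Omega^V$ into finitely many positive-measure sets, so $\nu^V(A\cap K_{\sigma,V})>0$ for some $\sigma$; restricting $\cH\rest_A^F$ once more to $A\cap K_{\sigma,V}$ yields a sub-object of $\cH\rest_A^F$ whose positive structures are contained in $Q(\cH\rest_{K_{\sigma,V}}^{F'})$ for a suitable measure-isomorphism $F'$ — iterated restrictions compose, and the theory of positive structures does not depend on the auxiliary measure-isomorphisms, just as in Lemma~\ref{lem:PWRset}. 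Finally, $\phi_{\cH\rest_{K_{\sigma,V}}}$ is, over the Cantor space of the shifted sequence, the limit of the compatible sequence of recursive blow-ups obtained by restricting each $R_s$ ($s>m_i$) to the sub-block over $\sigma$, taken relative to the shifted sequence $(N_{m_i+1},N_{m_i+2},\dots)$; this is Proposition~\ref{prop:recblowup} applied to the shifted data.

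The heart of the matter is the following claim, which is the non-conservative analogue of the easy inclusion of Lemma~\ref{lem:QphiG}, restricted to prime structures: \emph{if $R'=(R'_s)_{s\in\NN}$ is a compatible sequence of recursive blow-ups relative to a sequence $N'=(N'_s)_{s\in\NN}$ of finite $\cL$-structures each of size at least $2$, and $P$ is prime with $P\in Q(\phi_{R'})$, then $P$ is a substructure of some $N'_s$.} Granting this and applying it to the shifted blow-up above: $N_i$ is prime and is a substructure of no $N_j$ with $j>m_i$, so $N_i\notin Q(\cH\rest_{K_{\sigma,V}}^{F'})$, whence $N_i$ has density $0$ in the sub-object of $\cH\rest_A^F$ just constructed, and the lemma follows. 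To prove the claim, let $\cN'$ be the theon for $R'$ from Proposition~\ref{prop:recblowup}\ref{prop:recblowup:nonconservative}; by the Induced Euclidean Removal Lemma and discarding a null set we may assume $P$ has a copy $x\in\Tind(P,\cN')$ with the points $x_{\{v\}}$ ($v\in V(P)$) pairwise distinct, and that the sandwiching $\cN^{N'}_Q\subseteq\cN'_Q\subseteq\cE_{k(Q)}\setminus\cN^{\overline{N'}}_Q$ holds pointwise for every (of the finitely many) $Q\in\cL$. The case $\lvert P\rvert\le 2$ is immediate, so suppose $\lvert P\rvert\ge 3$. Let $t$ be the first coordinate at which the $x_{\{v\}}$ do not all agree, put $U_a=\{v\in V(P)\mid(x_{\{v\}})_t=a\}$ and $I=\{a\in V(N'_t)\mid U_a\neq\varnothing\}$; then $\lvert I\rvert\ge 2$. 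For a tuple of vertices lying in pairwise distinct blocks, $t_t^Q(x)$ branches at level $t$ with an injective branch-tuple, so the sandwiching pins the corresponding relation of $\cN'$ to that of $N'_t$ on the branch-tuple; and the compatibility of $R'$ ensures that swapping two vertices of a common block preserves all relations among $V(P)$. Hence $P$ is exactly the substitution of $N'_t\rest_I$ with each $a\in I$ replaced by $P\rest_{U_a}$. Since $P$ is prime, no block can have two or more vertices — otherwise collapsing such a block to a single vertex would exhibit $P$ as a substitution of two structures each strictly smaller than $P$ — so every block is a singleton, $\lvert I\rvert=\lvert P\rvert$, and the map sending each $v\in V(P)$ to its block is an embedding of $P$ into $N'_t$.

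The main obstacle is this claim, and within it the subtle point is that, in a non-conservative recursive blow-up, relations of arity at least $3$ on ``tree-branching'' tuples — those whose vertices do not all split off at a single level — are genuinely not determined by the sandwiching of Proposition~\ref{prop:recblowup}\ref{prop:recblowup:nonconservative}. The remedy is to use the compatibility of $R'$ precisely at those tuples: any two extensions of a recursive blow-up beyond a fixed finite level induce isomorphic substructures, and iterating this lets one morph one vertex of a block into another without changing the induced structure on $V(P)$; it is exactly this that forces a prime $P$ to be realized only by copies that branch completely at a single level. The remaining points — the composition of restrictions, the identification of the restricted theon with a shifted recursive blow-up, and the reduction to $\lvert P\rvert\ge 3$ — are routine.
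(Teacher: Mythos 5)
Your overall architecture does match the paper's: reduce via Lemma~\ref{lem:PWRset}\ref{lem:PWRset:WR} to showing no restriction $\cH\rest_A^F$ is weakly random, use Proposition~\ref{prop:recblowup}\ref{prop:recblowup:zeroproduct} to find an $N_i$ with positive density there, pass to a cylinder $K_{\sigma,V}$ of depth beyond the last $N_j$ containing $N_i$, identify that restriction with the limit of a shifted compatible blow-up sequence, and conclude $N_i\in Q(\cH\rest_A^F)\setminus P(\cH\rest_A^F)$. Where you diverge is the step showing $N_i$ has zero density over $K_{\sigma,V}$: the paper handles this combinatorially, asserting that the finite structures $R'_s$ lie in $S(\{N_j\mid j\geq j_0+1\})$ and invoking Lemma~\ref{lem:primesubstructure}, whereas you attempt a direct theon-level analogue of Lemma~\ref{lem:QphiG} for arbitrary compatible sequences.

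That is where the gap is, and it is genuine. The assertion that ``compatibility ensures that swapping two vertices of a common block preserves all relations among $V(P)$'' does not follow from the definitions: compatibility only pins, inside $R_{s+1}$, the relations on tuples whose level-$s$ projections are pairwise distinct (to the corresponding relations of $R_s$); it never compares two tuples of the \emph{same} $R_s$ differing by exchanging one vertex of a block for another, which is exactly what you need when the tuple meets some other block at least twice. In fact your key claim is false at the stated generality once arity-$3$ predicates are present. Take the language with a symmetric binary $E$ and a symmetric ternary $T$ and $N'_s=C_{s+5}$ (prime, pairwise incomparable, empty $T$). A recursive blow-up $R_1$ relative to $(C_5,C_6)$ is free to place $T$-edges on any triple meeting one level-$0$ block twice and another once; choose $a,b$ adjacent inside the block over $0$, $c,d$ adjacent inside the block over $2$ (a non-adjacent block), and put $T$-edges exactly on $\{a,b,c\}$ and $\{a,c,d\}$. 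This is a legitimate blow-up, it extends to a compatible sequence (copy relations conservatively at deeper levels), and the structure $P$ induced on $\{a,b,c,d\}$ (edges $\{a,b\},\{c,d\}$ only, plus those two hyperedges) has positive density in the limit; but $P$ is prime --- the two $T$-edges destroy the candidate modules $\{a,b\}$ and $\{c,d\}$, and the graph destroys all other subsets of size $2$ or $3$ --- and $P$ embeds in no $C_m$. So no argument from compatibility alone can establish your claim; a repair must use the extra information that the $R'_s$ lie in the substitution closure of the $N_j$'s (automatic for sequences produced by Lemma~\ref{lem:comprecblowup}, which is how the lemma is applied in the paper, but exactly what fails in the example above). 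Your argument is fine when all arities are at most $2$, where it reduces to the computation of Lemma~\ref{lem:QphiG}, and to be fair you located the difficulty at precisely the point where the paper's own proof is tersest (its unproved parenthetical membership $R'_i\in S(\{N_j\mid j\geq j_0+1\})$); but the proposed remedy does not close it.
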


\begin{proof}
  Let $V$ and $\cH$ be as in Proposition~\ref{prop:recblowup}. Suppose toward a contradiction that
  $\phi_R$ has a weakly random sub-object. By Lemma~\ref{lem:PWRset}, there exists a positive
  measure set $A\subseteq\Omega^V$ and a measure-isomorphism $F$ modulo $0$ from $\Omega^V_A$ to
  $\Omega^V$ such that $\cH\rest_A^F$ is weakly random. By
  Proposition~\ref{prop:recblowup}\ref{prop:recblowup:zeroproduct}, there exists $i_0\in\NN$ such
  that $\tind(N_{i_0},\cH\rest_A^F) > 0$.

  Let $j_0\df\max\{j \mid N_{i_0}\text{ is a substructure of } N_j\} < \infty$. Since
  $\{K_{\sigma,V} \mid \sigma\in\prod_{\ell=0}^{j_0} V_\ell\}$ partitions $\Omega^V$, there must
  exist $\sigma\in\prod_{\ell=0}^{j_0} V_\ell$ such that $\nu^V(A\cap K_{\sigma,V}) > 0$.

  From the definition of $\cH$, it follows that for every measure-isomorphism $\widetilde{F}$ modulo
  $0$ from $\Omega^V_{K_{\sigma,V}}$ to $\Omega^V$, we have
  $\phi_{\cH\rest_{K_\sigma,V}^{\widetilde{F}}} = \phi_{R'}$ for the sequence $R' =
  (R'_i)_{i\in\NN}$ given by $R'_i\df R_{j_0 + 1 + i}\rest_{U_i}$, where
  \begin{align*}
    U_i
    & \df
    \left\{\tau\in\prod_{\ell=0}^{j_0 + 1 + i} V_\ell \;\middle\vert\;
    \tau\rest_{\{0,\ldots,j_0\}} = \sigma
    \right\}.
  \end{align*}
  Note also that $R'$ is a compatible sequence of recursive blow-ups relative to the shifted
  sequence $N' = (N'_i)_{i\in\NN}$ given by $N'_i\df N_{j_0+1+i}$.

  We claim now that $\tind(N_{i_0},\cH\rest_{K_{\sigma,V}}^{\widetilde{F}}) = 0$. Indeed, since
  $N_{i_0}$ is prime, for this density to be positive, $N_{i_0}$ must be a substructure of
  infinitely many $R'_i$, but since $R'_i\in S(\{N_j \mid j\geq j_0+1\})$,
  Lemma~\ref{lem:primesubstructure} says that this can only happen if $N_{i_0}$ is a substructure of
  some $N_j$ with $j\geq j_0+1$, which would contradict the definition of $j_0$.

  Finally, this is a contradiction since $\cH\rest_A^F$ was assumed to be weakly random but
  $N_{i_0}\in Q(\cH\rest_A^F)\setminus P(\cH\rest_A^F)$ as $\nu^V(A\cap K_{\sigma,V}) > 0$ and
  $\tind(N_{i_0},\cH\rest_{K_{\sigma,V}}^{\widetilde{F}}) = 0$.
\end{proof}

\begin{proposition}\label{prop:WR->primalmostfinite}
  Let $T$ be a canonical theory such that $\cM[T]$ is weakly closed under substitutions. If
  $T\in\WR$, then $T$ is primally almost finite.
\end{proposition}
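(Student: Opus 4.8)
The plan is to argue by contraposition, transcribing the proof of Theorem~\ref{thm:graphs:WR} into the setting of general relational languages, with Proposition~\ref{prop:recblowup} and Lemma~\ref{lem:noweaklyrandomsubobject} playing the roles that Lemma~\ref{lem:QphiG} and Proposition~\ref{prop:noweaklyrandomsubgraphon} played for graphs. So suppose $\cM[T]$ is not primally almost finite, i.e., the set $\cP$ of prime models of $T$ is not almost finite. Then $\cP$ contains an infinite antichain in the substructure partial order, and since for each size there are only finitely many isomorphism types, after discarding the members of size at most $1$ we obtain an infinite antichain $\{G_n\}_{n\in\NN}$ of pairwise non-isomorphic prime models of $T$, each of size at least $2$, with $G_n$ not a substructure of $G_m$ whenever $n\neq m$.

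Next I would pad this antichain exactly as in the proof of Theorem~\ref{thm:graphs:WR}: for each $n$ choose $r_n\in\NN_+$ with $(1-1/\lvert G_n\rvert)^{r_n}\leq 1/2$ and set $N_\ell\df G_n$ for the unique $n$ with $\sum_{m=0}^{n-1} r_m\leq\ell<\sum_{m=0}^{n} r_m$. The resulting sequence $N=(N_\ell)_{\ell\in\NN}$ then consists of prime models of $T$ of size at least $2$; since $\{G_n\}$ is an antichain of pairwise non-isomorphic structures, $N_\ell$ is a substructure of $N_j$ precisely for the finitely many $j$ with $N_j\cong N_\ell$; and $\prod_{\ell\in\NN}(1-1/\lvert N_\ell\rvert)=\prod_{n\in\NN}(1-1/\lvert G_n\rvert)^{r_n}\leq\prod_{n\in\NN} 1/2 = 0$.

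Now I would apply Lemma~\ref{lem:comprecblowup} with $\cF\df\cM[T]$ — which is closed under substructures because $T$ is universal, and is weakly closed under substitutions by hypothesis — to the sequence $N$, obtaining a compatible sequence $R=(R_i)_{i\in\NN}$ of recursive blow-ups relative to $N$ with $R_i\in\cM[T]$ for every $i\in\NN$. By Proposition~\ref{prop:recblowup}, $R$ is convergent; let $\phi_R\in\HomT{T_\cL}$ be its limit. Every $M\in Q(\phi_R)$ satisfies $p(M,R_i)>0$ for all sufficiently large $i$, hence embeds into some $R_i\in\cM[T]$ and so lies in $\cM[T]$; thus $Q(\phi_R)\subseteq\cM[T]$, which means $\phi_R\in\HomT{T}$, i.e., $\phi_R$ is a limit of $T$. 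On the other hand, $N$ meets every hypothesis of Lemma~\ref{lem:noweaklyrandomsubobject}, so $\phi_R$ has no weakly random sub-object. This contradicts $T\in\WR$, completing the contrapositive and hence the proof.

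The argument is essentially routine given the machinery already set up. The only point requiring a little care — and the only genuine divergence from the graph case — is that here the recursive blow-ups need not be conservative, so one cannot simply exhibit an explicit $\{0,1\}$-valued model and read off everything from it (as Lemma~\ref{lem:QphiG} allowed); instead the three facts needed about $\phi_R$, namely convergence, the inclusion $Q(\phi_R)\subseteq\cM[T]$, and the absence of weakly random sub-objects, are supplied respectively by Proposition~\ref{prop:recblowup}, by convergence together with $R_i\in\cM[T]$, and by Lemma~\ref{lem:noweaklyrandomsubobject}. I do not anticipate any real obstacle; the substantive work lies in those previously established statements.
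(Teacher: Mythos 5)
Your proposal is correct and follows essentially the same route as the paper: contrapositive, pad the infinite antichain of primes with $r_n$ repetitions to force the product condition, build a compatible sequence of recursive blow-ups inside $\cM[T]$ via Lemma~\ref{lem:comprecblowup}, and invoke Lemma~\ref{lem:noweaklyrandomsubobject} to rule out weakly random sub-objects of $\phi_R$. The only cosmetic difference is your detour through $Q(\phi_R)\subseteq\cM[T]$ to see $\phi_R\in\HomT{T}$, where the paper simply uses that $\phi_R$ is the limit of the convergent sequence $(R_i)_{i\in\NN}$ of models of $T$; both are valid.
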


Before we prove Proposition~\ref{prop:WR->primalmostfinite}, let us note that it is completely
trivial when all arities are at least $3$ as in this case there are only finitely many prime
structures by Remark~\ref{rmk:prime3}.

\begin{proof}
  We prove this by the contra-positive. Suppose $\{N'_i \mid i\in\NN\}$ is an infinite antichain of
  prime models of $T$ and without loss of generality, assume every $N'_i$ has size at least $2$ (as
  $\cM_0[T]\cup\cM_1[T]$ is finite).

  For each $n\in\NN$, let $r_n\in\NN_+$ be large enough so that $(1-1/\lvert N'_i\rvert)^{r_n}\leq
  1/2$ and for each $\ell\in\NN$, let $N_\ell\df N'_n$ for the unique $n\in\NN$ such that
  $\sum_{m=0}^{n-1} r_m\leq\ell < \sum_{m=0}^n r_m$. Clearly, for each $\ell\in\NN$, there exist
  exactly $r_\ell$ values of $t\in\NN$ such that $N_\ell$ is a substructure of $N_t$. Note also that
  \begin{align*}
    \prod_{\ell\in\NN}\left(1 - \frac{1}{\lvert N_\ell\rvert}\right)
    & =
    \prod_{m\in\NN}\left(1 - \frac{1}{\lvert N'_m\rvert}\right)^{r_m}
    \leq
    \prod_{m\in\NN} \frac{1}{2}
    =
    0.
  \end{align*}
  Since $\cM[T]$ is weakly closed under substitutions, by Lemma~\ref{lem:comprecblowup}, there
  exists a compatible sequence $R = (R_\ell)_{\ell\in\NN}$ of recursive blow-ups relative to
  $N=(N_\ell)_{\ell\in\NN}$ with $R_\ell\in\cM[T]$ for every $\ell\in\NN$, and by
  Lemma~\ref{lem:noweaklyrandomsubobject}, the limit $\phi_R\in\HomT{T}$ of $R$ does not have any
  weakly random sub-object, hence $T\notin\WR$.
\end{proof}

Let us conclude this section by observing operations that preserve $\WR$ (at the level of
theories). The next proposition shows naturality (at the level of theories) of $\WR$, that is, it is
preserved by open interpretations.

\begin{proposition}\label{prop:WRnatural}
  If $I\colon T_1\leadsto T_2$ is an open interpretation and $T_2\in\WR$, then $I(T_2)\in\WR$.
\end{proposition}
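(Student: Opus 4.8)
The plan is to deduce $I(T_2)\in\WR$ from $T_2\in\WR$ by transporting an arbitrary limit of $I(T_2)$ back to a limit of $T_2$, applying the hypothesis there, and then pushing the resulting weakly random sub-object forward through $I$. First I would take an arbitrary $\phi\in\HomT{I(T_2)}$ and observe that, since the finite models of $I(T_2)$ are exactly the structures of the form $I(M)$ for $M\in\cM[T_2]$, and since $\HomT{I(T_2)}$ is the closure of $\{p(\place,I(M)) \mid M\in\cM[T_2]\}$ in the product topology, any $\phi\in\HomT{I(T_2)}$ is the limit of a convergent sequence $(I(N_n))_{n\in\NN}$ with $N_n\in\cM[T_2]$. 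By compactness of $[0,1]^{\cM[T_2]}$, we may pass to a subsequence so that $(N_n)_{n\in\NN}$ itself converges to some $\psi\in\HomT{T_2}$; by the action of open interpretations on limits (see~\eqref{eq:phiI}), we then have $\phi = \psi^I$.

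Next, since $T_2\in\WR$, the limit $\psi$ has a weakly random sub-object $\psi'$. The key step is then Proposition~\ref{prop:naturality}: part~\ref{prop:naturality:WR} tells us that $(\psi')^I$ is weakly random, and part of the proof of~\ref{prop:naturality:P} gives that if $\psi'$ is a sub-object of $\psi$ then $(\psi')^I$ is a sub-object of $\psi^I = \phi$. Hence $(\psi')^I$ is a weakly random sub-object of $\phi$. Since $\phi$ was an arbitrary limit of $I(T_2)$, this shows $I(T_2)\in\WR$. I would also note that $(\psi')^I\in\HomT{I(T_2)}$ automatically, since $I(T_2)$ is by definition the theory whose finite models are the $I(M)$, so any image under $I$ of a limit of $T_2$ lies in $\HomT{I(T_2)}$.

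The one point that needs a line of justification, rather than a direct citation, is the claim that sub-objects push forward along $I$: i.e., that if $\psi'$ is a sub-object of $\psi$ of measure $c>0$, then $(\psi')^I$ is a sub-object of $\psi^I$ of the same measure. This is exactly the content used inside the proof of Proposition~\ref{prop:naturality}\ref{prop:naturality:P}, so I can simply invoke that; concretely, if $(N_n)_{n\in\NN}$ converges to $\psi$ with $A_n\subseteq V(N_n)$, $\lvert A_n\rvert/\lvert N_n\rvert\to c$ and $(N_n\rest_{A_n})_{n\in\NN}$ converging to $\psi'$, then $(I(N_n))_{n\in\NN}$ converges to $\psi^I$, and $I(N_n)\rest_{A_n} = I(N_n\rest_{A_n})$ (open interpretations commute with taking substructures, since $I(P)$ is quantifier-free), so $(I(N_n)\rest_{A_n})_{n\in\NN}$ converges to $(\psi')^I$, witnessing that $(\psi')^I$ is a sub-object of $\psi^I$ of measure $c$.

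I do not expect any genuine obstacle here; the whole statement is a short formal consequence of Proposition~\ref{prop:naturality} together with the observation that every limit of $I(T_2)$ lifts to a limit of $T_2$ via a subsequence-and-compactness argument. The only mild care required is to make sure the lift is genuine — that is, that $\phi$ really equals $\psi^I$ for the limit $\psi$ obtained after passing to a subsequence — which follows from continuity of the map $\psi\mapsto\psi^I$ (equivalently, from~\eqref{eq:phiI} applied termwise along the convergent sequence).
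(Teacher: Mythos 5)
Your proposal is correct and follows essentially the same route as the paper, whose proof consists precisely of citing Proposition~\ref{prop:naturality}\ref{prop:naturality:WR}, the fact that every $\phi\in\HomT{I(T_2)}$ is of the form $\psi^I$ for some $\psi\in\HomT{T_2}$, and the push-forward of sub-objects along $I$. Your only addition is to spell out the compactness/subsequence argument for the lift and the sequence-based verification that sub-objects push forward (using that $I$ commutes with taking substructures), both of which are fine.
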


\begin{proof}
  Follows from Proposition~\ref{prop:naturality}\ref{prop:naturality:WR}, the fact that every
  $\phi\in\HomT{I(T_2)}$ is of the form $\phi=\psi^I$ for some $\psi\in\HomT{T_2}$ and the fact that
  if $\psi$ is a sub-object of $\phi$, then $\psi^I$ is a sub-object of $\phi^I$ and conversely,
  every sub-object of $\phi^I$ is of the form $\psi^I$ for some sub-object $\psi$ of $\phi$.
\end{proof}

It is easy to see that $\WR$ is not preserved under disjoint unions of theories (see
Definition~\ref{def:couplings}): the theory of linear orders $\TLinOrder$ satisfies $\WR$ (as it is
finitely categorical) but the theory of permutations $\TPerm=\TLinOrder\cup\TLinOrder$ does not
satisfy $\WR$ (see Remark~\ref{rmk:TPermWR}). However, the next proposition says that $\WR$ at least
interacts well with disjoint unions with theories with $\AEHP$ (see Definition~\ref{def:AEHP}).

\begin{proposition}\label{prop:AEHPWRindepcoup}
  Let $T_1$ and $T_2$ be universal theories and suppose $T_1\in\AEHP$. Then the following hold.
  \begin{enumerate}
  \item If $T_2\in\AEHP$, then $T_1\cup T_2\in\AEHP$.%
    \label{prop:AEHPWRindepcoup:AEHP}
  \item If $T_2\in\WR$, then $T_1\cup T_2\in\WR$.%
    \label{prop:AEHPWRindepcoup:WR}
  \end{enumerate}
\end{proposition}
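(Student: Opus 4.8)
\textbf{Proof plan for Proposition~\ref{prop:AEHPWRindepcoup}.}

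The plan is to handle both items uniformly by exploiting the structure of limits of $T_1\cup T_2$ in terms of couplings. The key observation is that any $\psi\in\HomT{T_1\cup T_2}$ is a coupling of $\psi^{I_1}\in\HomT{T_1}$ and $\psi^{I_2}\in\HomT{T_2}$, where $I_i\colon T_i\leadsto T_1\cup T_2$ is the structure-erasing interpretation from Definition~\ref{def:couplings}. First I would fix such a $\psi$ and use $T_1\in\AEHP$ to find a trivial sub-object of $\psi^{I_1}$: by Definition~\ref{def:AEHP}, there is a sub-object $\chi$ of $\psi^{I_1}$ with $\chi=\phi_\cM$ for some $T_1$-on $\cM$ all of whose peons have measure in $\{0,1\}$. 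The point is then to lift this sub-object operation to the coupling level: a sub-object of $\psi^{I_1}$ of measure $c$ is witnessed by a measurable weight function $f$ on the ground space of a theon representing $\psi$, and restricting the \emph{entire} $(T_1\cup T_2)$-on by this same $f$ produces a sub-object $\psi'$ of $\psi$ whose $I_1$-reduct is exactly $\chi$ (and whose $I_2$-reduct is the corresponding sub-object of $\psi^{I_2}$). This uses the characterization of sub-objects via \cite[Lemma~5.8]{CM22} recalled in Section~\ref{sec:prelim}; here I need that a single weight function on the common ground space simultaneously realizes the sub-object on both ``halves'' of the coupling, which is clear since the reduct maps $I_1,I_2$ just forget predicates and do not touch the ground space.

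With this reduction in hand, the two items follow quickly. For item~\ref{prop:AEHPWRindepcoup:AEHP}, since $T_2\in\AEHP$ as well, I would apply the same lifting a second time: starting from the sub-object $\psi'$ of $\psi$ whose $I_1$-reduct is trivial, pass to a further sub-object $\psi''$ of $\psi'$ whose $I_2$-reduct is trivial. A sub-object of a sub-object is a sub-object (this is a trivial concatenation of the defining sequences), and the $I_1$-reduct of $\psi''$ is a sub-object of a trivial limit, hence still trivial (a trivial limit has only itself as a sub-object, since all its peons have measure in $\{0,1\}$ and restricting by a positive-measure weight preserves this). Therefore both reducts of $\psi''$ are trivial, which means every peon of the corresponding $(T_1\cup T_2)$-on has measure in $\{0,1\}$, i.e.\ $\psi''$ is a trivial sub-object of $\psi$. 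Hence $T_1\cup T_2\in\AEHP$.

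For item~\ref{prop:AEHPWRindepcoup:WR}, I would combine the first lifting step with Proposition~\ref{prop:coup}. Again take $\psi\in\HomT{T_1\cup T_2}$; since $T_2\in\WR$, the limit $\psi^{I_2}$ has a weakly random sub-object $\phi_2'$. Lift this to a sub-object $\psi'$ of $\psi$ whose $I_2$-reduct is $\phi_2'$ and whose $I_1$-reduct $\phi_1'$ is a sub-object of $\psi^{I_1}$. Now further restrict using $T_1\in\AEHP$ to obtain a sub-object $\psi''$ of $\psi'$ whose $I_1$-reduct is trivial; its $I_2$-reduct is a sub-object of the weakly random $\phi_2'$, hence still weakly random. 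So $\psi''$ is a coupling of a trivial limit of $T_1$ with a weakly random limit of $T_2$, and Proposition~\ref{prop:coup} says $\psi''$ is weakly random. Since $\psi''$ is a sub-object of $\psi$, this shows $T_1\cup T_2\in\WR$. The main obstacle I anticipate is purely bookkeeping: carefully justifying that a weight function realizing a sub-object on one reduct realizes sub-objects simultaneously on the coupling and on the other reduct, with matching measures, and that ``sub-object of a trivial limit is trivial'' — both are routine from the preliminaries but must be stated cleanly; everything else is a short chain of applications of Propositions~\ref{prop:coup} and the definitions.
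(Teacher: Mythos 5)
Your argument is correct, but it takes a genuinely different route from the paper. The paper fixes a $T_1$-on $\cN^1$ representing $\psi^{I_1}$, invokes the theon-level form of $\AEHP$ (\cite[Theorem~5.11]{CM22}) to get a restricting \emph{set} $A$, and then crucially uses Proposition~\ref{prop:theoncoupling} to choose a representing $(T_1\cup T_2)$-on over $\Omega\otimes\Omega$ whose $T_1$-peons are cylinders $\cN_P\times\cE_{k(P)}(\Omega)$; the cylinder set $A'=A\times X$ then transfers the trivial restriction to the coupling, and a second restriction handles $T_2$. You instead work entirely at the level of limits: you lift a sub-object of the reduct $\psi^{I_i}$ to a sub-object of $\psi$ by realizing it, via \cite[Lemma~5.8]{CM22}, as a weight-function restriction of $I_i(\cH)$ for \emph{any} theon $\cH$ representing $\psi$, and then restricting all of $\cH$ by the same weight, using that structure-erasing reducts commute with the restriction operation. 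This is legitimate (it is exactly the converse direction the paper itself asserts in the proof of Proposition~\ref{prop:naturality}\ref{prop:naturality:P}), and it lets you avoid both Proposition~\ref{prop:theoncoupling} and \cite[Theorem~5.11]{CM22}, at the cost of two auxiliary facts you correctly flag: that a sub-object of a trivial limit is trivial (also used by the paper), and — because in item~\ref{prop:AEHPWRindepcoup:WR} you restrict for $T_2$ first and for $T_1$ second — that a sub-object of a weakly random limit is weakly random, which follows from transitivity of the sub-object relation (the paper's ordering, restricting for $T_1$ first, sidesteps this). Both proofs end the same way: a coupling of two trivial limits is trivial for item~\ref{prop:AEHPWRindepcoup:AEHP}, and Proposition~\ref{prop:coup} for item~\ref{prop:AEHPWRindepcoup:WR}. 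The paper's route buys explicit control of the representing theon (making the bookkeeping concrete at the price of an external result), while yours is more self-contained but should state the lifting and the two inheritance facts as explicit lemmas rather than asides; also note that being ``trivial'' is a property of the limit, not of the particular restricted theon, so your phrase ``every peon of the corresponding $(T_1\cup T_2)$-on has measure in $\{0,1\}$'' should be recast as the assertion that the limit admits such a representation.
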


To prove this proposition, we will need the following result from~\cite{CR20b} on theons
representing couplings (see Definition~\ref{def:couplings}).

\begin{proposition}[\protect{\cite[Proposition~4.3]{CR20b}}]\label{prop:theoncoupling}
  Let $\psi\in\HomT{T_1\cup T_2}$ be a coupling of $\phi_1\in\HomT{T_1}$ and $\phi_2\in\HomT{T_2}$
  and let $\cN^1$ be a $T_1$-on over $\Omega$ such that $\phi_1 = \phi_{\cN^1}$. Then there exists a
  $(T_1\cup T_2)$-on $\cH$ over $\Omega\otimes\Omega$ such that $\psi=\phi_\cH$ and $\cH_P =
  \cN_P\times\cE_{k(P)}(\Omega)$ for every predicate symbol $P$ in the language of $T_1$ (when we
  naturally identify $\cE_{k(P)}(\Omega\otimes\Omega)$ with
  $\cE_{k(P)}(\Omega)\times\cE_{k(P)}(\Omega)$).
\end{proposition}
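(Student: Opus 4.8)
The plan is to first grab \emph{any} semantic representation of the coupling $\psi$, and then reparametrize its underlying space so that its $\cL_1$-reduct becomes precisely the prescribed cylinder over $\cN^1$. By the main theorem of the theory of theons, pick a (weak) $(T_1\cup T_2)$-on $\cK$ over some atomless standard probability space $\Xi$ with $\phi_\cK=\psi$. Since $I_1\colon T_1\leadsto T_1\cup T_2$ is the structure-erasing interpretation, we have $I_1(P)=P$ for every predicate symbol $P$ in the language of $T_1$, hence $\cK_P = I_1(\cK)_P$ for such $P$, and the $T_1$-on $I_1(\cK)$ over $\Xi$ satisfies $\phi_{I_1(\cK)}=\psi^{I_1}=\phi_1=\phi_{\cN^1}$. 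Throughout I will use that for a measure-preserving map $g$ of atomless standard probability spaces, the coordinatewise pullback $g^*(\cM)_P\df (g^{r(k(P))})^{-1}(\cM_P)$ is again a theon and $\phi_{g^*(\cM)}=\phi_\cM$ (the choice of underlying space does not affect densities), and that pullback commutes with open interpretations, in particular with $I_1$.

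Next I would invoke the weak-isomorphism (common-extension) theorem for theons from~\cite{CR20a}: since $\cN^1$ over $\Omega$ and $I_1(\cK)$ over $\Xi$ represent the same limit $\phi_1$, there is an atomless standard probability space $\Lambda$ together with measure-preserving maps $q\colon\Lambda\to\Omega$ and $p\colon\Lambda\to\Xi$ such that $q^*(\cN^1)=p^*(I_1(\cK))$ as $T_1$-ons over $\Lambda$, modulo a zero-measure set. Now I massage $\Lambda$: replacing $\Lambda$ by $\Lambda\otimes\Omega$ and precomposing $q,p$ with the projection $\Lambda\otimes\Omega\to\Lambda$ keeps the identity $q^*(\cN^1)=p^*(I_1(\cK))$ intact. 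By Rokhlin's disintegration theorem the measure-preserving map $q\colon\Lambda\otimes\Omega\to\Omega$ fibers its domain over $\Omega$; each fiber now contains a tensor factor $\Omega$, hence is atomless, hence isomorphic modulo $0$ to $\Omega$; selecting these isomorphisms measurably (Rokhlin again) gives a mod-$0$ isomorphism $\Lambda\otimes\Omega\cong\Omega\otimes\Omega$ carrying $q$ to the first-coordinate projection $\pi_1$. Transporting $p$ along it yields a measure-preserving $g\colon\Omega\otimes\Omega\to\Xi$ with $g^*(I_1(\cK))=\pi_1^*(\cN^1)$ modulo $0$, where $\pi_1^*(\cN^1)_P=\cN^1_P\times\cE_{k(P)}(\Omega)$.

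Finally I set $\cH\df g^*(\cK)$, a $T_\cL$-on over $\Omega\otimes\Omega$. Then $\phi_\cH=\phi_\cK=\psi$, so $\cH$ is in particular a (weak) $(T_1\cup T_2)$-on, and for $P$ in the language of $T_1$ we get $\cH_P = g^*(\cK)_P = g^*(I_1(\cK))_P = \cN^1_P\times\cE_{k(P)}(\Omega)$ modulo $0$; redefining $\cH_P$ to be exactly that cylinder changes $\cH$ only on a null set, hence not $\phi_\cH$. (If a strong $(T_1\cup T_2)$-on is wanted, one last application of the Induced Euclidean Removal Lemma~\cite[Theorem~3.3]{CR20a} adjusts the remaining $\cL_2$-peons on a null set without affecting either $\phi_\cH$ or the already-fixed $\cL_1$-peons.)

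The main obstacle is the second step: first, having the weak-isomorphism theorem in the sharp ``single common extension with measure-preserving maps'' form rather than merely ``same limit''; and second, the surgery turning that extension into $\Omega\otimes\Omega$ with the projection to $\Omega$ literally equal to $\pi_1$ — this is where one tensors in an extra copy of $\Omega$ to force atomless fibers and then uses a measurable selection of fiberwise isomorphisms. Once that is in place, everything else is routine bookkeeping with pullbacks of theons, their compatibility with reducts, and the fact that pullback preserves limits.
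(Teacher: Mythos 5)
This statement is not proved in the paper at all: it is imported verbatim from \cite[Proposition~4.3]{CR20b}, so there is no internal argument to compare with, and your proof has to stand on its own. It does not, because of the step you yourself flag as the main obstacle. You invoke a ``weak isomorphism'' theorem asserting that any two $T_1$-ons $\cN^1$ (over $\Omega$) and $I_1(\cK)$ (over $\Xi$) representing the same limit admit a common extension $\Lambda$ with measure-preserving maps $q,p$ \emph{on the ground spaces} whose coordinatewise pullbacks literally agree, $q^*(\cN^1)=p^*(I_1(\cK))$ modulo $0$. No theorem of this strength is available in \cite{CR20a}, and one should not expect it to hold: uniqueness of theon representations (as for hypergraphons, via the Aldous--Hoover/Kallenberg theory) is only up to re-parametrizations of the higher-order coordinates $x_A$, $\lvert A\rvert\geq 2$, that are measure preserving \emph{fiberwise}, i.e.\ that may depend on the lower-order coordinates $x_B$ with $B\subsetneq A$. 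A single ground-space map applied uniformly to every coordinate of every level cannot implement such fiberwise twists --- already bringing an arbitrary peon into ``normal form'' along its top coordinate is a fiberwise operation --- so aligning an arbitrary representation $I_1(\cK)$ of the marginal with the \emph{prescribed} $\cN^1$ by coordinatewise pullbacks is not a citable fact; it is essentially the hard content of the proposition itself, and assuming it begs the question.

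The surrounding bookkeeping is fine: coordinatewise pullback along a measure-preserving ground map preserves all densities and commutes with open interpretations; tensoring with $\Omega$ to force atomless fibers and using disintegration plus measurable selection to identify $\Lambda\otimes\Omega$ with $\Omega\otimes\Omega$ over $\Omega$ is legitimate (if somewhat glossed) measure theory; and indeed $\pi_1^*(\cN^1)_P=\cN^1_P\times\cE_{k(P)}(\Omega)$ under the stated identification, so once a map $g$ with $g^*(I_1(\cK))=\pi_1^*(\cN^1)$ exists, setting $\cH\df g^*(\cK)$ finishes the argument. But everything hinges on the unavailable alignment step, which is exactly what \cite[Proposition~4.3]{CR20b} supplies; a correct self-contained proof has to work at the level of the underlying exchangeable structures (a representation theorem relative to the given $\cN^1$), not via a ground-map weak-isomorphism statement.
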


\begin{proofof}{Proposition~\ref{prop:AEHPWRindepcoup}}
  Let $\psi\in\HomT{T_1\cup T_2}$ and let $I_i\colon T_i\leadsto T_1\cup T_2$ ($i\in[2]$) be the
  structure-erasing interpretation. Then $\psi$ is a coupling of $\phi_1\df\psi^{I_1}$ and
  $\phi_2\df\psi^{I_2}$.

  Let $\cN^1$ be a $T_1$-on over $\Omega=(X,\cA,\mu)$ such that $\phi_1=\phi_{\cN^1}$. Since
  $T_1\in\AEHP$, by~\cite[Theorem~5.11]{CM22}, there exists a positive measure set $A\subseteq X$
  and a measure-isomorphism $F$ modulo $0$ from $\Omega_A$ to $\Omega$ such that
  $\phi_{\cN^1\rest_A^F}$ is trivial.

  Let now $\cH$ be the $(T_1\cup T_2)$-on over $\Omega'\df\Omega\otimes\Omega$ given by
  Proposition~\ref{prop:theoncoupling} and let $\mu'\df\mu\otimes\mu$ be the underlying measure of
  $\Omega'$. Let also $A'\df A\times X$ and let $F'\df F\otimes\id_X$ be the measure-isomorphism
  modulo $0$ from $\Omega'_{A'}$ to $\Omega'$ that acts as $F$ on the first coordinate and acts
  identically on the second coordinate.

  Suppose $T_2\in\AEHP$. Since $I_2(\cH\rest_{A'}^{F'}) = I_2(\cH)\rest_{A'}^{F'}$ is a $T_2$-on,
  by~\cite[Theorem~5.11]{CM22}, there exists a positive $\mu'_{A'}$-measure set $B\subseteq X\times
  X$ such that $I_2(\cH)\rest_{A'}^{F'}\rest_B^{\widetilde{F}}$ is trivial for every
  measure-isomorphism $\widetilde{F}$ modulo $0$ from $(\Omega'_{A'})_B$ to $\Omega'_{A'}$.

  Set $B'\df B\cap A'$ so that $B'$ is a positive $\mu'$-measure set such that
  $I_2(\cH)\rest_{B'}^{\widetilde{F}\comp F'}$ is trivial. Note now that since $\cH_P =
  \cN_P\times\cE_{k(P)}(\Omega)$ for every predicate symbol $P$ in the language of $T_1$, we get
  $\phi_{I_1(\cH)\rest_{A'}^{F'}} = \phi_{\cN^1\rest_A^F}$, which is a trivial limit. Since
  $\phi_{I_1(\cH)\rest_{B'}^{\widetilde{F}\comp F'}}$ is a sub-object of $\phi_{\cN^1\rest_A^F}$, it
  must also be trivial. Hence, $\psi=\phi_{\cH\rest_{B'}^{\widetilde{F}\comp F'}}$ must be trivial
  as it is a coupling of two trivial limits $\phi_{\cH\rest_{B'}^{\widetilde{F}\comp
      F'}}^{I_1}=\phi_{I_1(\cH)\rest_{B'}^{\widetilde{F}\comp F'}}$ and
  $\phi_{\cH\rest_{B'}^{\widetilde{F}\comp F'}}^{I_2}=\phi_{I_2(\cH)\rest_{B'}^{\widetilde{F}\comp
      F'}}$. Thus item~\ref{prop:AEHPWRindepcoup:AEHP} is proved.

  \medskip

  For item~\ref{prop:AEHPWRindepcoup:WR}, we make the same construction but taking $B\subseteq X\times
  X$ with positive $\mu'_{A'}$-measure such that $I_2(\cH)\rest_{A'}^{F'}\rest_B^{\widetilde{F}}$ is
  weakly random as guaranteed by $T_2\in\WR$. Then $\psi=\phi_\cH$ must be weakly random by
  Proposition~\ref{prop:coup} as it is a coupling of a trivial limit
  $\phi_{\cH\rest_{B'}^{\widetilde{F}\comp F'}}^{I_1}=\phi_{I_1(\cH)\rest_{B'}^{\widetilde{F}\comp
      F'}}$ with a weakly random limit $\phi_{\cH\rest_{B'}^{\widetilde{F}\comp
      F'}}^{I_2}=\phi_{I_2(\cH)\rest_{B'}^{\widetilde{F}\comp F'}}$.
\end{proofof}

\section{Conclusion and open problems}
\label{sec:concl}

In this paper we studied the notion of weak randomness, a weakening of the quasirandomness property
$\UInduce[1]$ (see~\cite{CR20b}). In the language of graphs, a graphon is weakly random if the set
of finite graphs $G$ having non-zero density is invariant across all subgraphons. In the more
general language of structures, weak randomness requires the limit $\phi\in\HomT{T}$ to be such that
for every sub-object $\psi$ of $\phi$ and every finite structure $M$, we have $\phi(M) > 0$ if and
only if $\psi(M) > 0$. We characterized (strongly) persistent families of structures, i.e., those
that correspond to a theory $T$ that has a universal weakly random limit (that is, a weakly random
$\phi$ such that $\Th(\phi)=T$) as precisely those that are closed under substructures and weakly
closed under substitutions.

We also studied a weakening $\WR$ of $\AEHP$. In the language of graphs, $\WR$ is a property of a
hereditary class of graphs which requires that every graphon associated to the class contains a
weakly random sub-graphon. We characterized $\WR$ for hereditary classes of graphs that are closed
under substitution as precisely those classes which are ``primally almost finite'', meaning that in
the partial order on elements of the class given by induced subgraph, there is no infinite antichain
of prime graphs. In the general language of structures, $\WR$ requires every limit of $T$ to contain
a weakly random sub-object (see Definitions~\ref{def:AEHP}, \ref{def:WR}
and~\ref{def:WRgeneral}). We characterized $\WR$ for theories $T$ with maximum arity at most $2$ and
$\cM[T]$ closed under substitutions as precisely the set of theories $T$ that are monochromatically
primally almost finite.

\medskip

A very natural open problem that was not addressed in this paper is to characterize weak randomness
at the level of objects, that is, to provide an equivalent property to $\phi\in\HomT{T}$ being
weakly random. Toward this goal, a natural first step is to ask how different can two weakly random
objects $\phi$ and $\psi$ be. A first source of difference is obviously that they can have different
persistence sets $P(\phi)\neq P(\psi)$. On the other hand, if $P(\phi)=P(\psi)$, then we can attempt
to measure their difference based on the sub-object partial pre-order and it is natural to ask what
is the structure of the partially pre-ordered set $\Phi_\cF\df\{\phi \mid P(\phi) = Q(\phi) = \cF\}$
for some (strongly) persistent class $\cF$. Obviously, if $\cF=\{K_n \mid n\in\NN\}$ or
$\cF=\{\overline{K}_n \mid n\in\NN\}$, then the set $\Phi_\cF$ has only one element, but even for
the next simplest case $\cF=S(\{K_0,K_2,\overline{K}_2\})$ of induced subgraphs of recursive
blow-ups of $C_4$, the structure of the partial pre-order on $\Phi_\cF$ is not clear: does it have
incomparable elements? What about infinite antichains? By Proposition~\ref{prop:recblowup}, if
$G=(G_n)_{n\in\NN}$ in which each $G_n$ is either $K_2$ or $\overline{K}_2$ and both $K_2$ and
$\overline{K}_2$ occur infinitely often, then the recursive blow-up $\phi_G$ satisfies
$\phi_G\in\Phi_\cF$ and we believe that changing the asymptotic proportion of edges and non-edges in
$G$ should produce incomparable elements of $\Phi_\cF$.

As we mentioned in the introduction, the approximate \Erdos--Hajnal property ($\AEHP$) is a
variation of the usual \Erdos--Hajnal property ($\EHP$) that allows for negligible errors, but
requires linear-sized homogeneous sets in the presence of convergence. Since $\WR$ is a weakening of
$\AEHP$, we would like to ask the following more abstract question: what is the polynomial-sized
error-free version of $\WR$ in the finite? Furthermore, since $\AEHP$ implies $\EHP$ and $\WR$ is a
larger class than $\AEHP$, is it still true that $\WR$ implies $\EHP$ for graphs? Of course, this
implication must hold if the \Erdos--Hajnal Conjecture is true. After the submission of this paper,
Nguyen--Scott--Seymour posted a preprint with a proof of the \Erdos--Hajnal conjecture for classes
of graphs with bounded VC~dimension~\cite{NSS24}. Combining their result with
Theorem~\ref{thm:primallyalmostfiniteNIP}, one concludes that hereditary classes of graphs that are
closed under substitutions and satisfy $\WR$ must necessarily satisfy $\EHP$, so it stands to reason
to attempt to remove the closure under substitutions hypothesis.

As mentioned in Section~\ref{sec:WR:univ} (see also Table~\ref{tab:corresp}), several of the proofs
on weak randomness and the class $\WR$ do not generalize very well in the presence of predicates of
arity at least $3$. It is natural to ask if we can characterize $\WR$ in these cases in the presence
of some simplifying assumption that would replace closure under substitution used in the binary
case.

As briefly mentioned before, weak randomness is a weakening of the property $\UInduce[1]$
of~\cite{CR20b}. Since $\UInduce[1]$ is part of a hierarchy of quasirandomness properties
$\UInduce[\ell]$, one might expect that there exists a hierarchy of weak randomness as well. In
turn, it may be that our difficulty in understanding $\WR$ in arity $3$ comes from the fact that
there is a wide variety of $\UInduce[1]$ limits of $3$-hypergraphs and since $\UInduce[2]$ for
$3$-hypergraphons amounts again to only (full) quasirandom $3$-hypergraphons, one might expect that
the corresponding $\WR[2]$ property in arity $3$ defined from an appropriate notion of ``weak
$2$-randomness'' (or more generally $\WR[\ell-1]$ in arity $\ell$) could be easier to handle. Since
the definition of $\UInduce[2]$ is considerably more technical than that of $\UInduce[1]$ and our
initial attempts at a weak $2$-randomness definition did not yet yield any interesting results, we
refrain from elaborating further.

Finally, in the absence of closure under substitutions, it is obvious that $\WR$ is no longer
characterized by the primally almost finite condition: obvious counter-examples include the theories
$T_{\omega\leq k}$ ($T_{\chi\leq k}$, resp.) of graphs whose clique number (chromatic number, resp.)
is at most $k$, which clearly satisfy $\AEHP$ but are not closed under substitutions when $k\geq 2$
(as $K_{k+1}\in S(\{K_2\})$ is not a model of $T_{\omega\leq k}$ or $T_{\chi\leq k}$). It is possible to
upgrade Lemma~\ref{lem:graphs:primallyalmostfinite->WR} and
Proposition~\ref{prop:monprimalmostfinite->WR} to also cover the theories $T_{\omega\leq k}$ and
$T_{\chi\leq k}$ via an interactive proof (more precisely, a two-player game in which the first
player is attempting to show that some sub-object $\psi$ must have $Q(\psi)$ monochromatically
primally almost finite and the second player is attempting to deceive the first player), but we
leave this result to a future work.

\bibliographystyle{alpha}
\bibliography{refs}

\end{document}